\tikzset{node distance=2cm, auto}
\newtheorem{observation}{Remark}[subsection]
\newtheorem{lemma}[observation]{Lemma}  
\newtheorem{thm}[observation]{Theorem}
\newtheorem{definition}[observation]{Definition}
\newtheorem{proposition}[observation]{Proposition} 
\newtheorem{corollary}[observation]{Corollary}
\theoremstyle{definition}
\newtheorem{example}[observation]{Example}
\newdimen\w@dth
\def\setw@dth#1#2{\setbox\z@\hbox{\scriptsize $#1$}\w@dth=\wd\z@
\setbox\@ne\hbox{\scriptsize $#2$}\ifnum\w@dth<\wd\@ne \w@dth=\wd\@ne \fi
\advance\w@dth by 1.2em}
\def\t@^#1_#2{\allowbreak\def\n@one{#1}\def\n@two{#2}\mathrel
{\setw@dth{#1}{#2}
\mathop{\hbox to \w@dth{\rightarrowfill}}\limits
\ifx\n@one\empty\else ^{\box\z@}\fi
\ifx\n@two\empty\else _{\box\@ne}\fi}}
\def\t@@^#1{\@ifnextchar_ {\t@^{#1}}{\t@^{#1}_{}}}
\def\t@left^#1_#2{\def\n@one{#1}\def\n@two{#2}\mathrel{\setw@dth{#1}{#2}
\mathop{\hbox to \w@dth{\leftarrowfill}}\limits
\ifx\n@one\empty\else ^{\box\z@}\fi
\ifx\n@two\empty\else _{\box\@ne}\fi}}
\def\t@@left^#1{\@ifnextchar_ {\t@left^{#1}}{\t@left^{#1}_{}}}
\def\two@^#1_#2{\def\n@one{#1}\def\n@two{#2}\mathrel{\setw@dth{#1}{#2}
\mathop{\vcenter{\hbox to \w@dth{\rightarrowfill}\kern-1.7ex
\hbox to \w@dth{\rightarrowfill}}%
}\limits
\ifx\n@one\empty\else ^{\box\z@}\fi
\ifx\n@two\empty\else _{\box\@ne}\fi}}
\def\tw@@^#1{\@ifnextchar_ {\two@^{#1}}{\two@^{#1}_{}}}
\def\tofr@^#1_#2{\def\n@one{#1}\def\n@two{#2}\mathrel{\setw@dth{#1}{#2}
\mathop{\vcenter{\hbox to \w@dth{\rightarrowfill}\kern-1.7ex
\hbox to \w@dth{\leftarrowfill}}%
}\limits
\ifx\n@one\empty\else ^{\box\z@}\fi
\ifx\n@two\empty\else _{\box\@ne}\fi}}
\def\t@fr@^#1{\@ifnextchar_ {\tofr@^{#1}}{\tofr@^{#1}_{}}}
\newdimen\W@dth
\def\setW@dth#1#2{\setbox\z@\hbox{$#1$}\W@dth=\wd\z@
\setbox\@ne\hbox{$#2$}\ifnum\W@dth<\wd\@ne \W@dth=\wd\@ne \fi
\advance\W@dth by 1.2em}
\def\T@^#1_#2{\allowbreak\def\N@one{#1}\def\N@two{#2}\mathrel
{\setW@dth{#1}{#2}
\mathop{\hbox to \W@dth{\rightarrowfill}}\limits
\ifx\N@one\empty\else ^{\box\z@}\fi
\ifx\N@two\empty\else _{\box\@ne}\fi}}
\def\T@@^#1{\@ifnextchar_ {\T@^{#1}}{\T@^{#1}_{}}}
\def\T@left^#1_#2{\def\N@one{#1}\def\N@two{#2}\mathrel{\setW@dth{#1}{#2}
\mathop{\hbox to \W@dth{\leftarrowfill}}\limits
\ifx\N@one\empty\else ^{\box\z@}\fi
\ifx\N@two\empty\else _{\box\@ne}\fi}}
\def\T@@left^#1{\@ifnextchar_ {\T@left^{#1}}{\T@left^{#1}_{}}}
\def\Tofr@^#1_#2{\def\N@one{#1}\def\N@two{#2}\mathrel{\setW@dth{#1}{#2}
\mathop{\vcenter{\hbox to \W@dth{\rightarrowfill}\kern-1.7ex
\hbox to \W@dth{\leftarrowfill}}%
}\limits
\ifx\N@one\empty\else ^{\box\z@}\fi
\ifx\N@two\empty\else _{\box\@ne}\fi}}
\def\T@fr@^#1{\@ifnextchar_ {\Tofr@^{#1}}{\Tofr@^{#1}_{}}}
\def\Two@^#1_#2{\def\N@one{#1}\def\N@two{#2}\mathrel{\setW@dth{#1}{#2}
\mathop{\vcenter{\hbox to \W@dth{\rightarrowfill}\kern-1.7ex
\hbox to \W@dth{\rightarrowfill}}%
}\limits
\ifx\N@one\empty\else ^{\box\z@}\fi
\ifx\N@two\empty\else _{\box\@ne}\fi}}
\def\Tw@@^#1{\@ifnextchar_ {\Two@^{#1}}{\Two@^{#1}_{}}}
\def\to{\@ifnextchar^ {\t@@}{\t@@^{}}}
\def\from{\@ifnextchar^ {\t@@left}{\t@@left^{}}}
\def\tofro{\@ifnextchar^ {\t@fr@}{\t@fr@^{}}}
\def\To{\@ifnextchar^ {\T@@}{\T@@^{}}}
\def\From{\@ifnextchar^ {\T@@left}{\T@@left^{}}}
\def\Two{\@ifnextchar^ {\Tw@@}{\Tw@@^{}}}
\def\Tofro{\@ifnextchar^ {\T@fr@}{\T@fr@^{}}}
\def\P{\mathcal P}
\def\Com{{\operatorname{Com}}}
\def\Ass{{\operatorname{Ass}}}
\def\Lie{{\operatorname{Lie}}}
\def\Der{{\operatorname{Der}}}
\def\Mod{\mathsf{MOD}}
\def\Alg{\mathsf{ALG}}
\def\Free{\mathsf{Free}}
\def\0{\mathbf 0}
\def\N{\mathbb N}
\def\Lieg{{\mathfrak{g}}}
\title{The Rosický Tangent Categories of Algebras over an Operad}
\author{Sacha Ikonicoff, Marcello Lanfranchi, Jean-Simon Pacaud Lemay}
\begin{document}

\maketitle

\begin{abstract}
Tangent categories provide a categorical axiomatization of the tangent bundle. There are many interesting examples and applications of tangent categories in a variety of areas such as differential geometry, algebraic geometry, algebra, and even computer science. The purpose of this paper is to expand the theory of tangent categories in a new direction: the theory of operads. The main result of this paper is that both the category of algebras of an operad and its opposite category are tangent categories. The tangent bundle for the category of algebras is given by the semi-direct product, while the tangent bundle for the opposite category of algebras is constructed using the module of K\"ahler differentials, and these tangent bundles are in fact adjoints of one another. To prove these results, we first prove that the category of algebras of a coCartesian differential monad is a tangent category. We then show that the monad associated to any operad is a coCartesian differential monad. This also implies that we can construct Cartesian differential categories from operads. Therefore, operads provide a bountiful source of examples of tangent categories and Cartesian differential categories, which both recaptures previously known examples and also yield new interesting examples. We also discuss how certain basic tangent category notions recapture well-known concepts in the theory of operads.
\end{abstract}


\subsection*{Acknowledgements}
The authors would first like to thank Martin Frankland for providing a very useful result about adjoint tangent structure. The authors would also like to thank Martin Frankland (again), Geoff Cruttwell and Dorette Pronk for many very useful discussions. For this research, the first named author was financially supported by a PIMS--CNRS Postdoctoral Fellowship and by the Fields Institute, and the third named author was financially supported by a JSPS Postdoctoral Fellowship, Award \#: P21746 and an ARC DECRA, Award \#: DE230100303.

\newpage
\tableofcontents

\section{Introduction}

Tangent categories provide a categorical description of the tangent bundle, one of the fundamental structures of differential geometry. Tangent categories were introduced by Rosick\'y in \cite{RosickyTangentCats}, and then, thirty years later, generalized and further developed by Cockett and Cruttwell in \cite{cockett2014differential}. Briefly, a tangent category (Definition \ref{definition:tangent-category}) is a category which comes equipped with an endofunctor $\mathsf{T}$ where for every object $A$, $\mathsf{T}(A)$ is interpreted as a generalized version of a tangent bundle over $A$. Furthermore, the functor $\mathsf{T}$ also comes equipped with five (or six) natural transformations that satisfy various axioms that capture the basic properties of the classical tangent bundle over a smooth manifold including natural projection, being a vector bundle, local triviality, linearity of the derivative, etc. 

Nowadays, the theory of tangent categories is a well-established area of research and fits into the broader story of differential categories. As expected, the theory of tangent categories and its applications are fundamentally linked to differential geometry. Many important concepts from differential geometry can be generalized in a tangent category, including vector fields \cite{cockett2015jacobi}, Euclidean spaces \cite{blute2009cartesian}, vector bundles \cite{cockettCruttwellDiffBundles,MacAdamVectorBundles}, connections \cite{cockettCruttwellConnections}, differential equations \cite{cockett2021differential}, differential forms and de Rham cohomology \cite{CruttwellLucychynCohomology}. 

Most well-known examples of tangent categories are based on differential geometry and synthetic differential geometry \cite{GARNER2018668}. Indeed, the canonical example of a tangent category is the category of smooth manifolds, where the tangent structure is induced by the classical tangent bundle of a smooth manifold. Recently, there has been an upswing on new interesting examples and novel applications of tangent categories beyond differential geometry such as in commutative algebra and algebraic geometry \cite{cruttwellLemay:AlgebraicGeometry}, and even in computer science, in particular in relation to differential linear logic \cite{cockett_et_al:LIPIcs:2020:11660} and the differential lambda calculus \cite{manzyuk2012tangent}. The objective of this paper is to further expand the theory and applications of tangent categories into a new direction: the theory of operads. 

Operad theory is a firmly established field of mathematics. Operads first originated as a useful tool in algebraic topology back in the late 1960s/early 1970s, in particular, to characterize iterated loop spaces \cite{may72}. The theory of operads went through a reinvention period in the 1990s, shifting from a topological point of view to a more algebraic one. Since then, operads have found applications in many mathematical domains, including homological algebra, category and higher category theory, combinatorics, and algebraic deformation theory. Operads have even found applications outside the realm of mathematics, where they appear notably in some aspects of mathematical physics, computer sciences, and biology. For an overview of applications of operads and a more detailed introduction, we invite the readers to see \cite{LV,Fresse-ModulesOperads,kriz95}. 

Naively, an operad $\P$ (Section \ref{sec:operads}) is a device that encodes a type of algebra structure on modules over a ring $R$ -- these operads are sometimes also referred to as algebraic operads. Every operad $\P$ has a canonical monad associated to it, and the algebras of said monad are what we call the algebras of the operad $\P$, or simply just the $\P$-algebras. Together, the algebras of an operad $\P$ form a category $\mathsf{ALG}_\P$. For many sorts of algebraic objects, there is an operad whose algebras are precisely those algebraic objects. For example, there is an operad $\Com$ where the $\Com$-algebras are the commutative $R$-algebras and so $\Alg_{\Com}$ is the category of commutative $R$-algebras (Example \ref{example:operadCom}). 

The main objective of this paper is to show that both $\mathsf{ALG}_\P$ and the opposite category $\mathsf{ALG}^{op}_\P$ are tangent categories (Theorem \ref{theorem:footT-semi-direct-product} and Theorem \ref{theorem:operadic-tangent-categories}), whose tangent bundle functors are adjoints to one another (Lemma  \ref{lem:operad-adj}). This is a generalization of the fact that $\Alg_{\Com}$ and $\Alg^{op}_{\Com}$ are both well-known examples of tangent categories, see for example \cite{cruttwellLemay:AlgebraicGeometry} for full details. Briefly, the tangent bundle of a commutative $R$-algebra $A$ in $\Alg_{\Com}$ is given by the algebra of dual numbers over $A$, by which we mean $A[\epsilon] := A[x]/(x^2)$ (Example \ref{example:tangent-structure-over-Alg-Com}):
\[\mathsf{T}(A) := A[\epsilon] \]
In fact, $\Alg_{\Com}$ was one of the main examples in Rosick\'y's original paper \cite[Example 2]{RosickyTangentCats}. On the other hand, the tangent bundle of a commutative algebra $A$ in $\Alg^{op}_{\Com}$ is given by the free symmetric algebra over $A$ of its module of K\"ahler differentials (Example \ref{example:otc-Com}): 
\[\mathsf{T}^\circ(A) := \mathsf{Sym}_A \left( \Omega_A \right)\]
The tangent category $\Alg^{op}_{\Com}$ is closely related to algebraic geometry, as explained in \cite{cruttwellLemay:AlgebraicGeometry}. Indeed, it is famously known that $\Alg^{op}_{\Com}$, the opposite category of commutative algebras, is equivalent to the category of affine schemes over $R$, the building blocks in algebraic geometry. Furthermore, Grothendieck himself calls $\mathsf{T}^\circ(A)$ the ``fibré tangent'' (french for tangent bundle) of $A$ \cite[Definition 16.5.12.I]{grothendieck1966elements}, while Jubin calls it the tangent algebra of $A$ \cite[Section 2.6]{jubin2014tangent}. These two tangent bundles are related since $\mathsf{T}$ and $\mathsf{T}^\circ$, viewed as endofunctors on $\Alg_{\Com}$, are mutual adjoints. This means that $\Alg_{\Com}$ is a tangent category with adjoint tangent structure (Section \ref{subsec:adjoint}). 

For any operad $\P$, by generalizing the constructions for commutative algebras, we are able to obtain tangent category structures for both $\mathsf{ALG}_\P$ and $\mathsf{ALG}^{op}_\P$. For a $\P$-algebra $A$, its tangent bundle in $\mathsf{ALG}_\P$ is given by the semi-direct product with itself (Section \ref{subsection:otc-concrete-description}):
\[ \mathsf{T}(A) := A \ltimes A \]
where the semi-direct product $\ltimes$ is a generalization of the dual numbers construction for $\P$-algebras \cite[Section 12.3.2]{LV}. On the other hand, the tangent bundle of a $\P$-algebra $A$ in $\mathsf{ALG}^{op}_\P$ requires more setup. Firstly, there is a notion of modules over a $\P$-algebra $A$, referred to as $A$-modules \cite[Section 12.3.1]{LV}, and in particular, a generalization of a module of K\"ahler differentials over $A$ \cite[Section 12.3.8]{LV}, denoted $\Omega_A$. Secondly, there is also a notion of $\P$-algebras over $A$, called simply $A$-algebras, and for any $A$-module $M$, there is a free $A$-algebra over $M$ \cite[Lemma~5.2]{Ginzburg-NCSymplecticGeometry}, denoted $\Free_A(M)$. As such, the tangent bundle of a $\P$-algebra $A$ in $\mathsf{ALG}^{op}_\P$ is defined as the free $A$-algebra of its module of K\"ahler differentials (Section \ref{subsec:adjoint-tan-operad}): 
\[ \mathsf{T}^\circ(A) := \Free_A(\Omega_A) \]
Furthermore, we also have that $\mathsf{ALG}_\P\left( \Free_A(\Omega_A) , A^\prime \right) \cong \mathsf{ALG}_\P(A, A^\prime \ltimes A^\prime)$. Therefore, the tangent bundles $\mathsf{T}$ and $\mathsf{T}^\circ$ are mutual adjoints, as desired. In particular, following the discussions of \cite{cruttwellLemay:AlgebraicGeometry}, we may interpret $\mathsf{ALG}^{op}_\P$ as a tangent category model of algebraic geometry relative to the operad $\P$. It is worth mentioning that, while the tangent bundle $\mathsf{T}$ is mostly the same for each operad, the adjoint tangent bundle $\mathsf{T}^\circ$ can vary quite drastically from operad to operad.

As a consequence, operads provide a large source of examples of tangent categories, including both previously known ones and many new ones, some of which may be exotic or very unexpected. Of course, by taking $\P = \Com$, we recapture precisely the tangent category of commutative algebras and the tangent category of affine schemes. However, we can take other operads $\P$ to obtain models of tangent categories that have not been previously considered. For example, we may take the operad $\Ass$, where $\Alg_\Ass$ is instead the category of (associative and unital) algebras (Example \ref{example:operadAss}). Therefore, $\Alg_\Ass$ is a tangent category model of non-commutative algebras (Example \ref{example:tangent-structure-over-Alg-Ass}), while $\Alg^{op}_\Ass$ is a tangent category model of non-commutative algebraic geometry (Example \ref{example:otc-Ass}). In particular, Ginzburg calls the tangent bundle $\mathsf{T}^\circ(A)$ the ``space of noncommutative differential forms of $A$'' \cite[Definition 10.2.3]{Ginzburg:notes-on-NCGeometry}. We can also take the operad $\Lie$, where $\Alg_\Lie$ is the category of Lie algebras (Example \ref{example:operadLie}). In this case, we get the surprising new examples of tangent categories of Lie algebras (Example \ref{example:tangent-structure-over-Alg-Lie} and Example \ref{example:otc-Lie}). Since operad theory has such a large range of relations to a variety of domains, we expect that the results of this paper will not only lead to a multitude of new examples of tangent categories but also greatly expand the reaches of the theory of tangent categories and its applications to new areas. 

We also discuss two notions coming from differential geometry in the setting of our tangent categories. The first is vector fields (Definition \ref{def:vecfield}), which, as the name suggests, generalizes the classical notion of vector fields from differential geometry. We will show that vector fields in both $\mathsf{ALG}_\P$ and $\mathsf{ALG}^{op}_\P$ correspond to derivations in the operadic sense \cite[Section 12.3.7]{LV}, which generalize the notion of algebraic derivations (Section \ref{sec:vf-operad}). The second is differential objects (Definition \ref{def:diffobj}), which provide analogues of Euclidean spaces in a tangent category. In particular, the tangent bundle of a differential object $A$ is just the product of $A$ with itself, $\mathsf{T}(A) \cong A \times A$. In $\mathsf{ALG}_\P$, this means that a differential object is a $\P$-algebra whose $\P$-algebra structure is essentially everywhere zero (Proposition \ref{prop:diffobj-alg}) -- which for some operads means that the only differential object is the zero algebra. On the other hand, the differential objects in $\mathsf{ALG}^{op}_\P$ are precisely the modules (in the operadic sense) over the $\P$-algebra $\P(0)$ (Theorem \ref{thm:diffobj-operad-op}). In future work, it would be interesting to study other tangent category notions in $\mathsf{ALG}_\P$ and $\mathsf{ALG}^{op}_\P$ such as connections, differential forms and their cohomology, differential bundles, differential equations, etc. 

To prove our main results, we will in fact prove a more general result, allowing us to avoid checking all the tangent category axioms for $\mathsf{ALG}_\P$ and $\mathsf{ALG}^{op}_\P$. Indeed, $\mathsf{ALG}_\P$ can also be described as the Eilenberg--Moore category of the monad associated to the operad $\P$. We investigate conditions under which the Eilenberg--Moore category of a monad is a tangent category (Section \ref{sec:ccdm}). It turns out that the solution to this problem is linked to a special class of tangent categories called Cartesian differential categories \cite{blute2009cartesian}. Briefly, a Cartesian differential category can be defined as a category with finite products and equipped with a differential combinator $\mathsf{D}$ which takes a map $f$ and produces its derivative $\mathsf{D}[f]$ (Definition \ref{def:CDC}). Every Cartesian differential category is a tangent category, where the tangent bundle is built using the differential combinator \cite[Proposition 4.7]{cockett2014differential}. In \cite{ikonicoff2021cartesian}, the first and third named authors introduced the notion of a coCartesian differential monad (Definition \ref{defi:cCDM}), which is precisely the kind of monad for which the opposite category of its Kleisli category is a Cartesian differential category. In \cite{ikonicoff2021cartesian}, the problem of identifying the structure of the Eilenberg--Moore category of a coCartesian differential monad was left open. In this paper, we will show that this category is a tangent category (Theorem \ref{thm:S-tan}), and that under mild assumptions, its opposite category is also a tangent category (Section \ref{subsection:adjoint-tangent-structure-cCD-monads}). 

We will then prove that the monad associated to any operad $\P$ is always a coCartesian differential monad (Theorem \ref{theorem:opdiffmonad}). This will immediately imply that $\mathsf{ALG}_\P$ is a tangent category (Theorem \ref{theorem:footT-semi-direct-product}). After some extra work, we will then also obtain that $\mathsf{ALG}^{op}_\P$ is a tangent category (Theorem \ref{theorem:operadic-tangent-categories}). Furthermore, since the monad associated to an operad $\P$ is a coCartesian differential monad, it also follows that the opposite category of its Kleisli category, $\mathsf{KL}^{op}_\P$, is a Cartesian differential category (Section \ref{subsec:CDC-operad}). Intuitively, the maps of $\mathsf{KL}^{op}_\P$ can be interpreted as special kinds of smooth functions, which we call $\P$-polynomials. In particular, the subcategory of finite dimensional $R$-module of $\mathsf{KL}^{op}_\P$ is the Lawvere theory for $\P$-polynomials, and is again a Cartesian differential category. Therefore, operads also give a source of examples of Cartesian differential categories, again both recapturing known examples, like classical polynomial differentiation (Example \ref{example:CDCCom}), and providing new unexpected examples, like the differentiation of Lie bracket polynomials (Example \ref{example:CDCLie}). Moreover, it is known that the subcategory of differential objects of a tangent category is a Cartesian differential category \cite[Theorem 4.11]{cockett2014differential}. We will show that every free $\P$-algebra is a differential object in $\mathsf{ALG}^{op}_\P$ (Lemma \ref{lem:freeP-diffobj}), and thus the Cartesian differential category $\mathsf{KL}^{op}_\P$ embeds into the Cartesian differential category of differential objects of $\mathsf{ALG}^{op}_\P$.

It is our hope that this paper is but the exciting start of a new unified theory for geometry for algebra structures obtained by applying the theory of tangent categories and Cartesian differential categories to the notion of operads. 

\textbf{Outline:} Section \ref{section:tancats} is a background section on the basics of tangent categories, where we introduce most of the terminology, notation, and constructions that we will use throughout the paper. Section \ref{sec:ccdm} is a general theory section on coCartesian differential monads, the results of this section are key to providing the main results of the following section. Section \ref{sec:operads} is the main section of this paper, where we study the tangent categories of algebras of an operad, and also discuss the Cartesian differential categories induced by an operad.  We conclude this paper in Section \ref{sec:future}, where we discuss future work that we hope to pursue that builds on the ideas presented in this paper. 

\textbf{Conventions:} We assume the reader is familiar with the basic notions of category theory such as categories, opposite categories, functors, natural transformations, and (co)limits like (co)products, pullbacks, pushouts, etc. In some cases, if only to introduce notation, we recall some of these concepts. In an arbitrary category $\mathbb{X}$, we denote identity maps as ${1_A: A \to A}$, and we use the classical notation for composition, $g \circ f$, as opposed to diagrammatic order which was used in other papers on tangent categories such as in \cite{cockett2014differential}. Finally, homsets in a category $\mathbb X$ of morphisms from an object $A$ to an object $B$ will be denoted by $\mathbb X(A,B)$. 

\section{Tangent Categories}
\label{section:tancats}

In this background section, we review tangent categories and their basic theory including adjoint tangent structure (where we also prove a new useful lemma), vector fields, differential objects, and Cartesian differential categories. 

\subsection{Tangent Categories}

We begin by recalling the necessary structural maps for a tangent category. We do not recall the full definition here and refer readers to see the full definition of a tangent category, including the axioms expressed as commutative diagrams, and intuitions in \cite{cockett2014differential,GARNER2018668}. The key difference between Rosick\'y's original definition of a tangent category \cite[Section 2]{RosickyTangentCats} and Cockett and Cruttwell's definition \cite{cockett2014differential} is that the former assumes an Abelian group structure on the fibres of the tangent bundle, while the latter generalizes to only a commutative monoid structure. As such, Rosick\'y's definition includes one extra natural transformation capturing the negatives in the tangent bundle. We will adopt the terminology used in \cite{cruttwellLemay:AlgebraicGeometry}, where a tangent structure with negatives will be called a \textit{Rosick\'y} tangent structure\footnote{We also choose this naming convention to clearly separate from the terminology used in \cite{harpazetal19b}, which studies a different notion of tangent categories for operads.}.  

\begin{definition}\label{definition:tangent-category} \cite[Definition 2.3 and Section 3.3]{cockett2014differential} A \textbf{(Rosick\'y) tangent structure} on a category $\mathbb{X}$ is a sextuple $\mathbb{T} := (\mathsf{T}, p, s, z, l, c)$ (resp. a septuple ${\mathbb{T} := (\mathsf{T}, p, s, z, l, c, n)}$) consisting of: 
\begin{enumerate}[{\em (i)}]
\item An endofunctor $\mathsf{T}: \mathbb{X} \to  \mathbb{X}$, called the \textbf{tangent bundle functor};
\item A natural transformation $p_A: \mathsf{T}(A) \to A$, called the \textbf{projection}, such that for each $n\in \mathbb{N}$, the $n$-fold pullback\footnote{By convention, $\mathsf{T}_0(A) = A$ and $\mathsf{T}_1(A) = \mathsf{T}(A)$} of $p_A$ exists, denoted as $\mathsf{T}_n(A)$ with projections $q_j: \mathsf{T}_n(A) \to \mathsf{T}(A)$, and such that for all $m \in \mathbb{N}$, $\mathsf{T}^m:=\mathsf{T}\circ\dots\circ\mathsf{T}$ preserves these pullbacks, that is, $\mathsf{T}^m( \mathsf{T}_n(A))$ is the $n$-fold pullback of $\mathsf{T}^m(p_A)$ with projections $\mathsf{T}^m(q_j)$;  
\item A natural transformation\footnote{Note that by the universal property of the pullback, it follows that we can define functors $\mathsf{T}_n: \mathbb{X} \to \mathbb{X}$.} $s_A: \mathsf{T}_2(A) \to \mathsf{T}(A)$, called the \textbf{sum};
\item A natural transformation $z_A: A \to \mathsf{T}(A)$, called the \textbf{zero map};
\item A natural transformation $l_A: \mathsf{T}(A) \to \mathsf{T}^2(A)$, called the \textbf{vertical lift};
\item A natural transformation $c_A: \mathsf{T}^2(A) \to \mathsf{T}^2(A)$, called the \textbf{canonical flip}; 
\item (And if Rosick\'y, a natural transformation ${n_A: \mathsf{T}(A) \to \mathsf{T}(A)}$, called the \textbf{negative map};)
\end{enumerate}
such that the equalities in \cite[Definition 2.3]{cockett2014differential} (and if Rosick\'y, also \cite[Definition 3.3]{cockett2014differential}) are satisfied. A \textbf{(Rosick\'y) tangent category} is a pair $(\mathbb{X}, \mathbb{T})$ consisting of a category $\mathbb{X}$ equipped with a (Rosick\'y) tangent structure $\mathbb{T}$ on $\mathbb{X}$.
\end{definition}

We can also ask our tangent categories to have finite products in such a way that the tangent bundle of a product is naturally isomorphic to the product of the tangent bundles, and that the tangent bundle of the terminal object is the terminal object. For a category with finite products, we denote $n$-ary products by $A_1 \times \hdots \times A_n$ with projections ${\pi_j: A_1 \times \hdots \times A_n \to A_j}$ and $\langle -, \hdots, - \rangle$ for the pairing operation, the terminal object as $\ast$, and for every object $A$, the unique map from $A$ to $\ast$ is denoted by $t_A: A \to \ast$.

\begin{definition}~\cite[Definition 2.8]{cockett2014differential} A \textbf{Cartesian (Rosick\'y) tangent category} is a (Rosick\'y) tangent category $(\mathbb{X}, \mathbb{T})$ such that $\mathbb{X}$ has finite products and the canonical maps:
\begin{align*}
\left \langle \mathsf{T}(\pi_1), \hdots, \mathsf{T}(\pi_n) \right \rangle: \mathsf{T}(A_1 \times \hdots \times A_n) \to \mathsf{T}(A_1) \times \hdots \times \mathsf{T}(A_n) && t_{\mathsf{T}(\ast)}: \mathsf{T}(\ast) \to \ast
\end{align*}
are isomorphisms: $\mathsf{T}(A_1 \times \hdots \times A_n) \cong \mathsf{T}(A_1) \times \hdots \times \mathsf{T}(A_n)$ and $\mathsf{T}(\ast) \cong \ast$. 
\end{definition}

See \cite[Example 2.2]{cockettCruttwellDiffBundles} and \cite[Example 2]{GARNER2018668} for lists of examples of tangent categories. Arguably the canonical example of a tangent category is the category of smooth manifolds, where the tangent structure is induced by the classical tangent bundle. This example provides a direct link between tangent categories and differential geometry. In Lemma \ref{lemma:biproduct}, we will review how every additive category is a tangent category, and in Section \ref{sec:CDC-diffobj}, we will review an important subclass of tangent categories: Cartesian differential categories. Furthermore, the main objective of this paper is to show that the category of algebras over an operad and its opposite category are both tangent categories. As such, more examples of tangent categories can be found in Section \ref{sec:operads}, including the (opposite) categories of algebras, commutative algebras, and Lie algebras.

\subsection{Adjoint Tangent Structure}\label{subsec:adjoint}

In \cite{cockett2014differential}, Cockett and Cruttwell introduce an important construction for this paper, called the dual tangent structure - not to be confused with the notion of cotangent structure. This construction allows one to build a tangent structure on the opposite category of a tangent category. This is possible when the tangent bundle functor admits a left adjoint, and in this case, said left adjoint becomes a tangent bundle functor on the opposite category. To avoid confusion, we will refer to this construction as the adjoint tangent structure. In particular, in Section \ref{subsec:adjoint-tan-operad} we will show that the category of algebras over an operad always has adjoint tangent structure, and therefore the opposite category of algebras over an operad is a tangent category. 

Recall that an adjunction between two categories $\mathbb{X}$ and $\mathbb{Y}$ consists of two functors ${\mathsf{L}: \mathbb{X} \to \mathbb{Y}}$, called the left adjoint, and $\mathsf{R}: \mathbb{Y} \to \mathbb{X}$, called the right adjoint, and two natural transformations $\eta_A: A \to \mathsf{R}\mathsf{L}(A)$, called the unit, and $\varepsilon_B: \mathsf{L}\mathsf{R}(B) \to B$, called the counit, such that ${\varepsilon_{\mathsf{L}(A)} \circ \mathsf{L}(\eta_A) = 1_{\mathsf{L}(A)}}$ and $\mathsf{R}(\varepsilon_A) \circ \eta_{\mathsf{R}(A)} = 1_{\mathsf{R}(A)}$. As a shorthand, we write adjunctions as $(\eta, \varepsilon): \mathsf{L} \dashv \mathsf{R}$. 

\begin{definition} \label{def:adjoint-tangent-structure} A tangent category $(\mathbb{X}, \mathbb{T})$ is said to have \textbf{adjoint tangent structure} if, for every $n \in \mathbb{N}$, the functor $\mathsf{T}_n$ admits a left adjoint $\mathsf{T}^\circ_n$ with unit $\eta(n)_A: A \to \mathsf{T}_n\mathsf{T}^\circ_n(A)$ and counit $\varepsilon(n)_A: \mathsf{T}^\circ_n\mathsf{T}_n(A) \to A$, or again, $(\eta(n), \varepsilon(n)): \mathsf{T}^\circ_n \dashv \mathsf{T}_n$. By convention, we simply denote $\mathsf{T}_1 = \mathsf{T}$, $\mathsf{T}_1^\circ = \mathsf{T}^\circ$, $\eta = \eta(1)$, and $\varepsilon = \varepsilon(1)$, so $(\eta, \varepsilon): \mathsf{T}^\circ \dashv \mathsf{T}$. 
\end{definition}

Using the adjoint tangent structure, we now give a full description of the resulting tangent category on the opposite category. Giving a tangent structure on the (opposite) category $\mathbb{X}^{op}$ corresponds to giving a ``dual tangent structure'' on $\mathbb{X}$, that is, the types of all the natural transformations are reversed.

\begin{thm} \cite[Proposition 5.17]{cockett2014differential}
\label{theorem:dual-tangent-structure}
Let $(\mathbb{X}, \mathbb{T})$ be a tangent category with adjoint tangent structure. Consider: 
\begin{enumerate}[{\em (i)}]
\item The adjoint projection $p^\circ_A: A \to \mathsf{T}^\circ(A)$, defined as:
\[p^\circ_A := p_{\mathsf{T}^\circ(A)} \circ \eta_A\] 
where the $n$-fold pushout of $p^\circ_A$ is $\mathsf{T}^\circ_n(A)$ with injections $q^\circ_j: \mathsf{T}^\circ(A) \to \mathsf{T}^\circ_n(A)$ defined as:
\[q^\circ_j = \varepsilon_{\mathsf{T}^\circ_n(A)} \circ \mathsf{T}^\circ(q_j) \circ \mathsf{T}^\circ(\eta_A)\]
\item The adjoint sum $s^\circ_A: \mathsf{T}^\circ(A) \to \mathsf{T}^\circ_2(A)$, defined as:
\[s^\circ_A := \varepsilon_{\mathsf{T}^\circ_2(A)} \circ \mathsf{T}^\circ(s_{\mathsf{T}^\circ_2(A)}) \circ \mathsf{T}\left(\eta(2)_A\right)\]
\item The adjoint zero map $z^\circ_A: \mathsf{T}^\circ(A) \to A$, defined as: 
\[z^\circ_A := \varepsilon_A \circ \mathsf{T}^\circ(z_A)\]
\item The adjoint vertical lift $l^\circ_A:{\mathsf{T}^\circ}^2(A) \to \mathsf{T}^\circ(A)$, defined as:
\[l^\circ_A := \varepsilon_{\mathsf{T}^\circ(A)} \circ \mathsf{T}^\circ(\varepsilon_{\mathsf{T}\mathsf{T}^\circ(A)}) \circ {\mathsf{T}^\circ}^2(l_{\mathsf{T}^\circ(A)}) \circ {\mathsf{T}^\circ}^2(\eta_A)\]
\item The adjoint canonical flip $c^\circ_A: {\mathsf{T}^\circ}^2(A) \to {\mathsf{T}^\circ}^2(A)$, defined as:
\[c^\circ_A := \varepsilon_{{\mathsf{T}^\circ}^2(A)} \circ \mathsf{T}^\circ(\varepsilon_{\mathsf{T}{\mathsf{T}^\circ}^2(A)}) \circ {\mathsf{T}^\circ}^2(c_{{\mathsf{T}^\circ}^2(A)}) \circ {\mathsf{T}^\circ}^2\mathsf{T}(\eta_{\mathsf{T}^\circ(A)}) \circ  {\mathsf{T}^\circ}^2(\eta_A)\]
\end{enumerate}
Then, $\mathbb{T}^\circ= (\mathsf{T}^\circ,  p^\circ, s^\circ, z^\circ, l^\circ, c^\circ)$ is a tangent structure on $\mathbb{X}^{op}$, and so, $(\mathbb{X}^{op}, \mathbb{T}^\circ)$ is a tangent category. Similarly, if $(\mathbb{X}, \mathbb{T})$ is a Rosick\'y tangent category with adjoint tangent structure, consider: 
\begin{enumerate}[{\em (i)}]
\setcounter{enumi}{5}
\item The adjoint negative map $n^\circ_A:  \mathsf{T}^\circ(A) \to {\mathsf{T}}^\circ(A)$ defined as:
\[n^\circ_A := \varepsilon_{\mathsf{T}^\circ(A)} \circ \mathsf{T}^\circ(n_{\mathsf{T}^\circ(A)}) \circ \eta_{\mathsf{T}^\circ(A)}\]
\end{enumerate}
Then, $\mathbb{T}^\circ= (\mathsf{T}^\circ,  p^\circ, s^\circ, z^\circ, l^\circ, c^\circ, n^\circ)$ is a Rosick\'y tangent structure on $\mathbb{X}^{op}$, and so, $(\mathbb{X}^{op}, \mathbb{T}^\circ)$ is a Rosick\'y tangent category. Furthermore, if $(\mathbb{X}, \mathbb{T})$ is a Cartesian (Rosick\'y) category with adjoint tangent structure and $\mathbb{X}$ also has finite coproducts, then $(\mathbb{X}^{op}, \mathbb{T}^\circ)$ is a Cartesian (Rosick\'y) tangent category. 
\end{thm}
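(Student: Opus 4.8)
The plan is to recognise the data $(\mathsf{T}^\circ, p^\circ, s^\circ, z^\circ, l^\circ, c^\circ, n^\circ)$ as the \emph{mates} (conjugate natural transformations) of $(\mathsf{T}, p, s, z, l, c, n)$ under the adjunctions in play, and then to transport the tangent axioms by the calculus of mates. Recall that giving a tangent structure on $\mathbb{X}^{op}$ is exactly giving a ``dual tangent structure'' on $\mathbb{X}$: an endofunctor $\mathsf{T}^\circ \colon \mathbb{X} \to \mathbb{X}$ together with natural transformations $p^\circ_A \colon A \to \mathsf{T}^\circ(A)$, $s^\circ_A \colon \mathsf{T}^\circ(A) \to \mathsf{T}^\circ_2(A)$, $z^\circ_A \colon \mathsf{T}^\circ(A) \to A$, $l^\circ_A \colon (\mathsf{T}^\circ)^2(A) \to \mathsf{T}^\circ(A)$, $c^\circ_A \colon (\mathsf{T}^\circ)^2(A) \to (\mathsf{T}^\circ)^2(A)$ (and a negative $n^\circ_A \colon \mathsf{T}^\circ(A) \to \mathsf{T}^\circ(A)$), where the $n$-fold pullbacks $\mathsf{T}_n$ of $p$ are replaced by $n$-fold \emph{pushouts} $\mathsf{T}^\circ_n$ of $p^\circ$, and where the defining equalities are those of \cite[Definitions 2.3 and 3.3]{cockett2014differential} with every arrow reversed. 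The first step is a direct check that the formulas in the statement are precisely the mates: for $p^\circ, z^\circ, l^\circ, c^\circ, n^\circ$ one uses the adjunction $\mathsf{T}^\circ \dashv \mathsf{T}$ together with its iterate $(\mathsf{T}^\circ)^2 \dashv \mathsf{T}^2$ equipped with its standard composite unit and counit, and for $s^\circ$ and for the injections $q^\circ_j$ one additionally uses $\mathsf{T}^\circ_2 \dashv \mathsf{T}_2$; in each case, expanding the standard mate formula and comparing termwise reproduces the displayed expression (in particular $q^\circ_j$ is the mate of the pullback projection $q_j \colon \mathsf{T}_n \Rightarrow \mathsf{T}$, taken along $\mathsf{T}^\circ \dashv \mathsf{T}$ on the target and $\mathsf{T}^\circ_n \dashv \mathsf{T}_n$ on the source).

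The engine of the argument is the standard fact that taking mates is a bijection on natural transformations which preserves identities, vertical composition, and whiskering by a functor that itself carries a compatible adjunction --- so that a whole pasting diagram of natural transformations built from $p, s, z, l, c, n$ and whiskerings by $\mathsf{T}$ and $\mathsf{T}_2$ transports, with the direction of all $2$-cells reversed, to the pasting diagram of the corresponding mates built from $p^\circ, s^\circ, z^\circ, l^\circ, c^\circ, n^\circ$ and whiskerings by $\mathsf{T}^\circ$ and $\mathsf{T}^\circ_2$. Applying this to each tangent (and Rosick\'y) axiom of \cite[Definitions 2.3 and 3.3]{cockett2014differential} that is an honest equation yields exactly the dual equation, i.e.\ the corresponding axiom in $\mathbb{X}^{op}$. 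Thus all equational axioms for $\mathbb{T}^\circ$ hold at once.

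Two points require genuine, if routine, work. The first is Axiom (ii): one must show that the $n$-fold pushout of $p^\circ_A$ in $\mathbb{X}$ exists and is computed by $\mathsf{T}^\circ_n(A)$ with injections $q^\circ_j$, and that each $(\mathsf{T}^\circ)^m$ preserves these pushouts. Preservation is immediate, since $(\mathsf{T}^\circ)^m$ is a composite of left adjoints and hence preserves all colimits. For the pushout itself, transpose along the adjunctions: a cocone on the $n$-ary copower of $p^\circ_A$ with vertex $B$ is a family $f_1, \dots, f_n \colon \mathsf{T}^\circ(A) \to B$ agreeing after precomposition with $p^\circ_A$; applying $\mathsf{T}^\circ \dashv \mathsf{T}$ and using naturality of $p$, this is the same as a family of maps $A \to \mathsf{T}(B)$ agreeing after postcomposition with $p_B$, that is, a cone on the $n$-fold pullback of $p_B$, hence --- as $\mathsf{T}_n(B)$ is that pullback --- a single map $A \to \mathsf{T}_n(B)$, hence by $\mathsf{T}^\circ_n \dashv \mathsf{T}_n$ a single map $\mathsf{T}^\circ_n(A) \to B$; tracing the identity of $\mathsf{T}^\circ_n(A)$ through these bijections identifies the universal cocone with the family $(q^\circ_j)$, which gives the required universal property. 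The second delicate axiom is the universality of the vertical lift, which asserts that a certain square is a pullback; here one must show the transposed square is a pushout in $\mathbb{X}$, which again follows from left adjoints preserving colimits together with the pushout description of $\mathsf{T}^\circ_n$ just obtained. Finally, for the Cartesian (Rosick\'y) case: finite coproducts of $\mathbb{X}$ are finite products of $\mathbb{X}^{op}$, $\mathsf{T}^\circ$ preserves them because it is a left adjoint, and the comparison map $\langle \mathsf{T}^\circ(\iota_1), \dots, \mathsf{T}^\circ(\iota_n) \rangle$ is invertible by transposing the isomorphism $\mathsf{T}(A_1 \times \cdots \times A_n) \cong \mathsf{T}(A_1) \times \cdots \times \mathsf{T}(A_n)$ of $\mathbb{X}$ along the adjunctions, while $\mathsf{T}^\circ$ sends the initial object of $\mathbb{X}$ to the initial object.

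The main obstacle is the bookkeeping around the two towers of pullback/pushout objects: one must fix the adjunctions $\mathsf{T}^\circ_n \dashv \mathsf{T}_n$ compatibly with $\mathsf{T}^\circ \dashv \mathsf{T}$ (this is precisely what dictates the formula for $q^\circ_j$), and then dualise with care exactly those axioms that entangle $\mathsf{T}$ with the objects $\mathsf{T}_n$ --- the additivity of $l$ and $c$ over $\mathsf{T}_2$ and, above all, the universality of the vertical lift, which is a limit statement rather than an equation and so falls outside the purely formal mate calculus and must be re-derived as a colimit statement in $\mathbb{X}^{op}$. Everything else is a mechanical application of the calculus of mates.
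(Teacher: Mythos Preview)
The paper does not give its own proof of this theorem: it is stated with a citation to \cite[Proposition 5.17]{cockett2014differential} and no argument is supplied. Your mate-calculus approach is precisely the one used in that reference, so you are reconstructing the intended proof rather than offering an alternative.

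Two small comments on execution. First, your treatment of the universality of the vertical lift is too compressed: saying it ``follows from left adjoints preserving colimits together with the pushout description of $\mathsf{T}^\circ_n$'' does not make the mechanism visible. What is actually needed is the same transposition trick you used for the $\mathsf{T}^\circ_n$ pushout: a test cocone in $\mathbb{X}$ on the putative pushout is transposed, via the relevant adjunctions, to a test cone on the vertical-lift pullback in $(\mathbb{X},\mathbb{T})$, and the universal property there produces the required unique factorisation. Left-adjointness of $\mathsf{T}^\circ$ is then used only to guarantee that $(\mathsf{T}^\circ)^m$ preserves that pushout. Second, in the Cartesian clause you do more work than necessary: once $\mathsf{T}^\circ$ is a left adjoint it preserves all colimits in $\mathbb{X}$, hence finite coproducts and the initial object, and this is already the statement that $\mathsf{T}^\circ$ preserves finite products in $\mathbb{X}^{op}$; no transposition of the product comparison for $\mathsf{T}$ is required, and indeed the Cartesianness of $(\mathbb{X},\mathbb{T})$ plays no role in that particular step.
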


Note that, if a tangent category $(\mathbb{X}, \mathbb{T})$ has adjoint tangent structure, then $(\mathbb{X}^{op}, \mathbb{T}^\circ)$ also has adjoint tangent structure. Applying Theorem \ref{theorem:dual-tangent-structure} on $(\mathbb{X}^{op}, \mathbb{T}^\circ)$ gives back the original tangent structure $(\mathbb{X}, \mathbb{T})$. 

To show that a tangent category has adjoint tangent structure, proving that $\mathsf{T}_n$ admits a left adjoint $\mathsf{T}^\circ_n$ for each $n$ can sometimes be a strenuous task. However, when $\mathsf T$ admits a left adjoint, and when the $n$-fold pushouts of the adjoint projection ${p^\circ_A: A \to \mathsf{T}^\circ(A)}$ exist, then this pushout provides a left adjoint for $\mathsf{T}_n$. This is particularly useful if the starting tangent category is cocomplete. We thank Martin Frankland for stating and proving the following lemma, for which we propose here our own version of the proof: 

\begin{lemma}[Frankland]
\label{lemma:dual-tangent-structure} Let $\mathbb{X}$ be a category, $\mathsf{T}: \mathbb{X} \to  \mathbb{X}$ a functor, and $p_A: \mathsf{T}(A) \to A$ a natural transformation such that for each $n\in \mathbb{N}$, the $n$-fold pullback of $p_A$ exists, denoted by $\mathsf{T}_n(A)$ with projections $q_j: \mathsf{T}_n(A) \to \mathsf{T}(A)$. Suppose that $\mathsf{T}$ has a left adjoint $\mathsf{T}^\circ$ with unit $\eta_A: A \to \mathsf{T}\mathsf{T}^\circ(A)$ and counit $\varepsilon_A: \mathsf{T}^\circ \mathsf{T}(A) \to A$, or again, $(\eta, \varepsilon): \mathsf{T}^\circ \dashv \mathsf{T}$. Furthermore, define the natural transformation $p^\circ_A: A \to \mathsf{T}^\circ(A)$ as $p^\circ_A := p_{\mathsf{T}^\circ(A)} \circ \eta_A$, and suppose that the $n$-fold pushout of $p^\circ_A$ exists, denoted as $\mathsf{T}^\circ_n(A)$ with injections $q^\circ_j: \mathsf{T}^\circ(A) \to \mathsf{T}^\circ_n(A)$. Then, $\mathsf{T}^\circ_n$ is a left adjoint for $\mathsf{T}_n$, where the unit $\eta(n)_A: A \to \mathsf{T}_n\mathsf{T}^\circ_n(A)$ and counit $\varepsilon(n)_A: \mathsf{T}^\circ_n\mathsf{T}_n(A) \to A$ are defined using the universal property of the pullback and pushout, that is, as the unique maps satisfying: 
\begin{align*}
q_j \circ \eta(n)_A = \mathsf{T}(q^\circ_j) \circ \eta_A, && \varepsilon(n)_A \circ q^\circ_j = \varepsilon_A \circ \mathsf{T}^\circ(q_j), && \forall\ 1 \leq j \leq n,
\end{align*}
so $(\eta(n), \varepsilon(n)): \mathsf{T}^\circ_n \dashv \mathsf{T}_n$. 
\end{lemma}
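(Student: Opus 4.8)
The plan is to exploit the fact that an $n$-fold pullback $\mathsf{T}_n(A)$ is a limit of a diagram built from copies of $\mathsf{T}$, and dually $\mathsf{T}^\circ_n(A)$ is a colimit of copies of $\mathsf{T}^\circ$, together with the standard fact that a left adjoint preserves colimits and a right adjoint preserves limits. Concretely, $\mathsf{T}_n(A)$ is the limit of the wide-pullback diagram consisting of $n$ copies of $p_A \colon \mathsf{T}(A) \to A$, and $\mathsf{T}^\circ_n(A)$ is the colimit (wide pushout) of $n$ copies of $p^\circ_A \colon A \to \mathsf{T}^\circ(A)$. Since $\mathsf{T}^\circ \dashv \mathsf{T}$, the functor $\mathsf{T}^\circ$ sends the wide-pushout cocone under $A$ to a colimiting cocone; and since the diagram defining $\mathsf{T}^\circ_n$ is obtained by applying $\mathsf{T}^\circ$ to the constant-at-$p^\circ_A$ shape — but here is the subtlety, one must first check that $\mathsf{T}^\circ$ applied to the relevant diagram actually is the diagram whose colimit is $\mathsf{T}^\circ_n(A)$. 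I would set this up carefully: define the category $\mathbb{X} / A$ or rather work directly with cones.

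First I would fix the combinatorial shape: let $J$ be the category with objects $\{0,1,\dots,n\}$ and a unique non-identity arrow $j \to 0$ for each $j \in \{1,\dots,n\}$ (a "cospan-like" shape), so that $\mathsf{T}_n(A) = \lim D$ where $D\colon J \to \mathbb{X}$ sends $j \mapsto \mathsf{T}(A)$ for $j \geq 1$, $0 \mapsto A$, and each arrow to $p_A$; and dually $\mathsf{T}^\circ_n(A) = \mathrm{colim}\, D^\circ$ where $D^\circ \colon J^{op} \to \mathbb{X}$ sends $j \mapsto \mathsf{T}^\circ(A)$, $0 \mapsto A$, each arrow to $p^\circ_A$. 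Then I would establish the natural isomorphism
\[
\mathbb{X}\bigl(\mathsf{T}^\circ_n(A), B\bigr) \;\cong\; \mathbb{X}\bigl(A, \mathsf{T}_n(B)\bigr)
\]
by the chain: a map $\mathsf{T}^\circ_n(A) \to B$ is, by the universal property of the pushout, a cocone, i.e.\ maps $f_j \colon \mathsf{T}^\circ(A) \to B$ with $f_j \circ p^\circ_A$ independent of $j$; transpose each $f_j$ across the adjunction to get $g_j \colon A \to \mathsf{T}(B)$; using naturality of the adjunction bijection and the definition $p^\circ_A = p_{\mathsf{T}^\circ(A)} \circ \eta_A$, show that the compatibility condition $f_j \circ p^\circ_A = f_k \circ p^\circ_A$ translates exactly into $p_B \circ g_j = p_B \circ g_k$; hence the data $(g_j)$ is precisely a cone over the pullback diagram, i.e.\ a single map $A \to \mathsf{T}_n(B)$. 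Naturality in $B$ follows because every step is natural. Then I would identify the unit and counit of this adjunction by chasing the identity maps $1_{\mathsf{T}^\circ_n(A)}$ and $1_{\mathsf{T}_n(B)}$ through the bijection, and check they coincide with the $\eta(n)$, $\varepsilon(n)$ described in the statement via the stated equations $q_j \circ \eta(n)_A = \mathsf{T}(q^\circ_j) \circ \eta_A$ and $\varepsilon(n)_A \circ q^\circ_j = \varepsilon_A \circ \mathsf{T}^\circ(q_j)$.

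The main obstacle I anticipate is the bookkeeping in the middle step: verifying that the cocone-compatibility condition on the $f_j$'s corresponds under transposition precisely to the cone-compatibility condition on the $g_j$'s. This requires carefully using that the transpose of $f_j \circ p^\circ_A$ — a map $A \to B$ precomposed appropriately — relates to $p_B \circ g_j$, which in turn hinges on the triangle identities and the specific formula $p^\circ_A = p_{\mathsf{T}^\circ(A)} \circ \eta_A$. A clean way to handle it is to note that transposition along $\mathsf{T}^\circ \dashv \mathsf{T}$ sends $h \colon \mathsf{T}^\circ(A) \to B$ to $\mathsf{T}(h) \circ \eta_A$, so $g_j = \mathsf{T}(f_j) \circ \eta_A$, and then $p_B \circ g_j = p_B \circ \mathsf{T}(f_j) \circ \eta_A = f_j \circ p_{\mathsf{T}^\circ(A)} \circ \eta_A = f_j \circ p^\circ_A$ by naturality of $p$; so the two compatibility conditions are literally the same equation. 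Everything else — existence of the pullbacks and pushouts — is assumed in the hypotheses, so once this correspondence is pinned down the adjunction and the identification of $\eta(n), \varepsilon(n)$ follow formally.
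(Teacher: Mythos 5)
Your proposal is correct, but it proves the adjunction by a different route than the paper. The paper takes the unit $\eta(n)$ and counit $\varepsilon(n)$ as defined in the statement, first checks they are well defined (the compatibility conditions for the pullback and pushout universal properties, left as an exercise), and then verifies the two triangle identities directly, by postcomposing with the projections $q_j$ and precomposing with the injections $q^\circ_j$ and using naturality of $q_j, q^\circ_j$ together with the triangle identities of $\mathsf{T}^\circ \dashv \mathsf{T}$. You instead establish a bijection $\mathbb{X}(\mathsf{T}^\circ_n(A), B) \cong \mathbb{X}(A, \mathsf{T}_n(B))$, natural in $B$, by the chain: maps out of the wide pushout are cocones, transposition along $\mathsf{T}^\circ \dashv \mathsf{T}$ carries the cocone condition to the cone condition, and cones are maps into the wide pullback; your key computation $p_B \circ \mathsf{T}(f_j) \circ \eta_A = f_j \circ p_{\mathsf{T}^\circ(A)} \circ \eta_A = f_j \circ p^\circ_A$ (naturality of $p$) is exactly what makes the middle step work, and chasing $1_{\mathsf{T}^\circ_n(A)}$ and $1_{\mathsf{T}_n(B)}$ through the bijection recovers precisely the defining equations for $\eta(n)$ and $\varepsilon(n)$ in the statement. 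What each approach buys: yours avoids the triangle-identity computations altogether, and the well-definedness of $\eta(n)$ and $\varepsilon(n)$ comes for free since it is the same cocone-to-cone computation (the very point the paper delegates to the reader); the paper's argument is more elementary and self-contained, working only with the given unit/counit data without invoking the universal-arrow (pointwise representability) characterization of adjunctions. One small point you should make explicit: naturality in $B$ for fixed $A$ suffices by that characterization, but the functor structure it induces on $\mathsf{T}^\circ_n$ should be observed to coincide with the one defined via the pushout universal property (which is what makes the $q^\circ_j$ natural, as both you and the paper implicitly use); this is routine and does not affect the validity of your argument. You also rightly noted that the naive "left adjoints preserve colimits" shortcut does not directly apply, since $\mathsf{T}^\circ$ applied to the pushout diagram is not the diagram defining $\mathsf{T}^\circ_n$, and your pivot to the hom-set argument handles this correctly.
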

\begin{proof} First, note that, for all $1 \leq i,j \leq n$, we have $p_A \circ \mathsf{T}(q^\circ_j) \circ \eta_A = p_A \circ \mathsf{T}(q^\circ_i) \circ \eta_A$ and $\varepsilon_A \circ \mathsf{T}^\circ(q_j) \circ p^\circ_A = \varepsilon_A \circ \mathsf{T}^\circ(q_i) \circ p^\circ_A$ (which we leave as an exercise for the reader). Therefore, it follows that $\eta(n)_A$ and $\varepsilon(n)_A$ are indeed well-defined. To prove that $\mathsf{T}^\circ_n$ is a left adjoint of $\mathsf{T}_n$, we need to show that the two triangle identities hold. To do so, we will take advantage of the (co)universal property of the pullback and pushout by instead proving that the desired identities hold when precomposed by the pushout injections or postcomposed by the pullback projections. First, note that for all $n\in \mathbb{N}$ and $1 \leq j \leq n$, $q_j: \mathsf{T}_n(A) \to \mathsf{T}(A)$ and $q^\circ_j: \mathsf{T}^\circ(A) \to \mathsf{T}^\circ_n(A)$ are natural transformations. Therefore, we compute: 
\begin{gather*} 
\varepsilon(n)_{\mathsf{T}^\circ_n(A)} \circ \mathsf{T}^\circ_n\left(\eta(n)_A \right) \circ q^\circ_j =  \varepsilon(n)_{\mathsf{T}^\circ_n(A)} \circ q^\circ_j \circ  \mathsf{T}^\circ\left(\eta(n)_A \right) = \varepsilon_{\mathsf{T}^\circ_n(A)} \circ \mathsf{T}^\circ(q_j) \circ  \mathsf{T}^\circ\left(\eta(n)_A \right) \\
= \varepsilon_{\mathsf{T}^\circ_n(A)} \circ \mathsf{T}^\circ\left( q_j \circ \eta(n)_A  \right) =
\varepsilon_{\mathsf{T}^\circ_n(A)} \circ \mathsf{T}^\circ\left( \mathsf{T}(q^\circ_j) \circ \eta_A   \right)  = \varepsilon_{\mathsf{T}^\circ_n(A)} \circ \mathsf{T}^\circ \mathsf{T}(q^\circ_j) \circ \mathsf{T}^\circ(\eta_A) \\
= q^\circ_j \circ \varepsilon_{\mathsf{T}^\circ(A)} \circ \mathsf{T}^\circ(\eta_A) = q^\circ_j \circ 1_{\mathsf{T}^\circ(A)} = q^\circ_j 
\end{gather*}

\begin{gather*}
q_j \circ \mathsf{T}_n(\varepsilon(n)_A) \circ \eta(n)_{\mathsf{T}_n(A)} = \mathsf{T}(\varepsilon(n)_A) \circ q_j \circ \eta(n)_{\mathsf{T}_n(A)} =\mathsf{T}(\varepsilon(n)_A) \circ \mathsf{T}(q^\circ_j) \circ \eta_{\mathsf{T}_n(A)} \\
=\mathsf{T}\left(\varepsilon(n)_A \circ q^\circ_j\right)\circ \eta_{\mathsf{T}_n(A)} =
\mathsf{T}\left(\varepsilon_A \circ \mathsf{T}^\circ(q_j)\right)\circ \eta_{\mathsf{T}_n(A)} =\mathsf{T}(\varepsilon_A) \circ \mathsf{T}\mathsf{T}^\circ(q_j) \circ \eta_{\mathsf{T}_n(A)} \\
= \mathsf{T}(\varepsilon_A) \circ  \eta_{\mathsf{T}(A)} \circ q_j =1_{\mathsf{T}(A)} \circ q_j =q_j 
\end{gather*}
So, for all $1 \leq j \leq n$, $\varepsilon(n)_{\mathsf{T}^\circ_n(A)} \circ \mathsf{T}^\circ_n\left(\eta(n)_A \right) \circ q^\circ_j =q^\circ_j$ and $q_j \circ \mathsf{T}_n(\varepsilon(n)_A) \circ \eta(n)_{\mathsf{T}_n(A)} = q_j$. Therefore, by the couniversal property of the pushout and the universal property of the pullback respectively, it follows that:
\begin{align*}
\varepsilon(n)_{\mathsf{T}^\circ_n(A)} \circ \mathsf{T}^\circ_n\left(\eta(n)_A \right) = 1_{\mathsf{T}^\circ_n(A)} &&  \mathsf{T}_n(\varepsilon(n)_A) \circ \eta(n)_{\mathsf{T}_n(A)} = 1_{\mathsf{T}_n(A)}
\end{align*}
So we conclude that $(\eta(n), \varepsilon(n)): \mathsf{T}^\circ_n \dashv \mathsf{T}_n$.
\end{proof}

\begin{corollary}\label{corollary:dual-tangent-structure} 
Let $(\mathbb{X}, \mathbb{T})$ be a (Rosick\'y) tangent category. Suppose that the tangent bundle functor $\mathsf{T}$ has a left adjoint $\mathsf{T}^\circ$, and that, for all $n$, the $n$-fold pushout of the map $p^\circ_A: A \to \mathsf{T}^\circ(A)$ from Lemma \ref{lemma:dual-tangent-structure} exists. Then, $(\mathbb{X}, \mathbb{T})$ has adjoint tangent structure, and therefore, $(\mathbb{X}^{op}, \mathbb{T}^\circ)$ inherits the structure of a (Rosick\'y) tangent category defined in Theorem \ref{theorem:dual-tangent-structure}.
\end{corollary}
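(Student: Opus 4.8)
The plan is to obtain this statement as a straightforward consequence of Lemma \ref{lemma:dual-tangent-structure} together with Theorem \ref{theorem:dual-tangent-structure}, so the real work is just checking that the hypotheses line up. Since $(\mathbb{X}, \mathbb{T})$ is a tangent category, part (ii) of Definition \ref{definition:tangent-category} already guarantees that $p_A \colon \mathsf{T}(A) \to A$ is a natural transformation whose $n$-fold pullback $\mathsf{T}_n(A)$, with projections $q_j$, exists for every $n \in \mathbb{N}$. Combined with the two standing assumptions of the corollary --- namely that $\mathsf{T}$ admits a left adjoint $\mathsf{T}^\circ$ and that the $n$-fold pushout $\mathsf{T}^\circ_n(A)$ of $p^\circ_A \colon A \to \mathsf{T}^\circ(A)$ exists for every $n$ --- this is exactly the input required by Lemma \ref{lemma:dual-tangent-structure}.

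Applying Lemma \ref{lemma:dual-tangent-structure}, I would conclude that for each $n \in \mathbb{N}$ the functor $\mathsf{T}^\circ_n$ is a left adjoint of $\mathsf{T}_n$, with unit $\eta(n)$ and counit $\varepsilon(n)$ characterized by the equations stated there, i.e. $(\eta(n), \varepsilon(n)) \colon \mathsf{T}^\circ_n \dashv \mathsf{T}_n$. By Definition \ref{def:adjoint-tangent-structure} this is precisely what it means for $(\mathbb{X}, \mathbb{T})$ to have adjoint tangent structure; note that this definition only demands the existence of a left adjoint for each $\mathsf{T}_n$ individually and imposes no further compatibility among them, so nothing beyond Lemma \ref{lemma:dual-tangent-structure} is needed at this point.

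Finally, with adjoint tangent structure in hand, I would invoke Theorem \ref{theorem:dual-tangent-structure} directly: it produces the sextuple $\mathbb{T}^\circ = (\mathsf{T}^\circ, p^\circ, s^\circ, z^\circ, l^\circ, c^\circ)$ and shows it is a tangent structure on $\mathbb{X}^{op}$; in the Rosick\'y case it additionally supplies $n^\circ$ and shows $(\mathbb{X}^{op}, \mathbb{T}^\circ)$ is a Rosick\'y tangent category; and if moreover $\mathbb{X}$ is Cartesian and has finite coproducts, the same theorem upgrades $(\mathbb{X}^{op}, \mathbb{T}^\circ)$ to a Cartesian (Rosick\'y) tangent category. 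There is essentially no genuine obstacle in this argument --- it is pure bookkeeping of hypotheses. The only point that warrants a moment's care is the one flagged above: confirming that ``adjoint tangent structure'' as defined in Definition \ref{def:adjoint-tangent-structure} is a condition solely on the existence of the adjoints $\mathsf{T}^\circ_n$, so that the piecewise pushout construction of Lemma \ref{lemma:dual-tangent-structure} genuinely suffices, and that the $\mathsf{T}^\circ_n$, $\eta(n)$, $\varepsilon(n)$ built there are the very data fed into Theorem \ref{theorem:dual-tangent-structure}.
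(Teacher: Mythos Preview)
Your proposal is correct and matches the paper's approach exactly: the corollary is stated without proof in the paper precisely because it is an immediate combination of Lemma \ref{lemma:dual-tangent-structure} (to obtain $\mathsf{T}^\circ_n \dashv \mathsf{T}_n$ for each $n$, hence adjoint tangent structure in the sense of Definition \ref{def:adjoint-tangent-structure}) and Theorem \ref{theorem:dual-tangent-structure} (to transport the tangent structure to $\mathbb{X}^{op}$). Your bookkeeping check that Definition \ref{def:adjoint-tangent-structure} requires only the existence of the individual adjunctions is the right observation, and there is nothing further to verify.
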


Per the above corollary, if $(\mathbb{X}, \mathbb{T})$ is a Cartesian (Rosick\'y) tangent category whose tangent bundle functor has a left adjoint, and if $\mathbb{X}$ is (finitely) cocomplete, then $(\mathbb{X}, \mathbb{T})$ has adjoint tangent structure, and therefore $(\mathbb{X}^{op}, \mathbb{T}^\circ)$ is a Cartesian (Rosick\'y) tangent category. In Section \ref{subsec:adjoint-tan-operad} we will show that the opposite category of algebras of an operad is a tangent category using this fact. In particular, we will discuss the adjoint tangent structure of algebras, commutative algebras, and Lie algebras. There are also important examples of tangent categories with adjoint tangent structures related to synthetic differential geometry \cite{GARNER2018668}, differential linear logic \cite{cockett_et_al:LIPIcs:2020:11660}, and algebraic geometry \cite{cruttwellLemay:AlgebraicGeometry}. 

\subsection{Vector Fields and their Lie Bracket}

Vector fields are a fundamental concept in differential geometry which, heuristically, correspond to assigning smoothly to each point of a smooth manifold a tangent vector in the tangent space over that point. The notion of a vector field can easily be generalized to tangent categories, and is simply defined as a section of the projection.  

\begin{definition} \label{def:vecfield} \cite[Definition 3.1]{cockett2014differential} In a tangent category $(\mathbb{X}, \mathbb{T})$, a \textbf{vector field} on an object $A$ of $\mathbb{X}$ is a map ${v: A \to \mathsf{T}(A)}$ which is a section of the projection $p_A$, that is, $p_A \circ v = 1_A$. The set of all vector fields on $A$ in $(\mathbb{X}, \mathbb{T})$ is denoted $\mathsf{V}_\mathbb{T}(A)$. 
\end{definition}

In any tangent category, the zero map $z_A: A \to \mathsf{T}(A)$ is a vector field, and the universal property of the lift induces a vector field $\mathcal{L}_A: \mathsf{T}(A) \to \mathsf{T}^2(A)$ \cite[Section 3.1]{cockett2014differential}, which generalizes the Liouville vector field, the canonical vector field on the tangent bundle of a smooth manifold. One can also define a category of vector fields \cite[Definition 2.8]{cockett2021differential}, which turns out to also be a tangent category \cite[Proposition 2.10]{cockett2021differential}. Vector fields can also be used to generalize dynamical systems and solve differential equations in a tangent category \cite{cockett2021differential}. In a Rosick\'y tangent category, the set of vector fields of any object is in fact a Lie algebra:

\begin{proposition}~\cite[Theorem 4.2]{cockett2015jacobi} \label{prop:lie} In a Rosick\'y tangent category $(\mathbb{X}, \mathbb{T})$, for any object $A$, $\mathsf{V}_\mathbb{T}(A)$ is a Lie algebra where in particular the Lie bracket of vector fields is defined as in \cite[Definition 3.14]{cockett2014differential}. 
\end{proposition}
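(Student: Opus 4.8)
The plan is to build the Lie algebra structure in stages: the underlying abelian group, the well-definedness of the bracket, its biadditivity and antisymmetry, and finally the Jacobi identity. I would first note that $\mathsf{V}_\mathbb{T}(A)$ is naturally an abelian group. If $v, w$ are vector fields on $A$, then since $p_A \circ v = 1_A = p_A \circ w$ the pair $\langle v, w\rangle$ factors through the pullback $\mathsf{T}_2(A)$, and $v + w := s_A \circ \langle v, w\rangle$ is again a section of $p_A$; the zero vector field is $z_A$ and the negative of $v$ is $n_A \circ v$. The abelian group axioms are then a direct consequence of the tangent-structure axioms for $s$, $z$ and $n$ (commutativity, associativity, unit, and --- here using the Rosický hypothesis --- inverses), together with the universal property of $\mathsf{T}_2(A)$.

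Next I would recall the bracket of \cite[Definition 3.14]{cockett2014differential}. Given $v, w \in \mathsf{V}_\mathbb{T}(A)$, consider the two maps $\mathsf{T}(w) \circ v$ and $c_A \circ \mathsf{T}(v) \circ w$ from $A$ to $\mathsf{T}^2(A)$. Using naturality of $p$ and the axioms $p_{\mathsf{T}(A)} \circ c_A = \mathsf{T}(p_A)$ and $\mathsf{T}(p_A) \circ c_A = p_{\mathsf{T}(A)}$, a short computation shows that both maps become $w$ when composed with $p_{\mathsf{T}(A)}$ and $v$ when composed with $\mathsf{T}(p_A)$; hence their difference --- formed in the abelian group of sections, which is exactly where the negatives are needed --- is ``vertical'', and by the universality of the vertical lift $l_A$ (which exhibits $\mathsf{T}(A)$ as the vertical bundle of $p_{\mathsf{T}(A)}$) there is a unique vector field $[v,w]\colon A \to \mathsf{T}(A)$ with $l_A \circ [v,w]$ equal to that difference.

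It then remains to verify the Lie algebra axioms. Biadditivity (hence $\mathbb{Z}$-bilinearity) follows because $\mathsf{T}$, $c$ and $l$ are all additive bundle morphisms over $p$, so that $\mathsf{T}(w_1 + w_2) \circ v$ and $\mathsf{T}(v)\circ(w_1+w_2)$ decompose as fibrewise sums, and compatibility with negatives is handled identically. Antisymmetry, $[v,w] = -[w,v]$, is obtained by applying $c_A$ to the defining difference: $c_A^2 = 1$ turns it into the negative of the difference defining $[w,v]$, and the axiom $c_A \circ l_A = l_A$ then forces $l_A\circ[v,w] = l_A\circ(-[w,v])$, whence equality by the uniqueness clause.

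The main obstacle is the Jacobi identity $[[u,v],w] + [[v,w],u] + [[w,u],v] = 0$. Here my plan is to follow \cite[Theorem 4.2]{cockett2015jacobi}: one lifts the computation one level further, to maps $A \to \mathsf{T}^3(A)$, rewrites each iterated bracket (after composing with the relevant iterated lift) as a signed composite built from $u$, $v$, $w$, the two functorial copies of $\mathsf{T}$, and the canonical flips available at level three, namely $c_{\mathsf{T}(A)}$, $\mathsf{T}(c_A)$ and their composites; one then shows that the alternating sum of the resulting terms vanishes, with the associator-type contributions cancelling in pairs thanks to the negatives and the rest collapsing via the coherences among the flips --- in particular the hexagon / Yang--Baxter relation $\mathsf{T}(c_A)\circ c_{\mathsf{T}(A)}\circ\mathsf{T}(c_A) = c_{\mathsf{T}(A)}\circ\mathsf{T}(c_A)\circ c_{\mathsf{T}(A)}$ and the compatibilities of $l$ with $c$ at the various levels. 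Getting the bookkeeping of these terms and their signs right, and matching up the cancellations, is where essentially all of the difficulty lies; since this is precisely the cited theorem, in the write-up it suffices to invoke \cite[Theorem 4.2]{cockett2015jacobi} directly.
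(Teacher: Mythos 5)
Since the paper offers no proof of this proposition beyond citing \cite[Theorem 4.2]{cockett2015jacobi}, and your argument likewise defers the only genuinely hard point (the Jacobi identity) to that same theorem, your proposal is essentially the paper's approach; the routine parts you sketch (the additive group of sections via $s$, $z$, $n$, the bracket via verticality of $\mathsf{T}(w)\circ v - c_A\circ\mathsf{T}(v)\circ w$ and the universal property of the lift, antisymmetry and biadditivity) are consistent with the cited construction. One harmless imprecision to keep in mind when writing it up: in \cite[Definition 3.14]{cockett2014differential} the vertical difference is only killed by one of the two projections (it still lies over $w$ via $p_{\mathsf{T}(A)}$), so the universality of $l_A$ yields a factorization through the pullback built from $l_A$ and the zero section, of which $[v,w]$ is the relevant component, rather than the literal equation $l_A\circ[v,w]=\mathsf{T}(w)\circ v - c_A\circ\mathsf{T}(v)\circ w$.
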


In Section \ref{sec:vf-operad}, we will show that vector fields in the tangent category of algebras over an operad correspond precisely to derivations in the operadic sense. This can be seen as a generalization of the famous result that vector fields of a smooth manifold are in bijective correspondence with derivations of the associated $\mathcal{C}^\infty$-ring of said manifold. 

We turn our attention to vector fields in the setting of an adjoint tangent structure. If $(\mathbb{X}, \mathbb{T})$ is a tangent category with adjoint tangent structure, a vector field in $(\mathbb{X}^{op}, \mathbb{T}^\circ)$ corresponds to a map $v: \mathsf{T}^\circ(A) \to A$ which is a retract of the adjoint projection in $\mathbb{X}$, that is, $v \circ p^\circ_A = 1_A$. It turns out that vector fields over $A$ in $(\mathbb{X}^{op}, \mathbb{T}^\circ)$ correspond precisely to vector fields over $A$ in $(\mathbb{X}, \mathbb{T})$. This comes as no surprise since $\mathsf{T}^\circ$ is a left adjoint of $\mathsf{T}$, and therefore, there is a natural bijective correspondence between maps of type $A \to \mathsf{T}(A)$ and of type $\mathsf{T}^\circ(A) \to A$. Furthermore, this equivalence also preserves the Lie algebra structure. 

\begin{lemma}\label{lem:adjoint-vf} Let $(\mathbb{X}, \mathbb{T})$ be a tangent category with adjoint tangent structure and let $(\mathbb{X}^{op}, \mathbb{T}^\circ)$ be the induced tangent category as defined in Theorem \ref{theorem:dual-tangent-structure}. For any object $A$ of $\mathbb{X}$,
\begin{enumerate}[{\em (i)}]
\item If $v \in \mathsf{V}_{\mathbb{T}}(A)$, define $v^\sharp:  \mathsf{T}^\circ(A) \to A$ by $v^\sharp := \varepsilon_A \circ \mathsf{T}^\circ(v)$. Then $v^\sharp \in \mathsf{V}_{\mathbb{T}^\circ}(A)$. 
\item If $w \in \mathsf{V}_{\mathbb{T}^\circ}(A)$, define $w^\flat: A \to \mathsf{T}(A)$ by $w^\flat := \mathsf{T}(w) \circ \eta_A$. Then $w^\flat \in \mathsf{V}_{\mathbb{T}}(A)$.
\end{enumerate}
Furthermore, these constructions are inverses of each other, that is, ${v^\sharp}^\flat = v$ and ${w^\flat}^\sharp = w$, and therefore, we have $\mathsf{V}_{\mathbb{T}}(A) \cong \mathsf{V}_{\mathbb{T}^\circ}(A)$. If $(\mathbb{X}, \mathbb{T})$ is a Rosick\'y tangent category, then the isomorphism $\mathsf{V}_{\mathbb{T}
}(A) \cong \mathsf{V}_{\mathbb{T}^\circ}(A)$ is also a Lie algebra isomorphism. 
\end{lemma}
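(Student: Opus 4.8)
The plan is to verify the three claims in order: first that $v^\sharp$ is a vector field in $(\mathbb{X}^{op}, \mathbb{T}^\circ)$, then that $w^\flat$ is a vector field in $(\mathbb{X}, \mathbb{T})$, then that the two constructions are mutually inverse; the Lie algebra statement will follow at the end. For the first claim I would need to check that $v^\sharp \circ p^\circ_A = 1_A$. Unfolding definitions, $v^\sharp \circ p^\circ_A = \varepsilon_A \circ \mathsf{T}^\circ(v) \circ p_{\mathsf{T}^\circ(A)} \circ \eta_A$. The key is naturality of $p$: since $v: A \to \mathsf{T}(A)$, we have $\mathsf{T}^\circ(v) \circ p_{\mathsf{T}^\circ(A)}$ should be rewritten using naturality of $p_A$ applied to the map $\mathsf{T}^\circ(v): \mathsf{T}^\circ(A) \to \mathsf{T}^\circ\mathsf{T}(A)$, wait — more carefully, naturality of $p$ gives $p_{\mathsf{T}(A)}$... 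Let me reorganize: use naturality of $p$ at the morphism $v$ to get $v \circ p_A = p_{\mathsf{T}(A)} \circ \mathsf{T}(v)$; then apply $\mathsf{T}^\circ$ somewhere. Actually the cleanest route is: $\mathsf{T}^\circ(v) \circ p_{\mathsf{T}^\circ(A)} \circ \eta_A$; by naturality of $p$ with respect to $\mathsf{T}^\circ(v)$ — no, $p$ is natural on $\mathbb{X}$, so $p_{\mathsf{T}^\circ\mathsf{T}(A)} \circ \mathsf{T}\mathsf{T}^\circ(v) = \mathsf{T}^\circ(v) \circ p_{\mathsf{T}^\circ(A)}$. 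Combining this with $p_A \circ v = 1_A$ (so $\mathsf{T}^\circ(p_A) \circ \mathsf{T}^\circ(v) = 1_{\mathsf{T}^\circ(A)}$) and the triangle identity $\varepsilon_A \circ \mathsf{T}^\circ(\eta_A)$... hmm, the relevant triangle identity is $\mathsf{T}(\varepsilon_A) \circ \eta_{\mathsf{T}(A)} = 1$ and $\varepsilon_{\mathsf{T}^\circ(A)} \circ \mathsf{T}^\circ(\eta_A) = 1$. The computation should chain naturality of $\eta$ (or $\varepsilon$) with one triangle identity and the section equation; I expect it to be a short diagram chase once the pieces are lined up correctly.

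For the second claim, $w^\flat \circ$ — rather $p_A \circ w^\flat = p_A \circ \mathsf{T}(w) \circ \eta_A$; by naturality of $p$ at $w: \mathsf{T}^\circ(A) \to A$ this equals $w \circ p_{\mathsf{T}^\circ(A)} \circ \eta_A = w \circ p^\circ_A = 1_A$ using the hypothesis $w \circ p^\circ_A = 1_A$. This direction is essentially immediate. Then for the inverse property, $(v^\sharp)^\flat = \mathsf{T}(v^\sharp) \circ \eta_A = \mathsf{T}(\varepsilon_A) \circ \mathsf{T}\mathsf{T}^\circ(v) \circ \eta_A$; by naturality of $\eta$ at $v$, $\mathsf{T}\mathsf{T}^\circ(v) \circ \eta_A = \eta_{\mathsf{T}(A)} \circ v$, so this becomes $\mathsf{T}(\varepsilon_A) \circ \eta_{\mathsf{T}(A)} \circ v = v$ by the triangle identity. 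Symmetrically, $(w^\flat)^\sharp = \varepsilon_A \circ \mathsf{T}^\circ(w^\flat) = \varepsilon_A \circ \mathsf{T}^\circ(\mathsf{T}(w)) \circ \mathsf{T}^\circ(\eta_A)$; naturality of $\varepsilon$ at $w$ gives $\varepsilon_A \circ \mathsf{T}^\circ\mathsf{T}(w) = w \circ \varepsilon_{\mathsf{T}^\circ(A)}$, so this becomes $w \circ \varepsilon_{\mathsf{T}^\circ(A)} \circ \mathsf{T}^\circ(\eta_A) = w$ by the other triangle identity. These are the standard adjunction bijection arguments, so the correspondence $\mathsf{V}_{\mathbb{T}}(A) \cong \mathsf{V}_{\mathbb{T}^\circ}(A)$ follows formally.

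The genuinely harder part is the final sentence: that when $(\mathbb{X}, \mathbb{T})$ is Rosick\'y, this bijection is a Lie algebra isomorphism. This requires comparing the Lie bracket on $\mathsf{V}_{\mathbb{T}}(A)$ (defined via Proposition \ref{prop:lie} using the structural maps of $\mathbb{T}$, in particular the canonical flip and the lift) with the Lie bracket on $\mathsf{V}_{\mathbb{T}^\circ}(A)$ (defined using the adjoint structural maps $c^\circ, l^\circ, n^\circ$ from Theorem \ref{theorem:dual-tangent-structure}). My plan would be to show the bijection is additive first — which follows from the fact that $\varepsilon_A \circ \mathsf{T}^\circ(-)$ is the right-to-left direction of the hom-set bijection of the adjunction $\mathsf{T}^\circ_n \dashv \mathsf{T}_n$ from Lemma \ref{lemma:dual-tangent-structure}, and that this bijection is compatible with the sum maps $s$ and $s^\circ$ because $s^\circ$ is defined from $s$ via the adjunction — and similarly that it respects negatives via $n^\circ$. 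For the bracket itself, I would unfold the definition of the Lie bracket of vector fields from \cite[Definition 3.14]{cockett2014differential}, which is built from the maps $l_A$, $c_A$, and the universal property of the vertical lift. The main obstacle is then a careful bookkeeping argument: one must express the bracket $[v^\sharp, w^\sharp]$ in $(\mathbb{X}^{op}, \mathbb{T}^\circ)$ in terms of $l^\circ, c^\circ$, substitute their definitions from Theorem \ref{theorem:dual-tangent-structure}, and then repeatedly apply naturality of $\eta$ and $\varepsilon$ together with the triangle identities to collapse the resulting composite down to $([v,w])^\sharp$. I expect this to be where essentially all the work lies; a cleaner alternative, if available, would be to invoke a general principle that any equivalence of tangent categories (here the identity-on-objects functor $\mathbb{X}^{op} \to \mathbb{X}^{op}$ is not quite it, but the adjunction induces a strong morphism of tangent structures) automatically preserves the derived Lie bracket, reducing the claim to checking that $\sharp/\flat$ is induced by a morphism of tangent structures — but absent such a packaged statement in the excerpt, the direct computation is the safe route.
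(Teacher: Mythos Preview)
The paper does not actually prove this lemma: it is stated without proof, preceded only by the remark that the correspondence ``comes as no surprise since $\mathsf{T}^\circ$ is a left adjoint of $\mathsf{T}$, and therefore, there is a natural bijective correspondence between maps of type $A \to \mathsf{T}(A)$ and of type $\mathsf{T}^\circ(A) \to A$.'' So there is nothing to compare your argument against; what you have written is strictly more than the paper provides.

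On the substance: your computations for (ii) and for the mutual-inverse claims are correct and clean --- these are exactly the standard adjunction bijection arguments. Your computation for (i) has the right ingredients (naturality of $p$, naturality of $\eta$, a triangle identity) but is presented in a confused way; for the record, one clean chain is
\[
\varepsilon_A \circ \mathsf{T}^\circ(v) \circ p_{\mathsf{T}^\circ(A)} \circ \eta_A
= \varepsilon_A \circ p_{\mathsf{T}^\circ\mathsf{T}(A)} \circ \mathsf{T}\mathsf{T}^\circ(v) \circ \eta_A
= p_A \circ \mathsf{T}(\varepsilon_A) \circ \eta_{\mathsf{T}(A)} \circ v
= p_A \circ v = 1_A,
\]
using naturality of $p$ twice, naturality of $\eta$ once, and the triangle identity $\mathsf{T}(\varepsilon_A)\circ\eta_{\mathsf{T}(A)}=1$. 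Alternatively, and more economically, you could simply observe that (i) is literally (ii) applied to the tangent category $(\mathbb{X}^{op},\mathbb{T}^\circ)$, which the paper notes also has adjoint tangent structure with adjoint $(\mathbb{X},\mathbb{T})$.

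For the Lie-algebra claim, the paper offers nothing beyond the assertion. Your plan --- unfold the bracket from \cite[Definition~3.14]{cockett2014differential} on both sides and reduce via the definitions of $l^\circ$, $c^\circ$, $n^\circ$ in Theorem~\ref{theorem:dual-tangent-structure} together with naturality of $\eta,\varepsilon$ and the triangle identities --- is the obvious direct route and should go through, though it is indeed the bulk of the work. Your suggested shortcut (that $\sharp/\flat$ is induced by a morphism of tangent structures and such morphisms preserve the bracket) is morally correct but would need a precise statement not available in the excerpt, so the direct computation is the safer choice.
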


\subsection{Cartesian Differential Categories and Differential Objects}\label{sec:CDC-diffobj}

In this section, we review differential objects and Cartesian differential categories. While tangent categories axiomatize the apparatus necessary for differential calculus over smooth manifolds, Cartesian differential categories instead axiomatize differential calculus over Euclidean spaces. In particular, a Cartesian differential category is defined in terms of a differential combinator, which is a generalization of the total derivative operator. Every Cartesian differential category is a Cartesian tangent category, where the tangent bundle functor is constructed using the differential combinator. On the other hand, to extract a Cartesian differential category from a Cartesian tangent category, one must look at a special class of objects: the differential objects. Essentially, differential objects generalize the Euclidean spaces in a tangent category, and the subcategory of differential objects is a Cartesian differential category, where the differential combinator is built using the tangent bundle functor. In fact, this results in an adjunction between the category of Cartesian differential categories and the category of Cartesian tangent categories  \cite[Theorem 4.12]{cockett2014differential}. 

Let us begin with Cartesian differential categories, which were introduced by Blute, Cockett, and Seely in \cite{blute2009cartesian}. The underlying category of a Cartesian differential category is a \textbf{Cartesian left additive category}, which in particular is a category with finite products, such that every homset is a commutative monoid and for which pre-composition only preserves additive structures \cite[Definition 1.2.1]{blute2009cartesian}. Cartesian differential categories are Cartesian left additive categories equipped with a differential combinator, whose axioms include analogues of the chain rule, linearity of the derivative, symmetry of the partial derivatives, etc. We do not provide axioms here and invite interested readers to learn more about Cartesian differential categories in \cite{blute2009cartesian,ikonicoff2021cartesian}.  

\begin{definition}\label{def:CDC}\cite[Definition 2.1.1]{blute2009cartesian} A \text{Cartesian differential category} is a Cartesian left additive category $\mathbb{X}$ equipped with a \textbf{differential combinator} $\mathsf{D}$, which is a family of operators $\mathsf{D}: \mathbb{X}(A,B) \to \mathbb{X}(A \times A,B)$, such that seven axioms \textbf{[CD.1]} to \textbf{[CD.7]} from \cite[Definition 2.3]{ikonicoff2021cartesian} hold. For a map $f: A \to B$, $\mathsf{D}[f]: A \times A \to B$ is called the \textbf{derivative} of $f$.
\end{definition}

Every Cartesian differential category is a Cartesian tangent category, where in particular, the tangent bundle functor is defined on objects as $\mathsf{T}(A) = A \times A$, and on maps as $\mathsf{T}(f) = \langle f \circ \pi_1, \mathsf{D}[f] \rangle$ \cite[Proposition 4.7]{cockett2014differential}. See \cite[Section 2]{ikonicoff2021cartesian} for examples of Cartesian differential categories. The canonical example of a Cartesian differential category is the Lawvere theory of real smooth functions, which provides a direct link to classical multivariable calculus. In Section \ref{sec:operads} we will explain how the opposite category of the Kleisli category of an operad, and a certain Lawvere theory of polynomials of an operad, are both Cartesian differential categories.

Let us now turn our attention to differential objects. Differential objects were first introduced in \cite[Definition 4.8]{cockett2014differential}, however, the definition was later updated in \cite[Definition 3.1]{cockettCruttwellDiffBundles} to include an important compatibility with the vertical lift. 

\begin{definition}\label{def:diffobj}\cite[Definition 3.1]{cockettCruttwellDiffBundles} In a Cartesian tangent category $(\mathbb{X}, \mathbb{T})$, a \textbf{differential object} is a quadruple $(A, \hat{p}, \sigma, \zeta)$ consisting of: 
\begin{enumerate}[{\em (i)}]
\item An object $A$ of $\mathbb{X}$;
\item A map $\hat{p}: \mathsf{T}(A) \to A$, called the \textbf{differential projection};
\item A map $\sigma: A \times A \to A$, called the \textbf{sum};
\item A map $\zeta: \ast \to A$, called the \textbf{zero};
\end{enumerate}
and such that the equalities in \cite[Definition 3.1]{cockettCruttwellDiffBundles} hold.  
Let $\mathsf{DIFF}[(\mathbb{X}, \mathbb{T})]$ be the category of differential objects of $(\mathbb{X}, \mathbb{T})$ and all maps of $\mathbb{X}$ between the underlying objects. 
\end{definition}

In Lemma \ref{lem:diffobjBen}, we will provide an alternative, but equivalent, characterization of differential objects in a Cartesian Rosick\'y tangent category. 

A differential object $A$ should be interpreted as an Euclidean space. One of the axioms of a differential object says that $(A, \sigma, \zeta)$ is a commutative monoid, generalizing the fact a Euclidean space is also a vector space. Another axiom says that $\langle p_A, \hat{p}_A \rangle: \mathsf{T}(A) \to A \times A$ is an isomorphism, so $\mathsf{T}(A) \cong A \times A$. This is an analogue of the fact that the tangent bundle of an Euclidean space is isomorphic to the product of the Euclidean space with itself. The differential projection then arises from the association of Euclidean space with each tangent space, and this association is compatible with the vertical lift embedding. For any Cartesian tangent category $(\mathbb{X}, \mathbb{T})$, $\mathsf{DIFF}[(\mathbb{X}, \mathbb{T})]$ is a Cartesian differential category where for a map $f: A \to B$, its differential is defined as $\mathsf{D}[f] = \hat{p} \circ \mathsf{T}(f) \circ \langle p_A, \hat{p}_A \rangle^{-1}$ \cite[Theorem 4.11]{cockett2014differential}. Conversely, in a Cartesian differential category, every object has a canonical and unique differential object structure \cite[Proposition 4.7]{cockett2014differential}.

Interestingly, differential objects do not usually behave well with respect to the adjoint tangent structure. Indeed, even if a Cartesian tangent category $(\mathbb{X}, \mathbb{T})$ has adjoint tangent structure, a differential object in $(\mathbb{X}, \mathbb{T})$ does not necessarily result in a differential object in $(\mathbb{X}^{op}, \mathbb{T}^\circ)$, and vice-versa. In fact, $(\mathbb{X}^{op}, \mathbb{T}^\circ)$ could have many differential objects while $(\mathbb{X}, \mathbb{T})$ may have no non-trivial ones. This is precisely the case for algebras over an operad. Indeed, in Section \ref{subsec:diffobj-operad}, we will see how the differential objects in the opposite category of algebras over an operad always correspond to modules (in the operadic sense) over the arity-zero part (the units) of the operad. In particular, for the opposite category of (commutative) algebras, the differential objects correspond precisely to modules over the base commutative ring. On the other hand, we will explain why differential objects in the category of algebras of an operad are in a certain sense trivial.

\section{CoCartesian Differential Monads}\label{sec:ccdm}

The main objective of this section is to prove that the category of algebras of a coCartesian differential monad is a tangent category, obtained by lifting the biproduct tangent structure from the base category. This is a crucial result for the story of this paper: in Section \ref{subsection:cCD-monad-for-operads}, we will show that the monad associated to any operad is always a coCartesian differential monad. As such, from this general result, we are able to obtain a tangent structure for the category of algebras of an operad without having to check all the axioms for a tangent category. In this section, we also discuss adjoint tangent structures, vector fields, and differential objects for coCartesian differential monads. By dualizing the results of this section, we also obtain the answer to the question asked in the conclusion of \cite{ikonicoff2021cartesian} regarding the coEilenberg--Moore category of a Cartesian differential comonad: we show that this category is a tangent category.

\subsection{Tangent Monads for Biproducts}

In this section, we discuss the canonical tangent structure induced by biproducts and tangent monads, which are precisely the kind of monads that lift said tangent structure to their categories of algebras. Recall that a \textbf{monad} on a category $\mathbb{X}$ is a triple $(\mathsf{S}, \mu, \eta)$ consisting of a functor ${\mathsf{S}: \mathbb{X} \to \mathbb{X}}$, a natural transformation $\mu_A: \mathsf{S}\mathsf{S}(A) \to \mathsf{S}(A)$, called the \textbf{monad multiplication}, and a natural transformation ${\eta_A: A \to \mathsf{S}(A)}$, called the \textbf{monad unit}, such that the following equalities hold:
\begin{align*}
\mu_A \circ \mathsf{S}(\eta_A) = 1_{\mathsf{S}(A)} = \mu_A \circ \eta_{\mathsf{S}(A)} && \mu_A \circ \mathsf{S}(\mu_A) = \mu_A \circ \mu_{\mathsf{S}(A)}   
\end{align*}
For a monad $(\mathsf{S}, \mu, \eta)$, an \textbf{$\mathsf{S}$-algebra} is a pair $(A,\alpha)$ consisting of an object $A$ and a map $\alpha: \mathsf{S}(A) \to A$ of $\mathbb{X}$, called the \textbf{$\mathsf S$-algebra structure map}, such that the following equalities hold: 
\begin{align*}
\alpha \circ \eta_A = 1_A && \alpha \circ \mu_A = \alpha \circ \mathsf{S}(\alpha)     
\end{align*}
An \textbf{$\mathsf{S}$-algebra morphism} $f: (A, \alpha)\to (B, \beta)$ is a map $f: A\to B$ in $\mathbb{X}$ such that the following equality holds: 
\begin{align*}
f \circ \alpha = \beta \circ \mathsf{S}(f)     
\end{align*}
We denote $\mathsf{ALG}_\mathsf{S}$ the category whose objects are $\mathsf{S}$-algebras and whose maps are $\mathsf{S}$-algebra morphisms. $\mathsf{ALG}_\mathsf{S}$ is also often called the \textbf{Eilenberg--Moore category} of the monad $(\mathsf{S}, \mu, \eta)$. Lastly, recall that the \textbf{free $\mathsf{S}$-algebra} over an object $A$ is the $\mathsf{S}$-algebra $(\mathsf{S}(A), \mu_A)$. 

A \textbf{tangent monad} \cite[Definition 19]{cockett_et_al:LIPIcs:2020:11660} on a tangent category $(\mathbb{X}, \mathbb{T})$ is a monad $(\mathsf{S}, \mu, \eta)$ on $\mathbb{X}$ which also comes equipped with a \textbf{distributive law} \cite[Lemma 1]{johnstone1975adjoint}, which is a natural transformation of type ${\lambda_A: \mathsf{S}\mathsf{T}(A) \to \mathsf{T}\mathsf{S}(A)}$ which is compatible with the tangent structure, in the sense that $(\mathsf{S}, \lambda)$ is a \textbf{tangent morphism} \cite[Definition 2.7]{cockett2014differential}, and is also compatible with the monad structure, in the sense that $\mu$ and $\eta$ are \textbf{tangent transformations} \cite[Definition 4.18]{cockettCruttwellDiffBundles}. In other words, a tangent monad is a monad in the 2-category of tangent categories, tangent morphisms, and tangent transformations \cite[Section 5.9]{blute2018affine}. Similarly, a Rosick\'y tangent monad is a tangent monad on a Rosick\'y tangent category such that the distributive law is also compatible with the negative map in the obvious way. 

By \cite[Proposition 20]{cockett_et_al:LIPIcs:2020:11660}, the category of algebras of a (Rosick\'y) tangent monad is a (Rosick\'y) tangent category, where the tangent bundle on an $\mathsf{S}$-algebra is:
\[ \mathsf{T}(A, \alpha) = (\mathsf{T}(A), \mathsf{T}(\alpha) \circ \lambda_A) \]
Furthermore, the forgetful functor from $\mathsf{S}$-algebras down to the base (Rosick\'y) tangent category preserves the (Rosick\'y) tangent structure strictly. In other words, the forgetful functor is a \emph{strict} tangent morphism (i.e. a tangent morphism whose distributive law is the identity \cite[Definition 5.2]{blute2018affine}). Therefore, we say that tangent monads ``lift'' the (Rosick\'y) tangent structure of the base (Rosick\'y) tangent category to the category of algebras. The finite products in the category of $\mathsf{S}$-algebras are also ``lifted'' from the base category. Hence, for a (Rosick\'y) tangent monad $\mathsf{S}$ on a Cartesian (Rosick\'y) tangent category, the category of $\mathsf{S}$-algebras will also be a Cartesian (Rosick\'y) tangent category, such that the forgetful functor preserves the Cartesian (Rosick\'y) tangent structure strictly. 

In this paper, we are interested in the specific case of tangent monads on categories with finite biproducts, and will therefore give the definition of a tangent monad in this setting. To do so, we must first review the tangent structure induced by biproducts. By a \textbf{semi-additive category}, we mean a category with finite biproducts. Alternatively, recall that a semi-additive category can be described as a category with finite products which is enriched over commutative monoids. In this setting, each homset is a commutative monoid, so we can sum parallel maps together $f+g$ and we have zero maps $0$, and composition preserves this additive structure. Keeping this in mind, we will use product notation $\times$ for biproducts rather than direct sum notation $\oplus$, as the tangent structure is more intuitive from the product perspective. By an \textbf{additive category}, we mean a semi-additive category that is also enriched over Abelian groups, that is, each homset is furthermore an Abelian group, and so, each map $f$ admits a negative $-f$. 

Let us now describe in full detail the canonical tangent structure for (semi-)additive categories. It turns out that this tangent structure in fact arises from a Cartesian differential structure. Indeed, every semi-additive category is canonically a Cartesian differential category where the differential combinator is defined as $\mathsf{D}[f] = f \circ \pi_2$ \cite[Example 2.5]{ikonicoff2021cartesian}. Thus the following tangent structure is obtained by applying \cite[Proposition 4.7]{cockett2014differential} (which we reviewed briefly in Section \ref{sec:CDC-diffobj} above) on a semi-additive category with its canonical Cartesian differential structure. 

\begin{lemma}\cite[Section 5]{cockett_et_al:LIPIcs:2020:11660}\label{lemma:biproduct} Let $\mathbb{X}$ be a semi-additive category. Then define:  
\begin{enumerate}[{\em (i)}]
\item The tangent bundle functor $\mathsf{B}: \mathbb{X} \to \mathbb{X}$ to be the diagonal functor, that is, the functor defined on objects as $\mathsf{B}(A) = A \times A$ and on maps as $\mathsf{B}(f) = f \times f$;
\item The projection $p^\times_A: A \times A \to A$ as the first projection of the product:
\[p^\times_A = \pi_1\] 
and where the $n$-fold pullback of $p^\times_A$ is $\mathsf{B}_n(A) := \prod\limits^{n+1}_{i=1} A$ and where the $j$-th pullback projection $q^\times_j: \prod\limits^{n+1}_{i=1} A \to A \times A$ projects out the first and $j+1$-th term:
\[q^\times_j := \langle \pi_1, \pi_{j+1} \rangle\] 
\item The sum $s^\times_A: A \times A \times A \to A \times A$ as the sum of the last two components:
\[s^\times_A := \langle \pi_1, \pi_2 + \pi_3 \rangle\] 
\item The zero map $z^\times_A: A \to A \times A$ as the injection into the first component:
\[z^\times_A := \langle 1_A, 0 \rangle\]
\item The vertical lift $l^\times_A: A \times A \to A \times A \times A \times A$ as the injection of the first component in the first component and the second component in the fourth component:
\[l^\times_A = \langle \pi_1, 0, 0, \pi_2 \rangle\]
\item The canonical flip $c^\times_A: A \times A \times A \times A \to A \times A \times A \times A$ as the transposition of the second and third components:
\[c^\times_A := \langle \pi_1, \pi_3, \pi_2, \pi_4 \rangle\] 
\end{enumerate}
Then, $\mathbb{B}= (\mathsf{B},  p^\times, s^\times, z^\times, l^\times, c^\times)$ is a tangent structure on $\mathbb{X}$, and so, $(\mathbb{X}, \mathbb{B})$ is a Cartesian tangent category. Similarly, if $\mathbb{X}$ is an additive category, then define: 
\begin{enumerate}[{\em (i)}]
\setcounter{enumi}{6}
\item The negative map $n^\times_A: A \times A \to A\times A$ as taking the negative of the second component:
\[n^\times_A := \langle \pi_1, -\pi_2 \rangle\] 
\end{enumerate}
Then, $\mathbb{B}= (\mathsf{B},  p^\times, s^\times, z^\times, l^\times, c^\times, n^\times)$ is a Rosick\'y tangent structure on $\mathbb{X}$, and so, $(\mathbb{X}, \mathbb{B})$ is a Cartesian Rosick\'y tangent category. 
\end{lemma}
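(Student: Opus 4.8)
The plan is to avoid checking the tangent-category axioms directly and instead to obtain $\mathbb{B}$ from a Cartesian differential structure via \cite[Proposition 4.7]{cockett2014differential}. First I would observe that a semi-additive category $\mathbb{X}$ is in particular a Cartesian left additive category: it has finite products, every homset is a commutative monoid, and composition is additive in each variable, so a fortiori pre-composition preserves the additive structure. Next I would verify that $\mathsf{D}[f] := f \circ \pi_2$ is a differential combinator, i.e. that it satisfies \textbf{[CD.1]}--\textbf{[CD.7]} of \cite[Definition 2.3]{ikonicoff2021cartesian}. Each axiom reduces to a one-line computation with projections and pairings: the linearity axioms \textbf{[CD.1]} and \textbf{[CD.2]} use that in a semi-additive category every map is additive, while the chain rule \textbf{[CD.5]}, the constancy axioms, and the symmetry axioms \textbf{[CD.6]}--\textbf{[CD.7]} hold because $\mathsf{D}$ merely re-reads the second coordinate. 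This recovers \cite[Example 2.5]{ikonicoff2021cartesian}.

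Then I would apply \cite[Proposition 4.7]{cockett2014differential} to the Cartesian differential category $(\mathbb{X}, \mathsf{D})$. By that result $\mathbb{X}$ becomes a Cartesian tangent category with tangent bundle functor $\mathsf{T}(A) = A \times A$ and $\mathsf{T}(f) = \langle f \circ \pi_1, \mathsf{D}[f] \rangle = \langle f \circ \pi_1, f \circ \pi_2 \rangle = f \times f$, so $\mathsf{T} = \mathsf{B}$ on the nose. It then remains to unwind the definitions of the projection, sum, zero map, vertical lift, and canonical flip that \cite[Proposition 4.7]{cockett2014differential} produces from a general Cartesian differential category and to match them against the explicit formulas for $p^\times, s^\times, z^\times, l^\times, c^\times$ in the statement; since a map into a finite product is determined by its components, this is a mechanical check. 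The only bookkeeping is identifying the $n$-fold pullback of $p^\times_A = \pi_1$ with $\prod_{i=1}^{n+1} A$ and the pullback projections $q^\times_j$ with $\langle \pi_1, \pi_{j+1} \rangle$, after which the displayed formulas fall out. The compatibility with finite products needed to make $(\mathbb{X}, \mathbb{B})$ a \emph{Cartesian} tangent category is part of the same proposition.

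For the Rosický statement, when $\mathbb{X}$ is additive I would set $n^\times_A := \langle \pi_1, -\pi_2 \rangle$ and check the extra axioms of \cite[Definition 3.3]{cockett2014differential}, which say that $n^\times$ is a morphism of additive bundles over $p^\times$ making each fibre an Abelian group and that it is compatible with the vertical lift; both follow immediately from the explicit formula for $n^\times$, using only that each homset is now an Abelian group. I expect no step here to be genuinely hard. The one mild obstacle is purely notational: transporting the abstractly described canonical tangent structure of a Cartesian differential category through the identification $\mathsf{T} = \mathsf{B}$ and confirming termwise agreement with the displayed formulas for the higher pullbacks $\mathsf{B}_n(A)$ — and even this is forced by the universal property of products, so it is bookkeeping rather than a real difficulty.
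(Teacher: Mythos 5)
Your proposal is correct and takes essentially the same route as the paper: the paper likewise does not verify the tangent-category axioms directly, but notes that every semi-additive category is a Cartesian differential category with $\mathsf{D}[f] = f \circ \pi_2$ (citing \cite[Example 2.5]{ikonicoff2021cartesian}) and obtains the stated structure by applying \cite[Proposition 4.7]{cockett2014differential}, deferring the remaining details to \cite[Section 5]{cockett_et_al:LIPIcs:2020:11660}. Your explicit check of the negative-map axioms for the Rosick\'y case is a harmless elaboration of what the paper leaves to the cited references.
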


The following definition of a tangent monad is indeed the same definition as in \cite[Definition 19]{cockett_et_al:LIPIcs:2020:11660}, but in the specific case of the canonical tangent structure on a (semi-)additive category. 
\begin{definition} \cite[Definition 19]{cockett_et_al:LIPIcs:2020:11660} \label{def:tangent-monad} Let $\mathbb{X}$ be a semi-additive category (resp. additive category), and let $(\mathsf{S}, \mu, \eta)$ be a monad on $\mathbb{X}$. A \textbf{$\mathbb{B}$-distributive law} over $(\mathsf{S}, \mu, \eta)$ is a natural transformation $\lambda_A: \mathsf{S}(A \times A) \to \mathsf{S}(A) \times \mathsf{S}(A)$ such that: 
\begin{enumerate}[{\em (i)}]
\item $\lambda$ is a \textbf{distributive law} of the functor $\mathsf{B}$ over the monad $(\mathsf{S}, \mu, \eta)$, that is, the following equalities hold: 
\begin{align*}
\lambda_A \circ \mu_{A \times A} = (\mu_A \times \mu_A) \circ \lambda_{\mathsf{S}(A)} \circ \mathsf{S}(\lambda_A) && \lambda_A \circ \eta_{A \times A} = \eta_A \times \eta_A
\end{align*}
\item $\lambda$ is compatible with the biproduct (Rosick\'y) tangent structure $\mathbb{B}$ in the sense that the following equalities hold: 
\begin{equation*}\begin{gathered}
p^\times_A \circ \lambda_A = \mathsf{S}(p^\times_A) \quad \quad \quad \lambda_A \circ \mathsf{S}\left( z^\times_A \right) = z^\times_{\mathsf{S}(A)} \\
\lambda_A \circ \mathsf{S}\left( s^\times_A \right) = s^\times_{\mathsf{S}(A)} \circ \left \langle \mathsf{S}(\pi_1), \pi_2 \circ \lambda_A \circ \mathsf{S}(q^\times_1),\pi_2 \circ \lambda_A \circ \mathsf{S}(q^\times_2) \right\rangle \\ 
l^\times_{\mathsf{S}(A)} \circ \lambda_A = (\lambda_A \times \lambda_A) \circ \lambda_{A \times A} \circ \mathsf{S}\left( l^\times_A \right)  \\
c^\times_{\mathsf{S}(A)} \circ (\lambda_A \times \lambda_A) \circ \lambda_{A \times A} = (\lambda_A \times \lambda_A) \circ \lambda_{A \times A} \circ \mathsf{S}\left( c^\times_A \right) \\
\left( \text{ and if additive, then also } n^\times_{\mathsf{S}(A)} \circ \lambda_A = \lambda_A \circ \mathsf{S}(n^\times_A) ~ \right)
\end{gathered}\end{equation*}
\end{enumerate}
A \textbf{(Rosick\'y) tangent monad} on $(\mathbb{X}, \mathbb{B})$ is a quadruple $(\mathsf{S}, \mu, \eta, \lambda)$ consisting of a monad $(\mathsf{S}, \mu, \eta)$ on $\mathbb{X}$ and a $\mathbb{B}$-distributive law $\lambda$ over $(\mathsf{S}, \mu, \eta)$.   
\end{definition}

As discussed above, the category of algebras of a (Rosick\'y) tangent monad on a semi-additive (resp. additive) category is a Cartesian (Rosick\'y) tangent category where the tangent structure maps are defined in the same way as in $\mathbb{B}$. However, we stress that, since the biproduct structure does not necessarily ``lift'' to the category of algebras, the resulting tangent structure is not given by Lemma \ref{lemma:biproduct}, even if the underlying tangent structure maps are the same. We will review this in full detail in Theorem \ref{thm:S-tan} below, after taking a closer look at the $\mathbb{B}$-distributive law. Indeed, the type of the $\mathbb{B}$-distributive law $\lambda_A: \mathsf{S}(A \times A) \to \mathsf{S}(A) \times \mathsf{S}(A)$ implies that it is the pairing of two maps of type $\mathsf{S}(A \times A) \to \mathsf{S}(A)$. The axiom $p^\times_A \circ \lambda_A = \mathsf{S}(p^\times_A)$ can be re-written as $\pi_1 \circ \lambda_A = \mathsf{S}(\pi_1)$. Therefore, $\lambda_A$ must be of the form $\lambda_A = \langle \mathsf{S}(\pi_1), \lambda^\prime_A \rangle$ for some map $\lambda^\prime_A = \mathsf{S}(A \times A) \to \mathsf{S}(A)$. In the next section, we will prove that this $\lambda^\prime_A$ is completely determined by a \textbf{differential combinator transformation}, and therefore, that tangent monads on semi-additive categories can be precisely described as coCartesian differential monads.

\subsection{CoCartesian Differential Monads}\label{subsec:cdmonad}

In this section, we show that, on (semi-)additive categories, coCartesian differential monads are precisely tangent monads and therefore, that the category of algebras of a coCartesian differential monad is a Cartesian tangent category. CoCartesian differential monads are the dual of Cartesian differential comonads, as introduced by the first and third named authors in \cite{ikonicoff2021cartesian}. They are precisely the kind of monads on a semi-additive category such that the opposite category of the Kleisli category is a Cartesian differential category \cite[Theorem 3.5]{ikonicoff2021cartesian}. Therefore, a coCartesian differential monad is a monad on a semi-additive category equipped with an additional natural transformation called a differential combinator transformation, which captures the differentiation of maps in the Kleisli category. The axioms of a differential combinator transformation are analogues of the axioms of a differential combinator for Cartesian differential categories. 

\begin{definition} \cite[Example 3.14]{ikonicoff2021cartesian}\label{defi:cCDM} Let $\mathbb{X}$ be a semi-additive category and $(\mathsf{S}, \mu, \eta)$ a monad on $\mathbb{X}$. A \textbf{differential combinator transformation} on $(\mathsf{S}, \mu, \eta)$ is a natural transformation $\partial_A: \mathsf{S}(A) \to \mathsf{S}(A \times A)$ such that the following equalities hold: 
\begin{description}
\item [{\bf [DC.1]}] $\mathsf{S}(\pi_1) \circ \partial_A = 0$
\item [{\bf [DC.2]}] $\mathsf{S}(\langle \pi_1, \pi_2, \pi_2 \rangle) \circ \partial_A = \mathsf{S}(\langle \pi_1, \pi_2, 0 \rangle) \circ \partial_A + \mathsf{S}(\langle \pi_1, 0, \pi_2\rangle) \circ \partial_A$
\item [{\bf [DC.3]}] $\partial_A \circ \eta_A = \eta_{A \times A} \circ \langle 0, 1_A \rangle$
\item [{\bf [DC.4]}] $\partial_A \circ \mu_A = \mu_{A \times A} \circ \mathsf{S}\left( \mathsf{S}(\langle 1_A, 0 \rangle) \circ \pi_1 + \partial_A \circ \pi_2 \right) \circ \partial_{\mathsf{S}(A)}$
\item [{\bf [DC.5]}] $\mathsf{S}(\langle \pi_1, \pi_4 \rangle) \circ \partial_{A \times A} \circ \partial_A = \partial_A$
\item [{\bf [DC.6]}] $\mathsf{S}\left( \left \langle \pi_1, \pi_3, \pi_2, \pi_4 \right \rangle \right)  \circ \partial_{A \times A} \circ \partial_A = \partial_{A \times A} \circ \partial_A$
\end{description}
A \textbf{coCartesian differential monad} on a semi-additive category $\mathbb{X}$ is a quadruple $(\mathsf{S}, \mu, \eta, \partial)$ consisting of a monad $(\mathsf{S}, \mu, \eta)$ on $\mathbb{X}$ and a differential combinator transformation $\partial$ on $(\mathsf{S}, \mu, \eta)$. 
\end{definition}

It turns out that, for a coCartesian differential monad on an additive category, the differential combinator transformation is also compatible with the negative map, and thus, no additional axiom is required.  

\begin{lemma} Let $\mathbb{X}$ be an additive category and let $(\mathsf{S}, \mu, \eta, \partial)$ be a coCartesian differential monad on $\mathbb{X}$. The following equality holds: 

\begin{description}
 \item[\textbf{[DC.N]}] $\mathsf{S}(\langle \pi_1, -\pi_2 \rangle) \circ \partial_A = -\partial_A$  
\end{description}
\end{lemma}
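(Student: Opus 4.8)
The plan is to derive the negativity identity \textbf{[DC.N]} purely from the existing axioms \textbf{[DC.1]}--\textbf{[DC.6]} together with the fact that in an additive category each homset is an Abelian group. The key observation is that \textbf{[DC.2]}, which encodes additivity of the differential combinator in its "direction" variable, should give us, after suitable precomposition with coproduct injections and postcomposition with appropriate reindexing maps, a relation expressing $\partial_A$ composed with "doubling the direction" as twice $\partial_A$, and more generally its additivity. From additivity in the direction variable, negativity follows formally: if $g \colon A \to B$ is additive in the appropriate sense, then $g$ applied to $-x$ equals $-g(x)$.

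\textbf{First I would} unwind \textbf{[DC.2]} to extract genuine additivity. Write $\iota_1 = \langle 1_A, 0\rangle, \iota_2 = \langle 0, 1_A \rangle \colon A \to A \times A$ and consider the map $\nabla = \langle 1_A, 1_A\rangle$. Precomposing \textbf{[DC.2]} (or rather a variant obtained by naturality of $\partial$ along maps $A \times A \to A \times A$ of the form $1_A \times f$) with suitable maps and using that $\mathsf{S}$ is additive-structure-preserving on the relevant composites, one shows that $a \mapsto \mathsf{S}(1_A \times a)\circ \partial_A$ is additive in $a$ for $a \in \mathbb{X}(C, A)$, or more precisely that $\mathsf{S}(\langle \pi_1, \pi_2 + \pi_3\rangle)$-type reindexings distribute over $\partial$. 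The cleanest route: \textbf{[DC.2]} says
\[
\mathsf{S}(\langle \pi_1, \pi_2, \pi_2 \rangle) \circ \partial_A = \mathsf{S}(\langle \pi_1, \pi_2, 0 \rangle) \circ \partial_A + \mathsf{S}(\langle \pi_1, 0, \pi_2\rangle) \circ \partial_A,
\]
and postcomposing both sides with $\mathsf{S}(\langle \pi_1, \pi_2 + \pi_3\rangle)$ (a map $A \times A \times A \to A \times A$), the left side becomes $\mathsf{S}(\langle\pi_1, 2\pi_2\rangle)\circ\partial_A$ while the right becomes $\mathsf{S}(\langle\pi_1,\pi_2\rangle)\circ\partial_A + \mathsf{S}(\langle\pi_1,\pi_2\rangle)\circ\partial_A = 2\partial_A$, using functoriality of $\mathsf{S}$, linearity of composition, and $\langle\pi_1,\pi_2\rangle = 1_{A\times A}$. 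So $\mathsf{S}(\langle\pi_1, 2\pi_2\rangle)\circ\partial_A = 2\partial_A$; iterating or arguing by the group structure, $\mathsf{S}(\langle\pi_1, k\pi_2\rangle)\circ\partial_A = k\partial_A$ for all $k \in \mathbb{Z}$, and $k = -1$ is exactly \textbf{[DC.N]}.

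\textbf{The subtle point} — and what I expect to be the main obstacle — is justifying the step from $k \in \mathbb{N}$ to $k = -1$: we cannot literally "iterate" to reach $-1$, so we must argue algebraically. The trick is that $\mathsf{S}(\langle\pi_1, \pi_2 + \pi_3\rangle)$ and its variants are group homomorphisms on the relevant homsets (since composition in an additive category is bi-additive), so the map $k \mapsto \mathsf{S}(\langle\pi_1,k\pi_2\rangle)\circ\partial_A - k\partial_A$ should be shown to vanish by first establishing it is additive in $k$ (directly from \textbf{[DC.2]} applied with the two summand-directions being $\pi_2$ and $\pi_3$ reindexed arbitrarily) and vanishes at $k=1$. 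Concretely: apply \textbf{[DC.2]}, postcompose with $\mathsf{S}(\langle\pi_1, j\pi_2 + k\pi_3\rangle)$ for arbitrary $j, k \in \mathbb{Z}$, to get $\mathsf{S}(\langle\pi_1,(j{+}k)\pi_2\rangle)\circ\partial_A = \mathsf{S}(\langle\pi_1,j\pi_2\rangle)\circ\partial_A + \mathsf{S}(\langle\pi_1,k\pi_2\rangle)\circ\partial_A$; this is literally additivity of the $\mathbb{Z}$-action, and setting $j = 1, k = -1$ gives $\mathsf{S}(\langle\pi_1,0\rangle)\circ\partial_A = \partial_A + \mathsf{S}(\langle\pi_1,-\pi_2\rangle)\circ\partial_A$. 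Finally, $\mathsf{S}(\langle\pi_1, 0\rangle)\circ\partial_A = \mathsf{S}(z^\times_A)\circ\mathsf{S}(\pi_1)\circ\partial_A = \mathsf{S}(z^\times_A)\circ 0 = 0$ by \textbf{[DC.1]} (rewriting $\langle\pi_1,0\rangle = z^\times_A \circ \pi_1$ and using functoriality), so $0 = \partial_A + \mathsf{S}(\langle\pi_1,-\pi_2\rangle)\circ\partial_A$, which rearranges to \textbf{[DC.N]}. The only care needed throughout is bookkeeping the arities of the reindexing tuples and confirming each postcomposition is by a well-defined map of the stated type; no use of \textbf{[DC.3]}--\textbf{[DC.6]} is required.
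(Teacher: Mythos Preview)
Your proposal is correct and, once you settle on the final concrete argument in your last paragraph, it is essentially identical to the paper's proof: postcompose \textbf{[DC.2]} with $\mathsf{S}(\langle\pi_1,\pi_2-\pi_3\rangle)$ (your case $j=1$, $k=-1$) to obtain $\mathsf{S}(\langle\pi_1,0\rangle)\circ\partial_A = \partial_A + \mathsf{S}(\langle\pi_1,-\pi_2\rangle)\circ\partial_A$, then use \textbf{[DC.1]} (via the factorisation $\langle\pi_1,0\rangle = \langle 1_A,0\rangle\circ\pi_1$) to see the left side vanishes. The detour through general integer $k$ and the $\mathbb{Z}$-action is unnecessary but harmless; you can go directly to $j=1$, $k=-1$ as the paper does.
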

\begin{proof} To prove that $\mathsf{S}(\langle \pi_1, -\pi_2 \rangle) \circ \partial_A = -\partial_A$, it suffices to show that $\partial_A + \mathsf{S}(\langle \pi_1, -\pi_2 \rangle) \circ \partial_A = 0$. First observe that, by {\bf [DC.1]} and {\bf [DC.2]}, it follows that: 
\begin{align*}
\mathsf{S}(\langle \pi_1,0 \rangle) \circ \partial_A = 0
\end{align*}
Also, note that we have the following equalities:
\begin{align*}
\begin{split}
\langle \pi_1, \pi_2 - \pi_3 \rangle \circ \langle \pi_1, \pi_2, 0 \rangle = 1_{A \times A}\\
\langle \pi_1, \pi_2 - \pi_3 \rangle \circ \langle \pi_1, 0, \pi_2 \rangle = \langle \pi_1, - \pi_2 \rangle \\
\langle \pi_1, \pi_2 - \pi_3 \rangle \circ \langle \pi_1, \pi_2, \pi_2 \rangle = \langle \pi_1, 0 \rangle 
\end{split}
\end{align*}
Then, using these identities, and both {\bf [DC.1]} and {\bf [DC.2]}, we compute: 
\begin{gather*}
\partial_A +  \mathsf{S}(\langle \pi_1, -\pi_2 \rangle) \circ \partial_A =  \mathsf{S}\left( \langle \pi_1, \pi_2 - \pi_3 \rangle \right) \circ \left( \mathsf{S}(\langle \pi_1, \pi_2, 0 \rangle) \circ \partial_A + \mathsf{S}(\langle \pi_1, 0, \pi_2\rangle) \circ \partial_A \right) \\
=  \mathsf{S}\left( \langle \pi_1, \pi_2 - \pi_3 \rangle \right) \circ \mathsf{S}(\langle \pi_1, \pi_2, \pi_2 \rangle) \circ \partial_A =  \mathsf{S}(\langle \pi_1,0 \rangle) \circ \partial_A = 0 
\end{gather*}
So we conclude that $\mathsf{S}(\langle \pi_1, -\pi_2 \rangle) \circ \partial_A = -\partial_A$. 
\end{proof}

As mentioned above, the opposite category of the Kleisli category of a coCartesian differential monad is a Cartesian differential category. Recall that for a monad $(\mathsf{S}, \mu, \eta)$ on a category $\mathbb{X}$, its \textbf{Kleisli category} is the category $\mathsf{Kl}_{\mathsf{S}}$ whose objects are the same as $\mathbb{X}$ and where a map from $A$ to $B$ in $\mathsf{Kl}_{\mathsf{S}}$ is a map of type $A \to \mathsf{S}(B)$ in $\mathbb{X}$. In a Cartesian differential category, for a map $f: A \to B$, its derivative will be of type ${\mathsf{D}[f]: A \times A \to B}$. For a coCartesian differential monad $(\mathsf{S}, \mu, \eta, \partial)$, since $\mathsf{Kl}^{op}_{\mathsf{S}}$ is a Cartesian differential category, then for a map $f: A \to \mathsf{S}(B)$ in $\mathbb{X}$, its derivative will be of type $\mathsf{D}[f]: A \to \mathsf{S}(B \times B)$ in $\mathbb{X}$, which is given by post-composing with the differential combinator transformation. We invite interested readers to see \cite{ikonicoff2021cartesian} for full details on the subject. 

\begin{proposition}\cite[Theorem 3.5]{ikonicoff2021cartesian} Let $\mathbb{X}$ be a semi-additive category and let $(\mathsf{S}, \mu, \eta, \partial)$ be a coCartesian differential monad on $\mathbb{X}$. Then, $\mathsf{Kl}^{op}_{\mathsf{S}}$ is a Cartesian differential category, where the differential combinator $\mathsf{D}$, viewed as operators $\mathsf{D}: \mathbb{X}(A, \mathsf{S}(B)) \to \mathbb{X}(A, \mathsf{S}(B \times B))$, is defined as the following composition in $\mathbb{X}$: $\mathsf{D}[f] := \partial_B \circ f$.  
\end{proposition}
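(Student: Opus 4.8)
The most economical route is to recognise this as \cite[Theorem 3.5]{ikonicoff2021cartesian} read in the opposite category. Since biproducts are self-dual, $\mathbb{X}^{op}$ is again semi-additive; a coCartesian differential monad $(\mathsf{S},\mu,\eta,\partial)$ on $\mathbb{X}$ is by definition the same datum as a Cartesian differential comonad on $\mathbb{X}^{op}$; and one checks immediately that $\mathsf{Kl}^{op}_\mathsf{S}$ is the coKleisli category of that comonad, with the coKleisli differential combinator of \cite{ikonicoff2021cartesian} unwinding to $\mathsf{D}[f]=\partial_B\circ f$. So one plan is: verify the (routine) identification of categories and combinators, then invoke \cite[Theorem 3.5]{ikonicoff2021cartesian}. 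For a self-contained argument one instead checks the axioms directly, as I now sketch.

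First I would equip $\mathsf{Kl}^{op}_\mathsf{S}$ with its Cartesian left additive structure. Since $\mathsf{Kl}^{op}_\mathsf{S}(A,B)=\mathbb{X}(B,\mathsf{S}(A))$, each hom-set is the commutative monoid inherited from the enrichment of $\mathbb{X}$, with zero maps factoring through the zero object $0$ of $\mathbb{X}$. As $\mathbb{X}$ has biproducts, $A\times A'$ is simultaneously a product and a coproduct, so it provides binary products in $\mathsf{Kl}^{op}_\mathsf{S}$ (coproducts in $\mathsf{Kl}_\mathsf{S}$), $0$ is terminal, and the projections, pairings and diagonals of $\mathsf{Kl}^{op}_\mathsf{S}$ are the evident composites of biproduct structure maps with $\eta$. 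Left additivity — that $\mathsf{Kl}^{op}$-precomposition preserves $+$ and $0$ — is automatic: unwinding $\mathsf{Kl}^{op}$-composition in $\mathbb{X}$, precomposition corresponds to precomposition in $\mathbb{X}$, which always preserves finite sums and zero. One then checks that the product projections are additive in the left-additive sense, which follows from the biproduct identities $\iota_1\pi_1+\iota_2\pi_2=1$ and $\pi_i\iota_j=\delta_{ij}$ in $\mathbb{X}$; this makes $\mathsf{Kl}^{op}_\mathsf{S}$ Cartesian left additive.

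Next I would verify \textbf{[CD.1]}--\textbf{[CD.7]} for $\mathsf{D}[f]:=\partial_B\circ f$, translating each into an equation in $\mathbb{X}$ and deriving it from \textbf{[DC.1]}--\textbf{[DC.6]} together with naturality of $\partial$ and the monad laws. The easy ones: \textbf{[CD.1]} and \textbf{[CD.4]} reduce to the universal property of the coproduct, namely $\partial_X\circ[\bar f,\bar g]=[\partial_X\circ\bar f,\partial_X\circ\bar g]$, together with the fact that in a semi-additive category $\bar f+\bar g=[\bar f,\bar g]\circ\langle 1,1\rangle$; the degenerate clauses of \textbf{[CD.1]} and \textbf{[CD.2]} use that $0$ is a zero object together with \textbf{[DC.1]}; the additive clause of \textbf{[CD.2]} is \textbf{[DC.2]}; \textbf{[CD.3]} (the value of $\mathsf{D}$ on identities and projections) is \textbf{[DC.3]}, after noting that a ``linear'' $\mathsf{Kl}^{op}$-map $\eta\circ k$ composed with $\partial$ collapses via $\mu\circ\mathsf{S}(\eta)=1$; and \textbf{[CD.6]} (linearity of the derivative) and \textbf{[CD.7]} (symmetry of mixed partials) are \textbf{[DC.5]} and \textbf{[DC.6]}, after unfolding the iterated derivative as $\partial_{B\times B}\circ\partial_B\circ f$ and matching the biproduct reindexings.

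The main obstacle is the chain rule \textbf{[CD.5]}, $\mathsf{D}[g\circ f]=\mathsf{D}[g]\circ\langle f\circ\pi_1,\mathsf{D}[f]\rangle$: this is the only axiom that genuinely involves composition, hence the monad multiplication. Unwinding the left side in $\mathbb{X}$ produces $\partial_X\circ\mu_X\circ\mathsf{S}(\bar f)\circ\bar g$, and \textbf{[DC.4]}, $\partial_A\circ\mu_A=\mu_{A\times A}\circ\mathsf{S}\big(\mathsf{S}(\langle 1_A,0\rangle)\circ\pi_1+\partial_A\circ\pi_2\big)\circ\partial_{\mathsf{S}(A)}$, together with naturality of $\partial$ rewrites this as $\mu_{X\times X}\circ\mathsf{S}\big(\mathsf{S}(\langle 1_X,0\rangle)\circ\bar f\circ\pi_1+\partial_X\circ\bar f\circ\pi_2\big)\circ\partial_Y\circ\bar g$; one then recognises the inner sum, via $\pi_i\iota_j=\delta_{ij}$ and $\mu\circ\mathsf{S}(\eta)=1$, as exactly the $\mathsf{Kl}^{op}$-representative of the copairing $\langle f\circ\pi_1,\mathsf{D}[f]\rangle$, and the whole expression as the representative of $\mathsf{D}[g]\circ\langle f\circ\pi_1,\mathsf{D}[f]\rangle$. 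The delicate point throughout is that the $+$ appearing inside $\mathsf{S}(-)$ in \textbf{[DC.4]} is a sum of parallel maps and must not be confused with a sum of two $\mathsf{S}$'s, since $\mathsf{S}$ need not preserve sums; this is precisely where \textbf{[DC.1]} is used to kill the cross terms. I would organise the calculation by first treating the cases where $g$ is linear (so \textbf{[CD.3]} and $\mu\circ\mathsf{S}(\eta)=1$ simplify everything) and where $f$ is linear, and then assembling the general case.
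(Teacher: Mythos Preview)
The paper does not supply its own proof of this proposition; it simply records the statement and cites \cite[Theorem 3.5]{ikonicoff2021cartesian}. Your opening paragraph takes exactly this route --- identify the statement as the dual of the cited theorem and invoke it --- so you match the paper's approach. The remainder of your proposal, a direct verification of \textbf{[CD.1]}--\textbf{[CD.7]} from \textbf{[DC.1]}--\textbf{[DC.6]}, goes beyond what the paper offers and is essentially the content of the proof in \cite{ikonicoff2021cartesian} itself; the axiom-by-axiom correspondence you describe is the correct one. One small slip: $\mathsf{Kl}^{op}$-precomposition by $h$ unwinds in $\mathbb{X}$ to \emph{post}composition by $\mu\circ\mathsf{S}(\bar h)$, not precomposition; the conclusion is unaffected because composition in a semi-additive category is bilinear.
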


In Section \ref{subsec:CDC-operad}, we will also review the notion of a $\mathsf{D}$-linear counit for coCartesian differential monads \cite[Definition 3.8]{ikonicoff2021cartesian}. We elected not to review it in this section since the $\mathsf{D}$-linear counit does not appear to play a role for the tangent structure story (but is important for the Cartesian differential structure).  

We will now prove that every coCartesian differential monad is a tangent monad, where the $\mathbb{B}$-distributive law is constructed using the differential combinator transformation. 

\begin{proposition} \label{prop:cdmonad-tanmonad} Let $(\mathsf{S}, \mu, \eta, \partial)$ be a coCartesian differential monad on a semi-additive category (resp. additive category) $\mathbb{X}$. Define the natural transformation $\lambda_A: \mathsf{S}(A \times A) \to \mathsf{S}(A) \times \mathsf{S}(A)$ as follows:
\begin{align*}
\lambda_A := \left\langle \mathsf{S}(\pi_1), \mathsf{S}\left( \pi_1 + \pi_4 \right) \circ \partial_{A \times A} \right \rangle  
\end{align*}
Then, $(\mathsf{S}, \mu, \eta, \lambda)$ is a (Rosick\'y) tangent monad on $(\mathbb{X}, \mathbb{B})$.
\end{proposition}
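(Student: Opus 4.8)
The plan is to verify directly that the proposed $\lambda_A$ satisfies all the conditions in Definition \ref{def:tangent-monad}, using the six axioms \textbf{[DC.1]}--\textbf{[DC.6]} (and \textbf{[DC.N]} in the additive case) of the differential combinator transformation $\partial$. The key observation, already noted at the end of Section \ref{subsec:cdmonad}, is that any $\mathbb{B}$-distributive law must have the form $\lambda_A = \langle \mathsf{S}(\pi_1), \lambda'_A \rangle$; so the content is entirely in checking that the second component $\lambda'_A := \mathsf{S}(\pi_1 + \pi_4) \circ \partial_{A \times A}$ makes everything work. Throughout, the strategy will be to reduce each required equation (both sides of which are maps into a product $\mathsf{S}(A) \times \dots \times \mathsf{S}(A)$) to a pair of equations between maps into $\mathsf{S}(A)$ by postcomposing with projections; the first-component equations are typically trivial since $\lambda_A$'s first component is just $\mathsf{S}(\pi_1)$ and the tangent structure maps $p^\times, z^\times, s^\times, l^\times, c^\times$ act in a transparent way on the ``base'' coordinate, and the second-component equations are where \textbf{[DC.1]}--\textbf{[DC.6]} get used.

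The steps, in order, would be:
\begin{enumerate}[{\em (i)}]
\item \emph{Naturality of $\lambda$}: immediate from naturality of $\partial$ and of the projections.
\item \emph{Distributive law over the unit}: $\lambda_A \circ \eta_{A \times A} = \eta_A \times \eta_A$. The first component is $\mathsf{S}(\pi_1) \circ \eta_{A \times A} = \eta_A \circ \pi_1$; the second uses \textbf{[DC.3]}, $\partial_{A\times A}\circ\eta_{A\times A} = \eta_{A\times A\times A\times A}\circ\langle 0,1\rangle$, then $\mathsf{S}(\pi_1+\pi_4)\circ\eta\circ\langle 0,1\rangle = \eta_A\circ\pi_2$.
\item \emph{Distributive law over the multiplication}: $\lambda_A \circ \mu_{A\times A} = (\mu_A \times \mu_A)\circ\lambda_{\mathsf{S}(A)}\circ\mathsf{S}(\lambda_A)$. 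This is the heaviest computation; it rests on \textbf{[DC.4]} (the ``chain rule'' for $\partial$ and $\mu$) together with \textbf{[DC.1]}. One unwinds both sides and uses \textbf{[DC.1]} to kill the $\mathsf{S}(\langle 1,0\rangle)\circ\pi_1$ summand appearing inside \textbf{[DC.4]} when it is hit by the appropriate projection.
\item \emph{Compatibility with $p^\times$ and $z^\times$}: $p^\times_A\circ\lambda_A = \mathsf{S}(p^\times_A)$ is just $\pi_1\circ\lambda_A = \mathsf{S}(\pi_1)$, true by construction; $\lambda_A\circ\mathsf{S}(z^\times_A) = z^\times_{\mathsf{S}(A)}$ reduces, on the second component, to $\mathsf{S}(\pi_1+\pi_4)\circ\partial_{A\times A}\circ\mathsf{S}(\langle 1,0\rangle) = 0$, which follows from naturality of $\partial$ and \textbf{[DC.1]} (after noting $\mathsf{S}(\langle\pi_1,0\rangle)\circ\partial_A = 0$, already derived in the excerpt).
\item \emph{Compatibility with $s^\times$}: expand both sides componentwise; the nontrivial component is an instance of \textbf{[DC.2]}, the additivity axiom for $\partial$, after rewriting the pairings $\langle\pi_1,\pi_2,\pi_2\rangle$, $\langle\pi_1,\pi_2,0\rangle$, $\langle\pi_1,0,\pi_2\rangle$ in the coordinates of $A\times A\times A$.
\item \emph{Compatibility with $l^\times$ and $c^\times$}: these are the ``second-order'' axioms and use \textbf{[DC.5]} and \textbf{[DC.6]} respectively. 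One computes $(\lambda_A\times\lambda_A)\circ\lambda_{A\times A}$ explicitly in terms of $\partial_{A\times A\times A\times A}\circ\partial_{A\times A}$ and matches it against $\mathsf{S}$ applied to the relevant reindexing map, using that $\partial_{A\times A}\circ\partial_A$ is symmetric in the two ``new'' coordinates by \textbf{[DC.6]} and is idempotent-like by \textbf{[DC.5]}.
\item \emph{The negative (additive case)}: $n^\times_{\mathsf{S}(A)}\circ\lambda_A = \lambda_A\circ\mathsf{S}(n^\times_A)$ follows from \textbf{[DC.N]}.
\end{enumerate}

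The main obstacle I expect is step (iii), compatibility with the monad multiplication: \textbf{[DC.4]} has a genuinely nontrivial right-hand side (a sum of two terms inside $\mathsf{S}$, with a nested $\partial_{\mathsf{S}(A)}$), and matching it against the expansion of $(\mu_A\times\mu_A)\circ\lambda_{\mathsf{S}(A)}\circ\mathsf{S}(\lambda_A)$ requires carefully tracking many reindexing maps and repeatedly invoking naturality of $\mu$, $\partial$, and the monad laws, as well as \textbf{[DC.1]} to discard the ``linear'' summand. Steps (vi) involving \textbf{[DC.5]} and \textbf{[DC.6]} are also bookkeeping-intensive because they involve the fourfold product $A\times A\times A\times A$ and double applications of $\partial$, but they are conceptually routine once the correct coordinate reindexings are identified. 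All remaining steps are short.
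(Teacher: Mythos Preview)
Your proposal is correct and follows essentially the same approach as the paper: verify each condition of Definition~\ref{def:tangent-monad} componentwise, matching the six (resp.\ seven) compatibility conditions to \textbf{[DC.1]}--\textbf{[DC.6]} (resp.\ \textbf{[DC.N]}) exactly as you describe. One small correction to your step~(iii): in the $\mu$-compatibility computation the summand $\mathsf{S}(\langle 1,0\rangle)\circ\pi_1$ from \textbf{[DC.4]} is \emph{not} killed by \textbf{[DC.1]}---rather, the paper uses the identity $(\pi_1+\pi_4)\circ\langle 1_{A\times A},0\rangle = \pi_1$ to factor $\mathsf{S}(\pi_1+\pi_4)$ out of both summands, after which \textbf{[DC.4]} applies directly (no \textbf{[DC.1]} is needed there).
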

\begin{proof} Recall the following useful identities regarding the product's pairing operator: 
\begin{align*}
(f \times g) \circ \langle h, k \rangle \circ j = \left \langle f \circ h \circ j, g \circ k \circ j \right \rangle && \langle f \circ \pi_1, g \circ \pi_2 \rangle = f \times g 
\end{align*}
We will show that $\lambda_A$ satisfies the equalities from Definition \ref{def:tangent-monad}. 

For simplicity and readability, we will omit the subscripts for the natural transformations. 
\begin{enumerate}[{\em (i)}]
\item $(\mu \times \mu) \circ \lambda \circ \mathsf{S}(\lambda) = \lambda \circ \mu$ 

First, observe that, by definition of $\lambda$: 
\begin{align*}
(\pi_1 + \pi_4) \circ (\lambda \times \lambda) = \mathsf{S}(\pi_1) \circ \pi_1 + \mathsf{S}\left( \pi_1 + \pi_4 \right) \circ \partial \circ \pi_2
\end{align*}
Also note that for four copies of $A$, we have that: 
\begin{align*}
\left( \pi_1 + \pi_4 \right) \circ \langle 1_{A \times A}, 0 \rangle = \pi_1 
\end{align*}
Then, using \textbf{[DC.4]}, we compute that: 
\begin{gather*}
(\mu \times \mu) \circ \lambda \circ \mathsf{S}(\lambda) =  \left\langle \mu \circ \mathsf{S}(\pi_1) \circ \mathsf{S}(\lambda), \mu \circ \mathsf{S}\left( \pi_1 + \pi_4 \right) \circ \partial \circ \mathsf{S}(\lambda) \right \rangle \\
=  \left \langle \mu \circ \mathsf{S}\mathsf{S}(\pi_1), \mu \circ \mathsf{S}\left( \mathsf{S}(\pi_1) \circ \pi_1 + \mathsf{S}\left( \pi_1 + \pi_4 \right) \circ \partial \circ \pi_2  \right) \circ \partial  \right \rangle  \\
= \left \langle\mathsf{S}(\pi_1) \circ \mu, \mu \circ \mathsf{S}\left( \mathsf{S}\left( \pi_1 + \pi_4 \right) \circ \mathsf{S}(\langle 1, 0 \rangle) \circ \pi_1 + \mathsf{S}\left( \pi_1 + \pi_4 \right) \circ \partial \circ \pi_2  \right) \circ \partial  \right \rangle \\
= \left \langle\mathsf{S}(\pi_1) \circ \mu, \mathsf{S}\left( \pi_1 + \pi_4 \right) \circ \mu \circ \mathsf{S}\left(  \mathsf{S}(\langle 1, 0 \rangle) \circ \pi_1 + \mathsf{S}\left( \pi_1 + \pi_4 \right) \circ \partial \circ \pi_2  \right) \circ \partial  \right \rangle \\
= \left \langle\mathsf{S}(\pi_1) \circ \mu, \mathsf{S}(\pi_1 + \pi_4) \circ \partial \circ \mu  \right \rangle = \left\langle \mathsf{S}(\pi_1), \mathsf{S}\left( \pi_1 + \pi_4 \right) \circ \partial \right \rangle \circ \mu = \lambda \circ \mu
\end{gather*}
\item $\lambda \circ \eta= \eta \times \eta$

Note that, in the case of four copies of $A$: 
\begin{align*}
(\pi_1 + \pi_4) \circ \langle 0, 1_{A \times A} \rangle = \pi_2 
\end{align*}
Then, using \textbf{[DC.3]}, we compute: 
\begin{gather*}
\lambda \circ \eta = \left \langle\mathsf{S}\left(\pi_1\right) \circ \eta , \mathsf{S}\left( \pi_1 + \pi_4 \right) \circ \partial \circ \eta  \right \rangle = \left \langle\mathsf{S}\left(\pi_1\right) \circ \eta , \mathsf{S}\left( \pi_1 + \pi_4 \right) \circ \eta \circ \langle 0,1 \rangle \right \rangle\\
= \left \langle \eta \circ \pi_1, \eta \circ (\pi_1 + \pi_4) \circ \langle 0, 1 \rangle  \right \rangle = \left \langle \eta \circ \pi_1, \eta \circ \pi_2  \right \rangle = \eta \times \eta 
\end{gather*}
\item $p^\times \circ \lambda = \mathsf{S}(p^\times)$ is immediate by the definition of $\lambda$ and $p^\times$. 
\item $\lambda \circ \mathsf{S}(z^\times) = z^\times$ 

First, observe that we have:
\begin{align*}
\pi_1 \circ z^\times = 1 &&   \left(\pi_1 + \pi_4 \right) \circ (z^\times \times z^\times) = \pi_1 
\end{align*}
Then, using \textbf{[DC.1]}, we compute: 
\begin{gather*}
\lambda \circ \mathsf{S}(z^\times)= \left \langle\mathsf{S}\left(\pi_1\right) \circ \mathsf{S}(z^\times), \mathsf{S}\left(\pi_1 + \pi_4 \right) \circ \partial  \circ \mathsf{S}(z^\times)   \right \rangle = \left \langle 1, \mathsf{S}(\pi_1) \circ \partial \right  \rangle \\
= \left \langle 1 , \mathsf{S}\left(\pi_1  \right) \circ \partial \right \rangle = \left \langle 1 , 0 \right \rangle  = z^\times
\end{gather*}

\item $\lambda \circ \mathsf{S}\left( s^\times \right) = s^\times \circ \left \langle \mathsf{S}(\pi_1), \pi_2 \circ \lambda \circ \mathsf{S}(q^\times_1),\pi_2 \circ \lambda \circ \mathsf{S}(q^\times_2) \right\rangle$ 

First, observe that we have: 
\begin{align*}
s^\times \circ \langle f, g, h \rangle = \langle f, g+h \rangle
\end{align*}
We leave it as an exercise for the reader to check that the following equality also holds for $\pi_2 \circ \lambda_A \circ  \mathsf{S}\left( q^\times_1 \right): \mathsf{S}(A \times A \times A) \to \mathsf{S}(A)$ and $\pi_2 \circ \lambda_A \circ  \mathsf{S}\left(q^\times_2 \right): \mathsf{S}(A \times A \times A) \to \mathsf{S}(A)$: 
\begin{align*}
\pi_2 \circ \lambda_A \circ  \mathsf{S}\left( q^\times_1 \right)  = \mathsf{S}\left( \langle \pi_1, \pi_5 \rangle \right)  \circ \partial &&   \pi_2 \circ \lambda_A \circ  \mathsf{S}\left( q^\times_2 \right)  = \mathsf{S}\left(\langle \pi_1, \pi_6 \rangle \right)  \circ \partial 
\end{align*}
and also that: 
\begin{align*}
\pi_1 \circ s^\times = \pi_1 && (\pi_1 + \pi_4) \circ (s^\times \times s^\times) = \langle \pi_1, \pi_5 + \pi_6 \rangle
\end{align*}
and lastly that for six copies of $A$ and nine copies of $A$: 
\begin{equation*}\begin{gathered}
\langle \pi_1, \pi_5  \rangle =  \left \langle \pi_1, \pi_8 + \pi_9 \right \rangle \circ \left \langle \pi_1, \pi_2, 0 \right \rangle \quad \quad \quad \langle \pi_1, \pi_6  \rangle =  \left \langle \pi_1, \pi_8 + \pi_9 \right \rangle \circ \left \langle \pi_1, 0, \pi_2 \right \rangle \\
\langle \pi_1, \pi_5 + \pi_6 \rangle =  \left \langle \pi_1, \pi_8 + \pi_9 \right \rangle \circ \left \langle \pi_1, \pi_2, \pi_2 \right \rangle 
\end{gathered}\end{equation*}
Then, using \textbf{[DC.2]}, we compute: 
\begin{align*}
\lambda \circ \mathsf{S}\left( s^\times \right) &= \left\langle  \mathsf{S}(\pi_1) \circ \mathsf{S}\left( s^\times \right),  \mathsf{S}\left( \pi_1 + \pi_4 \right) \circ \partial \circ \mathsf{S}\left( s^\times \right) \right \rangle \\
&=   \left\langle  \mathsf{S}(\pi_1), \mathsf{S}\left( \left \langle \pi_1, \pi_5 + \pi_6 \right \rangle \right) \circ \partial \right \rangle =   \left\langle  \mathsf{S}(\pi_1), \mathsf{S}\left( \left \langle \pi_1, \pi_8 + \pi_9 \right \rangle \right) \circ \mathsf{S}\left( \left \langle \pi_1, \pi_2, \pi_2 \right \rangle \right) \circ \partial \right \rangle  \\
&=\left\langle  \mathsf{S}(\pi_1), \mathsf{S}\left( \left \langle \pi_1, \pi_8 + \pi_9 \right \rangle \right) \circ \left( \mathsf{S}(\langle \pi_1, \pi_2, 0 \rangle) \circ \partial + \mathsf{S}(\langle \pi_1, 0, \pi_2\rangle) \circ \partial \right) \right \rangle \\
&= \left\langle  \mathsf{S}(\pi_1), \mathsf{S}\left( \left \langle \pi_1, \pi_5 \right \rangle \right) \circ \partial +  \mathsf{S}\left( \left \langle \pi_1, \pi_6 \right \rangle \right) \circ \partial \right \rangle  \\ 
&= s^\times \circ \left\langle  \mathsf{S}(\pi_1), \mathsf{S}\left( \left \langle \pi_1, \pi_5 \right \rangle \right) \circ \partial, \mathsf{S}\left( \left \langle \pi_1, \pi_6 \right \rangle \right) \circ \partial \right \rangle \\
&= s^\times \circ \left \langle \mathsf{S}(\pi_1), \pi_2 \circ \lambda_A \circ \mathsf{S}(q^\times_1),\pi_2 \circ \lambda_A \circ \mathsf{S}(q^\times_2) \right \rangle 
\end{align*}
\end{enumerate}

For the next two identities, it will be useful to expand out $(\lambda \times \lambda) \circ \lambda: \mathsf{S}(A \times A \times A \times A) \to \mathsf{S}(A) \times \mathsf{S}(A) \times \mathsf{S}(A) \times \mathsf{S}(A)$. We leave it as an exercise for the reader to check that: 
\begin{align*}
(\lambda \times \lambda) \circ \lambda = \left\langle \mathsf{S}(\pi_1), \mathsf{S}(\pi_1 + \pi_6) \circ \partial, \mathsf{S}(\pi_1 + \pi_7) \circ \partial, \mathsf{S}(\pi_1 + \pi_7 + \pi_{10} + \pi_{16}) \circ \partial \circ \partial \right \rangle
\end{align*}

\begin{enumerate}[{\em (i)}]
\setcounter{enumi}{5}
\item $l^\times \circ \lambda = (\lambda \times \lambda) \circ \lambda \circ \mathsf{S}\left(l^\times \right)$ 

First, observe that we have: 
\begin{align*}
l^\times \circ \langle f, g \rangle = \langle f, 0, 0, g \rangle
\end{align*}
and also, using additive enrichment and the definition of $\langle -, - \rangle$, we have: 
\begin{equation*}\begin{gathered}
\pi_1 \circ l^\times = \pi_1 \quad \quad (\pi_1 + \pi_6) \circ \left( l^\times \times l^\times \right) = \pi_1 = (\pi_1 + \pi_7) \circ \left( l^\times \times l^\times \right)  \\
(\pi_1 + \pi_7 + \pi_{10} + \pi_{16}) \circ \left(l^\times \times l^\times \times l^\times \times l^\times \right) = (\pi_1 + \pi_4) \circ \langle \pi_1, \pi_4 \rangle   
\end{gathered}\end{equation*}
Then, using \textbf{[DC.1]} and \textbf{[DC.5]}, we compute:\\
\resizebox{\linewidth}{!}{
\begin{minipage}{\linewidth}
\begin{align*}
(\lambda \times \lambda) \circ \lambda \circ \mathsf{S}\left( l^\times \right) &= \big\langle \mathsf{S}(\pi_1) \circ \mathsf{S}\left( l^\times \right), \mathsf{S}(\pi_1 + \pi_6) \circ \partial \circ \mathsf{S}\left( l^\times \right),\mathsf{S}(\pi_1 + \pi_7) \circ \partial \circ \mathsf{S}\left( l^\times \right),\\
&\qquad \mathsf{S}(\pi_1 + \pi_7 + \pi_{10} + \pi_{16}) \circ \partial \circ \partial \circ \mathsf{S}\left( l^\times \right) \big\rangle \\ 
&=  \left\langle \mathsf{S}(\pi_1), \mathsf{S}(\pi_1) \circ \partial, \mathsf{S}(\pi_1) \circ \partial, \mathsf{S}(\pi_1 + \pi_4) \circ \mathsf{S}(\langle \pi_1, \pi_4 \rangle) \circ \partial \circ \partial \right \rangle \\
& = \left\langle \mathsf{S}(\pi_1),0, 0, \mathsf{S}(\pi_1 + \pi_4) \circ \partial \right \rangle= l^\times \circ \left\langle \mathsf{S}(\pi_1), \mathsf{S}(\pi_1 + \pi_4) \circ \partial \right \rangle = l^\times \circ \lambda
\end{align*}
\end{minipage}
}\\
\item $\left \langle \pi_1, \pi_3, \pi_2, \pi_4 \right \rangle \circ (\lambda \times \lambda) \circ \lambda = (\lambda \times \lambda) \circ \lambda \circ \mathsf{S}\left( \left \langle \pi_1, \pi_3, \pi_2, \pi_4 \right \rangle \right)$ 

First, observe that we have: 
\begin{align*}
c^\times \circ \left \langle f,g,h,k \right \rangle =  \left \langle f,h,g,k \right \rangle
\end{align*}
and also, using additive enrichment and the definition of $\langle -, - \rangle$, we have: 
\begin{equation*}\begin{gathered}
\pi_1 \circ c^\times  = \pi_1 \quad \quad \quad (\pi_1 + \pi_6) \circ  c^\times  = \pi_1 +\pi_7 \quad \quad \quad (\pi_1 + \pi_7) \circ \left( c^\times \times c^\times \right) = \pi_1 +\pi_6 \\
(\pi_1 + \pi_7 + \pi_{10} + \pi_{16}) \circ \left( c^\times \times c^\times \times c^\times \times c^\times  \right) = (\pi_1 + \pi_7 + \pi_{10} + \pi_{16}) \circ  \left \langle \pi_1, \pi_3, \pi_2, \pi_4 \right \rangle
\end{gathered}\end{equation*}
Then, using \textbf{[DC.6]}, we compute: 
\begin{align*}
\hskip-1cm(\lambda \times \lambda) \circ \lambda \circ \mathsf{S}\left( c^\times \right) &=  \left\langle \mathsf{S}(\pi_1) \circ \mathsf{S}\left( c^\times \right), \mathsf{S}(\pi_1 + \pi_6) \circ \partial \circ \mathsf{S}\left( c^\times \right), \mathsf{S}(\pi_1 + \pi_7) \circ \partial \circ \mathsf{S}\left( c^\times \right),\right.\\
&\qquad\left.\mathsf{S}(\pi_1 + \pi_7 + \pi_{10} + \pi_{16}) \circ \partial \circ \partial \circ \mathsf{S}\left( c^\times \right) \right \rangle \\ 
&=~\big\langle \mathsf{S}(\pi_1), \mathsf{S}(\pi_1 + \pi_7) \circ \partial, \mathsf{S}(\pi_1 + \pi_6) \circ \partial,\\&\qquad \mathsf{S}(\pi_1 + \pi_7 + \pi_{10} + \pi_{16}) \circ \mathsf{S}(\left \langle \pi_1, \pi_3, \pi_2, \pi_4 \right \rangle) \circ \partial \circ \partial \big\rangle  \\
&=~ \left\langle \mathsf{S}(\pi_1), \mathsf{S}(\pi_1 + \pi_7) \circ \partial, \mathsf{S}(\pi_1 + \pi_6) \circ \partial, \mathsf{S}(\pi_1 + \pi_7 + \pi_{10} + \pi_{16})  \circ \partial \circ \partial \right \rangle  \\ 
&=~ c^\times \circ \left\langle \mathsf{S}(\pi_1), \mathsf{S}(\pi_1 + \pi_6) \circ \partial, \mathsf{S}(\pi_1 + \pi_7) \circ \partial, \mathsf{S}(\pi_1 + \pi_7 + \pi_{10} + \pi_{16})  \circ \partial \circ \partial \right \rangle \\
&=~c^\times \circ (\lambda \times \lambda) \circ \lambda 
\end{align*}

\end{enumerate}
We conclude that $(\mathsf{S}, \mu, \eta, \lambda)$ is a tangent monad. If $\mathbb{X}$ is also an additive category, it remains to show that $\lambda$ also satisfies the last identity from Definition \ref{def:tangent-monad}. 
\begin{enumerate}[{\em (i)}]
\setcounter{enumi}{7}
\item $n^\times \circ \lambda = \lambda \circ \mathsf{S}(n^\times)$

First, observe that: 
\begin{align*}
n^\times \circ (\langle f, g \rangle) = \langle f, -g \rangle    
\end{align*}
and also that: 
\begin{align*}
\pi_1 \circ n^\times = \pi_1 && (\pi_1 + \pi_4) \circ (n^\times \times n^\times) = (\pi_1 + \pi_4) \circ n^\times
\end{align*}
Then, using \textbf{[DC.N]}, we compute:
\begin{align*}
n^\times \circ \lambda &= n^\times \circ \left \langle\mathsf{S}\left(\pi_1\right) , \mathsf{S}\left(\pi_1 + \pi_4 \right) \circ \partial   \right \rangle = \left \langle\mathsf{S}\left(\pi_1\right) , \mathsf{S}\left(\pi_1 + \pi_4 \right) \circ (-\partial)   \right \rangle \\
&= \left \langle\mathsf{S}\left(\pi_1\right) , \mathsf{S}\left(\pi_1 + \pi_4 \right) \circ \mathsf{S}(n^\times) \circ \partial   \right \rangle = \left \langle\mathsf{S}\left(\pi_1\right) \circ \mathsf{S}(n^\times) , \mathsf{S}\left(\pi_1 + \pi_4 \right)  \circ \partial  \circ \mathsf{S}(n^\times) \right \rangle \\
&= \lambda \circ \mathsf{S}(n^\times) 
\end{align*}
\end{enumerate}
We conclude that $(\mathsf{S}, \mu, \eta, \lambda)$ is a Rosick\'y tangent monad.\end{proof}

The converse of Proposition \ref{prop:cdmonad-tanmonad} is also true, that is, every tangent monad on a semi-additive category induces a coCartesian differential monad. Furthermore, these constructions are inverses of each other, and therefore, for a semi-additive category, the data of a tangent monad is equivalent to that of a coCartesian differential monad. 

\begin{lemma}\label{lem:S-tan} Let $\mathbb{X}$ be a semi-additive category, and let $(\mathsf{S}, \mu, \eta, \lambda)$ be a tangent monad on $(\mathbb{X}, \mathbb{B})$. Define the natural transformation $\partial_A: \mathsf{S}(A) \to \mathsf{S}(A \times A)$ as follows: 
\begin{align*}
\partial_A := \pi_2 \circ \lambda_{A \times A} \circ \mathsf{S}(\langle 1_A, 0, 0, 1_A \rangle)  
\end{align*}
Then, $(\mathsf{S}, \mu, \eta, \partial)$ is a coCartesian differential monad on $\mathbb{X}$.
\end{lemma}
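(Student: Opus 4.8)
The plan is to verify the six axioms \textbf{[DC.1]}--\textbf{[DC.6]} directly for the natural transformation $\partial_A := \pi_2 \circ \lambda_{A\times A} \circ \mathsf{S}(\langle 1_A, 0, 0, 1_A \rangle)$, using the defining equations of a $\mathbb{B}$-distributive law from Definition \ref{def:tangent-monad} together with naturality and the commutative monoid enrichment. This is essentially the proof of Proposition \ref{prop:cdmonad-tanmonad} run in reverse.

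The first step is to record the basic shape of $\lambda$. Since a $\mathbb{B}$-distributive law satisfies $p^\times_A \circ \lambda_A = \mathsf{S}(p^\times_A)$, i.e. $\pi_1 \circ \lambda_A = \mathsf{S}(\pi_1)$, we may write $\lambda_A = \langle \mathsf{S}(\pi_1), \lambda'_A \rangle$ for the natural transformation $\lambda'_A := \pi_2 \circ \lambda_A \colon \mathsf{S}(A\times A) \to \mathsf{S}(A)$, so that $\partial_A = \lambda'_{A\times A} \circ \mathsf{S}(\langle 1_A, 0, 0, 1_A\rangle)$. Naturality of $\lambda'$ then lets one push maps across it in the form $\mathsf{S}(g) \circ \lambda'_B = \lambda'_C \circ \mathsf{S}(g \times g)$, and almost everything else is bookkeeping with projections on products.

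Then one proceeds axiom by axiom. Naturality of $\lambda'$ gives $\mathsf{S}(\pi_1) \circ \partial_A = \lambda'_A \circ \mathsf{S}(z^\times_A)$, which by the zero-compatibility $\lambda_A \circ \mathsf{S}(z^\times_A) = z^\times_{\mathsf{S}(A)}$ equals $\pi_2 \circ z^\times_{\mathsf{S}(A)} = 0$, yielding \textbf{[DC.1]}. Naturality of $\eta$ together with $\lambda_A \circ \eta_{A\times A} = \eta_A \times \eta_A$ gives $\partial_A \circ \eta_A = \eta_{A\times A} \circ \langle 0, 1_A\rangle$, which is \textbf{[DC.3]}. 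The additivity axiom \textbf{[DC.2]} follows from the sum-compatibility equation of $\lambda$ (the one involving $s^\times$ and the $q^\times_j$), after unwinding $\partial$ and using the computed expressions for $\pi_2 \circ \lambda_A \circ \mathsf{S}(q^\times_j)$. The chain rule \textbf{[DC.4]} is extracted from the multiplicative compatibility $(\mu_A \times \mu_A) \circ \lambda_{\mathsf{S}(A)} \circ \mathsf{S}(\lambda_A) = \lambda_A \circ \mu_{A\times A}$, and \textbf{[DC.5]}, \textbf{[DC.6]} come respectively from $l^\times_{\mathsf{S}(A)} \circ \lambda_A = (\lambda_A \times \lambda_A)\circ\lambda_{A\times A}\circ\mathsf{S}(l^\times_A)$ and $c^\times_{\mathsf{S}(A)}\circ(\lambda_A\times\lambda_A)\circ\lambda_{A\times A} = (\lambda_A\times\lambda_A)\circ\lambda_{A\times A}\circ\mathsf{S}(c^\times_A)$; the latter two first require expressing the iterate $\partial_{A\times A}\circ\partial_A$ in terms of $(\lambda_A\times\lambda_A)\circ\lambda_{A\times A}$. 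If $\mathbb{X}$ happens to be additive, no extra work is needed, as \textbf{[DC.N]} is then automatic.

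A cleaner packaging, which I would at least mention, is to first check the purely algebraic facts that the assignment $\partial \mapsto \lambda$ of Proposition \ref{prop:cdmonad-tanmonad} and the assignment $\lambda \mapsto \partial$ above are mutually inverse bijections between natural transformations $\mathsf{S}(A) \to \mathsf{S}(A\times A)$ and natural transformations $\mathsf{S}(A\times A) \to \mathsf{S}(A)\times\mathsf{S}(A)$ of the form $\langle\mathsf{S}(\pi_1), -\rangle$; these identities use only naturality and enrichment. Granting this, the six differential combinator axioms for $\partial$ correspond one-for-one to the $\mathbb{B}$-distributive-law axioms for $\lambda = \lambda(\partial)$, and the reverse implications are obtained by reading the computations of Proposition \ref{prop:cdmonad-tanmonad} backwards. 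Either way, the main obstacle is the same as in Proposition \ref{prop:cdmonad-tanmonad}: the combinatorial bookkeeping of projections on high iterated products ($A^4$, $A^8$, $A^{16}$), and in particular identifying $\partial_{A\times A} \circ \partial_A$ with the correct reindexing of $(\lambda_A \times \lambda_A)\circ\lambda_{A\times A}$; once that dictionary is in place, \textbf{[DC.5]} and \textbf{[DC.6]} fall out immediately.
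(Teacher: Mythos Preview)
Your proposal is correct and follows essentially the same approach as the paper: the paper's proof is only a sketch that indicates the same one-for-one correspondence you describe, namely \textbf{[DC.1]} from the $z^\times$ axiom, \textbf{[DC.2]} from the $s^\times$ axiom, \textbf{[DC.3]} from the $\eta$ axiom, \textbf{[DC.4]} from the $\mu$ axiom, \textbf{[DC.5]} from the $l^\times$ axiom, and \textbf{[DC.6]} from the $c^\times$ axiom. Your ``cleaner packaging'' via the mutual inverse bijection is exactly what the paper records immediately afterward as a corollary, so you have in fact supplied more detail than the paper itself.
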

\begin{proof}Since the proof is essentially again by brute force calculations, and not necessarily more enlightening for this paper, we omit them here and instead simply give a sketch. To prove \textbf{[DC.1]}, we use the axiom between $\lambda$ and $z^\times$. To prove \textbf{[DC.2]}, we use the axiom between $\lambda$ and $s^\times$. To prove \textbf{[DC.3]}, we use the axiom between $\lambda$ and $\eta$. To prove \textbf{[DC.4]}, we use the axiom between $\lambda$ and $\mu$. To prove \textbf{[DC.5]}, we use the axiom between $\lambda$ and $l^\times$. And lastly, to prove \textbf{[DC.6]}, we use the axiom between $\lambda$ and $c^\times$.
\end{proof}

\begin{corollary} For a monad $(\mathsf{S}, \mu, \eta)$ on a semi-additive category $\mathbb{X}$, there is a bijective correspondence between $\mathbb{B}$-distributive laws and differential combinator transformations. Therefore, for a semi-additive category $\mathbb{X}$, there is a bijective correspondence between tangent monads on $(\mathbb{X}, \mathbb{B})$ and coCartesian differential monads on $\mathbb{X}$. 
\end{corollary}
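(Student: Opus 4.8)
The plan is to check that the two constructions of Proposition~\ref{prop:cdmonad-tanmonad} and Lemma~\ref{lem:S-tan} are mutually inverse. Fix the monad $(\mathsf{S}, \mu, \eta)$ on the semi-additive category $\mathbb{X}$. Let $\Phi$ be the assignment sending a differential combinator transformation $\partial$ to the natural transformation $\lambda_A := \langle \mathsf{S}(\pi_1), \mathsf{S}(\pi_1 + \pi_4) \circ \partial_{A \times A} \rangle$, and let $\Psi$ be the assignment sending a $\mathbb{B}$-distributive law $\lambda$ to the natural transformation $\partial_A := \pi_2 \circ \lambda_{A \times A} \circ \mathsf{S}(\langle 1_A, 0, 0, 1_A \rangle)$. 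By Proposition~\ref{prop:cdmonad-tanmonad}, $\Phi(\partial)$ is a $\mathbb{B}$-distributive law over $(\mathsf{S}, \mu, \eta)$, and by Lemma~\ref{lem:S-tan}, $\Psi(\lambda)$ is a differential combinator transformation on $(\mathsf{S}, \mu, \eta)$; so $\Phi$ and $\Psi$ are maps between the two sets in question. Moreover any $\mathbb{B}$-distributive law $\lambda$ satisfies $\pi_1 \circ \lambda_A = \mathsf{S}(\pi_1)$ (the compatibility with $p^\times$ in Definition~\ref{def:tangent-monad}), so $\lambda$ is completely determined by its second component $\pi_2 \circ \lambda_A$; this will be used when comparing $\lambda$ with $\Phi(\Psi(\lambda))$. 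It remains to show $\Psi \circ \Phi = \id$ and $\Phi \circ \Psi = \id$.

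First I would do $\Psi \circ \Phi = \id$, which is the easy direction. Starting from $\partial$ and setting $\lambda = \Phi(\partial)$, we have $\pi_2 \circ \lambda_{A \times A} = \mathsf{S}(\pi_1 + \pi_4) \circ \partial_{(A \times A) \times (A \times A)}$, so $\Psi(\lambda)_A = \mathsf{S}(\pi_1 + \pi_4) \circ \partial_{(A \times A) \times (A \times A)} \circ \mathsf{S}(\langle 1_A, 0, 0, 1_A \rangle)$. Applying naturality of $\partial$ to the map $\langle 1_A, 0, 0, 1_A \rangle : A \to (A \times A) \times (A \times A)$ rewrites this as $\mathsf{S}\bigl( (\pi_1 + \pi_4) \circ (\langle 1_A, 0, 0, 1_A \rangle \times \langle 1_A, 0, 0, 1_A \rangle) \bigr) \circ \partial_A$, and inspecting coordinates shows that the reindexing map $(\pi_1 + \pi_4) \circ (\langle 1_A, 0, 0, 1_A \rangle \times \langle 1_A, 0, 0, 1_A \rangle)$ is equal to $1_{A \times A}$. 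Hence $\Psi(\Phi(\partial))_A = \partial_A$. Beyond naturality, this uses only the product/biproduct bookkeeping identities already recalled in the proof of Proposition~\ref{prop:cdmonad-tanmonad}.

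Next I would treat $\Phi \circ \Psi = \id$. Given a $\mathbb{B}$-distributive law $\lambda$, write $\partial = \Psi(\lambda)$ and $\tilde{\lambda} = \Phi(\partial)$. The first components of $\tilde{\lambda}_A$ and $\lambda_A$ both equal $\mathsf{S}(\pi_1)$, so by the remark above it suffices to prove that the second components agree, namely $\mathsf{S}(\pi_1 + \pi_4) \circ \partial_{A \times A} = \pi_2 \circ \lambda_A$. Here I would expand $\partial_{A \times A}$ by the definition of $\Psi$ and then repeatedly apply naturality of $\lambda$ together with the compatibility of $\lambda$ with the vertical lift $l^\times$ and the zero map $z^\times$ from Definition~\ref{def:tangent-monad} — exactly the axioms used in Lemma~\ref{lem:S-tan} to derive \textbf{[DC.5]} and \textbf{[DC.1]} — in order to collapse the resulting composite of reindexing maps down to $\pi_2 \circ \lambda_A$. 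I expect this to be the main obstacle: it is a brute-force calculation in the style of the proofs of Proposition~\ref{prop:cdmonad-tanmonad} and Lemma~\ref{lem:S-tan}, and the only real difficulty is tracking which coordinate projections appear once $\lambda$ is iterated and which biproduct injections vanish; no new conceptual ingredient is needed.

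Finally, the second assertion is formal. Neither $\Phi$ nor $\Psi$ changes the underlying monad, so the bijection between $\mathbb{B}$-distributive laws over $(\mathsf{S}, \mu, \eta)$ and differential combinator transformations on $(\mathsf{S}, \mu, \eta)$ restricts, for each monad, to a bijection between tangent monads on $(\mathbb{X}, \mathbb{B})$ with underlying monad $(\mathsf{S}, \mu, \eta)$ and coCartesian differential monads on $\mathbb{X}$ with underlying monad $(\mathsf{S}, \mu, \eta)$; ranging over all monads yields the claimed bijection between tangent monads on $(\mathbb{X}, \mathbb{B})$ and coCartesian differential monads on $\mathbb{X}$. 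In the additive setting the extra $n^\times$-compatibility of $\lambda$, respectively the axiom \textbf{[DC.N]} for $\partial$, holds automatically by the lemma preceding Proposition~\ref{prop:cdmonad-tanmonad} and the corresponding step of its proof, so the same correspondence holds verbatim for Rosick\'y tangent monads.
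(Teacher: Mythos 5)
Your overall strategy is exactly the paper's: show that the two assignments of Proposition \ref{prop:cdmonad-tanmonad} and Lemma \ref{lem:S-tan} are mutually inverse, reduce the comparison of distributive laws to their second components via $\pi_1 \circ \lambda_A = \mathsf{S}(\pi_1)$, and observe that the underlying monad is untouched so the correspondence passes to tangent monads versus coCartesian differential monads; your $\Psi \circ \Phi$ computation is verbatim the paper's. The one place you diverge is the direction you left as an anticipated ``brute-force'' obstacle, $\Phi \circ \Psi$: you expect to need the compatibilities of $\lambda$ with $l^\times$ and $z^\times$ and an iterated-$\lambda$ bookkeeping argument, but none of that is required. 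The step is exactly symmetric to the one you completed, with naturality of $\lambda$ playing the role that naturality of $\partial$ played there: for $g := \pi_1 + \pi_4 \colon (A \times A) \times (A \times A) \to A$, naturality of $\lambda$ at $g$ gives $\mathsf{S}(g) \circ \pi_2 \circ \lambda_{(A \times A) \times (A \times A)} = \pi_2 \circ \lambda_A \circ \mathsf{S}(g \times g)$, and hence
\[
\mathsf{S}(\pi_1 + \pi_4) \circ \partial_{A \times A} \;=\; \pi_2 \circ \lambda_A \circ \mathsf{S}\bigl( ((\pi_1 + \pi_4) \times (\pi_1 + \pi_4)) \circ \langle 1_{A \times A}, 0, 0, 1_{A \times A} \rangle \bigr) \;=\; \pi_2 \circ \lambda_A,
\]
using the bookkeeping identity $((\pi_1 + \pi_4) \times (\pi_1 + \pi_4)) \circ \langle 1_{A \times A}, 0, 0, 1_{A \times A} \rangle = 1_{A \times A}$, which together with your remark on first components gives $\Phi(\Psi(\lambda)) = \lambda$ in one line --- this is precisely how the paper finishes. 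So there is no gap in substance, only an unexecuted step whose difficulty (and required axioms) you overestimated; your closing remarks on the additive/Rosick\'y case and on ranging over monads are consistent with the paper.
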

\begin{proof} We must show that the constructions of Proposition \ref{prop:cdmonad-tanmonad} and Lemma \ref{lem:S-tan} are mutually inverse. Starting with a $\mathbb{B}$-distributive law $\lambda$, observe first that, for two copies of $A$ we have that: 
\begin{align*}
\left( (\pi_1 + \pi_4) \times (\pi_1 + \pi_4) \right) \circ  \langle 1_{A \times A}, 0, 0, 1_{A \times A} \rangle = 1_{A \times A}
\end{align*}
Then, we compute: 
\begin{align*}
\left\langle \mathsf{S}(\pi_1), \mathsf{S}\left( \pi_1 + \pi_4 \right) \circ \partial_{A \times A} \right \rangle &= \left\langle \mathsf{S}(\pi_1), \mathsf{S}\left( \pi_1 + \pi_4 \right) \circ \pi_2 \circ \lambda_{A \times A \times A \times A} \circ \mathsf{S}(\langle 1_{A \times A}, 0, 0, 1_{A \times A} \rangle)   \right \rangle \\
&=~ \left\langle \pi_1 \circ \lambda_A,  \pi_2 \circ \lambda_A   \right \rangle = \left\langle \pi_1 ,  \pi_2   \right \rangle \circ \lambda_A = \lambda_A 
\end{align*}
Starting instead from a differential combinator transformation $\partial$, observe first that, for eight copies of $A$: 
\begin{align*}
\left( \pi_1 + \pi_4 \right) \circ  \left( \langle 1_{A}, 0, 0, 1_{A} \rangle \times \langle 1_{A}, 0, 0, 1_{A} \rangle \right) = 1_{A \times A}
\end{align*}
Then, we compute: 
\begin{gather*}
\pi_2 \circ \lambda_{A \times A} \circ \mathsf{S}(\langle 1_A, 0, 0, 1_A \rangle)  =  \mathsf{S}\left( \pi_1 + \pi_4 \right) \circ \partial_{A \times A} \circ  \mathsf{S}(\langle 1_A, 0, 0, 1_A \rangle) = \partial_A 
\end{gather*}   
We conclude that $\mathbb{B}$-distributive laws and differential combinator transformations are indeed in bijective correspondence, and therefore, so are coCartesian differential monads and tangent monads. 
\hfill \end{proof}

Since every coCartesian differential monad is a tangent monad, by applying \cite[Proposition 20]{cockett_et_al:LIPIcs:2020:11660}, we obtain a tangent structure on the category of algebras of a coCartesian differential monad. We expand out this construction in detail. Recall that, for a monad $(\mathsf{S}, \mu, \eta)$ on a category $\mathbb{X}$ with finite products, $\mathsf{Alg}_{\mathsf{S}}$ also has finite products:
\[ (A, \alpha) \times (A^\prime, \alpha^\prime) := (A \times A^\prime, \langle \alpha \circ \mathsf{S}(\pi_1), \alpha^\prime \circ \mathsf{S}(\pi_2) \rangle) \]
The terminal object is $(\ast, t_{\mathsf{S}(\ast)})$, and the projections and pairings are the same as in $\mathbb{X}$. 

\begin{thm}\label{thm:S-tan} Let $(\mathsf{S}, \mu, \eta, \partial)$ be a coCartesian differential monad on a semi-additive category $\mathbb{X}$. Define: 
\begin{enumerate}[{\em (i)}]
\item The tangent bundle functor as the functor $\mathsf{T}: \mathsf{ALG}_{\mathsf{S}} \to \mathsf{ALG}_{\mathsf{S}}$ defined on objects as:
\begin{align*}
\mathsf{T}(A,\alpha) = \left(A \times A, (\alpha \times \alpha) \circ \lambda \right) = \left(A \times A, \left\langle \alpha \circ \mathsf{S}(\pi_1), \alpha \circ \mathsf{S}\left( \pi_1 + \pi_4 \right) \circ \partial_{A \times A} \right \rangle \right)
\end{align*}
and on maps as $\mathsf{T}(f) = f \times f$;
\item The projection as the natural transformation $p_{(A,\alpha)}:\mathsf{T}(A,\alpha) \to (A, \alpha)$ defined as:
\[p_{(A,\alpha)} := \pi_1\] 
and where the $n$-fold pullback of $p_{(A,\alpha)}$ is:
\begin{multline*}
\mathsf{T}_n(A,\alpha)=\Bigg( \prod\limits^{n+1}_{i=1} A,  \big\langle \alpha \circ \mathsf{S}(\pi_1), \alpha \circ \mathsf{S}\left( \pi_1 + \pi_4 \right) \circ \partial_{A \times A} \circ \mathsf{S}(\langle \pi_1, \pi_2 \rangle), \\\hdots, \alpha \circ \mathsf{S}\left( \pi_1 + \pi_4 \right) \circ \partial_{A \times A}  \circ \mathsf{S}(\langle \pi_1, \pi_{n+1} \rangle) \big\rangle  \Bigg)
\end{multline*}
with pullback projections ${q_j: \mathsf{T}_n(A,\alpha) \to \mathsf{T}(A,\alpha)}$ defined as: \[q_j = \langle \pi_1, \pi_{j+1} \rangle\]
\item The sum as the natural transformation $s_A: \mathsf{T}_2(A,\alpha) \to \mathsf{T}(A,\alpha)$ defined as:
\[s_{(A,\alpha)} := \langle \pi_1, \pi_2 + \pi_3 \rangle\]
\item The zero map as the natural transformation $z_{(A,\alpha)}: (A, \alpha) \to \mathsf{T}(A,\alpha)$ defined as:
\[z_{(A,\alpha)} := \langle 1_A, 0 \rangle\]
\item The vertical lift as the natural transformation $l_{(A,\alpha)}:\mathsf{T}^2(A,\alpha) \to \mathsf{T}(A,\alpha)$ defined as:
\[l_{(A,\alpha)} := \langle \pi_1, 0,0, \pi_2 \rangle\]
\item The canonical flip as the natural transformation $c_{(A,\alpha)}: \mathsf{T}^2(A,\alpha) \to \mathsf{T}^2(A,\alpha)$ defined as:
\[c_{(A,\alpha)} := \!\langle \pi_1, \pi_3, \pi_2, \pi_4 \rangle\]
\end{enumerate}
Then, $\mathbb{T} = (\mathsf{T},  p, s, z, l, c)$ is a tangent structure on $\mathsf{ALG}_\mathsf{S}$, and so $(\mathsf{ALG}_\mathsf{S}, \mathbb{T})$ is a Cartesian tangent category. If $\mathbb{X}$ is also an additive category, then define: 
\begin{enumerate}[{\em (i)}]
\setcounter{enumi}{6}
\item The negative map as the natural transformation $n_{(A,\alpha)}:  \mathsf{T}(A,\alpha) \to \mathsf{T}(A,\alpha)$ defined as: 
\[n_{(A,\alpha)} := \langle \pi_1, -\pi_2 \rangle\]
\end{enumerate}
Then, $\mathbb{T}= (\mathsf{T},  p, s, z, l, c, n)$ is a Rosick\'y tangent structure on $\mathsf{ALG}_\mathsf{S}$, and so, $(\mathsf{ALG}_\mathsf{S}, \mathbb{T})$ is a Cartesian Rosick\'y tangent category. 
\end{thm}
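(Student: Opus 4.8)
The plan is to avoid checking the tangent category axioms directly and instead to derive the statement from the results already assembled. Since $\mathbb{X}$ is semi-additive (resp.\ additive), Lemma~\ref{lemma:biproduct} makes $(\mathbb{X}, \mathbb{B})$ a Cartesian (resp.\ Cartesian Rosick\'y) tangent category, and by Proposition~\ref{prop:cdmonad-tanmonad} the quadruple $(\mathsf{S}, \mu, \eta, \lambda)$, with $\lambda_A = \langle \mathsf{S}(\pi_1), \mathsf{S}(\pi_1 + \pi_4) \circ \partial_{A \times A}\rangle$, is a (Rosick\'y) tangent monad on $(\mathbb{X}, \mathbb{B})$. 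Applying \cite[Proposition~20]{cockett_et_al:LIPIcs:2020:11660} then yields a (Rosick\'y) tangent structure on $\mathsf{ALG}_\mathsf{S}$, lifted along the forgetful functor $U \colon \mathsf{ALG}_\mathsf{S} \to \mathbb{X}$, which is a strict tangent morphism, with tangent bundle $\mathsf{T}(A,\alpha) = (\mathsf{B}(A), \mathsf{B}(\alpha) \circ \lambda_A)$ on objects. What remains is to rewrite the structure maps of this lifted tangent structure in the explicit form displayed in the statement.

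First I would unwind the tangent bundle functor: on objects $\mathsf{B}(A) = A \times A$, $\mathsf{B}(\alpha) = \alpha \times \alpha$, and $(\alpha \times \alpha) \circ \langle \mathsf{S}(\pi_1), \mathsf{S}(\pi_1 + \pi_4) \circ \partial_{A \times A}\rangle = \langle \alpha \circ \mathsf{S}(\pi_1), \alpha \circ \mathsf{S}(\pi_1 + \pi_4) \circ \partial_{A \times A}\rangle$, exactly as claimed; on maps $\mathsf{T}(f) = \mathsf{B}(f) = f \times f$, which is an $\mathsf{S}$-algebra morphism since $U\mathsf{T} = \mathsf{B}U$. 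Because $U$ is a strict tangent morphism, the underlying maps in $\mathbb{X}$ of the lifted $p$, $s$, $z$, $l$, $c$ (and, in the additive case, $n$) are literally the maps $p^\times, s^\times, z^\times, l^\times, c^\times$ (and $n^\times$) of Lemma~\ref{lemma:biproduct}, which already have the displayed forms; the only thing to observe is that each is a morphism of $\mathsf{S}$-algebras between the relevant algebras, which is part of the lifting result cited above.

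Next I would treat the pullbacks. Since $U$ creates limits, the $n$-fold pullback of $p_{(A,\alpha)}$ exists and is carried by the $\mathbb{X}$-pullback $\prod_{i=1}^{n+1} A$ of $p^\times_A = \pi_1$, endowed with the unique $\mathsf{S}$-algebra structure $\xi$ making each $q_j = \langle \pi_1, \pi_{j+1}\rangle$ a morphism into $\mathsf{T}(A,\alpha)$. Composing $\xi$ with the two components of $q_j$ and invoking the algebra structure of $\mathsf{T}(A,\alpha)$ gives $\pi_1 \circ \xi = \alpha \circ \mathsf{S}(\pi_1)$ and $\pi_{j+1} \circ \xi = \alpha \circ \mathsf{S}(\pi_1 + \pi_4) \circ \partial_{A \times A} \circ \mathsf{S}(\langle \pi_1, \pi_{j+1}\rangle)$, which is precisely the structure map displayed for $\mathsf{T}_n(A,\alpha)$; uniqueness holds because the $q_j$ are jointly monic. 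That $\mathsf{T}^m$ preserves these pullbacks reduces, via $U\mathsf{T}^m = \mathsf{B}^m U$ and the fact that $U$ creates limits, to the fact that $\mathsf{B}$, being a finite power and hence continuous, preserves them in $\mathbb{X}$.

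Finally, finite products in $\mathsf{ALG}_\mathsf{S}$ are created by $U$, and since $(\mathbb{X}, \mathbb{B})$ is a Cartesian tangent category the canonical comparison maps $\langle \mathsf{T}(\pi_1), \dots, \mathsf{T}(\pi_n)\rangle$ and $t_{\mathsf{T}(\ast)}$ in $\mathsf{ALG}_\mathsf{S}$ have invertible underlying maps in $\mathbb{X}$, hence are themselves isomorphisms, which gives the Cartesian tangent category assertion; when $\mathbb{X}$ is additive, Proposition~\ref{prop:cdmonad-tanmonad} additionally provides the compatibility of $\lambda$ with $n^\times$, so $(\mathsf{S}, \mu, \eta, \lambda)$ is a Rosick\'y tangent monad and the negative map $n_{(A,\alpha)} = \langle \pi_1, -\pi_2\rangle$ is lifted as well. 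I expect the one genuinely delicate step to be the identification of the lifted pullbacks $\mathsf{T}_n(A,\alpha)$ together with the verification that iterates of $\mathsf{T}$ preserve them; the remaining verifications are a routine transcription of Lemma~\ref{lemma:biproduct} and Proposition~\ref{prop:cdmonad-tanmonad} through the strict tangent morphism $U$.
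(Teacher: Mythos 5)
Your proposal is correct and follows essentially the same route as the paper: the paper also obtains the theorem by combining Proposition \ref{prop:cdmonad-tanmonad} (a coCartesian differential monad is a tangent monad for the biproduct tangent structure of Lemma \ref{lemma:biproduct}) with the lifting result \cite[Proposition 20]{cockett_et_al:LIPIcs:2020:11660}, and then expanding the lifted structure maps, pullbacks $\mathsf{T}_n(A,\alpha)$, and Cartesian/Rosick\'y parts explicitly. Your additional remarks on the forgetful functor creating limits and on $\mathsf{T}^m$ preserving the pullbacks just spell out details the paper leaves implicit.
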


We again stress that even if for an $\mathsf{S}$-algebra $(A, \alpha)$, its tangent bundle $\mathsf{T}(A,\alpha)$ and the product $(A, \alpha) \times (A, \alpha)$ have the same underlying object $A \times A$, they are in general not equal (or even isomorphic) as $\mathsf{S}$-algebras. They are only equal when $(A, \alpha)$ is a differential object, as we will discuss below in Section \ref{subsec:diffobj-ccdc}. 

\subsection{Adjoint Tangent Structure for coCartesian Differential Monads}
\label{subsection:adjoint-tangent-structure-cCD-monads}

In this section, we discuss adjoint tangent structure for the category of algebras of a coCartesian differential monad. The first thing to observe is that, for any semi-additive category $\mathbb{X}$, $(\mathbb{X}, \mathbb{B})$ has adjoint tangent structure, where, since $\mathbb{X}^{op}$ is also a (semi-)additive category, the tangent bundle functor is the same as the one in $\mathbb X$. Concretely, to explain why $(\mathbb{X}, \mathbb{B})$ has adjoint tangent structure, by Lemma \ref{lemma:dual-tangent-structure} it suffices to explain why the tangent bundle functor has a left adjoint. 

\begin{lemma}\cite[Section 6]{cockett_et_al:LIPIcs:2020:11660}\label{lemma:biproduct-adjoint} Let $\mathbb{X}$ be a semi-additive category. The tangent bundle functor $\mathsf{B}$ is its own left adjoint, where the unit $\eta^\times_A: A \to A \times A \times A \times A$ of the adjunction injects $A$ into the first and last component:
\[\eta^\times_A := \langle 1_A, 0, 0, 1_A \rangle\] 
and where the counit $\varepsilon_A: A \times A \times A \times A \to A$ sums the first and last components:
\[\varepsilon^\times_A := \pi_1 + \pi_4.\] 
So $(\eta^\times, \varepsilon^\times): \mathsf{B} \dashv \mathsf{B}$. Therefore, $(\mathbb{X}, \mathbb{B})$ has adjoint tangent structure, where: 
\begin{enumerate}[{\em (i)}]
\item The adjoint tangent bundle functor is the tangent bundle functor $\mathsf{B}: \mathbb{X}^{op} \to \mathbb{X}^{op}$;
\item The adjoint projection $p^{\times^\circ}_A: A \times A \to A$ is precisely the zero of the tangent structure:
\[p^{\times^\circ}_A = \langle 1_A, 0 \rangle\] 
and $\mathsf{B}_n(A)$ is the $n$-fold pushout of $p^{\times^\circ}_A$ where the $j$-th pushout injection $q^{\times^\circ}_j: A \times A \to \mathsf{B}_n(A)$ is given by injecting the first component into the first component and the second component into the $j$-th component: 
\[q^{\times^\circ}_j := \langle \pi_1, 0, \hdots, 0, \pi_2, 0, \hdots, 0 \rangle\]
\item The adjoint sum $s^{\times^\circ}_A: A \times A \to A \times A \times A$ is given by copying the second component:
\[s^{\times^\circ}_A := \langle \pi_1, \pi_2, \pi_2 \rangle\]
\item The adjoint vertical lift $l^{\times^\circ}_A: A \times A \times A \times A \to A \times A$ projects the first component onto the first component and the fourth component onto the second component:
\[l^{\times^\circ}_A := \langle \pi_1, \pi_4 \rangle\] 
\item The adjoint canonical flip $c^{\times^\circ}_A: A \times A \times A \times A \to A \times A \times A \times A$ is the same as the canonical flip:
\[c^{\times^\circ}_A := \langle \pi_1, \pi_3, \pi_2, \pi_4 \rangle\]
\end{enumerate}
Then $\mathbb{B}^\circ= (\mathsf{B},  p^{\times^\circ}, s^{\times^\circ}, z^{\times^\circ}, l^{\times^\circ}, c^{\times^\circ})$ is a tangent structure on $\mathbb{X}^{op}$, and so $(\mathbb{X}^{op}, \mathbb{B}^\circ)$ is a Cartesian tangent category. Similarly, if $\mathbb{X}$ is an additive category, then: 
\begin{enumerate}[{\em (i)}]
\setcounter{enumi}{6}
\item The adjoint negative $n^{\times^\circ}: A\times A \to A \times A$ is the same as the negative of the tangent structure:
\[n^{\times^\circ}_A = \langle \pi_1, - \pi_2 \rangle\]
\end{enumerate}
Then $\mathbb{B}^\circ= (\mathsf{B},  p^{\times^\circ}, s^{\times^\circ}, z^{\times^\circ}, l^{\times^\circ}, c^{\times^\circ}, n^{\times^\circ})$ is a Rosick\'y tangent structure on $\mathbb{X}^{op}$, and so $(\mathbb{X}^{op}, \mathbb{B}^\circ)$ is a Cartesian Rosick\'y tangent category.
\end{lemma}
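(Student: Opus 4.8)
The plan is to reduce everything to the general machinery already assembled: first establish the adjunction $(\eta^\times,\varepsilon^\times)\colon \mathsf{B}\dashv \mathsf{B}$ by the triangle identities, then feed this into Corollary \ref{corollary:dual-tangent-structure} to obtain adjoint tangent structure, and finally read off the explicit descriptions of $\mathbb{B}^\circ$ by unwinding the formulas of Theorem \ref{theorem:dual-tangent-structure}.

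\textbf{Step 1 (the adjunction).} Since $\mathsf{B}(A)=A\times A$, the maps $\eta^\times_A=\langle 1_A,0,0,1_A\rangle\colon A\to A^{\times 4}$ and $\varepsilon^\times_A=\pi_1+\pi_4\colon A^{\times 4}\to A$ are built entirely from finite biproduct data, so nothing beyond semi-additivity is used. I would verify the two triangle identities $\varepsilon^\times_{\mathsf{B}(A)}\circ \mathsf{B}(\eta^\times_A)=1_{\mathsf{B}(A)}$ and $\mathsf{B}(\varepsilon^\times_A)\circ \eta^\times_{\mathsf{B}(A)}=1_{\mathsf{B}(A)}$ directly, using only the identities $\langle f\circ\pi_1,g\circ\pi_2\rangle=f\times g$ and $(f\times g)\circ\langle h,k\rangle=\langle f\circ h,g\circ k\rangle$ together with bilinearity of composition: in both cases the composite acts coordinatewise and every term routed through a zero injection drops out, leaving the identity. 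Morally this is just the natural bijection $\mathbb{X}(A\times A,B)\cong\mathbb{X}(A,B)\times\mathbb{X}(A,B)\cong\mathbb{X}(A,B\times B)$ available in any semi-additive category, but the triangle-identity route keeps the verification to a few lines and pins down the specific unit and counit claimed.

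\textbf{Step 2 (adjoint tangent structure exists).} Computing $p^{\times^\circ}_A:=p^\times_{\mathsf{B}(A)}\circ\eta^\times_A$ gives $\langle 1_A,0\rangle$ after one simplification, i.e. $p^{\times^\circ}_A$ is the first coproduct injection $A\to A\oplus A$ of $\mathbb{X}$. The $n$-fold pushout of a coproduct injection exists in any category with finite coproducts and is again a coproduct, so here it is the $(n{+}1)$-fold coproduct, which in the semi-additive category $\mathbb{X}$ is the biproduct $\mathsf{B}_n(A)=\prod_{i=1}^{n+1}A$ with the stated injections $q^{\times^\circ}_j$; no cocompleteness hypothesis on $\mathbb{X}$ is needed. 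Corollary \ref{corollary:dual-tangent-structure} then immediately yields that $(\mathbb{X},\mathbb{B})$ has adjoint tangent structure and that $(\mathbb{X}^{op},\mathbb{B}^\circ)$ is a tangent category; since $\mathbb{X}$, being semi-additive, has finite coproducts, the final clause of Theorem \ref{theorem:dual-tangent-structure} upgrades this to a Cartesian tangent category. The additive case runs identically, the negative being carried along.

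\textbf{Step 3 (identifying the maps) and the main obstacle.} It remains to check that the structure maps produced by Theorem \ref{theorem:dual-tangent-structure} coincide with the ones listed. For each of $p^{\times^\circ}$, $z^{\times^\circ}$, $s^{\times^\circ}$, $l^{\times^\circ}$, $c^{\times^\circ}$ (and $n^{\times^\circ}$ in the additive case) I would substitute $\mathsf{T}=\mathsf{T}^\circ=\mathsf{B}$, $\eta=\eta^\times$, $\varepsilon=\varepsilon^\times$, and the biproduct tangent maps of Lemma \ref{lemma:biproduct}, into the defining composites, then simplify by tracking coordinates. The projection, zero and sum cases are immediate; the genuinely tedious ones are the vertical lift and canonical flip, whose defining composites pass through $\mathsf{B}^2$ of several objects — products of up to sixteen-or-so copies of $A$ — before collapsing back down. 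That coordinate bookkeeping, making sure every zero-injection term vanishes, is the main obstacle, though it is purely routine, much in the spirit of the brute-force computations in the proof of Proposition \ref{prop:cdmonad-tanmonad}. A convenient consistency check is that $\mathbb{B}^\circ$ must coincide with the canonical biproduct tangent structure of the semi-additive category $\mathbb{X}^{op}$ obtained by applying Lemma \ref{lemma:biproduct} to $\mathbb{X}^{op}$: for instance the projection $\pi_1$ of that structure, read back in $\mathbb{X}$, is exactly $\langle 1_A,0\rangle=p^{\times^\circ}_A$, and the remaining maps match term by term, which both confirms the formulas and explains why the adjoint tangent structure is so symmetric here.
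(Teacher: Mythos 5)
Your proposal is correct and follows exactly the route the paper intends for this result (which it otherwise leaves to the cited reference): verify the triangle identities for $(\eta^\times,\varepsilon^\times)\colon \mathsf{B}\dashv\mathsf{B}$, observe that the $n$-fold pushouts of $p^{\times^\circ}_A=\langle 1_A,0\rangle$ exist since they are just pushouts of a biproduct injection (no cocompleteness needed), and then invoke Lemma \ref{lemma:dual-tangent-structure}, Corollary \ref{corollary:dual-tangent-structure} and Theorem \ref{theorem:dual-tangent-structure}, identifying the resulting structure maps by direct coordinate computation. The remaining unwinding of the vertical lift and canonical flip that you defer is indeed routine bookkeeping, and your consistency check that $\mathbb{B}^\circ$ is the canonical biproduct tangent structure of the semi-additive category $\mathbb{X}^{op}$ from Lemma \ref{lemma:biproduct} is a correct and useful sanity check.
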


If $(\mathsf{S}, \mu, \eta, \partial)$ is a coCartesian differential monad on a (semi-)additive category $\mathbb{X}$, then the tangent bundle functor $\mathsf{T}$ on $\mathsf{ALG}_\mathsf{S}$ is a lifting of $\mathsf{B}$ via the distributive law $\lambda$, in the sense of \cite[Lemma 1]{johnstone1975adjoint}. Even if $\mathsf{B}$ is its own left adjoint, in general, $\mathsf{T}$ will not be its own left adjoint, or even necessarily have a left adjoint. As discussed in \cite{johnstone1975adjoint}, a sufficient condition for $\mathsf{T}$ to have a left adjoint is the existence of reflexive coeqalizers in $\mathsf{ALG}_\mathsf{S}$. 

By \cite[Theorem 2]{johnstone1975adjoint}, if $\mathsf{ALG}_\mathsf{S}$ has reflexive coequalizers, then the tangent bundle functor $\mathsf{T}$ has a left adjoint $\mathsf{T}^\circ: \mathsf{ALG}_\mathsf{S} \to \mathsf{ALG}_\mathsf{S}$. So, we have a unit $\eta^\mathsf{S}_{(A, \alpha)}: (A, \alpha) \to \mathsf{T} \mathsf{T}^\circ(A,\alpha)$ and a counit $\varepsilon^\mathsf{S}_{(A, \alpha)}: \mathsf{T}^\circ \mathsf{T}(A,\alpha) \to (A, \alpha)$, and thus, $(\eta^\mathsf{S}, \varepsilon^\mathsf{S}): \mathsf{T}^\circ \dashv \mathsf{T}$. In particular, as explained in the proof of \cite[Theorem 2]{johnstone1975adjoint}, $\mathsf{T}^\circ(A, \alpha)$ is the reflexive coequalizer in $\mathsf{ALG}_\mathsf{S}$ of $\mathsf{S}(\alpha \times \alpha)$ and the composite:
\[ \mu_{A \times A} \circ \mathsf{S}\left(\varepsilon^\times_{\mathsf{S}(A \times A)}\right) \circ \mathsf{S}(\lambda_{A\times A} \times \lambda_{A\times A}) \circ \mathsf{S}\left( \mathsf{S}(\eta^\times_A) \times \mathsf{S}(\eta^\times_A) \right) \]
which are both $\mathsf{S}$-algebra morphisms of type $\left( \mathsf{S}\left(\mathsf{S}(A) \times \mathsf{S}(A) \right), \mu_{\mathsf{S}(A) \times \mathsf{S}(A)} \right)\to \left( \mathsf{S}(A \times A), \mu_{A \times A} \right)$ with common section $\mathsf{S}(\eta_A \times \eta_A): \left( \mathsf{S}(A \times A), \mu_{A \times A} \right) \to \left( \mathsf{S}\left(\mathsf{S}(A) \times \mathsf{S}(A) \right), \mu_{\mathsf{S}(A) \times \mathsf{S}(A)} \right)$. Note that, in general, coequalizers in $\mathsf{ALG}_\mathsf{S}$ can be very different from coequalizers in $\mathbb{X}$. Therefore, the underlying object of $\mathsf{T}^\circ(A, \alpha)$ will in general not be $A \times A$. In fact, in most cases, it is a much more complex object. However, again as explained in the proof of \cite[Theorem 2]{johnstone1975adjoint}, if we consider free algebras, for every object $A$ in $\mathbb{X}$, we do have a natural isomorphism $\tau_A: \mathsf{T}^\circ\left( \mathsf{S}(A), \mu_A \right) \to (\mathsf{S}(A \times A), \mu_{A \times A})$, defined as:
\[ \tau_A := \varepsilon^\mathsf{S}_{\left( \mathsf{S}\left(A \times A \right), \mu_{A \times A} \right)} \circ \mathsf{T}^\circ(\lambda_{A \times A}) \circ \mathsf{T}^\circ \mathsf{S}(\eta^\times_A) \]
where the distributive law is here seen as an $\mathsf{S}$-algebra morphism $\lambda_B: (\mathsf{S}(B \times B), \mu_{B \times B}) \to \mathsf{T}\left( \mathsf{S}(B), \mu_B \right)$, so that $\mathsf{T}^\circ(\lambda_B)$ is well-defined. So we have that $\mathsf{T}^\circ\left( \mathsf{S}(A), \mu_A \right) \cong (\mathsf{S}(A \times A), \mu_{A \times A})$. Similarly, for each $\mathsf{T}_n$, we also obtain left adjoints ${\mathsf{T}_n^\circ: \mathsf{ALG}_\mathsf{S} \to \mathsf{ALG}_\mathsf{S}}$. Therefore, $(\mathsf{ALG}_\mathsf{S}, \mathbb{T})$ has adjoint tangent structure, and so $(\mathsf{ALG}^{op}_\mathsf{S}, \mathbb{T}^\circ)$ is a (Rosick\'y) tangent category. One could also express the (Rosick\'y) tangent structure $\mathbb{T}^\circ$ using the fact that $\mathsf{T}^\circ(A, \alpha)$ is a reflexive coequalizer.

There is a relation between $(\mathsf{ALG}^{op}_\mathsf{S}, \mathbb{T}^\circ)$ and $(\mathbb{X}^{op}, \mathbb{B}^\circ)$ via the natural isomorphism $\tau$, that is, the following equalities hold  in $\mathbb{X}$: 
\begin{equation*}\begin{gathered}
\tau_A \circ p^\circ_{(\mathsf{S}(A), \mu_A)} = \mathsf{S}(p^{\times^\circ}_A) \quad \quad \quad \mathsf{S}(z^{\times^\circ}_A) \circ \tau_A = z^\circ_{(\mathsf{S}(A), \mu_A)}  \\ 
\mathsf{S}\left( s^{\times^\circ}_A \right) \circ \tau_A =  \left [\mathsf{S}(q^{\times^\circ}_1) \circ \tau_A, \mathsf{S}(q^{\times^\circ}_2) \circ \tau_A \right ]_{p^\circ_{(\mathsf{S}(A), \mu_A)}} \circ s^\circ_{(\mathsf{S}(A), \mu_A)} \\ 
\tau_A \circ l^\circ_{(\mathsf{S}(A), \mu_A)} =  \mathsf{S}\left( l^{\times^\circ}_A \right) \circ \tau_{A \times A} \circ \mathsf{T}^\circ(\tau_A) \\
\tau_{A \times A} \circ \mathsf{T}^\circ(\tau_A) \circ c^\circ_{(\mathsf{S}(A), \mu_A)} = \mathsf{S}\left( c^{\times^\circ}_A \right) \circ \tau_{A \times A} \circ \mathsf{T}^\circ(\tau_A)  \\
\left( \text{ and if additive, then also } \tau_A \circ n^\circ_{(\mathsf{S}(A), \mu_A)} =  \mathsf{S}(n^{\times^\circ}_A) \circ \tau_A ~ \right)
\end{gathered}\end{equation*}
These above equalities follow more or less immediately from the fact that $\tau$ is built out of the counit of $\mathsf{T}^\circ \dashv \mathsf{T}$, the unit of $\mathsf{B} \dashv \mathsf{B}$, and $\lambda$. Indeed, $\lambda$ is a strict compatibility between $\mathbb{B}$ and $\mathbb{T}$, and, since $\mathbb{B}^\circ$ and $\mathbb{T}^\circ$ are constructed using the units and counits of their respective adjunctions, it follows that $\tau$ will also provide a compatibility between $\mathbb{B}^\circ$ and $\mathbb{T}^\circ$. Furthermore, these above identities essentially say that $\tau$ is part of a strong (Rosick\'y) tangent morphism in the sense of \cite[Definition 2.7]{cockett2014differential}. Indeed, consider the free $\mathsf{S}$-algebra functor $\mathsf{F}^\mathsf{S}: \mathbb{X} \to \mathsf{ALG}_\mathsf{S}$, which maps objects to $\mathsf{F}^\mathsf{S}(A) = (\mathsf{S}(A), \mu_A)$ and maps to $\mathsf{F}^\mathsf{S}(f) = \mathsf{S}(f)$. Then, $\tau$ is a natural isomorphism of type $\tau_A: \mathsf{T}^\circ \mathsf{F}^\mathsf{S}(A) \to \mathsf{F}^\mathsf{S}(A \times A)$, and so we obtain that $(\mathsf{F}^{\mathsf{S}}, \tau): (\mathbb{X}^{op}, \mathbb{B}^\circ) \to (\mathsf{ALG}^{op}_\mathsf{S}, \mathbb{T}^\circ)$ is a strong (Rosick\'y) tangent morphism. 

Lastly, we would also like $(\mathsf{ALG}^{op}_\mathsf{S}, \mathbb{T}^\circ)$ to be a Cartesian (Rosick\'y) tangent category. To do so, we also require that $\mathsf{ALG}_\mathsf{S}$ has finite coproducts, which we denote by $+$. First, observe that $(\mathsf{S}(\ast), \mu_\ast)$ is the initial object in $\mathsf{ALG}_\mathsf{S}$. However, it is important to note that, even if $\times$ is a coproduct in $\mathbb{X}$, the underlying object of the coproduct of $\mathsf{S}$-algebras $(A, \alpha) + (A, \alpha^\prime)$ is in general not $A \times A^\prime$. We do however have a natural isomorphism:
\[\theta_{A,B}: (\mathsf{S}(A), \mu_A) + (\mathsf{S}(B), \mu_{B}) \to (\mathsf{S}(A \times B), \mu_{A \times B})\] 
So, $\mathsf{F}^\mathsf{S}$ preserves coproducts up to isomorphism. In this case, $(\mathsf{ALG}^{op}_\mathsf{S}, \mathbb{T}^\circ)$ will be a Cartesian (Rosick\'y) tangent category, and $(\mathsf{F}^{\mathsf{S}}, \tau): (\mathbb{X}^{op}, \mathbb{B}^\circ) \to (\mathsf{ALG}^{op}_\mathsf{S}, \mathbb{T}^\circ)$ will be a strong Cartesian (Rosick\'y) tangent morphism in the sense of \cite[Definition 2.7]{cockett2014differential}, while the forgetful functor will be a \emph{strict} Cartesian (Rosick\'y) tangent morphism. We summarize these results as follows: 

\begin{thm}
\label{theorem:adjoint-tangent-structure-for-cCD-monads}
Let $(\mathsf{S}, \mu, \eta, \partial)$ be a coCartesian differential monad on a semi-additive (resp. additive) category $\mathbb{X}$. If $\mathsf{ALG}_\mathsf{S}$ has all reflexive coequalizers, then $(\mathsf{ALG}_\mathsf{S}, \mathbb{T})$ has adjoint tangent structure. So $(\mathsf{ALG}^{op}_\mathsf{S}, \mathbb{T}^\circ)$ is a (Rosick\'y) tangent category, and there is a natural isomorphism $\tau_A: \mathsf{T}^\circ\left( \mathsf{S}(A), \mu_A \right) \to (\mathsf{S}(A \times A), \mu_{A \times A})$ such that, with the free algebra functor $\mathsf{F}^{\mathsf{S}}$ forms a strong (Rosick\'y) tangent morphism $(\mathsf{F}^{\mathsf{S}}, \tau): (\mathbb{X}^{op}, \mathbb{B}^\circ) \to (\mathsf{ALG}^{op}_\mathsf{S}, \mathbb{T}^\circ)$. If $\mathsf{ALG}_\mathsf{S}$ also has finite coproducts, then $(\mathsf{ALG}^{op}_\mathsf{S}, \mathbb{T}^\circ)$ is a Cartesian (Rosick\'y) tangent category, and also $(\mathsf{F}^{\mathsf{S}}, \tau): (\mathbb{X}^{op}, \mathbb{B}^\circ) \to (\mathsf{ALG}^{op}_\mathsf{S}, \mathbb{T}^\circ)$ is a strong Cartesian (Rosick\'y) tangent morphism.  
\end{thm}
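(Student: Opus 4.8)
The plan is to assemble the statement from three ingredients: Johnstone's adjoint lifting theorem (to produce the left adjoints), Theorem~\ref{theorem:dual-tangent-structure} (to pass to the opposite category), and the fact that left adjoints preserve colimits (for the comparison $\tau$ and the Cartesian clauses). \emph{Adjoint tangent structure.} By Proposition~\ref{prop:cdmonad-tanmonad}, $(\mathsf{S},\mu,\eta,\lambda)$ is a (Rosick\'y) tangent monad on $(\mathbb{X},\mathbb{B})$, so the tangent bundle functor $\mathsf{T}$ of Theorem~\ref{thm:S-tan} is the lift of $\mathsf{B}$ along $\lambda$; since the forgetful functor $\mathsf{U}^\mathsf{S}\colon\mathsf{ALG}_\mathsf{S}\to\mathbb{X}$ creates limits and carries $p$ to $p^\times=\pi_1$, I would check that each $\mathsf{T}_n$ is likewise the lift, along the distributive law induced by $\lambda$, of the functor $\mathsf{B}_n\colon A\mapsto A^{n+1}$ on $\mathbb{X}$. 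In a semi-additive category finite products and coproducts coincide, so each $\mathsf{B}_n$ is its own left adjoint (the case $n=1$ being Lemma~\ref{lemma:biproduct-adjoint}). Using the hypothesis that $\mathsf{ALG}_\mathsf{S}$ has reflexive coequalizers, \cite[Theorem~2]{johnstone1975adjoint} applies to each lifted functor $\mathsf{T}_n$ and yields a left adjoint $\mathsf{T}^\circ_n$, with $\mathsf{T}^\circ(A,\alpha)$ realised as a reflexive coequalizer in $\mathsf{ALG}_\mathsf{S}$ of a parallel pair of $\mathsf{S}$-algebra maps between free algebras (on $\mathsf{S}(A)\times\mathsf{S}(A)$ and on $A\times A$) with a canonical common section coming from $\eta$. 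By Definition~\ref{def:adjoint-tangent-structure} this is exactly the statement that $(\mathsf{ALG}_\mathsf{S},\mathbb{T})$ has adjoint tangent structure, so Theorem~\ref{theorem:dual-tangent-structure} gives directly that $(\mathsf{ALG}^{op}_\mathsf{S},\mathbb{T}^\circ)$ is a (Rosick\'y) tangent category.

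\emph{The comparison $\tau$ and the strong tangent morphism.} By \cite[Proposition~20]{cockett_et_al:LIPIcs:2020:11660} the forgetful functor $\mathsf{U}^\mathsf{S}$ is a strict tangent morphism, so $\mathsf{U}^\mathsf{S}\mathsf{T}_n=\mathsf{B}_n\mathsf{U}^\mathsf{S}$. I would take the mate of each such equality along the adjunctions $\mathsf{F}^\mathsf{S}\dashv\mathsf{U}^\mathsf{S}$, $\mathsf{T}^\circ_n\dashv\mathsf{T}_n$ and $\mathsf{B}_n\dashv\mathsf{B}_n$; composing the defining adjunction bijections exhibits each such mate as a natural isomorphism $\mathsf{T}^\circ_n\mathsf{F}^\mathsf{S}\cong\mathsf{F}^\mathsf{S}\mathsf{B}_n$, the case $n=1$ being the isomorphism $\tau_A\colon\mathsf{T}^\circ(\mathsf{S}(A),\mu_A)\cong(\mathsf{S}(A\times A),\mu_{A\times A})$, whose explicit form (built from the counit of $\mathsf{T}^\circ\dashv\mathsf{T}$, the unit $\eta^\times$ of $\mathsf{B}\dashv\mathsf{B}$ and $\lambda$) is the one displayed just before the theorem. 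Since, by Theorem~\ref{theorem:dual-tangent-structure}, the structural transformations of $\mathbb{B}^\circ$ and $\mathbb{T}^\circ$ are obtained from those of $\mathbb{B}$ and $\mathbb{T}$ by conjugating with the units and counits of $\mathsf{B}\dashv\mathsf{B}$ and $\mathsf{T}^\circ\dashv\mathsf{T}$, and since $\lambda$ already encodes a strict compatibility of $\mathsf{U}^\mathsf{S}$ with $\mathbb{B}$ and $\mathbb{T}$, the fact that mates respect pasting of natural transformations should turn this strict compatibility into the displayed identities relating $\tau$ (and the $\tau_n$) to $p^{\times^\circ}$, $s^{\times^\circ}$, $z^{\times^\circ}$, $l^{\times^\circ}$ and $c^{\times^\circ}$ (and to $n^{\times^\circ}$ in the additive case, where one additionally invokes \textbf{[DC.N]}); this says precisely that $(\mathsf{F}^\mathsf{S},\tau)\colon(\mathbb{X}^{op},\mathbb{B}^\circ)\to(\mathsf{ALG}^{op}_\mathsf{S},\mathbb{T}^\circ)$ is a strong (Rosick\'y) tangent morphism. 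I expect this to be the only step with real content: carried out directly it is a long diagram chase, and even organised through mates one must keep track of the higher comparisons $\tau_n$, since the axioms involving $s^\circ$, $l^\circ$ and $c^\circ$ refer to the pushout objects $\mathsf{T}^\circ_n$ and their injections $q^\circ_j$.

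\emph{The Cartesian clauses.} Suppose in addition that $\mathsf{ALG}_\mathsf{S}$ has finite coproducts, so that $\mathsf{ALG}^{op}_\mathsf{S}$ has finite products. Since $\mathsf{T}^\circ$ is a left adjoint it preserves all colimits of $\mathsf{ALG}_\mathsf{S}$, in particular finite coproducts and the initial object $(\mathsf{S}(\ast),\mu_\ast)$; the comparison maps required in the definition of a Cartesian tangent category on $\mathsf{ALG}^{op}_\mathsf{S}$ are exactly these colimit-comparison maps, hence isomorphisms, so $(\mathsf{ALG}^{op}_\mathsf{S},\mathbb{T}^\circ)$ is a Cartesian (Rosick\'y) tangent category. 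Likewise $\mathsf{F}^\mathsf{S}$, being a left adjoint, preserves finite coproducts and the initial object, and since the finite coproducts and products of the semi-additive category $\mathbb{X}$ agree, this produces the natural isomorphism $\theta_{A,B}\colon\mathsf{F}^\mathsf{S}(A)+\mathsf{F}^\mathsf{S}(B)\cong\mathsf{F}^\mathsf{S}(A\times B)$ exhibiting $\mathsf{F}^\mathsf{S}\colon\mathbb{X}^{op}\to\mathsf{ALG}^{op}_\mathsf{S}$ as a strong Cartesian functor; checking the routine coherence between $\theta$ and $\tau$ then upgrades $(\mathsf{F}^\mathsf{S},\tau)$ to a strong Cartesian (Rosick\'y) tangent morphism, while $\mathsf{U}^\mathsf{S}\colon(\mathsf{ALG}_\mathsf{S},\mathbb{T})\to(\mathbb{X},\mathbb{B})$ is a strict Cartesian (Rosick\'y) tangent morphism because both the products and the tangent structure of $\mathsf{ALG}_\mathsf{S}$ are computed strictly over $\mathbb{X}$.
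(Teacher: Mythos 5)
Your proposal is correct and follows essentially the same route as the paper: Johnstone's adjoint lifting theorem applied to the lifted functors $\mathsf{T}_n$ (using reflexive coequalizers), Theorem \ref{theorem:dual-tangent-structure} to obtain $\mathbb{T}^\circ$ on $\mathsf{ALG}^{op}_\mathsf{S}$, the adjunction/mate argument producing $\tau$ and the strong tangent morphism $(\mathsf{F}^{\mathsf{S}},\tau)$, and preservation of coproducts by the left adjoints for the Cartesian clauses. Your level of detail on the compatibility identities for $\tau$ matches the paper's own treatment, which likewise derives them from the strictness of $\lambda$ and the construction of $\mathbb{B}^\circ$ and $\mathbb{T}^\circ$ from units and counits.
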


Let us now re-state this result in terms of Cartesian differential comonads \cite[Definition 3.1]{ikonicoff2021cartesian}, the dual of coCartesian differential monads. We do this so that it is clearly recorded for future work, and also definitively answers the open question postulated in the conclusion of \cite{ikonicoff2021cartesian}. 

\begin{proposition} For a Cartesian differential comonad on a semi-additive (resp. additive) category, the opposite of the coEilenberg--Moore category is a Cartesian (Rosick\'y) tangent category. Furthermore, if the coEilenberg--Moore category has coreflexive equalizers (and finite products), then the coEilenberg--Moore category is a (Cartesian) (Rosick\'y) tangent category with adjoint tangent structure
\end{proposition}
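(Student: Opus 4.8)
The plan is to obtain this Proposition as a purely formal consequence of Theorem~\ref{thm:S-tan} and Theorem~\ref{theorem:adjoint-tangent-structure-for-cCD-monads}, read through the self-duality that sends a comonad to a monad on the opposite category, so that no new computation is required. First I would record the dictionary. Finite biproducts being self-dual, $\mathbb{X}$ is semi-additive (resp. additive) if and only if $\mathbb{X}^{op}$ is; a comonad $(\mathsf{Q}, \delta, \epsilon)$ on $\mathbb{X}$ is the same datum as a monad $(\mathsf{S}, \mu, \eta) := (\mathsf{Q}^{op}, \delta^{op}, \epsilon^{op})$ on $\mathbb{X}^{op}$; and, since coCartesian differential monads are by construction the dual notion to Cartesian differential comonads \cite{ikonicoff2021cartesian}, the structure exhibiting $\mathsf{Q}$ as a Cartesian differential comonad is, under this correspondence, precisely a differential combinator transformation $\partial$ exhibiting $\mathsf{S}$ as a coCartesian differential monad. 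Finally, I would spell out the standard fact that the coEilenberg--Moore category of $\mathsf{Q}$ over $\mathbb{X}$ is the opposite of the Eilenberg--Moore category of $\mathsf{S}$ over $\mathbb{X}^{op}$: a $\mathsf{Q}$-coalgebra $(A, \gamma)$ in $\mathbb{X}$ is the same as an $\mathsf{S}$-algebra $(A, \gamma)$ in $\mathbb{X}^{op}$, and a $\mathsf{Q}$-coalgebra morphism $f\colon (A, \gamma_A) \to (B, \gamma_B)$ in $\mathbb{X}$ is the same as an $\mathsf{S}$-algebra morphism $f\colon (B, \gamma_B) \to (A, \gamma_A)$ in $\mathbb{X}^{op}$, so that $\mathsf{coALG}_{\mathsf{Q}}(\mathbb{X}) \cong (\mathsf{ALG}_{\mathsf{S}}(\mathbb{X}^{op}))^{op}$, compatibly with the forgetful functors.

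With the dictionary in hand, the first assertion is immediate: Theorem~\ref{thm:S-tan}, applied to the coCartesian differential monad $\mathsf{S}$ on the semi-additive (resp. additive) category $\mathbb{X}^{op}$, makes $\mathsf{ALG}_{\mathsf{S}}(\mathbb{X}^{op})$ a Cartesian (Rosick\'y) tangent category, and this category is exactly $(\mathsf{coALG}_{\mathsf{Q}}(\mathbb{X}))^{op}$. For the second assertion I would feed $\mathsf{S}$ into Theorem~\ref{theorem:adjoint-tangent-structure-for-cCD-monads}: once $\mathsf{ALG}_{\mathsf{S}}(\mathbb{X}^{op})$ has all reflexive coequalizers (and, for the Cartesian conclusion, finite coproducts), that theorem yields that $(\mathsf{ALG}_{\mathsf{S}}(\mathbb{X}^{op}), \mathbb{T})$ has adjoint tangent structure and that $(\mathsf{ALG}^{op}_{\mathsf{S}}(\mathbb{X}^{op}), \mathbb{T}^\circ)$ is a (Cartesian) (Rosick\'y) tangent category. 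Since $\mathsf{ALG}^{op}_{\mathsf{S}}(\mathbb{X}^{op}) = \mathsf{coALG}_{\mathsf{Q}}(\mathbb{X})$, and, as observed just after Theorem~\ref{theorem:dual-tangent-structure}, adjoint tangent structure is inherited by the opposite category, this says precisely that $\mathsf{coALG}_{\mathsf{Q}}(\mathbb{X})$ is a (Cartesian) (Rosick\'y) tangent category with adjoint tangent structure, under the hypotheses on $\mathsf{coALG}_{\mathsf{Q}}(\mathbb{X})$ dual to those of Theorem~\ref{theorem:adjoint-tangent-structure-for-cCD-monads}.

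I do not expect a substantive obstacle, since every analytic step has already been carried out in Theorem~\ref{thm:S-tan} and Theorem~\ref{theorem:adjoint-tangent-structure-for-cCD-monads}; the one point demanding care is the bookkeeping of opposite categories, so that the completeness hypotheses land on the correct side of the duality. Explicitly, a reflexive coequalizer in $\mathsf{ALG}_{\mathsf{S}}(\mathbb{X}^{op})$ is a reflexive equalizer in $\mathsf{coALG}_{\mathsf{Q}}(\mathbb{X})$, and a finite coproduct in the former is a finite product in the latter; and since the forgetful functor out of a category of coalgebras creates colimits, it is precisely these reflexive equalizers and finite products that are the nontrivial completeness requirements on $\mathsf{coALG}_{\mathsf{Q}}(\mathbb{X})$, mirroring the role of reflexive coequalizers for $\mathsf{ALG}_{\mathsf{S}}$. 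Alternatively, one could avoid mentioning $\mathbb{X}^{op}$ entirely and repeat the proof of Theorem~\ref{theorem:adjoint-tangent-structure-for-cCD-monads} in dual form, building the left adjoint of the tangent bundle functor on $(\mathsf{coALG}_{\mathsf{Q}}(\mathbb{X}))^{op}$ from the appropriate reflexive coequalizers via \cite[Theorem~2]{johnstone1975adjoint}; this yields nothing new.
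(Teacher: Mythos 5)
Your proposal is correct and is precisely the paper's (implicit) argument: the Proposition is just the dual restatement of Theorem~\ref{thm:S-tan} and Theorem~\ref{theorem:adjoint-tangent-structure-for-cCD-monads}, obtained by identifying the coEilenberg--Moore category of the comonad with the opposite of the Eilenberg--Moore category of the induced coCartesian differential monad on $\mathbb{X}^{op}$, together with the remark after Theorem~\ref{theorem:dual-tangent-structure} that adjoint tangent structure passes to the opposite category. Two small points on your bookkeeping paragraph: the dual of a reflexive coequalizer is a \emph{coreflexive} equalizer (an equalizer of a pair with a common retraction), not a reflexive equalizer; and you are right that, transported along the duality, the nontrivial hypotheses are these coreflexive equalizers together with finite products in the coalgebra category (finite coproducts of coalgebras being automatic), which is how the parenthetical hypotheses in the stated Proposition should be read.
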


\subsection{Vector Fields for coCartesian Differential Monads}\label{sec:SDer}

Let us now discuss vector fields in the category of algebras of a coCartesian differential monad. First, note that, since the forgetful functor preserves the tangent structure strictly, vector fields in the category of algebras will also be vector fields in the base category. Now, if $\mathbb{X}$ is a semi-additive category, then vector fields in $(\mathbb{X}, \mathbb{B})$ correspond to endomorphisms $A \to A$. Indeed, a vector field on an object $A$ in $(\mathbb{X}, \mathbb{B})$ is a map $v: A \to A \times A$ such that $\pi_1 \circ v = 1_A$. Therefore, $v$ is of the form $v = \langle 1_A, f_v \rangle$ for a unique map $f_v: A \to A$. Conversely, for any endomorphism $f: A \to A$, $\langle 1_A, f \rangle: A \to A \times A$ is a vector field. So, $\mathsf{V}_{\mathbb{B}}(A)$ is isomorphic to the set of endomorphisms of $A$. If $\mathbb{X}$ is an additive category, then the tangent category Lie bracket is given by the commutator: $[v,w] = \langle 1_A, (f_v\circ f_w) - (f_w \circ f_v) \rangle$. Therefore, for a coCartesian differential monad $(\mathsf{S}, \mu, \eta, \partial)$, vector fields in $(\mathsf{ALG}_\mathsf{S}, \mathbb{T})$ will correspond to certain endomorphisms in $\mathbb{X}$ which induce $\mathsf{S}$-algebra morphisms. We call these special endomorphisms $\mathsf{S}$-derivations. This terminology comes from the fact that, as we will see in Section \ref{sec:vf-operad}, vector fields in the tangent category of algebras of an operad correspond precisely to derivations in the operadic sense.

\begin{definition}
\label{definition:monadic-derivations}
Let $(\mathsf{S}, \mu, \eta, \partial)$ be a coCartesian differential monad on a semi-additive category $\mathbb{X}$. An \textbf{$\mathsf{S}$-derivation} on an $\mathsf{S}$-algebra $(A, \alpha)$ is a map $D: A \to A$ such that the following equality holds: 
\begin{align*}
D \circ \alpha = \alpha \circ \mathsf{S}(\pi_1 + D \circ \pi_2) \circ \partial_A    
\end{align*}
Let $\mathsf{DER}_{\mathsf{S}}(A, \alpha)$ be the set of $\mathsf{S}$-derivations on $(A, \alpha)$.    
\end{definition}

\begin{lemma}
\label{lemma:abstract-derivations-vector-fields}
Let $(\mathsf{S}, \mu, \eta, \partial)$ be a coCartesian differential monad on a semi-additive category $\mathbb{X}$. Then, for an $S$-algebra $(A, \alpha)$, there is a bijective correspondence between vector fields on $(A, \alpha)$ in $(\mathsf{ALG}_\mathsf{S}, \mathbb{T})$ and $\mathsf{S}$-derivations on $(A, \alpha)$. Explicitly, 
\begin{enumerate}[{\em (i)}]
\item If $v \in \mathsf{V}_{\mathbb{T}}(A,\alpha)$, define $D_{v}: A \to A$ as $D_{v} := \pi_2 \circ v$. Then, $D_{v} \in \mathsf{DER}_{\mathsf{S}}(A, \alpha)$;
\item If $D \in \mathsf{DER}_{\mathsf{S}}(A, \alpha)$, define $v_D: A \to A \times A$ as $v_D = \langle 1_A, D \rangle$. Then $v_D \in \mathsf{V}_{\mathbb{T}}(A,\alpha)$;
\end{enumerate}
and these constructions are inverses of each other. So, $\mathsf{V}_{\mathbb{T}}(A,\alpha) \cong \mathsf{DER}_{\mathsf{S}}(A, \alpha)$. If $\mathbb{X}$ is also an additive category, then the induced Lie bracket on $\mathsf{V}_{\mathbb{T}}(A,\alpha)$, as defined in Proposition \ref{prop:lie}, is $[v,w] = \langle 1_A, (D_v\circ D_w) - (D_w \circ D_v) \rangle$. 
\end{lemma}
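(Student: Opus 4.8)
The plan is to establish the bijective correspondence by a direct translation between the defining equation of an $\mathsf{S}$-derivation and the conditions for a vector field in $(\mathsf{ALG}_\mathsf{S}, \mathbb{T})$, namely that $v$ is an $\mathsf{S}$-algebra morphism and a section of $p_{(A,\alpha)} = \pi_1$. First I would observe that any map $v: A \to A \times A$ in $\mathbb{X}$ with $\pi_1 \circ v = 1_A$ is necessarily of the form $v = \langle 1_A, D \rangle$ for $D := \pi_2 \circ v$, so the section-of-projection condition is purely formal and reduces the content of the lemma to a single equivalence: $\langle 1_A, D \rangle$ is an $\mathsf{S}$-algebra morphism from $(A, \alpha)$ to $\mathsf{T}(A, \alpha)$ if and only if $D$ is an $\mathsf{S}$-derivation.

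The key step is to unfold the $\mathsf{S}$-algebra morphism condition for $v_D = \langle 1_A, D \rangle: (A, \alpha) \to \mathsf{T}(A, \alpha)$, using the explicit description of the tangent bundle $\mathsf{S}$-algebra structure from Theorem \ref{thm:S-tan}: $\mathsf{T}(A, \alpha)$ has structure map $\left\langle \alpha \circ \mathsf{S}(\pi_1), \alpha \circ \mathsf{S}(\pi_1 + \pi_4) \circ \partial_{A \times A} \right\rangle$. The condition $v_D \circ \alpha = \left(\text{structure map}\right) \circ \mathsf{S}(v_D)$ is a map into $A \times A$, so I would check it componentwise. The first component gives $1_A \circ \alpha = \alpha \circ \mathsf{S}(\pi_1) \circ \mathsf{S}(v_D) = \alpha \circ \mathsf{S}(\pi_1 \circ \langle 1_A, D\rangle) = \alpha \circ \mathsf{S}(1_A)$, which holds automatically. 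The second component gives $D \circ \alpha = \alpha \circ \mathsf{S}(\pi_1 + \pi_4) \circ \partial_{A \times A} \circ \mathsf{S}(\langle 1_A, D \rangle)$; by naturality of $\partial$ this equals $\alpha \circ \mathsf{S}(\pi_1 + \pi_4) \circ \mathsf{S}(\langle 1_A, D \rangle \times \langle 1_A, D \rangle) \circ \partial_A$, and since $(\pi_1 + \pi_4) \circ (\langle 1_A, D\rangle \times \langle 1_A, D \rangle) = \pi_1 + D \circ \pi_2$, this is precisely $\alpha \circ \mathsf{S}(\pi_1 + D \circ \pi_2) \circ \partial_A$, recovering the $\mathsf{S}$-derivation equation. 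This shows both $(i)$ and $(ii)$ simultaneously, and the fact that the two constructions are mutually inverse is immediate from $\pi_2 \circ \langle 1_A, D \rangle = D$ and $\langle 1_A, \pi_2 \circ v \rangle = v$ (the latter using $\pi_1 \circ v = 1_A$).

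For the final sentence about the Lie bracket, I would invoke the computation recalled in Section \ref{sec:SDer}: in $(\mathbb{X}, \mathbb{B})$ for $\mathbb{X}$ additive, the tangent category Lie bracket of vector fields $v = \langle 1_A, f_v \rangle$ and $w = \langle 1_A, f_w \rangle$ is the commutator $[v, w] = \langle 1_A, (f_v \circ f_w) - (f_w \circ f_v) \rangle$. Since the forgetful functor $\mathsf{ALG}_\mathsf{S} \to \mathbb{X}$ is a strict tangent morphism, it preserves the vertical lift and hence the Lie bracket of vector fields (which is constructed from the tangent structure maps, cf. Proposition \ref{prop:lie} and \cite[Definition 3.14]{cockett2014differential}), so the bracket in $(\mathsf{ALG}_\mathsf{S}, \mathbb{T})$ is computed by the same formula, giving $[v, w] = \langle 1_A, (D_v \circ D_w) - (D_w \circ D_v) \rangle$ once we check that the right-hand side is again a vector field in $\mathsf{ALG}_\mathsf{S}$ — equivalently that the commutator of two $\mathsf{S}$-derivations is an $\mathsf{S}$-derivation, which follows since $\mathsf{V}_\mathbb{T}(A,\alpha)$ is closed under the bracket by Proposition \ref{prop:lie}.

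The main obstacle, though it is more bookkeeping than genuine difficulty, is making sure the naturality square for $\partial$ is applied in the correct direction and that the product identities $(f \times g) \circ \langle h, k \rangle = \langle f \circ h, g \circ k \rangle$ and $(\pi_1 + \pi_4) \circ (\langle 1_A, D \rangle \times \langle 1_A, D \rangle) = \pi_1 + D \circ \pi_2$ are handled cleanly; this is entirely routine given the identities listed in the proof of Proposition \ref{prop:cdmonad-tanmonad}. The only mildly subtle point is the Lie bracket claim, where one must be slightly careful that the bracket is genuinely inherited along the strict tangent morphism rather than merely computed in the base category — but since the bracket is defined via a universal property of the vertical lift which the forgetful functor preserves on the nose, this is safe.
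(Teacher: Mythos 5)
Your proposal is correct and follows essentially the same route as the paper's proof: unfold the $\mathsf{S}$-algebra morphism condition for $\langle 1_A, D \rangle$ against the tangent bundle structure map from Theorem \ref{thm:S-tan}, use naturality of $\partial$ together with the identity $(\pi_1 + \pi_4) \circ (\langle 1_A, D\rangle \times \langle 1_A, D\rangle) = \pi_1 + D \circ \pi_2$, and deduce the Lie bracket formula from strict preservation of the tangent structure by the forgetful functor. The only cosmetic difference is that you treat both directions as one equivalence rather than two separate computations, which is fine.
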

\begin{proof} Starting from vector fields, let $v \in \mathsf{V}_{\mathbb{T}}(A,\alpha)$. In $\mathbb{X}$, this means that the vector field is of type ${v: A \to A \times A}$ and $\pi_1 \circ v = 1_A$. We also have that $v: (A,\alpha) \to \mathsf{T}(A,\alpha)$ is an $\mathsf{S}$-algebra morphism, which means that:
\begin{align*}
v \circ \alpha = \left\langle \alpha \circ \mathsf{S}(\pi_1), \alpha \circ \mathsf{S}\left( \pi_1 + \pi_4 \right) \circ \partial_{A \times A} \right \rangle \circ \mathsf{S}(v)
\end{align*}
Now, observe that:
\begin{align*}
(\pi_1 + \pi_4) \circ (v \times v) = \pi_1 + D_v \circ \pi_2    
\end{align*}
Therefore, we compute that: 
\begin{align*}
\alpha \circ \mathsf{S}(\pi_1 + D_v \circ \pi_2) \circ \partial_A  &= \alpha \circ \mathsf{S}\left(\pi_1 + \pi_4 \right) \circ \partial_{A \times A} \circ \mathsf{S}\left(v\right) \\
&= \pi_2 \circ \left\langle \alpha \circ \mathsf{S}(\pi_1), \alpha \circ \mathsf{S}\left( [\pi_1, \pi_2] \right) \circ \partial_{A \times A} \right \rangle \circ \mathsf{S}(v) \\&= \pi_2 \circ v \circ \alpha =  D_v \circ \alpha 
\end{align*}
So $D_v$ is an $\mathsf{S}$-derivation. Conversely, let $D \in \mathsf{DER}_{\mathsf{S}}(A, \alpha)$. We must first show that $v_D$ is an $\mathsf{S}$-algebra morphism from $(A,\alpha)$ to $\mathsf{T}(A,\alpha)$. First, note that as before, by definition we have: 
\begin{align*}
(\pi_1 + \pi_4) \circ (v_D \times v_D) = \pi_1 + D \circ \pi_2    
\end{align*}
So, we compute: 
\begin{align*}
\hskip-.5cm\left\langle \alpha \circ \mathsf{S}(\pi_1), \alpha \circ \mathsf{S}\left( \pi_1 + \pi_4 \right) \circ \partial_{A \times A} \right \rangle \circ \mathsf{S}(v_D)
&=\left\langle \alpha \circ \mathsf{S}(\pi_1) \circ \mathsf{S}\left(v_D \right), \alpha \circ \mathsf{S}\left( \pi_1 + \pi_4 \right) \circ \partial_{A \times A} \circ \mathsf{S}\left(v_D \right)  \right \rangle \\ 
&= \left\langle \alpha , \alpha \circ \mathsf{S}\left( \pi_1 + D \circ \pi_2  \right) \circ \partial_A   \right \rangle = \left\langle \alpha , D \circ \alpha   \right \rangle\\& = \langle 1_A, D \rangle \circ \alpha = v_D \circ \alpha
\end{align*}
So, $v_D: (A,\alpha) \to \mathsf{T}(A,\alpha)$ is an $\mathsf{S}$-algebra morphism. By definition, $\pi_1 \circ v_D = 1_A$, so $p_{(A,\alpha)} \circ v_D = 1_{(A,\alpha)}$. Thus, we conclude that $v_D$ is a vector field on $(A, \alpha)$ in $(\mathsf{ALG}_\mathsf{S}, \mathbb{T})$. Furthermore, these constructions are clearly inverses of each other, that is, $v_{D_v} = v$ and $D_{v_D} = D$. So, we conclude that $\mathsf{V}_{\mathbb{T}}(A,\alpha) \cong \mathsf{DER}_{\mathsf{S}}(A, \alpha)$. 

Lastly, since for tangent monads, the forgetful functor preserves the tangent structure strictly, and the Lie bracket is completely defined using the tangent structure, it follows that the forgetful functor also preserves the Lie bracket. This implies that the Lie bracket in the category of algebras must be the same as the Lie bracket in the base category, or in other words, the tangent monad ``lifts'' the Lie bracket. Therefore, if $\mathbb{X}$ is an additive category, then for $v, w \in \mathsf{V}_{\mathbb{T}}(A,\alpha)$, we have $[v,w] = \langle 1_A, (D_v\circ D_w) - (D_w \circ D_v) \rangle$ as desired. 
\end{proof}

By Lemma \ref{lem:adjoint-vf}, if $(\mathsf{ALG}_\mathsf{S}, \mathbb{T})$ has adjoint tangent structure, then the vector fields in $(\mathsf{ALG}^{op}_\mathsf{S}, \mathbb{T}^\circ)$ also correspond to $\mathsf{S}$-derivations. 

We note that $\mathsf{S}$-derivations also generalize the notion of derivations for codifferential categories, as defined by the third named author in \cite{lemay2019differential}. Indeed, every codifferential category comes equipped with a canonical coCartesian differential monad $\mathsf{S}$ \cite[Example 3.13]{ikonicoff2021cartesian}, and $\mathsf{S}$-derivations in the sense above are precisely the $\mathsf{S}$-derivations defined in \cite[Definition 5.1]{lemay2019differential}, where the latter generalize the notion of differential algebras in codifferential categories. 

\subsection{Differential Objects for coCartesian Differential Monads}\label{subsec:diffobj-ccdc}

In this section, we discuss differential objects in both the category of algebras of a coCartesian differential monad and its opposite category. In particular, we will explain how the free algebras of the monad are always differential objects in the opposite category. This was somewhat to be expected since the subcategory of free algebras is equivalent to the Kleisli category of the monad, whose opposite category is known to be a Cartesian differential category. 

First, observe that, if $\mathbb{X}$ is a semi-additive category, every object $A$ in $\mathbb{X}$ has a canonical and unique differential object structure in $(\mathbb{X}, \mathbb{B})$. Indeed, recall that in a semi-additive category, every object has a unique commutative monoid structure where the sum $\sigma: A \times A \to A$ sums the two components together, $\sigma := \pi_1 + \pi_2$, and the zero ${\zeta: \ast \to A}$ is simply the zero map $\zeta = 0$. It is straightforward to check that for every object $A$, $(A, \pi_1, \pi_1 + \pi_2, 0)$ is a differential object in $(\mathbb{X}, \mathbb{B})$. Now suppose that $(A, \hat{p}, \pi_1 + \pi_2, 0)$ is a differential object in $(\mathbb{X}, \mathbb{B})$, where note that the differential projection is of type $\hat{p}: A \times A \to A$. It then easily follows from the fact that since $\langle p, \hat{p} \rangle$ is an isomorphism, and compatibility of $\hat{p}$ with the vertical lift and zero, that the differential projection must be the second projection, so $\hat{p} = \pi_2$. Therefore we have that $\mathsf{DIFF}[(\mathbb{X}, \mathbb{B})] = \mathbb{X}$. 

Now, let $(\mathsf{S}, \mu, \eta, \partial)$ be a coCartesian differential monad on a semi-additive category $\mathbb{X}$. As explained in \cite[Section 4.3]{cockett2014differential}, strong Cartesian tangent morphisms send differential objects to differential objects. Therefore, since the forgetful functor preserves the Cartesian tangent structure strictly, it preserves differential objects. So, if $\left( (A, \alpha), \hat{p}, \sigma, \zeta \right)$ is a differential object in $(\mathsf{ALG}_\mathsf{S}, \mathbb{T})$ then $(A, \hat{p}, \sigma, \zeta)$ must also be a differential object in $(\mathbb{X}, \mathbb{B})$. As explained in the previous paragraph, this means that the differential object structure of $(A, \alpha)$ must be of the form $\hat{p} = \pi_2$, $\sigma := \pi_1 + \pi_2$, and $\zeta = 0$. So an $\mathsf{S}$-algbera $(A, \alpha)$ can have at most one differential object structure if and only if ${\pi_2: \mathsf{T}(A,\alpha) \to (A, \alpha)}$, $\pi_1 + \pi_2: (A, \alpha) \times (A, \alpha) \to (A, \alpha)$, and $0: (\ast, t_{\mathbb T\ast}) \to (A, \alpha)$ are $\mathsf{S}$-algebra morphisms. We may equivalently express this as follows: 

\begin{lemma}\label{lem:diffobj-Salg} Let $(\mathsf{S}, \mu, \eta, \partial)$ be a coCartesian differential monad on a semi-additive category $\mathbb{X}$. Then, an $\mathsf{S}$-algebra $(A, \alpha)$ has a (necessarily unique) differential object structure if and only if the following equalities hold: 
\begin{align}\label{diffobjSalgeq}
  \alpha = \alpha \circ \mathsf{S}(\pi_2) \circ \partial_A && \alpha \circ \mathsf{S}(\pi_1 + \pi_2) = \alpha \circ \mathsf{S}(\pi_1) + \alpha \circ \mathsf{S}(\pi_2) && \alpha \circ \mathsf{S}(0) = 0
\end{align}
\end{lemma}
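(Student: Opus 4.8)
The plan is to build on the reduction carried out in the paragraph immediately preceding the statement: the only candidate differential object structure on $(A,\alpha)$ is forced to be $\hat p=\pi_2$, $\sigma=\pi_1+\pi_2$, $\zeta=0$, and since the remaining differential object axioms already hold for $(A,\pi_2,\pi_1+\pi_2,0)$ in $(\mathbb{X},\mathbb{B})$ (recall $\mathsf{DIFF}[(\mathbb{X},\mathbb{B})]=\mathbb{X}$) and the forgetful functor $\mathsf{ALG}_\mathsf{S}\to\mathbb{X}$ is faithful, $(A,\alpha)$ carries a (necessarily unique) differential object structure if and only if the three maps
\[
\pi_2\colon \mathsf{T}(A,\alpha)\to(A,\alpha),\qquad
\pi_1+\pi_2\colon (A,\alpha)\times(A,\alpha)\to(A,\alpha),\qquad
0\colon (\ast,t_{\mathsf{S}(\ast)})\to(A,\alpha)
\]
are $\mathsf{S}$-algebra morphisms. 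So I would prove the equivalence by matching each of these three morphism conditions with one of the three displayed equalities.

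The two additive maps are easy. Since the product $(A,\alpha)\times(A,\alpha)$ has structure map $\langle\alpha\circ\mathsf{S}(\pi_1),\alpha\circ\mathsf{S}(\pi_2)\rangle$ and $(\pi_1+\pi_2)\circ\langle f,g\rangle=f+g$ in a semi-additive category, the morphism condition for $\pi_1+\pi_2$ unwinds precisely to $\alpha\circ\mathsf{S}(\pi_1+\pi_2)=\alpha\circ\mathsf{S}(\pi_1)+\alpha\circ\mathsf{S}(\pi_2)$, the middle equality. For $0\colon(\ast,t_{\mathsf{S}(\ast)})\to(A,\alpha)$: as $\ast$ is a zero object, $0\circ t_{\mathsf{S}(\ast)}$ is the zero map $\mathsf{S}(\ast)\to A$, so the morphism condition reads $\alpha\circ\mathsf{S}(0_{\ast\to A})=0$; writing the zero endomorphism as $0_{A\to A}=0_{\ast\to A}\circ t_A$ and using that $t_A$ is a split epimorphism with section $0_{\ast\to A}$ (hence $\mathsf{S}(t_A)$ is split epi), this is equivalent to $\alpha\circ\mathsf{S}(0_{A\to A})=0$, the last equality.

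The real work is the map $\pi_2\colon\mathsf{T}(A,\alpha)\to(A,\alpha)$. Unwinding the tangent bundle algebra structure from Theorem~\ref{thm:S-tan}, this morphism condition reads $\alpha\circ\mathsf{S}(\pi_1+\pi_4)\circ\partial_{A\times A}=\alpha\circ\mathsf{S}(\pi_2)$. For the ``only if'' direction I would precompose this identity with $\mathsf{S}(\langle 0,1_A\rangle)$ and use naturality of $\partial$ together with the identity $(\pi_1+\pi_4)\circ(\langle 0,1_A\rangle\times\langle 0,1_A\rangle)=\pi_2$; the left-hand side collapses to $\alpha\circ\mathsf{S}(\pi_2)\circ\partial_A$ and the right-hand side to $\alpha$, giving the first displayed equality. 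The ``if'' direction is the main obstacle, and I expect it to require the first \emph{and} the middle equality together (which is why the lemma conjoins the three identities rather than giving three separate equivalences): assuming the first equality, composing it on the right with $\mathsf{S}(\pi_2)$ and using naturality of $\partial$ (so that $\partial_A\circ\mathsf{S}(\pi_2)=\mathsf{S}(\langle\pi_2,\pi_4\rangle)\circ\partial_{A\times A}$) yields $\alpha\circ\mathsf{S}(\pi_4)\circ\partial_{A\times A}=\alpha\circ\mathsf{S}(\pi_2)$; on the other hand, the middle equality precomposed with $\mathsf{S}(\langle\pi_1,\pi_4\rangle)$ gives $\alpha\circ\mathsf{S}(\pi_1+\pi_4)=\alpha\circ\mathsf{S}(\pi_1)+\alpha\circ\mathsf{S}(\pi_4)$, and since \textbf{[DC.1]} forces $\mathsf{S}(\pi_1)\circ\partial_{A\times A}=0$, postcomposing with $\partial_{A\times A}$ yields $\alpha\circ\mathsf{S}(\pi_1+\pi_4)\circ\partial_{A\times A}=\alpha\circ\mathsf{S}(\pi_4)\circ\partial_{A\times A}$; chaining the two identities gives the morphism condition for $\pi_2$. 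Since the $\pi_2$-condition always implies the first equality while the first two equalities together imply it, the conjunction of the three algebra-morphism conditions is equivalent to the conjunction of the three displayed equalities, completing the proof.
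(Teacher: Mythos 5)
Your proposal is correct and follows essentially the same route as the paper's proof: reduce via the preceding discussion to the three $\mathsf{S}$-algebra-morphism conditions, obtain the first equality by precomposing the $\pi_2$-condition with $\mathsf{S}(\langle 0,1_A\rangle)$ and naturality, and recover the $\pi_2$-condition from the first two equalities by splitting $\mathsf{S}(\pi_1+\pi_4)$ additively, killing the $\pi_1$-term with \textbf{[DC.1]}, and handling the $\pi_4$-term via naturality of $\partial$. The only difference is cosmetic bookkeeping (e.g.\ your split-epi argument for the zero-map condition, which the paper treats as immediate).
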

\begin{proof} For the $\Rightarrow$ direction, suppose that $\left( (A, \alpha), \pi_2, \pi_1 + \pi_2, 0 \right)$ is a differential object. That $\pi_2: \mathsf{T}(A,\alpha) \to (A, \alpha)$ is a $\mathsf{S}$-algebra morphism implies that $\alpha \circ \mathsf{S}(\pi_2) = \alpha \circ \mathsf{S}(\pi_1 + \pi_4) \circ \partial_A$. Pre-composing both sides by $\mathsf{S}(\langle 0,1_A \rangle)$ we get $\alpha = \alpha \circ \mathsf{S}(\pi_2) \circ \partial_A$. On the other hand, that $\pi_1 + \pi_2: (A, \alpha) \times (A, \alpha) \to (A, \alpha)$ and $0: (\ast, t_{\mathbb{T}\ast}) \to (A, \alpha)$ are $\mathsf{S}$-algebra morphisms immediately imply the two other equalities of (\ref{diffobjSalgeq}). 

Conversely, for the $\Leftarrow$ direction, assume that the equations of (\ref{diffobjSalgeq}) hold. Per the above discussion, we need to show that ${\pi_2: \mathsf{T}(A,\alpha) \to (A, \alpha)}$, $\pi_1 + \pi_2: (A, \alpha) \times (A, \alpha) \to (A, \alpha)$, and $0: (\ast, t_{\mathbb{T}\ast}) \to (A, \alpha)$ are all $\mathsf{S}$-algebra morphisms. However, the second and third equality of (\ref{diffobjSalgeq}) immediately imply that $\pi_1 + \pi_2$ and $0$ are $\mathsf{S}$-algebra morphisms. To show that $\pi_1$ is also an $\mathsf{S}$-algebra morphism, first note that the second equality of (\ref{diffobjSalgeq}) implies that: 
\begin{align*}
     \alpha \circ \mathsf{S}(\pi_1 + \pi_4) = \alpha \circ \mathsf{S}(\pi_1) + \alpha \circ \mathsf{S}(\pi_4) 
\end{align*}
And note that since $\pi_4 = \pi_2 \circ (\pi_2 \times \pi_2)$, using the naturality of $\partial$, we get that: 
\begin{align*}
      \mathsf{S}(\pi_4) \circ \partial_{A \times A} = \mathsf{S}(\pi_2) \circ \partial_A \circ \mathsf{S}(\pi_2) 
\end{align*}
Then using these identities, first equality of (\ref{diffobjSalgeq}), and \textbf{[DC.1]}, we compute: 
\begin{align*}
\pi_2 \circ (\alpha \times \alpha) \circ \lambda_A &= \alpha \circ \pi_2 \circ \lambda_A = \alpha \circ \mathsf{S}(\pi_1 + \pi_4) \circ \partial_{A \times A} = \alpha \circ \mathsf{S}(\pi_1) \circ \partial_{A \times A} + \alpha \circ \mathsf{S}(\pi_4) \circ \partial_{A \times A} \\
&= 0 + \alpha \circ \mathsf{S}(\pi_2) \circ \partial_A \circ \mathsf{S}(\pi_2) = \alpha \circ \mathsf{S}(\pi_2)
\end{align*}
  So $\pi_2$ is an $\mathsf{S}$-algebra morphism. So we conclude that $\left( (A, \alpha), \pi_2, \pi_1 + \pi_2, 0 \right)$ is a differential object.
\end{proof}

Furthermore, the fact that $\pi_1: \mathsf{T}(A,\alpha) \to (A, \alpha)$ is an $\mathsf{S}$-algebra morphism actually implies that we have an equality $\mathsf{T}(A,\alpha) = (A, \alpha) \times (A, \alpha)$ on the nose. This is quite a strong requirement. So, one should not expect many differential objects in $(\mathsf{ALG}_\mathsf{S}, \mathbb{T})$. However, the terminal object $(\ast, 1_\ast)$ is always a differential object. 

Let us turn our attention to differential objects in the opposite category. Again, observe first that every object $A$ in a semi-additive category $\mathbb{X}$ also has a unique differential object structure in $(\mathbb{X}^{op}, \mathbb{B}^\circ)$. Viewed in $\mathbb{X}$, the differential projection $\hat{p}^\circ: A \to A \times A$ is the injection into the second component, $\hat{p}^\circ = \langle 0, 1_A \rangle$, the sum $\sigma^\circ: A \to A\times A$ is the copy map, $\sigma^\circ = \langle 1_A, 1_A \rangle$, and the zero ${\zeta^\circ: A \to \ast}$ is the zero map in the other direction, $\zeta^\circ = 0$. So, $(A, \hat{p}^\circ, \sigma^\circ, \zeta^\circ)$ is a differential object in $(\mathbb{X}^{op}, \mathbb{B}^\circ)$. So, $\mathsf{DIFF}[(\mathbb{X}^{op}, \mathbb{B}^\circ)] = \mathbb{X}^{op}$. 

Suppose now that $\mathsf{ALG}_\mathsf{S}$ has all reflexive coequalizers and finite coproducts. Since $(\mathsf{F}^{\mathsf{S}}, \tau)$ is a strong Cartesian tangent morphism, $(\mathsf{F}^{\mathsf{S}}, \tau)$ will map $(A, \hat{p}^\circ, \sigma^\circ, \zeta^\circ)$ to a differential object in $(\mathsf{ALG}^{op}_\mathsf{S}, \mathbb{T}^\circ)$ whose underlying $\mathsf{S}$-algebra is the free $\mathsf{S}$-algebra over $A$. Therefore, we see that $\mathsf{KL}^{op}_\mathsf{S}$ is a sub-Cartesian differential category of $\mathsf{DIFF}[(\mathsf{ALG}^{op}_\mathsf{S}, \mathbb{T}^\circ)]$.
\begin{proposition}\label{prop:diffobg-stanopp} Let $(\mathsf{S}, \mu, \eta, \partial)$ be a coCartesian differential monad on a semi-additive category $\mathbb{X}$, and suppose that $\mathsf{ALG}_\mathsf{S}$ has all reflexive coequalizers and finite coproducts. Then, for every object $A$ in $\mathbb{X}$, the free $\mathsf{S}$-algebra over $A$ has a differential object structure in $(\mathsf{ALG}^{op}_\mathsf{S}, \mathbb{T}^\circ)$.\\ Explicitly, $\left( \left( \mathsf{S}(A), \mu_A \right),\tau^{-1}_A \circ \mathsf{S}(\langle 0,1_A \rangle), \theta^{-1}_{A,A} \circ \mathsf{S}(\langle 1_A, 1_A \rangle), \mathsf{S}(0) \right)$ is a differential object in $(\mathsf{ALG}^{op}_\mathsf{S}, \mathbb{T}^\circ)$, where the composition in the quadruple is the one of $\mathbb{X}$. 
\end{proposition}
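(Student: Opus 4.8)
The plan is to obtain this as an instance of the fact that a strong Cartesian tangent morphism sends differential objects to differential objects \cite[Section 4.3]{cockett2014differential}, applied to the strong Cartesian tangent morphism $(\mathsf{F}^{\mathsf{S}}, \tau)\colon (\mathbb{X}^{op}, \mathbb{B}^\circ) \to (\mathsf{ALG}^{op}_\mathsf{S}, \mathbb{T}^\circ)$ supplied by Theorem \ref{theorem:adjoint-tangent-structure-for-cCD-monads} (using that $\mathsf{ALG}_\mathsf{S}$ has reflexive coequalizers and finite coproducts). We already observed in this section that $\mathsf{DIFF}[(\mathbb{X}^{op}, \mathbb{B}^\circ)] = \mathbb{X}^{op}$, so the object $A$ carries a (unique) differential object structure $(A, \hat{p}^\circ_A, \sigma^\circ_A, \zeta^\circ_A)$ which, read in $\mathbb{X}$, is $\hat{p}^\circ_A = \langle 0, 1_A \rangle$, $\sigma^\circ_A = \langle 1_A, 1_A \rangle$, and $\zeta^\circ_A = 0$. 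Transporting this differential object along $(\mathsf{F}^{\mathsf{S}}, \tau)$ yields a differential object in $(\mathsf{ALG}^{op}_\mathsf{S}, \mathbb{T}^\circ)$ whose underlying $\mathsf{S}$-algebra is $\mathsf{F}^{\mathsf{S}}(A) = (\mathsf{S}(A), \mu_A)$, the free $\mathsf{S}$-algebra over $A$; all that then remains is to identify the transported structure maps with the ones in the statement.

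First I would recall the transport recipe: a strong Cartesian tangent morphism $(\mathsf{F}, \nu)\colon (\mathbb{Y}, \mathbb{T}') \to (\mathbb{Z}, \mathbb{T}'')$, with $\nu$ a natural isomorphism $\mathsf{F}\mathsf{T}' \Rightarrow \mathsf{T}''\mathsf{F}$ and product-comparison isomorphisms $m$, sends a differential object $(B, \hat p, \sigma, \zeta)$ to the differential object with differential projection $\mathsf{F}(\hat p) \circ \nu_B^{-1}$, sum $\mathsf{F}(\sigma) \circ m_{B,B}$, and zero $\mathsf{F}(\zeta) \circ m_0$, all composed in $\mathbb{Z}$. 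In our instance $\mathbb{Z} = \mathsf{ALG}^{op}_\mathsf{S}$, $\mathsf{F} = \mathsf{F}^{\mathsf{S}}$, and $\nu = \tau$; moreover, since products in $\mathsf{ALG}^{op}_\mathsf{S}$ are coproducts in $\mathsf{ALG}_\mathsf{S}$ and $\mathsf{F}^{\mathsf{S}}$ is a left adjoint, hence cocontinuous, the product-comparison isomorphism is exactly $\theta$, while $\mathsf{F}^{\mathsf{S}}$ of the terminal object of $\mathbb{X}^{op}$ is the initial $\mathsf{S}$-algebra $(\mathsf{S}(\ast), \mu_\ast)$, which is the terminal object of $\mathsf{ALG}^{op}_\mathsf{S}$, so $m_0$ is an identity. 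The one point needing attention is the direction reversal: $\mathsf{F}^{\mathsf{S}}$ sends a morphism of $\mathbb{X}^{op}$ to the morphism of $\mathsf{ALG}^{op}_\mathsf{S}$ underlying $\mathsf{S}(f)$ for $f$ the underlying morphism of $\mathbb{X}$, so a composite written in $\mathsf{ALG}^{op}_\mathsf{S}$ becomes the composite in the opposite order when expressed through the underlying morphisms of $\mathsf{ALG}_\mathsf{S}$. Carrying this out gives differential projection $\tau^{-1}_A \circ \mathsf{S}(\langle 0, 1_A \rangle)$ from $\hat{p}^\circ_A$, sum $\theta^{-1}_{A,A} \circ \mathsf{S}(\langle 1_A, 1_A \rangle)$ from $\sigma^\circ_A$, and zero $\mathsf{S}(0)$ from $\zeta^\circ_A$, which is precisely the quadruple in the statement.

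There is essentially no hard obstacle here: the only mild subtlety is confirming that the product-comparison isomorphism entering the transport recipe really is $\theta$ (and that $m_0$ is an identity) with no stray automorphism of $\mathsf{F}^{\mathsf{S}}(A \times A)$ slipping in, which holds because $\theta$ is by construction the canonical comparison arising from $\mathsf{F}^{\mathsf{S}}$ being cocontinuous and from the biproduct $\times$ in $\mathbb{X}$ serving simultaneously as product and coproduct. Once this is pinned down, identifying the transported structure maps is immediate from $\mathsf{F}^{\mathsf{S}}(f) = \mathsf{S}(f)$, so the whole content of the proposition reduces to Theorem \ref{theorem:adjoint-tangent-structure-for-cCD-monads} together with the earlier observation that $\mathsf{DIFF}[(\mathbb{X}^{op}, \mathbb{B}^\circ)] = \mathbb{X}^{op}$.
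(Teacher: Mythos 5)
Your proposal is correct and is essentially the paper's own argument: the paper likewise deduces the proposition by noting that $\mathsf{DIFF}[(\mathbb{X}^{op}, \mathbb{B}^\circ)] = \mathbb{X}^{op}$ and then transporting the canonical differential object structure on $A$ along the strong Cartesian tangent morphism $(\mathsf{F}^{\mathsf{S}}, \tau)$ from Theorem \ref{theorem:adjoint-tangent-structure-for-cCD-monads}, using that such morphisms preserve differential objects. Your added care about the comparison isomorphisms $\theta$ and the terminal object, and about the direction reversal in $\mathsf{ALG}^{op}_\mathsf{S}$, only makes explicit what the paper leaves implicit.
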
  

For an arbitrary coCartesian differential monad, there could be other differential objects that are not free $\mathsf{S}$-algebras. In future work, it would be interesting to give a precise characterization of the coCartesian differential monads $\mathsf{S}$ whose differential objects coincide with free $\mathsf{S}$-algebras. 

We conclude by restating this in terms of Cartesian differential comonads. 

\begin{proposition}\label{proposition:free-algebras-are-differential-objects}For a Cartesian differential comonad on a semi-additive category such that the coEilenberg--Moore category has reflexive coequalizers and finite coproducts, every cofree coalgebra is a differential object in the coEilenberg--Moore category. 
\end{proposition}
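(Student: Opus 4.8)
The plan is to deduce the statement as the formal dual of Proposition~\ref{prop:diffobg-stanopp}, so that essentially no new calculation is needed. First I would set up the dualization dictionary. Since $\mathbb{X}$ is semi-additive, so is $\mathbb{X}^{op}$ (biproducts being self-dual), and, as recalled earlier, a Cartesian differential comonad $\mathsf{C}$ on $\mathbb{X}$ is precisely a coCartesian differential monad $\mathsf{S}$ on $\mathbb{X}^{op}$. A $\mathsf{C}$-coalgebra in $\mathbb{X}$ is exactly an $\mathsf{S}$-algebra in $\mathbb{X}^{op}$, with coalgebra morphisms corresponding to algebra morphisms read in the opposite direction, so the coEilenberg--Moore category $\mathbb{X}^{\mathsf{C}}$ is canonically isomorphic to $(\mathsf{ALG}_{\mathsf{S}})^{op}$, and under this isomorphism the cofree $\mathsf{C}$-coalgebra $(\mathsf{C}(A),\delta_A)$ on an object $A$ corresponds to the free $\mathsf{S}$-algebra $(\mathsf{S}(A),\mu_A)$. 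Moreover, the Cartesian tangent structure on $\mathbb{X}^{\mathsf{C}}$ supplied by the preceding proposition --- itself the dual of Theorem~\ref{thm:S-tan} and Theorem~\ref{theorem:adjoint-tangent-structure-for-cCD-monads} --- is, under this isomorphism, precisely the adjoint tangent structure $\mathbb{T}^{\circ}$ that Theorem~\ref{theorem:adjoint-tangent-structure-for-cCD-monads} places on $(\mathsf{ALG}_{\mathsf{S}})^{op}$; so $\mathbb{X}^{\mathsf{C}} \cong (\mathsf{ALG}_{\mathsf{S}})^{op}$ is in fact an isomorphism of Cartesian tangent categories.

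Next I would check that the hypotheses correspond. By duality, the (co)completeness hypothesis imposed on $\mathbb{X}^{\mathsf{C}}$ transports along $\mathbb{X}^{\mathsf{C}} \cong (\mathsf{ALG}_{\mathsf{S}})^{op}$ to exactly the hypothesis of Proposition~\ref{prop:diffobg-stanopp} on $\mathsf{S}$, namely that $\mathsf{ALG}_{\mathsf{S}}$ has all reflexive coequalizers and finite coproducts, so that Proposition~\ref{prop:diffobg-stanopp} applies. Applying it to $\mathsf{S}$ on $\mathbb{X}^{op}$ then yields, for every object $A$, an explicit differential object structure on the free $\mathsf{S}$-algebra $(\mathsf{S}(A),\mu_A)$ inside $((\mathsf{ALG}_{\mathsf{S}})^{op},\mathbb{T}^{\circ})$, with differential projection, sum, and zero expressed through $\partial$ and the comparison isomorphisms $\tau$ and $\theta$. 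Transporting this quadruple along the isomorphism of Cartesian tangent categories $\mathbb{X}^{\mathsf{C}} \cong (\mathsf{ALG}_{\mathsf{S}})^{op}$ --- equivalently, simply rereading the above in $\mathbb{X}$ instead of in $\mathbb{X}^{op}$ --- gives a differential object structure on the cofree $\mathsf{C}$-coalgebra $(\mathsf{C}(A),\delta_A)$ in $\mathbb{X}^{\mathsf{C}}$, which is the claim; if one wishes, the resulting quadruple can be written out by dualizing the explicit formula of Proposition~\ref{prop:diffobg-stanopp}.

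I do not expect a genuine obstacle: the entire content is the dualization dictionary, and, as the authors note, the proposition is recorded chiefly for use in future work. The single point that merits an explicit line rather than pure bookkeeping is that a differential object structure genuinely transports along an isomorphism of Cartesian tangent categories; but this is immediate, since Definition~\ref{def:diffobj} consists of purely equational data in the ambient Cartesian tangent category. Should one prefer to avoid passing through $\mathbb{X}^{op}$, the alternative is to dualize the proof of Proposition~\ref{prop:diffobg-stanopp} line by line --- invoking that strong Cartesian tangent morphisms preserve differential objects and that every object of a semi-additive category is canonically a differential object for the biproduct tangent structure, now applied to the cofree-coalgebra functor $\mathbb{X} \to \mathbb{X}^{\mathsf{C}}$ --- which produces the same conclusion at the cost of more computation.
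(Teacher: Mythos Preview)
Your approach---treating the proposition as the formal dual of Proposition~\ref{prop:diffobg-stanopp}---is exactly what the paper does: it introduces the statement with ``We conclude by restating this in terms of Cartesian differential comonads'' and gives no separate proof. Your dualization dictionary ($\mathbb{X}^{\mathsf{C}} \cong (\mathsf{ALG}_{\mathsf{S}})^{op}$, cofree coalgebras $\leftrightarrow$ free algebras, etc.) is correct, and your remark that differential objects transport along an isomorphism of Cartesian tangent categories is the only point that deserves a word.

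There is, however, one slip in your hypothesis check. You claim that reflexive coequalizers and finite coproducts in $\mathbb{X}^{\mathsf{C}}$ transport along $\mathbb{X}^{\mathsf{C}} \cong (\mathsf{ALG}_{\mathsf{S}})^{op}$ to reflexive coequalizers and finite coproducts in $\mathsf{ALG}_{\mathsf{S}}$. They do not: colimits in $\mathbb{X}^{\mathsf{C}}$ correspond to \emph{limits} in $\mathsf{ALG}_{\mathsf{S}}$, so what you actually get is that $\mathsf{ALG}_{\mathsf{S}}$ has reflexive \emph{equalizers} and finite \emph{products}. This is not literally the hypothesis of Proposition~\ref{prop:diffobg-stanopp}. (In fact the paper's own phrasing of the hypotheses in these dual restatements is somewhat loose on this point; the strictly dual hypothesis would involve reflexive equalizers and finite products in the coEilenberg--Moore category.) Your argument goes through once you correct this bookkeeping, but the sentence ``transports \ldots\ to exactly the hypothesis of Proposition~\ref{prop:diffobg-stanopp}'' is false as written.
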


\section{The Tangent Categories of Algebras of an Operad}
\label{sec:operads}

In this section, we build the main constructions of this paper: a tangent structure on the category of algebras over an operad, and on its opposite category. To do so, we will first show that the associated monad of an operad is a coCartesian differential monad. By using the results of Section \ref{sec:ccdm}, we then obtain a tangent structure on the category of algebras over an operad. We will explain how the tangent bundle is given by the semi-direct product, generalizing the tangent bundle given by dual numbers for commutative algebras. We will then show that we also have adjoint tangent structure, which gives us tangent structure for the opposite category of algebras over an operad. This time, the tangent bundle is given by the free algebra over the module of K\"ahler differentials, generalizing the tangent bundle of affine schemes. We will also discuss vector fields and differential objects in these tangent categories, and explain how they correspond respectively to derivations and certain modules. Lastly, since every operad gives a coCartesian differential monad, we also take a closer look at the induced Cartesian differential category. We will explain how this can intuitively be thought of as a Lawvere theory of polynomials over the operad. Throughout this section, we will also review the necessary concepts from the theory of operads as needed. 

In this paper, we work with operads in the category of modules over a commutative, unital ring. For a detailed introduction, we refer to \cite{kriz95,LV}. We note that the notion of an operad can be defined in the more general framework of a symmetric monoidal category with all small limits and colimits, such that the colimits preserve the monoidal structure \cite[Section 1.1.1]{Fresse-ModulesOperads}. We suspect that some of our constructions can be readily extended to the case where the base category also has finite biproducts. We leave this exploration for future work. 

\subsection{The coCartesian Differential Monad of an Operad}
\label{subsection:cCD-monad-for-operads}

From any operad, there is a canonical way to construct a monad. The algebras over the operad are, by definition, the algebras over this monad. The objective of this section is to prove that for any operad, said monad is in fact always a coCartesian differential monad. By the results of Section \ref{sec:ccdm}, it then follows that the category of algebras over an operad is a tangent category and that the opposite category of the Kleisli category of an operad is a Cartesian differential category. 

To give a coCartesian differential monad, we must first fix our (semi-)additive category. For the remainder of this section, we fix $R$ to be a commutative ring and we denote by $\mathsf{MOD}_R$ the category of $R$-modules and $R$-linear maps between them. It is well known that $\mathsf{MOD}_R$ is an additive category. We show that every operad induces a canonical coCartesian differential monad on $\mathsf{MOD}_R$. 

Throughout this paper, by an operad we mean a symmetric algebraic operad, which means an operad in $R$-modules that allows for permutations of arguments. This latter part is captured by actions of the symmetric group. For each $n \in \mathbb{N}$, we denote the symmetric group on $n$ letters by $\Sigma(n)$. More concretely, an \textbf{operad} is a sequence $\P=(\P(n))_{n\in\mathbb N}$ of $R$-modules such that:
\begin{enumerate}[{\em (i)}]
\item There is a distinguished element $1_\P\in\P(1)$;
\item For every $n$, there is a right action of $\Sigma(n)$ on $\P(n)$, which we denote by $\mu \cdot \sigma$ for all $\mu \in \P(n)$ and $\sigma \in \Sigma(n)$;
\item For every $n$ and $m$, there is a family of $R$-linear maps $\circ_i:\P(m)\otimes\P(n)\to\P(m+n-1)$ for all $i$ with $1\le i\le m$, called the \textbf{partial compositions}, which we denote by $\mu \circ_i \nu := \circ_i(\mu \otimes \nu)$. 
\end{enumerate}
The partial compositions are required to satisfy natural equivariance and associativity conditions, and $1_\P$ is required to play the role of a unit with respect to partial compositions, see \cite[Section 5.3.4]{LV} for full details. Using the partial compositions, we can also define the $R$-linear maps $\circ: \P(k)\otimes \P(n_1)\otimes\cdots\otimes \P(n_k)\to\P(n_1+\dots+n_k)$ for all $k$ and $n_i$, called the \textbf{complete composition}, which is defined on pure tensors by:
\[\circ(\mu\otimes \nu_1\otimes\dots\otimes\nu_k)=(\cdots((\mu\circ_k\nu_k)\circ_{k-1}\nu_{k-1})\cdots)\circ_1\nu_1,\] 
and then extended by $R$-linearity. As a shorthand, we denote:
\[\mu(\nu_1,\dots,\nu_k) := \circ(\mu\otimes \nu_1\otimes\dots\otimes\nu_k)\] 
After Theorem \ref{theorem:opdiffmonad} below, we give some well-known examples of operads. 

We now describe the monad associated to an operad $\P$ \cite[Section 5.1.2]{LV}. First, for any $R$-module $V$, define the $R$-module $\mathsf{S}(\P,V)$ as the coproduct of all $\P(n) \otimes V^{\otimes n}$ quotiented by $\Sigma(n)$, where the action is diagonal and $\Sigma(n)$ acts on $V^{\otimes n}$ by permuting the factors of the tensor power:
\begin{align*}
\mathsf{S}(\P,V)=\bigoplus_{n\in\mathbb N} \left( \P(n) \otimes V^{\otimes n} \right)_{\Sigma(n)}.
\end{align*}
As a shorthand, we denote the equivalence class of a pure tensor as follows:
\begin{align*}
(\mu;v_1,\dots,v_{n}) :=\left[ \mu\otimes v_1\otimes\cdots\otimes v_{n} \right] \in \left( \P(n) \otimes V^{\otimes {n}} \right)_{\Sigma(n)},  &&\text{where } \mu \in \P(n), v_i \in V. 
\end{align*}
Observe that, for any $R$-linear morphism with domain $\mathsf{S}(\P, V)$, it suffices to define said morphism on elements of the form $(\mu;v_1,\dots,v_{n})$, making sure the definition respects the action of the symmetric group, and then extend by $R$-linearity. With this in mind, define the functor $\mathsf{S}(\P,-): \mathsf{MOD}_R \to \mathsf{MOD}_R$ which maps an $R$-module $V$ to $\mathsf{S}(\P,V)$, and sends an $R$-linear morphism ${f: V \to W}$ to the $R$-linear morphism ${\mathsf{S}(\P, f): \mathsf{S}(\P, V) \to \mathsf{S}(\P, W)}$ which is defined as follows: 
\begin{align*}
\mathsf{S}(\P, f)(\mu;v_1,\dots,v_{n}) = (\mu;f(v_1),\dots,f(v_{n}))
\end{align*}
The monad unit $\eta_V: V \to   \mathsf{S}(\P,V)$ and the monad multiplication $\gamma_V: S\left(\P, \mathsf{S}(\P,V) \right) \to \mathsf{S}(\P,V)$ are respectively defined as follows: 
\[
\eta_V(v) = (1_\P; v) \in \P(1) \otimes V,
\]
\[
\gamma_V\left( \mu; \left(\nu_1; v_{1,1}, \hdots, v_{1,n_1} \right), \hdots, \left( \nu_{k}; v_{k,1}, \hdots, v_{k,n_k}  \right) \right) = \left( \mu \left( \nu_1, \hdots, \nu_{k}  \right); v_{1,1}, \hdots, v_{k,n_k} \right).
\]
Then, $(\mathsf{S}(\P,-),  \gamma, \eta)$ is a monad on $\mathsf{MOD}_R$ \cite[Section 3]{kriz95}. We will now prove that this monad is in fact also a coCartesian differential monad.

\begin{thm}\label{theorem:opdiffmonad} Let $\P$ be an operad. Let $\partial_V: \mathsf{S}(\P, V) \to \mathsf{S}(\P, V \times V)$ be the $R$-linear map such that: 
\begin{equation*}\begin{gathered} 
\partial_V\left( \mu; v_1, \hdots, v_{n} \right) = \sum_{i=1}^{n}\left(\mu; (v_1, 0),\hdots,(0, v_i),\hdots,(v_{n},0) \right).
\end{gathered}\end{equation*}
Here, in the sum, we use the first injection $V\to V\times V$ to all the inputs in $V$ except for the $i$-th input, for which we use the second injection. Then, $\partial$ is a differential combinator transformation for $(\mathsf{S}(\P,-),  \gamma, \eta)$, and thus $(\mathsf{S}(\P,-),  \gamma, \eta, \partial)$ is a coCartesian differential monad. 
\end{thm}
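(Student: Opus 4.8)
The plan is to verify Definition \ref{defi:cCDM} directly: that the stated formula defines a well-defined natural transformation $\partial_V \colon \mathsf{S}(\P,V)\to\mathsf{S}(\P,V\times V)$, and that it satisfies \textbf{[DC.1]}--\textbf{[DC.6]}. Since $\mathsf{S}(\P,V)$ is generated as an $R$-module by the classes $(\mu;v_1,\dots,v_n)$, every identity in sight need only be checked on such generators, and I will repeatedly use that $(\mu;\dots,v,\dots)$ is $R$-multilinear in its $V$-entries, so that any class with a $0$ in one entry is zero. For well-definedness I would write $\partial_V$ on $\P(n)\otimes V^{\otimes n}$ as $\sum_{i=1}^n \id_{\P(n)}\otimes\delta_i$, where $\delta_i\colon V^{\otimes n}\to(V\times V)^{\otimes n}$ applies the second coproduct injection in the $i$-th factor and the first injection in the others; although no individual $\delta_i$ is $\Sigma(n)$-equivariant, a permutation of the tensor factors merely permutes the family $\{\delta_i\}_i$, so the sum $\sum_i\delta_i$ is equivariant and $\partial_V$ descends to the $\Sigma(n)$-coinvariants. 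Naturality in $V$ is then immediate on generators, since $\mathsf{S}(\P,f)$ and $\mathsf{S}(\P,f\times f)$ act entry-wise.

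The axioms \textbf{[DC.1]}, \textbf{[DC.2]}, \textbf{[DC.3]} are short. For \textbf{[DC.1]}, applying $\mathsf{S}(\P,\pi_1)$ to the $i$-th summand of $\partial_V(\mu;v_1,\dots,v_n)$ puts a $0$ in the $i$-th entry, so every summand vanishes. For \textbf{[DC.2]}, one applies the three comparison maps entry-wise to $\partial_V(\mu;v_1,\dots,v_n)$ and recombines, in the $i$-th entry, $(0,v_i,0)+(0,0,v_i)=(0,v_i,v_i)$ by multilinearity. For \textbf{[DC.3]}, one simply computes $\partial_V(\eta_V(v))=(1_\P;(0,v))=\eta_{V\times V}(\langle 0,1_V\rangle(v))$.

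The main obstacle is \textbf{[DC.4]}, where both sides must be unwound against the explicit monad multiplication $\gamma$. I would take a generic element of $\mathsf{S}(\P,\mathsf{S}(\P,V))$ of the form $(\mu;w_1,\dots,w_k)$ with $w_a=(\nu_a;v_{a,1},\dots,v_{a,n_a})$, apply $\partial_{\mathsf{S}(\P,V)}$ to obtain $\sum_a(\mu;(w_1,0),\dots,(0,w_a),\dots,(w_k,0))$, and then observe that the inner map $\mathsf{S}(\P,\langle 1_V,0\rangle)\circ\pi_1+\partial_V\circ\pi_2$ sends a first-injection entry $(w_c,0)$ to $\mathsf{S}(\P,\langle 1_V,0\rangle)(w_c)$ and the single second-injection entry $(0,w_a)$ to $\partial_V(w_a)$. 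Applying $\gamma_{V\times V}$ then concatenates the $V$-entries and composes the operad elements into $\mu(\nu_1,\dots,\nu_k)$, and using $R$-linearity of $\gamma$ to distribute over the inner sum in $\partial_V(w_a)=\sum_b(\nu_a;\dots,(0,v_{a,b}),\dots)$, the result becomes exactly $\sum_a\sum_b(\mu(\nu_1,\dots,\nu_k);\dots,(0,v_{a,b}),\dots)$, which is the expansion of $\partial_V(\gamma_V(\mu;w_1,\dots,w_k))$. This is the Leibniz-rule content of the axiom; the only facts used are that $\gamma$ is $R$-linear and leaves the $V$-entries untouched apart from reordering.

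Finally, for \textbf{[DC.5]} and \textbf{[DC.6]} I would compute the second derivative $\partial_{V\times V}\circ\partial_V$ on a generator $(\mu;v_1,\dots,v_n)$, organizing the output as a double sum over ordered pairs $(i,j)$ of entries: for $i\ne j$ the $i$-th entry is $(0,v_i,0,0)$ and the $j$-th entry is $(0,0,v_j,0)$ (all remaining entries being $(v_\ell,0,0,0)$), while for $i=j$ the relevant entry is $(0,0,0,v_i)$. For \textbf{[DC.5]}, the map $\mathsf{S}(\P,\langle\pi_1,\pi_4\rangle)$ kills every term with $i\ne j$ (two entries receive a $0$) and returns $(\mu;\dots,(0,v_i),\dots)$ on each diagonal term $i=j$, so the total is $\partial_V(\mu;v_1,\dots,v_n)$. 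For \textbf{[DC.6]}, the swap $\mathsf{S}(\P,\langle\pi_1,\pi_3,\pi_2,\pi_4\rangle)$ exchanges the $(i,j)$ term with the $(j,i)$ term and fixes the diagonal terms, so the double sum is invariant. This would complete the verification that $(\mathsf{S}(\P,-),\gamma,\eta,\partial)$ is a coCartesian differential monad; since $\mathsf{MOD}_R$ is moreover additive, compatibility with the negative (\textbf{[DC.N]}) then holds automatically.
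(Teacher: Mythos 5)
Your proposal is correct and follows essentially the same route as the paper's proof: a generator-wise verification of \textbf{[DC.1]}--\textbf{[DC.6]}, using multilinearity and vanishing of classes with a zero entry for \textbf{[DC.1]}--\textbf{[DC.3]}, the same Leibniz-style computation against $\gamma$ for \textbf{[DC.4]}, and the same double-sum expansion of $\partial_{V\times V}\circ\partial_V$ (off-diagonal versus diagonal terms) for \textbf{[DC.5]} and \textbf{[DC.6]}. Your explicit check that $\sum_i\delta_i$ is $\Sigma(n)$-equivariant, hence that $\partial_V$ descends to coinvariants, is a small point the paper leaves implicit, and your closing remark that \textbf{[DC.N]} is automatic matches the paper's separate lemma on additive categories.
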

\begin{proof}
It is clear that $\partial$ is a natural transformation. Therefore, we must prove that \textbf{[DC.1]} to \textbf{[DC.6]} in Definition \ref{defi:cCDM} hold. First observe that for any $R$-linear morphism $f: V \times V \to W$, we have: 
\begin{align*}
\mathsf{S}(\P, f)\left( \partial_V\left( \mu; v_1, \hdots, v_{n} \right) \right) = \sum_{i=1}^{n}\left(\mu; f(v_1, 0),\hdots,f(0, v_i),\hdots,f(v_{n},0) \right) .
\end{align*}
This will help simplify our calculations. 

\begin{enumerate}[{\bf [DC.1]}]
\item Here, we use the fact that $(\mu; v_1, \hdots, 0, \hdots, v_{n}) =0$: 
\begin{gather*}
\mathsf{S}(\P, \pi_1)\left( \partial_V\left( \mu; v_1, \hdots, v_{n} \right) \right) =  \sum_{i=1}^{n}\left(\mu; v_1,\hdots,0,\hdots,v_{n} \right)=0 
\end{gather*}
So $\mathsf{S}(\P, \pi_1) \circ \partial_V = 0$
\item Here we use that $(\mu; v_1, \hdots, v_i + v_i^\prime, \hdots, v_{n}) = (\mu; v_1, \hdots, v_i, \hdots, v_{n}) + (\mu; v_1, \hdots, v_i^\prime, \hdots, v_{n})$:\\
\begin{multline*}
\mathsf{S}(\P,\langle \pi_1, \pi_2, \pi_2 \rangle)\left( \partial_V\left( \mu; v_1, \hdots, v_{n} \right) \right) = \sum_{i=1}^{n}\left(\mu; (v_1, 0,0) ,\hdots,(0,v_i, v_i),\hdots,(v_{n},0,0) \right) \\
= \sum_{i=1}^{n}\left(\mu; (v_1, 0,0) ,\hdots,(0,v_i, 0),\hdots,(v_{n},0,0) \right) \\\qquad+ \sum_{i=1}^{n}\left(\mu; (v_1, 0,0) ,\hdots,(0,0, v_i),\hdots,(v_{n},0,0) \right) \\
= \mathsf{S}(\P,\langle \pi_1, \pi_2, 0\rangle)\left( \partial_V\left( \mu; v_1, \hdots, v_n \right) \right) + \mathsf{S}(\P,\langle \pi_1, 0, \pi_2 \rangle)\left( \partial_V\left( \mu; v_1, \hdots, v_n \right) \right),
\end{multline*}
so $\mathsf{S}(\P,\langle \pi_1, \pi_2, \pi_2 \rangle) \circ \partial_V = \mathsf{S}(\P,\langle \pi_1, \pi_2, 0 \rangle) \circ \partial_V + \mathsf{S}(\P,\langle \pi_1, 0, \pi_2\rangle) \circ \partial_V$
\item This is just the case for $n=1$ which says that $\partial_V(\mu;v) = (\mu; (0,v))$. So $\partial_V \circ \eta_V = \eta_{V \times V} \circ \langle 0, 1_V \rangle$. 
\item We can start with an element of the form $\left( \mu; \left(\nu_1; v_{1,1}, \hdots, v_{1,n_1} \right), \hdots, \left( \nu_{k}; v_{k,1}, \hdots, v_{k,n_{k}}  \right) \right)$. On the one hand, we have that: 
\begin{multline*}
\partial_V\left( \gamma_V\left( \mu; \left(\nu_1; v_{1,1}, \hdots, v_{1,n_1} \right), \hdots, \left( \nu_{k}; v_{k,1}, \hdots, v_{k,n_{k}}  \right) \right) \right) 
\\= \partial_V\left( \left( \mu \left( \nu_1, \hdots, \nu_{k}  \right); v_{1,1}, \hdots, v_{k,n_{k}} \right) \right) \\
= \sum\limits^{k}_{i=1} \sum^{n_i}_{j_i=1} \left(\mu \left( \nu_1, \hdots, \nu_{k}  \right); (v_{1,1}, 0),\hdots,(0, v_{i,j_i}),\hdots,(v_{k,n_{k}},0) \right).
\end{multline*}
On the other hand, we have that:
\begin{multline*}
 \partial_{\mathsf{S}(\P,V)}\left( \mu; \left(\nu_1; v_{1,1}, \hdots, v_{1,n_1} \right), \hdots, \left( \nu_{k}; v_{k,1}, \hdots, v_{k,n_{k}}  \right) \right) \\
\hskip-.4cm= \sum\limits^{k}_{i=1} \Bigg(  \mu; \left( \left(\nu_1; v_{1,1}, \hdots, v_{1,n_1} \right), 0 \right), \hdots, \left(0, \left(\nu_i; v_{i,1}, \hdots, v_{i,n_i} \right) \right), \hdots \left( \left( \nu_{k}; v_{k,1}, \hdots, v_{k,n_{k}}  \right), 0 \right) \Bigg).
\end{multline*}

\noindent Then applying $\mathsf{S}\left(\P, \mathsf{S}(\P,\langle 1_V, 0 \rangle) \circ \pi_1 + \partial_V \circ \pi_2 \right)$, using the multilinearity in the module arguments, we get:
\begin{multline*}
\sum\limits^{k}_{i=1} \sum^{n_i}_{j_i=1} \bigg( \mu; \left(\nu_1; (v_{1,1},0), \hdots, (v_{1,n_1},0) \right),\\
\hdots, \left(\nu_i; (v_{i,1},0), \hdots, (0,v_{i,j_i}), \hdots, (v_{i,n_i},0)  \right),\hdots, \left( \nu_{k}; (v_{k,1},0), \hdots, (v_{k,n_{k}},0)  \right) \bigg).
\end{multline*}
Then, finally applying $\gamma_{V\times V}$, we get: 
\begin{align*}
\sum\limits^{k}_{i=1} \sum^{n_i}_{j_i=1} \left(\mu \left( \nu_1, \hdots, \nu_{k}  \right); (v_{1,1}, 0),\hdots,(0, v_{i,j_i}),\hdots,(v_{k,n_{k}},0) \right).
\end{align*}
So, $\partial_V \circ \gamma_V = \gamma_{V \times V} \circ S\left(\P, \mathsf{S}(\P,\langle 1_V, 0 \rangle) \circ \pi_1 + \partial_V \circ \pi_2 \right) \circ \partial_{\mathsf{S}(\P,V)}$. 
\end{enumerate}
For the remaining two relations, let us first expand out $\partial_{V \times V} \circ \partial_V$:
\begin{multline*}
\partial_{V\times V}\left( \partial_V\left( \mu; v_1, \hdots, v_{n} \right) \right)\\= \sum_{i=1}^{n} \sum_{1\leq j <i } \left(\mu; (v_1, 0, 0,0),\hdots,(0,0, v_j,0) \hdots, (0, v_i, 0,0 ),\hdots,(v_{n},0,0,0) \right) \\
+ \sum_{i=1}^{n} \left(\mu; (v_1, 0, 0,0),\hdots,(0,0, 0, v_i) \hdots, ,(v_{n},0,0,0) \right) \\
+ \sum_{i=1}^{n} \sum_{i < j \leq n } \left(\mu; (v_1, 0, 0,0), \hdots, (0, v_i, 0,0 ), \hdots,(0,0, v_j,0), \hdots,(v_{n},0,0,0) \right) .
\end{multline*}

\begin{enumerate}[{\bf [DC.1]}]
\setcounter{enumi}{4}
\item Here we again use that $(\mu; v_1, \hdots, 0, \hdots, v_n) =0$: 
\begin{multline*}
\mathsf{S}(\langle \pi_1, \pi_4 \rangle)\left(  \partial_{V\times V}\left( \partial_V\left( \mu; v_1, \hdots, v_{n} \right) \right) \right)\\ =\sum_{i=1}^{n} \sum_{1 \leq j < i} \left(\mu; (v_1, 0),\hdots,(0,0) \hdots, (0,0 ),\hdots,(v_{n},0) \right) \\
+ \sum_{i=1}^{n} \left(\mu; (v_1, 0),\hdots,(0, v_i) \hdots, ,(v_{n},0) \right)  \\+ \sum_{i=1}^{n} \sum_{i < j \leq n } \left(\mu; (v_1,0), \hdots, (0,0 ), \hdots,(0,0), \hdots,(v_{n},0,0,0) \right) \\
= 0 + \partial_V\left( \mu; v_1, \hdots, v_{n} \right) + 0 = \partial_V\left( \mu; v_1, \hdots, v_{n} \right),
\end{multline*}

so $\mathsf{S}(\langle \pi_1, \pi_4 \rangle) \circ \partial_{V \times V} \circ \partial_V = \partial_V$.
\item This amounts to a simple reindexing by swapping $i$ and $j$: 
\begin{align*}
& S\left(\P, \left \langle \pi_1, \pi_3, \pi_2, \pi_4 \right \rangle \right) \left( \partial_{V\times V}\left( \partial_V\left( \mu; v_1, \hdots, v_{n} \right) \right) \right) =\\
&= \sum_{i=1}^{n} \sum_{1 \leq j < i} \left(\mu; (v_1, 0, 0,0),\hdots,(0,v_j, 0,0) \hdots, (0, 0, v_i,0 ),\hdots,(v_{n},0,0,0) \right) \\
&+ \sum_{i=1}^{n} \left(\mu; (v_1, 0, 0,0),\hdots,(0,0, 0, v_i) \hdots, ,(v_{n},0,0,0) \right) \\
&+ \sum_{i=1}^{n} \sum_{i < j \leq n } \left(\mu; (v_1, 0, 0,0), \hdots, (0, v_i, 0,0 ), \hdots,(0,0, v_j,0), \hdots,(v_{n},0,0,0) \right) \\
&= \sum_{j=1}^{n} \sum_{1 \leq i < j} \left(\mu; (v_1, 0, 0,0),\hdots,(0,0, v_i,0) \hdots, (0, v_j, 0,0 ),\hdots,(v_{n},0,0,0) \right) \\
&+ \sum_{j=1}^{n} \left(\mu; (v_1, 0, 0,0),\hdots,(0,0, 0, v_j) \hdots, ,(v_{n},0,0,0) \right)\\
&+ \sum_{j=1}^{n} \sum_{j < i \leq n } \left(\mu; (v_1, 0, 0,0), \hdots, (0, v_j, 0,0 ), \hdots,(0,0, v_i,0), \hdots,(v_{n},0,0,0) \right) \\
&=\partial_{V\times V}\left( \partial_V\left( \mu; v_1, \hdots, v_{n} \right) \right),
\end{align*} 
so $S\left(\P, \left \langle \pi_1, \pi_3, \pi_2, \pi_4 \right \rangle \right) \partial_{V \times V} \circ \partial_V= \partial_{V \times V} \circ \partial_V$.
\end{enumerate}
We conclude that $\partial$ is a differential combinator transformation and that $(\mathsf{S}(\P,-),  \gamma, \eta, \partial)$ is a coCartesian differential monad. 
\end{proof}

Here are now some well-known examples of operads, their associated monad and resulting differential combinator transformation: 

\begin{example}
\label{example:operadfromalg}
Any (unital and associative) $R$-algebra $A$ induces an operad $A^\bullet$ where $A^\bullet(1)=A$ and $A^\bullet(n)=\mathsf{0}$ for $n \neq 1$. The associated monad is given by the free (left) $A$-module monad\footnote{To avoid confusion, the algebras of the operad $A^\bullet$ will simply be (left) $A$-modules and not $A$-algebras, as we will see in Example \ref{example:Abullet-modules}.}, that is, $\mathsf{S}(A^\bullet, V) = A \otimes V$. See \cite[Example 0, 5.2.10]{LV} for full details. The differential combinator transformation $\partial_V: A \otimes V \to A \otimes (V \times V)$ is simply given by injecting $V$ into the second component, so $\partial_V(a \otimes v) = a \otimes (0,v)$. 
\end{example}  

\begin{example}\label{example:operadCom}
The operad $\Com$ is defined by $\Com(n)=R$ for all $n$, with trivial action of the symmetric group. Unit and compositions are defined by identities of $R$. The associated monad is the symmetric algebra monad:
\[\mathsf{S}(\Com,V)=\mathsf{Sym}(V) = \bigoplus_{n \in \mathbb{N}} \left(V^{\otimes^n}\right)_{\Sigma(n)}\] 
See \cite[Example 2, 5.2.10]{LV} for full details. The differential combinator transformation $\partial_V: \mathsf{Sym}(V) \to \mathsf{Sym}(V \times V)$ is defined on pure symmetrized tensors as follows: 
\[\partial_V([v_1 \otimes \hdots \otimes v_{n}]) = \sum^{n}_{i=1} [(v_1, 0) \otimes \hdots \otimes (0,v_i) \otimes \hdots \otimes (v_{n},0)].\]
Now, if $V$ is a free $R$-module with basis $X$, then $\mathsf{Sym}(V)$ is isomorphic as an $R$-algebra to the polynomial $R$-algebra over $X$, $\mathsf{Sym}(V) \cong R[X]$. Also, $\mathsf{Sym}(V \times V)$ is isomorphic to the polynomial $R$-algebra over the disjoint union of $X$ with itself. So, writing $dX= \lbrace dx \vert~ \forall x \in X \rbrace$ to distinguish between the first copy and the second copy of $X$, we have that $\mathsf{Sym}(V \times V) \cong R[X, dX]$. In terms of polynomials, $\partial_V: R[X] \to R[X, dX]$ maps a polynomial to the sum of its partial derivatives:
\[\partial_V(p(\vec x)) = \sum^n_{i=1} \frac{\partial p(\vec x)}{\partial x_i} dx_i.\] 
Therefore, $\partial$ recaptures polynomial differentiation. 
\end{example}

\begin{example}\label{example:operadAss}
There is an operad $\Ass$ defined by $\Ass(n)=R[\Sigma(n)]$, the regular representation of the group $\Sigma(n)$. See \cite[Example 1, 5.2.10]{LV} for full details. The associated monad is the tensor algebra monad:
\[\mathsf{S}(\Ass,V)=\mathsf{Ten}(V) = \bigoplus_{n \in \mathbb{N}} V^{\otimes n}.\] 
The differential combinator transformation $\partial_V: \mathsf{Ten}(V) \to \mathsf{Ten}(V \times V)$ is defined on pure tensors as follows: 
\[\partial_V(v_1 \otimes \hdots \otimes v_{n}) = \sum^{n}_{i=1} (v_1, 0) \otimes \hdots \otimes (0,v_i) \otimes \hdots \otimes (v_{n}, 0).\] 
Now, if $V$ is a free $R$-module with basis $X$, then $\mathsf{Ten}(V)$ is isomorphic to the $R$-algebra of non-commutative polynomials over $X$. As such $\partial$ corresponds to differentiating non-commutative polynomials. 
\end{example}

\begin{example}\label{example:operadLie} There is an operad $\Lie$ whose associated monad is given by the free Lie algebra monad, $\mathsf{S}(\Lie, V) = \mathsf{Lie}(V)$. See \cite[Example 3, 5.2.10]{LV} for full details. In particular, $\mathsf{Lie}(V)$ is spanned by elements of the form: $\left[ v_1, [v_2,\hdots [v_{n-1}, v_{n}] \hdots] \right]$, so the Lie bracket of the Lie bracket of etc. of elements $v_i \in V$. The differential combinator transformation $\partial_V: \mathsf{Lie}(V) \to \mathsf{Lie}(V \times V)$ is defined on pure Lie brackets as follows:
\[\partial_V\left( \left[ v_1,[v_2, \hdots [v_{n-1}, v_{n}] \hdots] \right] \right) = \sum^{n}_{i=1} \left[ (v_1,0),\left[(v_2,0) ,\hdots \left[(0,v_i), \hdots [(v_{n-1},0), (v_{n},0)] \hdots\right] \right] \right].\]
Therefore, $\partial$ corresponds to differentiating Lie bracket polynomials.
\end{example}

\subsection{The Cartesian Differential Categories of an Operad}\label{subsec:CDC-operad}

The first consequence of Theorem \ref{theorem:opdiffmonad} is that the opposite category of the Kleisli category of a monad associated to an operad is a Cartesian differential category. As a shorthand, for an operad $\P$ we denote $\mathsf{KL}_\P := \mathsf{KL}_{\mathsf{S}(\P,-)}$ for the Kleisli category of $(\mathsf{S}(\P,-), \gamma, \eta)$. So we may state that: 

\begin{proposition} Let $\P$ be an operad. Then $\mathsf{KL}_\P ^{op}$, is a Cartesian differential category. 
\end{proposition}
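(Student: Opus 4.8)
The plan is to derive this instantly from two results already established above. First, note that $\mathsf{MOD}_R$ is an additive category, hence in particular a semi-additive category, as required by the theory of coCartesian differential monads. By Theorem~\ref{theorem:opdiffmonad}, the monad $(\mathsf{S}(\P,-), \gamma, \eta)$ associated to the operad $\P$, together with the natural transformation $\partial$ given there by $\partial_V(\mu; v_1, \dots, v_n) = \sum_{i=1}^n (\mu; (v_1,0), \dots, (0, v_i), \dots, (v_n, 0))$, is a coCartesian differential monad on $\mathsf{MOD}_R$.

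I would then simply invoke the proposition recalled above from \cite[Theorem~3.5]{ikonicoff2021cartesian}, which states that for any coCartesian differential monad $(\mathsf{S}, \mu, \eta, \partial)$ on a semi-additive category $\mathbb{X}$, the opposite of the Kleisli category $\mathsf{Kl}^{op}_{\mathsf{S}}$ is a Cartesian differential category, with differential combinator $\mathsf{D}[f] := \partial_B \circ f$ on a Kleisli morphism $f \colon A \to \mathsf{S}(B)$. Applying this with $\mathbb{X} = \mathsf{MOD}_R$ and $\mathsf{S} = \mathsf{S}(\P,-)$ gives immediately that $\mathsf{KL}^{op}_\P$ is a Cartesian differential category, its differential combinator sending a Kleisli morphism $f \colon A \to \mathsf{S}(\P, B)$ to $\partial_B \circ f \colon A \to \mathsf{S}(\P, B \times B)$.

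There is essentially nothing hard to overcome here; all of the real work has been front-loaded into the verification of axioms \textbf{[DC.1]}--\textbf{[DC.6]} carried out in the proof of Theorem~\ref{theorem:opdiffmonad}. If I wanted to make the statement more useful downstream, I would append a short remark spelling out the differential combinator explicitly on representatives $(\mu; v_1, \dots, v_n)$ --- the formula being essentially $\partial_B$ applied componentwise to the module arguments --- which is what is used in the subsequent examples for $\Com$, $\Ass$, and $\Lie$; but this is not required for the proposition as stated.
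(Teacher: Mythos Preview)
Your proposal is correct and matches the paper's own approach exactly: the paper does not give a separate proof of this proposition but simply states it as an immediate consequence of Theorem~\ref{theorem:opdiffmonad} combined with the general result \cite[Theorem~3.5]{ikonicoff2021cartesian} recalled earlier. Your additional remark about spelling out the differential combinator on representatives is precisely what the paper does in the paragraphs following the proposition.
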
 

Let us unpack this Cartesian differential category.  Recall that the objects of $\mathsf{KL}^{op}_\P$ are $R$-modules, while a map $f: V \to W$ in $\mathsf{KL}^{op}_\P$ is an $R$-linear morphism of type $f: W \to \mathsf{S}(\P,V)$. The derivative ${\mathsf{D}[f]: V \times V \to W}$ in $\mathsf{KL}^{op}_\P$ is the $R$-linear morphism of type ${\mathsf{D}[f] : W \to \mathsf{S}(\P, V \times V)}$ defined as:
\[\mathsf{D}[f] = \partial_V \circ f.\] 
Let us give a bit of intuition about this. Let $X$ be a basis for a free $R$-module $V$. Then $(\mu;x_1, \hdots, x_{n})$ can be interpreted as a sort of monomial of degree $n$, which we call $\P$-monomials. With this in mind, an arbitrary element $P \in \mathsf{S}(\P, V)$ is therefore a finite sum of $\P$-monomials, and therefore we may interpret $P$ as a $\P$-polynomial. Now $(\mu; (x_1,0), \hdots, (0,x_i), \hdots, (x_{n},0))$ should be understood as the partial derivative of $(\mu;x_1, \hdots, x_{n})$ in the variable $x_i$. Therefore, the differential combinator transformation $\partial_V:\mathsf{S}(\P,V)\to \mathsf{S}(\P,V\times V)$ maps $\P$-polynomials to the sum of their partial derivatives, which we suggestively write as:
\[\partial_V(P)=\sum_{x\in X}\frac{\partial P}{\partial x}dx,\]
where the sum is well-defined since $P$ only depends on a finite number of elements of $X$. In other words, $\partial_V$ maps $\P$-polynomials to their total derivative. Now let us extend this intuition to the Kleisli category. If $W$ is another free $R$-module with basis $Y$, then a map $f: V \to W$ in $\mathsf{KL}^{op}_\P$ is precisely associated to a tuple of $\P$-polynomials in variables $X$:
\[ f \equiv \langle P_{y} \rangle_{y \in Y}.\] 
So its derivative $\mathsf{D}[f]: V \times V \to W$ in $\mathsf{Kl}_{\P}^{op}$ is associated to the tuple of the total derivative of each $\P$-polynomial:
\[\mathsf{D}[f] \equiv \left \langle \sum_{x\in X}\frac{\partial P_{y}}{\partial x}dx \right \rangle_{y \in Y}.\] 
Therefore, $\mathsf{KL}^{op}_\P$ can naively be understood as a generalized Lawvere theory of $\P$-polynomial. We can obtain a legitimate Lawvere theory of $\P$-polynomials by defining $\P\text{-}\mathsf{POLY}$ to be the category whose objects are the natural numbers $n \in \mathbb{N}$ and where a map $n \to m$ is an $m$-tuple $\langle P_1, \hdots, P_m \rangle$ where $P_i \in \mathsf{S}(\P, R^n)$. Then $\P\text{-}\mathsf{POLY}$ is equivalent to the full subcategory of $\mathsf{KL}^{op}_\P$ of finite dimensional $R$-modules, where in particular $\P\text{-}\mathsf{POLY}(n,m) \cong \mathsf{MOD}_R\left(R^m, \mathsf{S}(\P, R^n) \right)$. Also, note that $\P\text{-}\mathsf{POLY}(n,1) = \mathsf{S}(\P,R^n)$. Therefore we have that: 

\begin{proposition} Let $\P$ be an operad. Then $\P\text{-}\mathsf{POLY}$ is a Cartesian differential category where in particular for a map $P: n \to m$, $P= \langle P_1, \hdots, P_m \rangle$, its derivative $\mathsf{D}[n]: n \times n \to m$ is defined as follows: 
\[\mathsf{D}\left[\langle P_1, \hdots, P_m \rangle \right] = \langle \partial_{R^n}(P_1), \hdots, \partial_{R^n}(P_m) \rangle.\] 
\end{proposition}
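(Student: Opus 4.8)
The plan is to deduce the proposition as a direct corollary of the facts already established: the monad $(\mathsf{S}(\P,-),\gamma,\eta)$ is a coCartesian differential monad (Theorem \ref{theorem:opdiffmonad}), hence by \cite[Theorem 3.5]{ikonicoff2021cartesian} its opposite Kleisli category $\mathsf{KL}_\P^{op}$ is a Cartesian differential category with differential combinator $\mathsf{D}[f]=\partial_V\circ f$. So the only real content is that $\P\text{-}\mathsf{POLY}$, as defined, is \emph{equivalent} (as a Cartesian differential category) to the full subcategory of $\mathsf{KL}_\P^{op}$ spanned by the finitely generated free $R$-modules $R^n$, and that under this equivalence the differential combinator takes the stated explicit form. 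Since a Cartesian left additive structure and a differential combinator transport along an equivalence of categories preserving finite products, exhibiting this equivalence suffices.

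First I would spell out the equivalence $\Phi\colon \P\text{-}\mathsf{POLY}\to \mathsf{KL}_\P^{op}$. On objects $\Phi(n)=R^n$. A morphism $n\to m$ in $\P\text{-}\mathsf{POLY}$ is an $m$-tuple $\langle P_1,\dots,P_m\rangle$ with $P_i\in\mathsf{S}(\P,R^n)$; by the universal property of the product $R^m$ in $\mathsf{MOD}_R$, such a tuple corresponds exactly to an $R$-linear map $R^m\to\mathsf{S}(\P,R^n)$, i.e.\ to a morphism $R^n\to R^m$ in $\mathsf{KL}_\P^{op}$. This gives $\P\text{-}\mathsf{POLY}(n,m)\cong\mathsf{MOD}_R(R^m,\mathsf{S}(\P,R^n))=\mathsf{KL}_\P^{op}(R^n,R^m)$ naturally. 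I would check that $\Phi$ is functorial: composition in $\P\text{-}\mathsf{POLY}$ (substituting tuples of $\P$-polynomials into one another) must match Kleisli composition (which involves $\gamma$); this is exactly the operadic substitution encoded by the monad multiplication, so it follows by unwinding definitions. Since every finite-dimensional free $R$-module is isomorphic to some $R^n$, $\Phi$ is essentially surjective onto the full subcategory of finite-dimensional objects, and it is fully faithful by the hom-set bijection above; hence it is an equivalence onto that subcategory. A full subcategory of a Cartesian differential category that is closed under finite products (here $R^n\times R^m\cong R^{n+m}$, and the terminal object $0=R^0$ lies in it) is again a Cartesian differential category, and an equivalence transports this structure, so $\P\text{-}\mathsf{POLY}$ is a Cartesian differential category.

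Finally I would verify that the transported differential combinator is the claimed formula. For $P=\langle P_1,\dots,P_m\rangle\colon n\to m$, viewed in $\mathsf{KL}_\P^{op}$ as the $R$-linear map $\hat P\colon R^m\to\mathsf{S}(\P,R^n)$ with $\hat P(e_i)=P_i$, the combinator of $\mathsf{KL}_\P^{op}$ gives $\mathsf{D}[\hat P]=\partial_{R^n}\circ\hat P\colon R^m\to\mathsf{S}(\P,R^n\times R^n)$, which sends $e_i\mapsto\partial_{R^n}(P_i)$. Pulling this back through $\Phi$ (and noting $R^n\times R^n\cong R^{2n}$ and $\langle\mathsf{S}(\P,\pi_1),\dots\rangle$-style identifications are the identity on the underlying tuples) yields exactly $\mathsf{D}[\langle P_1,\dots,P_m\rangle]=\langle\partial_{R^n}(P_1),\dots,\partial_{R^n}(P_m)\rangle$, as stated.

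The main obstacle, such as it is, is bookkeeping rather than mathematics: one must check that composition in $\P\text{-}\mathsf{POLY}$ — defined informally as ``substitution of $\P$-polynomials'' — is given precisely by the Kleisli composite built from $\gamma$, and that the Cartesian left additive structure (finite products $R^n\times R^m\cong R^{n+m}$, the additive enrichment coming from addition of $\P$-polynomials) matches the one on $\mathsf{KL}_\P^{op}$ under $\Phi$. Once these identifications are pinned down, the result is immediate from \cite[Theorem 3.5]{ikonicoff2021cartesian} together with Theorem \ref{theorem:opdiffmonad}; no axiom of a Cartesian differential category needs to be reverified by hand.
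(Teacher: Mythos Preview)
Your proposal is correct and follows exactly the approach the paper takes: the paper does not give an explicit proof, but rather states the proposition as an immediate consequence of the preceding sentence, which asserts that $\P\text{-}\mathsf{POLY}$ is equivalent to the full subcategory of $\mathsf{KL}^{op}_\P$ on finite-dimensional $R$-modules. You have simply spelled out the bookkeeping that the paper leaves implicit, including the verification that the transported differential combinator has the stated componentwise form.
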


Here is the resulting Cartesian differential category for our main examples of operads. 

\begin{example}
\label{example:CDCalg}
For an $R$-algebra $A$, $A^\bullet\text{-}\mathsf{POLY}$ is equivalent to the Cartesian differential category of $A$-linear maps. So let $A\text{-}\mathsf{LIN}$ be the category whose objects are $n \in \mathbb{N}$ and where a map $f: n \to m$ is an $A$-linear morphism $f: A^n \to A^m$. Then $A\text{-}\mathsf{LIN}$ is a Cartesian differential category where the differential combinator is defined as $\mathsf{D}[f](\vec x, \vec y) = f(\vec y)$ \cite[Example 2.6]{ikonicoff2021cartesian}, and furthermore we have that $A^\bullet\text{-}\mathsf{POLY} \simeq A\text{-}\mathsf{LIN}$ as Cartesian differential categories.
\end{example}  

\begin{example}\label{example:CDCCom}
For the operad $\Com$, $\Com\text{-}\mathsf{POLY}$ recaptures precisely polynomial differentiation since it is equivalent to the Lawvere theory of polynomials over $R$, $R\text{-}\mathsf{POLY}$, which is one of the main examples of Cartesian differential categories \cite[Example 2.6]{ikonicoff2021cartesian}. Concretely, $R\text{-}\mathsf{POLY}$ is the category whose objects are $n \in \mathbb{N}$ and where a map ${P: n \to m}$ is a $m$-tuple of polynomials in $n$ variables, that is, $P = \langle p_1(\vec x), \hdots, p_m(\vec x) \rangle$ with $p_i(\vec x) \in R[x_1, \hdots, x_n]$. $R\text{-}\mathsf{POLY}$ is a Cartesian differential category where the differential combinator is given by the standard differentiation of polynomials, that is, for a map ${P: n \to m}$, with $P = \langle p_1(\vec x), \hdots, p_m(\vec x) \rangle$, its derivative $\mathsf{D}[P]: n \times n \to m$ is defined as the tuple of the sum of the partial derivatives of the polynomials $p_i(\vec x)$:
\begin{align*}
\mathsf{D}[P](\vec x, \vec y) := \left( \sum \limits^n_{i=1} \frac{\partial p_1(\vec x)}{\partial x_i} y_i, \hdots, \sum \limits^n_{i=1} \frac{\partial p_n(\vec x)}{\partial x_i} y_i \right), && \sum \limits^n_{i=1} \frac{\partial p_j (\vec x)}{\partial x_i} y_i \in R[x_1, \hdots, x_n, y_1, \hdots, y_n].
\end{align*} 
So we have that $\Com\text{-}\mathsf{POLY} \simeq R\text{-}\mathsf{POLY}$ as Cartesian differential categories. 
\end{example}

\begin{example}\label{example:CDCAss}
For the operad $\Ass$, $\Ass\text{-}\mathsf{POLY}$ captures instead differentiating non-commutative polynomials since it is equivalent to the Lawvere theory of non-commutative polynomials. 
\end{example}

\begin{example}\label{example:CDCLie} For the operad $\Lie$, $\Lie\text{-}\mathsf{POLY}$ is the cartesian differential which is given by the Lawvere theory of Lie bracket polynomials. 
\end{example}

We conclude this section by discussing the notion of a $\mathsf{D}$-linear counit for a coCartesian differential monad.

\begin{definition}\cite[Definition 3.8]{ikonicoff2021cartesian} For a coCartesian differential monad $(\mathsf{S}, \mu, \eta, \partial)$ on a semi-additive category $\mathbb{X}$, a \textbf{$\mathsf{D}$-linear counit} is a natural transformation $\mathcal{E}_A: \mathsf{S}(A) \to A$ in $\mathbb X$ such that the following equalities hold: 
\begin{description}
\item[{\bf [DU.1]}] $\mathcal{E}_A \circ \eta_A = 1_A$,
\item[{\bf [DU.2]}] $\eta_A \circ \mathcal{E}_A = \mathsf{S}(\pi_2) \circ \partial_A$.
\end{description}
\end{definition}

In a Cartesian differential category with a differential combinator $\mathsf{D}$, there is an important class of maps called the \textbf{$\mathsf{D}$-linear maps} \cite[Definition 2.2.1]{blute2009cartesian} which are maps $f$ such that $\mathsf{D}[f] = f \circ \pi_2$. For a coCartesian differential monad, the $\mathsf{D}$-linear maps in the opposite category of its Kleisli category correspond precisely to the maps in the base category if and only if the coCartesian differential monad has a $\mathsf{D}$-linear counit \cite[Proposition 3.11]{ikonicoff2021cartesian}. We will now show that for an operad $\P$, its associated monad has a $\mathsf{D}$-linear counit $\mathcal{E}_V: \mathsf{S}(\P,V) \to V$ if and only if $\P(1)$ is of dimension one (as an $R$-module). Essentially, note that $\P(1) \otimes V \subset \mathsf{S}(\P, V)$, and therefore, if $\P(1) \cong R$, then there is a copy of $V$ inside $\mathsf{S}(\P, V)$. So the $\mathsf{D}$-linear counit amounts to projecting out the $V$ component of $\mathsf{S}(\P, V)$.

\begin{lemma} Let $\P$ be an operad. Then $(\mathsf{S}(\P,-),  \gamma, \eta, \partial)$ has a $\mathsf{D}$-linear counit $\mathcal{E}_V: \mathsf{S}(\P,V) \to V$ if and only if the $R$-linear morphism $e_\P: R \to \P(1)$ which picks out the distinguished element, $e_\P(1) = 1_\P$, is an isomorphism. Explicitly, if $e_\P$ is an isomorphism, define $\mathcal{E}_V: \mathsf{S}(\P,V) \to V$ as follows: 
\begin{align*}
 \mathcal{E}_V(\mu;v) = e^{-1}_\P(\mu) \cdot v \mbox{ if }\mu\in\P(1),&& \mathcal{E}_V(\mu;v_1, \hdots, v_{n}) = 0\mbox{ if }n\neq 1,
\end{align*}
and conversely if $\mathcal{E}_V: \mathsf{S}(\P,V) \to V$ is a $\mathsf{D}$-linear counit, then define $e^{-1}_\P: \P(1) \to R$ as $e^{-1}_\P(\mu) = \mathcal{E}_R(\mu;1)$. 
\end{lemma}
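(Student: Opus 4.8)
The plan is to prove both implications, in each case exploiting the two axioms \textbf{[DU.1]} and \textbf{[DU.2]} together with the explicit formula for $\partial$ from Theorem~\ref{theorem:opdiffmonad}. The conceptual heart of both directions is the computation of $\mathsf{S}(\P,\pi_2)\circ\partial_V$ on a generator: since $\partial_V(\mu;v_1,\dots,v_n)=\sum_{i=1}^n(\mu;(v_1,0),\dots,(0,v_i),\dots,(v_n,0))$, applying $\pi_2$ in each slot annihilates every input except the $i$-th, turning the $i$-th summand into $(\mu;0,\dots,0,v_i,0,\dots,0)$; by multilinearity this vanishes whenever $n\neq 1$, while for $n=1$ it equals $(\mu;v_1)$. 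Hence \textbf{[DU.2]} is equivalent to the two conditions: $\eta_V\bigl(\mathcal{E}_V(\mu;v_1,\dots,v_n)\bigr)=0$ for $n\neq 1$, and $\eta_V\bigl(\mathcal{E}_V(\mu;v)\bigr)=(\mu;v)$ for $\mu\in\P(1)$ and $v\in V$. Moreover \textbf{[DU.1]} makes $\eta_V$ a split monomorphism, hence injective, for every $V$; this will be used in the converse direction. (As a byproduct, since \textbf{[DU.2]} pins down $\eta_V\circ\mathcal{E}_V$ and $\eta_V$ is injective, the $\mathsf{D}$-linear counit is unique when it exists.)

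For the $(\Leftarrow)$ direction, assume $e_\P$ is an isomorphism and define $\mathcal{E}_V$ by the stated formula: it is $0$ on each arity-$n$ summand with $n\neq 1$, and $\mathcal{E}_V(\mu;v)=e_\P^{-1}(\mu)\cdot v$ for $\mu\in\P(1)$. This is well defined and $R$-linear because $\Sigma(1)$ is trivial and $e_\P^{-1}\colon\P(1)\to R$ is $R$-linear, and it is natural because for $R$-linear $f$ one has $f\bigl(e_\P^{-1}(\mu)\cdot v\bigr)=e_\P^{-1}(\mu)\cdot f(v)$ (and both sides are $0$ in higher arity). For \textbf{[DU.1]}: $\eta_V(v)=(1_\P;v)$ with $1_\P\in\P(1)$, so $\mathcal{E}_V(\eta_V(v))=e_\P^{-1}(1_\P)\cdot v=1\cdot v=v$. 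For \textbf{[DU.2]}, by the reduction above: on an arity-$n$ generator with $n\neq 1$ both sides are $0$; and for $\mu\in\P(1)$, using that the tensor is over $R$ so that $(1_\P;rv)=(r\cdot 1_\P;v)=(e_\P(r);v)$, we get $\eta_V\bigl(\mathcal{E}_V(\mu;v)\bigr)=\bigl(1_\P;e_\P^{-1}(\mu)\cdot v\bigr)=\bigl(e_\P(e_\P^{-1}(\mu));v\bigr)=(\mu;v)$, as required.

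For the $(\Rightarrow)$ direction, assume $\mathcal{E}$ is a $\mathsf{D}$-linear counit and specialize to $V=R$, identifying the arity-$1$ summand $\bigl(\P(1)\otimes R\bigr)_{\Sigma(1)}\cong\P(1)$ via $(\nu;r)\mapsto r\cdot\nu$; under this identification $\eta_R(r)=(1_\P;r)$ corresponds to $r\cdot 1_\P=e_\P(r)$, and $(\mu;1)$ corresponds to $\mu$. Set $r_\mu:=\mathcal{E}_R(\mu;1)$. The $n=1$ case of \textbf{[DU.2]} gives $\eta_R(r_\mu)=(\mathsf{S}(\P,\pi_2)\circ\partial_R)(\mu;1)=(\mu;1)$, i.e.\ $e_\P(r_\mu)=\mu$ after the identification, so $e_\P$ is surjective with right inverse $\mu\mapsto r_\mu=\mathcal{E}_R(\mu;1)$. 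For injectivity, if $e_\P(r)=0$ then $\eta_R(r)=(1_\P;r)$ corresponds to $e_\P(r)=0$, hence $\eta_R(r)=0$, and since $\eta_R$ is injective by \textbf{[DU.1]} we conclude $r=0$. Therefore $e_\P$ is an isomorphism with inverse $\mu\mapsto\mathcal{E}_R(\mu;1)$.

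The bulk of this is bookkeeping. The only genuinely delicate point is keeping the identifications straight: that $\bigl(\P(1)\otimes V\bigr)_{\Sigma(1)}\cong\P(1)\otimes V$ since $\Sigma(1)$ is trivial, and that scalars move across the tensor, $(1_\P;rv)=(r\cdot 1_\P;v)$. This is exactly the bridge between the abstract counit and the concrete map $e_\P$, and it is also where the hypothesis ``$\P(1)$ of dimension one'' — encoded as $e_\P$ being an isomorphism — actually enters.
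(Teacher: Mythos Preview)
Your proof is correct and follows essentially the same approach as the paper. Both directions hinge on the same computation of $\mathsf{S}(\P,\pi_2)\circ\partial_V$, and your verification of \textbf{[DU.1]} and \textbf{[DU.2]} in the $(\Leftarrow)$ direction is identical to the paper's. In the $(\Rightarrow)$ direction the paper verifies directly that $\mu\mapsto\mathcal{E}_R(\mu;1)$ is a two-sided inverse of $e_\P$ via two element-level computations, whereas you phrase the same content as surjectivity (from \textbf{[DU.2]}) plus injectivity (from $\eta_R$ being split mono by \textbf{[DU.1]}); this is a cosmetic reorganisation of the same argument, and your explicit use of the identification $\P(1)\otimes R\cong\P(1)$ makes the bookkeeping slightly cleaner.
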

\begin{proof} Suppose that $e_\P$ is an isomorphism. We must check that $\mathcal{E}$ satisfies $\mathcal{E}_V \circ \eta_V = 1_V$ and $\eta_V \circ \mathcal{E}_V = \mathsf{S}(\P,\pi_2) \circ \partial_V$. The first identity is automatic since $e^{-1}_\P(1_\P) = 1$:
\[ \mathcal{E}_V(\eta_v(v))= \mathcal{E}_V(1_\P; v) = e^{-1}_\P(1_\P) \cdot v = 1\cdot v =v.\]
For the other identity, we must prove it in two cases.  For the case $\mu\in\P(1)$, note that $\mu = e^{-1}_\P(\mu) \cdot 1_\P$. So, using that $(r \cdot \mu; v) = (\mu; r \cdot v)$, we have that: 
\begin{align*}
\mathsf{S}(\P,\pi_2)\left( \partial_V (\mu;v) \right) &= (\mu; v) = (e^{-1}_\P(\mu) \cdot 1_\P; v) 
=( 1_\P; e^{-1}_\P(\mu) \cdot v) \\&= (1_\P; \mathcal{E}_V(\mu,v) ) = \eta_V( \mathcal{E}_V(\mu,v)).
\end{align*}
Lastly, when $n\neq 1$, using that $(\mu; v_1, \hdots, 0, \hdots, v_n) =0$, we compute that: 
\begin{align*}
\mathsf{S}(\P,\pi_2)\left( \partial_V (\mu;v_1, \hdots, v_{n}) \right) = \sum^{n}_{i=1} (\mu; 0, \hdots, v_i, \hdots, 0) = 0 = \eta_V( \mathcal{E}_V(\mu;v_1, \hdots, v_{n})),
\end{align*}    
so we have that $\mathcal{E}$ is a $\mathsf{D}$-linear counit. Conversely, suppose that $\mathcal{E}$ is a $\mathsf{D}$-linear counit, so in particular, $\mathcal{E}_R \circ \eta_R = 1$ and $\eta_R \circ \mathcal{E}_R = \mathsf{S}(\P,\pi_2) \circ \partial_R$. On the one hand, we have:
\begin{align*}
e^{-1}_\P( e_\P(1))= \mathcal{E}_R(1_\P;1) =  \mathcal{E}_R(\eta_R(1)) =1.
\end{align*}
On the other hand, first observe that by our notation, for $\mu, \nu \in \P(1)$, if $(\mu; 1) = (\nu; 1)$, then this means that $\mu=\nu$. We compute: 
\begin{align*}
&(e_\P( e^{-1}_\P (\mu)); 1) = (\mathcal{E}_R(\mu;1) \cdot 1_\P; 1) = (1_\P; \mathcal{E}_R(\mu;1) \\
&=\eta_R(\mathcal{E}_R(\mu;1)) = \mathsf{S}(\P,\pi_2)\left( \partial_R (\mu;1) \right) = (\mu;1) ,
\end{align*}
hence $e_\P( e^{-1}_\P(1) (\mu)) = \mu$, and so, $e_\P$ is an isomorphism.
\end{proof}

For the operads $\Com$, $\Ass$, and $\Lie$, their associated coCartesian differential monads all have a $\mathsf{D}$-linear counit which is given precisely by projecting out the copy of $V$ in the symmetric algebra, tensor algebra, or free Lie algebra respectively. On the other hand, for an arbitrary $R$-algebras $A$, the associated coCartesian differential monad of the operad $A^\bullet$ will not, in general, have a $\mathsf{D}$-linear counit unless $A \cong R$. 

\subsection{Tangent Structure for Algebras over an Operad}
\label{subsection:otc-concrete-description}

In this section, we describe the tangent structure on the category of algebras over an operad. We have already shown that the monad associated to an operad is a coCartesian differential monad, then, by applying Proposition \ref{prop:cdmonad-tanmonad} and Theorem \ref{thm:S-tan}, we can state the following: 

\begin{lemma}\label{cor:P-tan} Let $\P$ be an operad. For the coCartesian differential monad $(\mathsf{S}(\P,-),  \gamma, \eta, \partial)$, the induced $\mathbb{B}$-distributive law $\lambda_V: \mathsf{S}(\P, V \times V) \to \mathsf{S}(\P,V) \times \mathsf{S}(\P,V)$ as defined in Proposition \ref{prop:cdmonad-tanmonad} is given as follows:  
\begin{equation*}
 \lambda_V (\mu;(u_1,v_1),\dots,(u_{n},v_{n})) = \left( (\mu;u_1,\dots,u_{n}), \sum_{i=1}^{n}\left(\mu; u_1,\hdots, v_i,\hdots, u_{n} \right)  \right).
\end{equation*}
Furthermore, $(\mathsf{ALG}_{\mathsf{S}(\P,-)}, \mathbb{T})$ is a Cartesian Rosick\'y tangent category, where the Rosick\'y tangent structure is defined as in Theorem \ref{thm:S-tan}. 
\end{lemma}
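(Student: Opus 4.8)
The plan is to derive both claims as straightforward instances of the general machinery established in Section~\ref{sec:ccdm}, applied to the coCartesian differential monad $(\mathsf{S}(\P,-),\gamma,\eta,\partial)$ of Theorem~\ref{theorem:opdiffmonad}. First I would observe that $\mathsf{MOD}_R$ is an additive category, so $(\mathsf{MOD}_R,\mathbb{B})$ is a Cartesian Rosick\'y tangent category by Lemma~\ref{lemma:biproduct}, and Theorem~\ref{theorem:opdiffmonad} tells us that $(\mathsf{S}(\P,-),\gamma,\eta,\partial)$ is a coCartesian differential monad on it. Then Proposition~\ref{prop:cdmonad-tanmonad} applies verbatim: it produces a Rosick\'y tangent monad $(\mathsf{S}(\P,-),\gamma,\eta,\lambda)$ whose $\mathbb{B}$-distributive law is, by definition, $\lambda_V := \left\langle \mathsf{S}(\P,\pi_1), \mathsf{S}(\P,\pi_1+\pi_4)\circ\partial_{V\times V}\right\rangle$. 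Finally Theorem~\ref{thm:S-tan} (in its additive-category clause) gives that $(\mathsf{ALG}_{\mathsf{S}(\P,-)},\mathbb{T})$ is a Cartesian Rosick\'y tangent category with the tangent structure described there. So the only genuine content to verify is the explicit closed form of $\lambda_V$ on generators.

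For that computation I would evaluate $\lambda_V$ on a typical generator $(\mu;(u_1,v_1),\dots,(u_n,v_n))\in\mathsf{S}(\P,V\times V)$. The first component is $\mathsf{S}(\P,\pi_1)(\mu;(u_1,v_1),\dots,(u_n,v_n)) = (\mu;u_1,\dots,u_n)$, directly from the definition of the functor on maps. For the second component, I would use the formula recorded in the proof of Theorem~\ref{theorem:opdiffmonad}: for any $R$-linear $f\colon W\to W'$ one has $\mathsf{S}(\P,f)(\partial_W(\omega;w_1,\dots,w_n)) = \sum_{i=1}^n (\omega; f(w_1,0),\dots,f(0,w_i),\dots,f(w_n,0))$. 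Here we first compute $\partial_{V\times V}$ of the generator, which produces a sum over $i$ of terms where the $i$-th slot carries the element $(0,(u_i,v_i))$ of $(V\times V)\times(V\times V)$ and every other slot $j$ carries $((u_j,v_j),0)$; then applying $\mathsf{S}(\P,\pi_1+\pi_4)$, the map $\pi_1+\pi_4\colon (V\times V)\times(V\times V)\to V$ sends $((u_j,v_j),0)\mapsto u_j$ and $(0,(u_i,v_i))\mapsto v_i$. Hence the $i$-th term becomes $(\mu;u_1,\dots,u_{i-1},v_i,u_{i+1},\dots,u_n)$, and summing over $i$ gives exactly $\sum_{i=1}^n(\mu;u_1,\dots,v_i,\dots,u_n)$, as claimed. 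I would note in passing that this is consistent with $\Sigma(n)$-equivariance since $\pi_1$ and $\pi_1+\pi_4$ are applied uniformly in all slots, and that it suffices to check on generators and extend $R$-linearly.

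The remaining assertions require no new work: the statement ``$(\mathsf{ALG}_{\mathsf{S}(\P,-)},\mathbb{T})$ is a Cartesian Rosick\'y tangent category with the Rosick\'y tangent structure of Theorem~\ref{thm:S-tan}'' is precisely the conclusion of that theorem in the additive-category case, since $\mathsf{MOD}_R$ is additive. One can optionally record the resulting tangent bundle explicitly by substituting the formula for $\lambda_V$ into $\mathsf{T}(A,\alpha) = (A\times A,(\alpha\times\alpha)\circ\lambda_A)$ from Theorem~\ref{thm:S-tan}, which previews the semi-direct product description $\mathsf{T}(A)\cong A\ltimes A$ to be discussed in Section~\ref{subsection:otc-concrete-description}, but this is not needed for the lemma.

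I do not anticipate a real obstacle here: the lemma is a transparent specialization, and the main (mild) care is bookkeeping the indices and injections/projections when unwinding $\mathsf{S}(\P,\pi_1+\pi_4)\circ\partial_{V\times V}$ on a generator — in particular keeping straight which of the four coordinates $\pi_1,\dots,\pi_4$ of $(V\times V)^{\times 2}$ survives under $\pi_1+\pi_4$ in the differentiated versus undifferentiated slots. Everything else is an invocation of Theorem~\ref{theorem:opdiffmonad}, Proposition~\ref{prop:cdmonad-tanmonad}, and Theorem~\ref{thm:S-tan}.
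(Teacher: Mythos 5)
Your proposal is correct and takes essentially the same route as the paper: the paper's proof simply recalls that $\lambda_V := \left\langle \mathsf{S}(\P,\pi_1), \mathsf{S}(\P,\pi_1+\pi_4)\circ\partial_{V\times V}\right\rangle$ from Proposition \ref{prop:cdmonad-tanmonad} and leaves the evaluation on generators as an exercise, which you carry out correctly (the slot-by-slot bookkeeping under $\pi_1+\pi_4$ matches the intended computation). The ``furthermore'' clause is, as you note, just Theorem \ref{thm:S-tan} in the additive case since $\mathsf{MOD}_R$ is additive.
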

\begin{proof} Recall that, by definition, $\lambda_V := \left\langle  \mathsf{S}(\P,\pi_1),  S\left(\P, \pi_1 + \pi_4 \right) \circ \partial_{V \times V} \right \rangle$. We leave it as an exercise for the reader to check that this gives precisely the formula above. 
\end{proof}

Let us give a more concrete description of the tangent structure by describing the tangent bundle in terms of semi-direct products. Following the standard terminology in operad literature, by an algebra over an operad we mean an algebra over the operad's associated monad \cite[Section 5.2.3]{LV}. More explicitly, for an operad $\P$, a $\P$-algebra is an $R$-module $A$ equipped with an $R$-linear morphism $\theta: \mathsf{S}(\P,A) \to A$ such that $\theta \circ \eta_A = 1_A$ and $\theta \circ \mathsf{S}(\P,\theta) = \theta \circ \gamma_A$. As a useful shorthand, we write:
\begin{align*}
\theta(\mu;a_1,\dots,a_{n}) := \mu(a_1,\dots,a_{n}),
\end{align*}
and so, we will only write $A$ for a $\P$-algebra when there is no confusion about its $\P$-algebra structure $\theta$. Therefore, the necessary $\P$-algebra identities can be expressed as: 
\begin{align*}
1_\P(a) = a, && \mu\left(\nu_1( a_{1,1}, \hdots, a_{1,n_1}), \hdots, \nu_k( v_{k,1}, \hdots, v_{k,n_k}) \right) = \mu \left( \nu_1, \hdots, \nu_k  \right)(a_{1,1}, \hdots, a_{k,n_k}) .
\end{align*}
Similarly, by a $\P$-algebra morphism we simply mean an $\mathsf{S}(\P,-)$-algebra morphism. So if $A$ and $B$ are $\P$-algebras, then a $\P$-algebra morphism is an $R$-linear morphism $f: A \to B$ which is compatible with $\P$-algebra structure, that is, the following equality holds: 
\begin{align*}
f(\mu(a_1,\dots,a_{n})) = \mu(f(a_1),\dots,f(a_{n})).
\end{align*}
Therefore, by the category of algebras over an operad we mean the Eilenberg--Moore category of its associated monad. So let $\mathsf{ALG}_\P$ be the category of $\P$-algebras and $\P$-algebra morphisms between them, or in other words, $\mathsf{ALG}_\P := \mathsf{ALG}_{\mathsf{S}(\P,-)}$. 

By Lemma \ref{cor:P-tan}, we already know that $\mathsf{ALG}_\P$ is a tangent category. However, we wish to give a more explicit description of the tangent bundle of a $\P$-algebra. We do this using the semi-direct product of a $\P$-algebra with itself. We note that, while the semi-direct product \cite[Section 12.3.2]{LV} can be more generally defined between a $\P$-algebra and a module (a notion that we review in the next section below), for the purpose of the tangent structure, we only need to understand it for a $\P$-algebra with itself. 

For a $\P$-algebra $A$, define the $\P$-algebra $A\ltimes A$ as the $R$-module $A \times A$ equipped with $\P$-algebra structure defined as follows: 
\begin{align*}
\mu((a_1, b_1),\hdots,(a_{n}, b_{n}))=\left(\mu(a_1,\hdots,a_{n}),\sum_{i=1}^{n}\mu(a_1,\hdots,b_i,\hdots,a_{n})\right).
\end{align*}
Note that, in $A\ltimes A$, the first component $A$ is viewed as a $\P$-algebra, while the second component $A$ is viewed as a module. More generally, we also define the $\P$-algebra $A\ltimes A^n$ to be the $R$-module $A \times \underbrace{A \times \hdots \times A}_{n \text{ times}}$ with $\P$-algebra structure defined as (where we denoted by $\vec{b}$ a tuple $\left(b_1,\hdots,b_n\right)$ of elements of $A$):
\begin{multline*}
\mu\left( (a_1, \vec{b}_1), \hdots, (a_{m}, \vec{b}_{m}) \right) = \\\left(\mu(a_1,\hdots,a_{m}),\sum_{i=1}^{m}\mu(a_1,\hdots,b_{i,1},\hdots,a_{m}), \hdots, \sum_{i=1}^{m}\mu(a_1,\hdots,b_{i,n},\hdots,a_{m})\right).
\end{multline*}
We will now prove that $A\ltimes A$ is precisely the tangent bundle over $A$.

\begin{lemma} Let $\P$ be an operad and $A$ a $\P$-algebra. Then $\mathsf{T}(A) = A\ltimes A$ and $\mathsf{T}_n(A) = A\ltimes A^n$, where $\mathsf{T}(A)$ and $\mathsf{T}_n(A)$ are defined as in Theorem \ref{thm:S-tan}.
\end{lemma}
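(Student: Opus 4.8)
The plan is to unwind the definitions and check that the explicit $\P$-algebra structure on $A \ltimes A$ (and more generally $A \ltimes A^n$) agrees with the $\mathsf{S}$-algebra structure prescribed in Theorem \ref{thm:S-tan} under the $\mathbb{B}$-distributive law $\lambda$ computed in Lemma \ref{cor:P-tan}. Recall from Theorem \ref{thm:S-tan} that if $A$ has $\P$-algebra structure $\theta \colon \mathsf{S}(\P, A) \to A$, then $\mathsf{T}(A)$ has underlying $R$-module $A \times A$ with $\P$-algebra structure $(\theta \times \theta) \circ \lambda_A$. So the entire content of the lemma is to verify that $(\theta \times \theta) \circ \lambda_A$, evaluated on a pure generator $(\mu; (a_1,b_1), \dots, (a_n, b_n))$, yields exactly $\bigl(\mu(a_1,\dots,a_n), \sum_i \mu(a_1,\dots,b_i,\dots,a_n)\bigr)$, which is the defining formula for the $\P$-algebra $A \ltimes A$.

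First I would substitute the formula for $\lambda_V$ from Lemma \ref{cor:P-tan}:
\begin{equation*}
\lambda_A(\mu; (a_1,b_1),\dots,(a_n,b_n)) = \left( (\mu; a_1,\dots,a_n),\ \sum_{i=1}^n (\mu; a_1,\dots,b_i,\dots,a_n) \right),
\end{equation*}
and then apply $\theta \times \theta$ componentwise, using the shorthand $\theta(\mu; a_1,\dots,a_n) = \mu(a_1,\dots,a_n)$ and $R$-linearity of $\theta$ to pull the sum out. This gives precisely $\bigl(\mu(a_1,\dots,a_n), \sum_{i=1}^n \mu(a_1,\dots,b_i,\dots,a_n)\bigr)$, matching the definition of $A \ltimes A$ on the nose. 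The same computation works verbatim for $\mathsf{T}_n(A)$: there one uses the explicit description of the $n$-fold pullback $\mathsf{T}_n(A,\alpha)$ given in Theorem \ref{thm:S-tan}, whose structure map is built from $\lambda$ precomposed with the reindexing maps $\mathsf{S}(\P, \langle \pi_1, \pi_{j+1}\rangle)$; tracking each coordinate shows it reproduces the displayed formula for $A \ltimes A^n$. I would also briefly note that on morphisms both functors act as $f \mapsto f \times f$ (resp. $f \mapsto f \times \dots \times f$), so the identification is one of functors, not merely of objects.

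The only mild subtlety — and the closest thing to an obstacle — is bookkeeping: one must confirm that the quotient by the symmetric group action is respected, i.e. that the formula for $\lambda_A$ descends to $\bigl(\P(n) \otimes (A \times A)^{\otimes n}\bigr)_{\Sigma(n)}$, but this is already guaranteed since $\lambda$ was proven to be a well-defined natural transformation in Lemma \ref{cor:P-tan} (and Theorem \ref{theorem:opdiffmonad}). The remaining verification — that $A \ltimes A$ with the stated structure really is a $\P$-algebra — is then automatic, since it is the image of the $\P$-algebra $A$ under the functor $\mathsf{T}$. Thus the proof amounts to a short unwinding of two formulas, and I would present it as such rather than belaboring the coordinate chase for $\mathsf{T}_n$.
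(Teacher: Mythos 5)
Your proposal is correct and follows essentially the same route as the paper: both identify the structure map of $\mathsf{T}(A)$ as $(\theta \times \theta) \circ \lambda_A$, substitute the explicit formula for $\lambda_A$ from Lemma \ref{cor:P-tan}, and observe that applying $\theta \times \theta$ yields exactly the defining formula of $A \ltimes A$, with the $\mathsf{T}_n(A) = A \ltimes A^n$ case handled by the analogous coordinate check. Your added remarks on well-definedness modulo the symmetric group action and on the action on morphisms are fine but not needed beyond what the paper records.
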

\begin{proof} Let $\theta: \mathsf{S}(\P,A) \to A$ denote the $\P$-algebra structure map of $A$. Then, by Theorem \ref{thm:S-tan}, $\mathsf{T}(A)$ is the $R$-module $A \times A$ with $\P$-algebra structure map defined as $(\theta \times \theta) \circ \lambda_A$, where $\lambda$ was given in Lemma \ref{cor:P-tan}. So, since $\mathsf{T}(A)$ and $A\ltimes A$ have the same underlying $R$-module $A \times A$, we must show that they have the same $\P$-algebra structure. We compute: 
\begin{align*}
(\theta \times \theta)\left(\lambda_A\left( (a_1, b_1),\dots,(a_{n}, b_{n}) \right) \right) &=   (\theta \times \theta)\left( (\mu;a_1,\dots,a_{n}), \sum_{i=1}^{n}\left(\mu; a_1,\hdots, b_i,\hdots, a_{n} \right)  \right) \\
&= \left( \mu(a_1,\hdots, a_{n}), \sum_{i=1}^{n}\mu\left(a_1,\hdots, b_i,\hdots, a_{n} \right)  \right).
\end{align*}
We conclude that $\mathsf{T}(A) = A\ltimes A$. Similarly, we can also show that $\mathsf{T}_n(A) = A\ltimes A^n$. 
\end{proof}

Therefore, we may write the tangent structure of Theorem \ref{thm:S-tan} for $\mathsf{ALG}_\P$ in terms of semi-direct products. 

\begin{thm}
\label{theorem:footT-semi-direct-product}
Let $\P$ be an operad. Consider: 
\begin{enumerate}[{\em (i)}]
\item The tangent bundle functor $\mathsf{T}: \mathsf{ALG}_{\P} \to \mathsf{ALG}_{\P}$ defined on objects as $\mathsf{T}(A) = A\ltimes A$ and on maps as $\mathsf{T}(f) = (f \times f)$, that is:
\[\mathsf{T}(f)(a,b) = (f(a), f(b))\]
\item The projection $p_A: A\ltimes A \to A$ defined as:
\[p_A(a,b) = a\] 
with $n$-fold pullback $\mathsf{T}_n(A) = A\ltimes A^n$, and projections $q_j: A\ltimes A^n \to A\ltimes A$ defined as:
\[q_j(a,b_1, \hdots, b_n) = (a,b_j)\] 
\item The sum $s_A: A \ltimes A^2 \to A \ltimes A$ defined as:
\[s_A(a,b_1, b_2) = (a, b_1+b_2)\]
\item The zero map $z_A: A \to A\ltimes A$ defined as:
\[z_A(a) = (a,0)\]
\item The vertical lift $\ell_A: A\ltimes A \to (A\ltimes A) \ltimes (A\ltimes A)$ defined as:
\[\ell_A(a,b) = (a,0,0,b)\]
\item The canonical flip $c_A: (A\ltimes A) \ltimes (A\ltimes A) \to (A\ltimes A) \ltimes (A\ltimes A)$ defined as:
\[c_A(a,b,c,d) = (a,c,b,d)\]
\item The negative map $n_A: A\ltimes A \to A\ltimes A$ defined as:
\[n_A(a,b) = (a, -b)\] 
\end{enumerate}
Then, $\mathbb{T}= (\mathsf{T},  p, s, z, l, c, n)$ is a Rosick\'y tangent structure on $\mathsf{ALG}_\P$, and so, $(\mathsf{ALG}_\P, \mathbb{T})$ is a Cartesian Rosick\'y tangent category. 
\end{thm}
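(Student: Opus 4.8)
The plan is to deduce this theorem almost entirely from the general machinery of Section~\ref{sec:ccdm}, so that essentially no new verification is needed. First I would invoke Theorem~\ref{theorem:opdiffmonad}: since $\mathsf{MOD}_R$ is an additive category and $(\mathsf{S}(\P,-), \gamma, \eta, \partial)$ is a coCartesian differential monad on it, Theorem~\ref{thm:S-tan} (in its additive version) immediately gives that $\mathsf{ALG}_\P = \mathsf{ALG}_{\mathsf{S}(\P,-)}$ is a Cartesian Rosický tangent category, with tangent structure $\mathbb{T}$ built from the induced $\mathbb{B}$-distributive law $\lambda$, whose explicit form is recorded in Lemma~\ref{cor:P-tan}. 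Thus all the tangent axioms, the pullback-preservation conditions, and the Cartesian compatibility isomorphisms $\mathsf{T}(A_1 \times \cdots \times A_n) \cong \mathsf{T}(A_1) \times \cdots \times \mathsf{T}(A_n)$ and $\mathsf{T}(\ast) \cong \ast$ hold a priori; the only remaining task is to check that the seven structure maps displayed here in semi-direct-product notation coincide with those produced by Theorem~\ref{thm:S-tan}.

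Second, I would carry out this translation. By the preceding lemma, $\mathsf{T}(A) = A \ltimes A$ and $\mathsf{T}_n(A) = A \ltimes A^n$ on the underlying $R$-module $A \times A \times \cdots \times A$, and hence $\mathsf{T}^2(A) = (A \ltimes A) \ltimes (A \ltimes A)$ sits on $A \times A \times A \times A$. Under these identifications one reads off, directly from the formulas in Theorem~\ref{thm:S-tan}, that $\mathsf{T}(f) = f \times f$ becomes $(a,b) \mapsto (f(a), f(b))$; that $p_A = \pi_1$ becomes $(a,b)\mapsto a$, with pullback projections $q_j = \langle \pi_1, \pi_{j+1}\rangle$ becoming $(a, b_1, \dots, b_n) \mapsto (a, b_j)$; that $s_A = \langle \pi_1, \pi_2 + \pi_3\rangle$ becomes $(a, b_1, b_2) \mapsto (a, b_1 + b_2)$; that $z_A = \langle 1_A, 0\rangle$ becomes $a \mapsto (a,0)$; that $l_A = \langle \pi_1, 0, 0, \pi_2\rangle$ becomes $(a,b) \mapsto (a, 0, 0, b)$; that $c_A = \langle \pi_1, \pi_3, \pi_2, \pi_4\rangle$ becomes $(a,b,c,d) \mapsto (a, c, b, d)$; and that $n_A = \langle \pi_1, -\pi_2\rangle$ becomes $(a,b) \mapsto (a, -b)$. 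All of these are immediate once one recalls that the product, pairing, sum of parallel maps, and negatives in $\mathsf{ALG}_\P$ are computed on underlying $R$-modules exactly as in $\mathsf{MOD}_R$.

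There is no real obstacle here: the theorem is a bookkeeping restatement of Theorem~\ref{thm:S-tan}. The one place that warrants a genuine (but short) computation is confirming that the $\P$-algebra structure written for $A \ltimes A$ — and more generally the $n$-fold pullback $A \ltimes A^n$ — agrees on the nose with $(\theta \times \theta) \circ \lambda_A$ using the explicit description of $\lambda$ from Lemma~\ref{cor:P-tan}; this is exactly what the preceding lemma already establishes. Once that identification is in hand, every clause of the statement follows, and the Cartesian Rosický tangent structure is inherited verbatim.
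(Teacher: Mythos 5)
Your proposal is correct and follows exactly the paper's own route: the paper likewise obtains the tangent structure by combining Theorem \ref{theorem:opdiffmonad}, Proposition \ref{prop:cdmonad-tanmonad}, and Theorem \ref{thm:S-tan} (via Lemma \ref{cor:P-tan}), identifies $\mathsf{T}(A)=A\ltimes A$ and $\mathsf{T}_n(A)=A\ltimes A^n$ through the explicit computation of $(\theta\times\theta)\circ\lambda_A$, and then records Theorem \ref{theorem:footT-semi-direct-product} as the semi-direct-product restatement of the general result. The only genuine check, as you note, is the agreement of the $\P$-algebra structure on $A\ltimes A$ with $(\theta\times\theta)\circ\lambda_A$, which is precisely the content of the lemma preceding the theorem.
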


We now consider what the resulting tangent categories are for our main examples of operads. 

\begin{example}\label{example:Abullet-modules} Let $A$ be an $R$-algebra. Then for the operad $A^\bullet$, the $A^\bullet$-algebras are precisely $A$-modules. So, $\mathsf{ALG}_{A^\bullet} =\mathsf{MOD}_A$, the category of $A$-modules and $A$-linear maps between them. The resulting tangent structure on $\mathsf{MOD}_A$ is precisely the biproduct structure from Lemma \ref{lemma:biproduct}, so in particular, for an $A$-module $M$, its tangent bundle is simply $\mathsf{T}(M) = M \times M$. So $(\mathsf{MOD}_A, \mathbb{B})$ is a Cartesian Rosick\'y tangent category. 
\end{example}

\begin{example}
\label{example:tangent-structure-over-Alg-Com}
For the operad $\Com$, the $\Com$-algebras are precisely (associative and unital) commutative $R$-algebras. So $\mathsf{ALG}_\Com = \mathsf{CALG}_R$, the category of commutative $R$-algebras and $R$-algebra morphisms between them. Up to isomorphism, the resulting tangent structure is the one described in \cite[Section 2.2]{cruttwellLemay:AlgebraicGeometry}, where the tangent bundle is given by dual numbers. Indeed, for a commutative $R$-algebra $A$, let $A[\epsilon]$ be its $R$-algebra of dual numbers, which we use as a shorthand for $A[\epsilon] := A[x]/(x^2)$: 
\begin{align*} A[\epsilon] = \lbrace a + b \epsilon \vert~ \forall a,b \in A \rbrace, &&\text{with } \epsilon^2 = 0.
\end{align*}
It is easy to see that $A\ltimes A \cong A[\epsilon]$ via $(a,b) \mapsto a + b\epsilon$. So we may express the tangent structure using instead dual numbers, so $\mathsf{T}(A) = A[\epsilon]$. We may write $\mathsf{T}^2(A)$ and $\mathsf{T}_n(A)$ as multivariable dual numbers in the following way: 
\begin{align*} \mathsf{T}^2(A) = A[\epsilon][\epsilon^\prime] =  \lbrace a + b \epsilon + c\epsilon^\prime + d \epsilon\epsilon^\prime \vert~ \forall a,b,c,d \in A \rbrace, &&\text{with } \epsilon^2 = {\epsilon^\prime}^2 = 0, \\
\mathsf{T}_n(A) = A[\epsilon_1, \hdots, \epsilon_n] =  \lbrace a + b_1 \epsilon_1 + \hdots + b_n \epsilon_n \vert~ \forall a,b_j \in A \rbrace, && \text{with } \epsilon_i\epsilon_j = 0.
\end{align*}
The rest of the tangent structure is given as follows: 
\begin{gather*}
p_{A}(a+b \epsilon) = a, \quad \quad \quad s_A(a + b \epsilon_1 + c \epsilon_2) = a + (b+c) \epsilon, \quad \quad \quad z_A(a) = a, \\
l_A(a + b\epsilon) = a + b \epsilon\epsilon^\prime, \quad \quad \quad c_A( a + b \epsilon + c\epsilon^\prime + d \epsilon\epsilon^\prime) =  a + c \epsilon + b\epsilon^\prime + d \epsilon\epsilon^\prime, \\
n_A(a + b \epsilon) = a - b \epsilon,
\end{gather*}
so $(\mathsf{CALG}_R, \mathbb{T})$ is a Cartesian Rosick\'y tangent category. 
\end{example}

\begin{example}
\label{example:tangent-structure-over-Alg-Ass}
For the operad $\Ass$, the $\Ass$-algebras are precisely (associative and unital) $R$-algebras. So $\mathsf{ALG}_\Ass = \mathsf{ALG}_R$, the category of $R$-algebras and $R$-algebra morphisms between them. Again, up to isomorphism, the resulting tangent structure is precisely the same as for commutative algebras in Example \ref{example:tangent-structure-over-Alg-Com}, so in particular for an $R$-algebra $A$, $\mathsf{T}(A) = A[\epsilon] \cong A\ltimes A$. Therefore, $(\mathsf{ALG}_R, \mathbb{T})$ is a Cartesian Rosick\'y tangent category. 
\end{example}

\begin{example}
\label{example:tangent-structure-over-Alg-Lie}
For the operad $\Lie$, the $\Lie$-algebras are precisely Lie algebras over $R$. So $\mathsf{ALG}_\Ass = \mathsf{LIE}_R$, the category of Lie algebras over $R$ and Lie algebra morphisms between them. For a Lie algebra $\Lieg$, the Lie algebra $\Lieg\ltimes\Lieg$ is the $R$-module $\Lieg\times\Lieg$ with the Lie brackets defined as $\left[(x_1,y_1),(x_2,y_2) \right]:=([x_1,x_2],[x_1,y_2]+[y_1,x_2])$. So $(\mathsf{LIE}_R, \mathbb{T})$ is a Cartesian Rosick\'y tangent category, which we stress is a new example of a tangent category. 
\end{example}

We conclude this section by mentioning that the construction of Theorem \ref{theorem:footT-semi-direct-product} provides a (contravariant) functor between the category of operads and the category of Cartesian Rosick\'y tangent categories. Briefly, for operads $\P$ and $\P^\prime$, an operad morphism $f: \P \to \P^\prime$ is a sequence of equivariant $R$-linear morphisms $f(n): \P(n) \to \P^\prime(n)$ which preserve the partial compositions and the distinguished object. Then, let $\mathsf{OPERAD}_R$ be the category of operads and operad morphisms between them. On the other hand, for $(\mathbb{X}, \mathbb{T})$ and $(\mathbb{X}^\prime, \mathbb{T}^\prime)$, a strict Cartesian Rosick\'y tangent morphism $\mathsf{F}: (\mathbb{X}, \mathbb{T}) \to (\mathbb{X}^\prime, \mathbb{T}^\prime)$ is a functor $\mathsf{F}: \mathbb{X} \to \mathbb{X}^\prime$ which preserves the product strictly and also preserve the tangent structure, in the sense that $\mathsf{F} \circ \mathsf{T} = \mathsf{T}^\prime \circ \mathsf{F}$, $\mathsf{F}(p) = p^\prime$, etc. Let $\mathsf{CRTAN}_{=}$ be the category of Cartesian Rosick\'y tangent categories and strict Cartesian tangent morphisms between them. Every operad morphism ${f: \P \to \P^\prime}$ induces a functor $\mathsf{ALG}_f: \mathsf{ALG}_{\P^\prime} \to \mathsf{ALG}_{\P}$ by mapping a $\P^\prime$-algebra $A$ to $A$ with $\P$-algebra structure defined as $\mu(a_1, \hdots, a_n) := f(\mu)(a_1, \hdots, a_n)$. It is straightforward to see that $\mathsf{ALG}_f: (\mathsf{ALG}_{\P^\prime}, \mathbb{T}) \to (\mathsf{ALG}_\P, \mathbb{T})$ is a strict Cartesian Rosick\'y tangent morphism. Therefore we obtain a functor $\mathsf{ALG}: \mathsf{OPERAD}^{op}_R \to \mathsf{CRTAN}_{=}$ which sends an operad $\P$ to $(\mathsf{ALG}_\P, \mathbb{T})$ and operad morphisms $f: \P \to \P^\prime$ to $\mathsf{ALG}_f: (\mathsf{ALG}_{\P^\prime}, \mathbb{T}) \to (\mathsf{ALG}_\P, \mathbb{T})$. 

In future work, it will be interesting to see if the left adjoint of the pullback functor $\mathsf{ALG}_f$ induced by an operad morphism $f$ is also a tangent morphism.

\subsection{Adjoint Tangent Structure of Algebras over an Operad}\label{subsec:adjoint-tan-operad}

The objective of this section is to study a tangent structure for the opposite category of algebras over an operad, which is adjoint to the one we defined in Section \ref{subsection:otc-concrete-description}. 

Since the category of algebras over an operad is always cocomplete, it admits all reflexive coequalizers. So, by Theorem \ref{theorem:adjoint-tangent-structure-for-cCD-monads}, we deduce that the tangent structure defined in Section \ref{subsection:otc-concrete-description} has adjoint tangent structure. The adjoint tangent bundle is given by the free algebra over the module of K\"ahler differentials of an algebra. This is quite a mouthful, so let's break it down piece by piece. 

Let $\P$ be an operad and $A$ a $\P$-algebra. Then an \textbf{$A$-module} \cite[Section 12.3.1]{LV} is an $R$-module $M$ equipped with a family of $R$-linear morphisms $\psi_{n+1}:\P(n+1)\otimes A^{\otimes n}\otimes M\to M$, called \textbf{evaluation maps}, satisfying natural equivariance, associativity, and a unit map: $\eta_M:M\to \bigoplus_{n\in\N}\P(n+1)\otimes A^{\otimes n}\otimes M$ playing the role of a unit for the evaluation. As a shorthand, we write:
\begin{align*}
\mu(a_1,\dots,a_{n},x):=\psi_{n+1}(\mu\otimes a_1\otimes\dots\otimes a_{n}\otimes x).
\end{align*}
If $M$ and $M^\prime$ are $A$-modules, then an $A$-linear morphism is an $R$-linear morphism $f: M \to M^\prime$ which preserves the evaluation and unit maps in the sense that: 
\begin{align*}
\eta_{M\prime}(f(x))&=\left(\bigoplus_{n+1}\P(n+1)\otimes A^{\otimes n}\otimes f\right)\circ\eta_M(x),\\
f(\mu(a_1,\dots,a_{n},x))&= \mu(a_1,\dots,a_{n},f(x))  .
\end{align*}
Let $\mathsf{MOD}_A$ be the category of $A$-modules and $A$-linear morphisms between them. Among the $A$-modules, there is an important one called the module of K\"ahler differentials of $A$, which generalizes the classical notion of K\"ahler differentials. 

We must first describe derivations for algebras over an operad. For an $A$-module $M$, we adopt the following useful notation:
\begin{align*}
\mu(a_1,\dots,a_{i},x,a_{i+1},\dots,a_{n})=\left(\mu\cdot(i~ i+1\hdots n)\right)(a_1,\hdots,a_{n},x),
\end{align*}
where $(i~ i+1\dots n)$ is the $n+1-i$-cycle permutation. An \textbf{$A$-derivation} \cite[Section 12.3.7]{LV} evaluated in an $A$-module $M$ is an $R$-linear morphism $D:A\to M$ satisfying:
\begin{align*}
D(\mu(a_1,\dots,a_{n}))=\sum_{i=1}^{n}\mu(a_1,\hdots,D(a_i),\hdots,a_{n}).
\end{align*}
This equality is called the Leibniz rule. 

Now let $\Der(A,M)$ be the $R$-module of $A$-derivations evaluated in $M$. This way, we define a functor $\Der(A,-)$ which is representable \cite[Proposition 12.3.11]{LV}. The \textbf{module of K\"ahler differentials of $A$} \cite[Section 12.3.8]{LV} is an $A$-module which represents $\Der(A,-)$, that is, an $A$-module $\Omega_A$ such that, for all $A$-modules $M$, $\Der(A,M) \cong \Mod_A(\Omega_A,M)$. This means that there is an $A$-derivation $\mathsf{d}: A \to \Omega_A$ which is universal in the sense that for every $A$-derivation ${D: A \to M}$, there exists a unique $A$-linear morphism $\overline{D}:\Omega_A\to M$ such that $\overline{D} \circ \mathsf{d} = D$. We do not need a concrete description of $\Omega_A$ for this paper, see \cite[Lemma 12.3.12]{LV} for full details. However, it is interesting to point out that, for the $\P$-algebra $\mathsf{S}(\P,V)$, $\Omega_{\mathsf{S}(\P,V)}$ is isomorphic to the sub-$\mathsf{S}(\P,V)$-module of $\mathsf{S}(\P,V\times V)$ generated by elements of the form $\left(\mu;(v_1,0),\hdots, (0,v_i), \hdots, (v_{n},0)\right)$. Therefore, the differential combinator transformation $\partial_V: \mathsf{S}(\P,V) \to \mathsf{S}(\P,V \times V)$ factors through $\Omega_{\mathsf{S}(V)}$ by composing the derivation $\mathsf{d}:\mathsf{S}(\P, V)\to \Omega_{\mathsf{S}(V)}$ with the inclusion $\Omega_{\mathsf{S}(V)}\to \mathsf{S}(\P,V\times V)$. 

For an arbitrary $\P$-algebra $A$, $\Omega_A$ is not a $\P$-algebra. One might be tempted to consider $\mathsf{S}(\P, \Omega_A)$ as a candidate for the tangent bundle over $A$. However, this is not the adjoint functor we are looking for. We will instead take the free $A$-algebra over $\Omega_A$. An \textbf{$A$-algebra}, also called a $\P$-algebra under $A$ \cite{moerdijkEnvelopingOperads}, is a $\P$-algebra $B$ equipped with a $\P$-algebra morphism $u: A \to B$. Now, if $B$ and $B^\prime$ are $A$-algebras with $u: A \to B$ and $u^\prime: A \to B^\prime$ respectively, then an $A$-algebra morphism is a $\P$-algebra morphism $f: B \to B^\prime$ which also preserves the $A$-algebra structure, so $f \circ u = u^\prime$. Let $\mathsf{ALG}_A$ be the category of $A$-algebras and $A$-algebra morphisms between them. Every $A$-algebra $B$ is also an $A$-module where the evaluation is given by:
\begin{align*}
\mu(a_1,\dots,a_{n},b) := \mu(u(a_1),\dots,u(a_{n}),b),
\end{align*}
and similarly, every $A$-algebra morphism is also an $A$-module morphism. We obtain a functor $\mathsf{U}_A: \mathsf{ALG}_A \to \mathsf{MOD}_A$. The functor $\mathsf{U}_A$ has a left adjoint $\Free_A:\mathsf{MOD}_A \to \mathsf{ALG}_A$. In the next proposition, we provide a concrete description of $\Free_A$. This is an extension of a result due to Ginzburg, who proved the existence of $\Free_A$ for quadratic operads \cite[Lemma~5.2]{Ginzburg-NCSymplecticGeometry}. This means that, for every $A$-module $M$, there exists an $A$-algebra $\Free_A(M)$ with $u_M: A \to \Free_A(M)$ called the \textbf{free $A$-algebra over $M$}.

\begin{proposition}
\label{prop:free-A-algebra-functor}
Let $A$ be a $\P$-algebra and let $M$ be a module over $A$ (in the operadic sense). Consider the $\P$-algebra $\Free_AM$ obtained by quotienting the free algebra $\mathsf{S}(\P,A\times M)$ by the ideal generated by the following relations:
\begin{multline*}
(\mu;(a_1,0),\dots,(a_{k-1},0),(a_k,x),(a_{k+1},0)\dots,(a_n,0))\\
=(\mu(a_1,\dots,a_n),\mu(a_1,\dots,a_{k-1},x,a_{k+1},\dots,a_n)),
\end{multline*}
for every $\mu\in\P(n)$, $a_1,\dots,a_n\in A$, $x\in M$ and positive integer $n$. Then, $\Free_A:\mathsf{MOD}_A\to\mathsf{ALG}_A$ extends to a left adjoint to the functor $\mathsf{U}_A:\mathsf{ALG}_A\to\mathsf{MOD}_A$, where the $A$-algebra structure $u_M: A \to \Free_AM$ is defined as the injection $u_M(a) =(a,0)$.
\end{proposition}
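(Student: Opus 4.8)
The plan is to verify the universal property of the free $A$-algebra directly: for every $A$-algebra $B$ with structure map $u \colon A \to B$ and every $A$-module morphism $g \colon M \to \mathsf{U}_A(B)$, I must produce a unique $A$-algebra morphism $\widetilde g \colon \Free_A M \to B$ with $\widetilde g \circ i_M = g$, where $i_M \colon M \to \Free_A M$ is the canonical inclusion $x \mapsto (0,x)$, and also $\widetilde g \circ u_M = u$. First I would observe that $\mathsf{S}(\P, A \times M)$ is the free $\P$-algebra on the $R$-module $A \times M$, so an $R$-linear map $A \times M \to B$ — namely $\langle u, g \rangle$ — extends uniquely to a $\P$-algebra morphism $\varphi \colon \mathsf{S}(\P, A \times M) \to B$. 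The crux is then to check that $\varphi$ kills the ideal generated by the stated relations, so that it descends to a $\P$-algebra morphism $\widetilde g$ on the quotient $\Free_A M$. This amounts to applying $\varphi$ to both sides of the defining relation: the left side becomes $\mu\bigl(u(a_1), \dots, u(a_{k-1}), g(x), u(a_{k+1}), \dots, u(a_n)\bigr)$ in $B$, and the right side becomes $\bigl(\mu(u(a_1),\dots,u(a_n)),\ \mu(u(a_1),\dots,u(a_{k-1}),g(x),u(a_{k+1}),\dots,u(a_n))\bigr)$ — but here one must remember that $B$, viewed as an $A$-module via $u$, has evaluation maps $\mu(a_1,\dots,a_n,b) := \mu(u(a_1),\dots,u(a_n),b)$, and the $\P$-algebra structure on $B$ restricted to the submodule reproduces exactly this. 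So the equality of the two images is precisely the statement that $g$ is an $A$-module morphism together with the fact that $u$ is a $\P$-algebra morphism; this is where the hypotheses are used, and it is essentially a bookkeeping computation matching the operadic module axioms against the $\P$-algebra axioms of $B$.

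Once $\varphi$ descends to $\widetilde g \colon \Free_A M \to B$, compatibility with the $A$-algebra structure, $\widetilde g \circ u_M = u$, is immediate since $u_M(a) = (a,0)$ and $\varphi$ sends $(a,0)$ to $u(a)$; and $\widetilde g \circ i_M = g$ is immediate from $\varphi(0,x) = g(x)$. For uniqueness, I would note that $\Free_A M$ is generated as a $\P$-algebra by the images of the elements $(a,0)$ and $(0,x)$ — indeed every generator $(\mu;(a_1,0),\dots,(0,x),\dots,(a_n,0))$ of $\mathsf{S}(\P, A\times M)$ is, modulo the ideal, an operadic expression in these — so any $A$-algebra morphism out of $\Free_A M$ is determined by its values on the $u_M(a)$ and the $i_M(x)$, which are forced to be $u(a)$ and $g(x)$ respectively. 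This gives the natural bijection $\mathsf{ALG}_A(\Free_A M, B) \cong \mathsf{MOD}_A(M, \mathsf{U}_A B)$, and naturality in both $M$ and $B$ follows routinely from the construction.

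The main obstacle I anticipate is the verification that the defining ideal is well-behaved: one needs to check that quotienting $\mathsf{S}(\P, A \times M)$ by the two-sided ideal generated by those relations actually yields an object that is simultaneously a $\P$-algebra \emph{and} an $A$-algebra whose underlying $A$-module recovers the intended structure, and that the relations are exactly enough (not too few, not too many). Concretely, this requires showing that the submodule of $\Free_A M$ spanned by the classes of $(0,x)$ for $x \in M$ is isomorphic to $M$ as an $A$-module — i.e. that the inclusion $i_M$ is injective and $A$-linear for the evaluation induced by $u_M$ — which is not automatic from the presentation and would need either a normal-form argument for elements of $\Free_A M$ or an appeal to the adjunction itself once established (the unit of the adjunction at $M$ being $i_M$, which is split by the counit applied to a suitable test algebra). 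A clean way to sidestep a normal-form analysis is to prove the adjunction abstractly first (the universal property as above already delivers $\Free_A \dashv \mathsf{U}_A$ without needing injectivity of $i_M$), and then, as a corollary, deduce injectivity of the unit by exhibiting, for each $M$, an $A$-algebra $B$ together with an $A$-module monomorphism $M \hookrightarrow \mathsf{U}_A B$; the universal-property map $\Free_A M \to B$ then witnesses that $i_M$ is mono. This is the step that will require the most care, and it is also where the generalization beyond Ginzburg's quadratic case must be handled, since the presentation-by-generators-and-relations argument does not rely on quadraticity at all.
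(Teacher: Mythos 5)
Your proposal follows essentially the same route as the paper: extend $(a,x)\mapsto u(a)+g(x)$ from the free $\P$-algebra $\mathsf{S}(\P,A\times M)$ to a $\P$-algebra morphism, check that the defining relations are killed so it descends to $\Free_A M$, and verify the two assignments are mutually inverse (your generation argument is exactly the implicit content of the paper's "the final step"), with your closing worry about injectivity of $i_M$ correctly identified as unnecessary for the adjunction itself. One bookkeeping correction: applying $\varphi$ to the relation, the left side is $\mu\bigl(u(a_1),\dots,u(a_k)+g(x),\dots,u(a_n)\bigr)$ (not with $g(x)$ alone in slot $k$) and the right side is the single element $u(\mu(a_1,\dots,a_n))+g(\mu(a_1,\dots,a_{k-1},x,a_{k+1},\dots,a_n))$ of $B$ (not a pair), so the required equality follows by multilinearity in the $k$-th slot together with $u$ being a $\P$-algebra morphism and $g$ being $A$-linear.
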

\begin{proof} Note that we have an $A$-module morphism $\iota_M: M \to \Free_A M$ defined as the inclusion $\iota_M(x) = (0,x)$. Now given an $A$-algebra $u:A\to B$ and an $A$-algebra morphism $f:\Free_A M\to B$, we can define an $A$-module morphsim $f^\flat: M \to \Free_A M$ as the composite $f^\flat = f \circ \iota_M$. Conversely, given an $A$-module morphism $g:M\to\mathsf{U}_AB$, it is not difficult to check that the $\P$-algebra morphism $\mathsf{S}(\P,A\times M) \to B$ mapping $(a,x) \mapsto u(a)+g(x)$  lifts to the quotient. As such, it provides a well-defined $A$-algebra morphism $g^\sharp:\Free_AM\to B$. The final step is to note that $f\mapsto f^\flat$ and $g\mapsto g^\sharp$ are inverses of each other. Thus, we obtain a natural bijection $\mathsf{ALG}_A(\Free_AM,B)\cong\mathsf{MOD}_A(M,\mathsf{U}_AB)$, and thus an adjunction as desired. 
\end{proof}

With all this setup, we can finally define the adjoint tangent bundle of a $\P$-algebra to be $\mathsf{T}^\circ(A) := \Free_A(\Omega_A)$. Using the combined universal properties of both $\Omega_A$ and $\Free_A(-)$, we can conclude that:
\begin{align*}
\mathsf{ALG}_\P\left( \Free_A(\Omega_A) , A^\prime \right) \cong \mathsf{ALG}_\P(A, A^\prime \ltimes A^\prime).
\end{align*}
Therefore, $\mathsf{T}^\circ: \mathsf{ALG}_\P \to \mathsf{ALG}_\P$ is indeed a left adjoint to $\mathsf{T}: \mathsf{ALG}_\P \to \mathsf{ALG}_\P$. However, let us give a more concrete description of the adjoint tangent bundle and the adjunction. For a $\P$-algebra $A$, its adjoint tangent bundle $\mathsf{T}^\circ(A)$ is explicitly given by the $\P$-algebra obtained by quotienting $\mathsf{S}(\P,A\times A)$ by the following relations:
\begin{align*}
 \mu((a_1,0),\dots,(a_{n},0))&=(\mu(a_1,\dots,a_{n}),0), \\
(0,\mu(a_1,\dots,a_{n})) &= \sum_{i=1}^{n}\mu((a_1,0),\hdots, (0,a_i), \hdots,(a_n,0)).
\end{align*}
As useful shorthand, we write $a := (a,0) \in \mathsf{T}^\circ(A)$ and $\mathsf{d}(a) := (0,a) \in \mathsf{T}^\circ(A)$ for all $a \in A$. The above relations then state that $\mu(a_1,\dots,a_{n})$ in $\mathsf{T}^\circ(A)$ corresponds to $\mu(a_1,\dots,a_{n})$ in $A$, and that
\begin{align*}
\mathsf{d}\left( \mu(a_1,\dots,a_{n})\right) &= \sum_{i=1}^{n}\mu(a_1,\hdots, \mathsf{d}(a_i),\dots, a_n).
\end{align*}
Note that $\mathsf{d}$ can be thought of an $R$-linear function: $\mathsf{d}(r\cdot a+s\cdot b)=r\cdot \mathsf{d}(a)+s \cdot \mathsf{d}(b)$. As a $\P$-algebra, $\mathsf{T}^\circ(A)$ is generated by $a$ and $\mathsf{d}(a)$ for all $a \in A$. As such, to define $\P$-algebra morphisms with domain $\mathsf{T}^\circ(A)$, it suffices to define them on the generators $a$ and $\mathsf{d}(a)$, making sure that the definition is compatible with the above relations. For every $\P$-algebra morphism $f: P \to P^\prime$, define the $\P$-algebra morphism ${\mathsf{T}^\circ(f): \mathsf{T}^\circ(A) \to \mathsf{T}^\circ(A^\prime)}$ on generators as follows:
\begin{align*}
    \mathsf{T}^\circ(f)(a) = f(a), && \mathsf{T}^\circ(f)(\mathsf{d}(a)) = \mathsf{d}(f(a)).
\end{align*}
This gives us the desired functor ${\mathsf{T}^\circ: \mathsf{ALG}_\P \to \mathsf{ALG}_\P}$. Observe that we did not need the $A$-algebra structure of $\mathsf{T}^\circ(A)$ to build this functor. Nevertheless, readers familiar with modules of K\"ahler differentials may easily check that the presentation of $\mathsf{T}^\circ(A)$ given here recaptures precisely $\Free_A(\Omega_A)$ (especially using the $\mathsf{d}$ notation). That said, the $A$-algebra structure of $\mathsf{T}^\circ(A)$ will be precisely the adjoint projection $p^\circ_A: A \to \mathsf{T}^\circ(A)$. 

Turning our attention back to the adjunction, we define the unit $\eta_A: A \to \mathsf{T}^\circ (A) \ltimes \mathsf{T}^\circ (A)$ as follows: 
\begin{align*}
\eta_A(a) = (a, \mathsf{d}(a)),
\end{align*}
which is clearly a $\P$-algebra morphism. The counit $\varepsilon_A: \mathsf{T}^\circ(A \ltimes A) \to A$ is the $\P$-algebra morphism defined on generators $(a,b)$ and $\mathsf{d}(a,b)$ for all $(a,b) \in A \ltimes A$ as follows: 
\begin{align*}
\varepsilon_A(a,b) = a, && \varepsilon(\mathsf{d}(a,b))= b.
\end{align*}

\begin{lemma}\label{lem:operad-adj} $(\eta, \varepsilon): \mathsf{T}^\circ \dashv \mathsf{T}$ is an adjunction. 
\end{lemma}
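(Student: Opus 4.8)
The plan is to verify the two triangle identities for $(\eta, \varepsilon)$ directly, using the explicit descriptions of $\mathsf{T}$, $\mathsf{T}^\circ$, $\eta$, and $\varepsilon$ on generators. Since $\mathsf{T}^\circ(A)$ is generated as a $\P$-algebra by the elements $a$ and $\mathsf{d}(a)$ for $a \in A$, and since all the maps involved are $\P$-algebra morphisms, it suffices to check each identity on these generators — this is exactly the reduction the surrounding text sets up. Concretely, I would show
\begin{align*}
\varepsilon_{\mathsf{T}^\circ(A)} \circ \mathsf{T}^\circ(\eta_A) = 1_{\mathsf{T}^\circ(A)} && \mathsf{T}(\varepsilon_A) \circ \eta_{\mathsf{T}(A)} = 1_{\mathsf{T}(A)}.
\end{align*}

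First I would handle the left-hand triangle. Tracing a generator $a \in A \subseteq \mathsf{T}^\circ(A)$: applying $\mathsf{T}^\circ(\eta_A)$ sends $a \mapsto \eta_A(a) = (a, \mathsf{d}(a))$ viewed as an element of $\mathsf{T}^\circ(A) \ltimes \mathsf{T}^\circ(A)$; but I must be careful that $\mathsf{T}^\circ$ is applied to the $\P$-algebra morphism $\eta_A: A \to \mathsf{T}^\circ(A)\ltimes\mathsf{T}^\circ(A)$, so $\mathsf{T}^\circ(\eta_A)$ acts on the generators of $\mathsf{T}^\circ(A)$ by $a \mapsto \eta_A(a)$ and $\mathsf{d}(a)\mapsto \mathsf{d}(\eta_A(a))$, landing in $\mathsf{T}^\circ(\mathsf{T}^\circ(A)\ltimes\mathsf{T}^\circ(A))$. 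Then $\varepsilon_{\mathsf{T}^\circ(A)}: \mathsf{T}^\circ(\mathsf{T}^\circ(A) \ltimes \mathsf{T}^\circ(A)) \to \mathsf{T}^\circ(A)$ sends a generator $(x,y)$ to $x$ and $\mathsf{d}(x,y)$ to $y$. Composing: $a \mapsto \eta_A(a) = (a,\mathsf{d}(a)) \mapsto a$, and $\mathsf{d}(a) \mapsto \mathsf{d}(a, \mathsf{d}(a)) \mapsto \mathsf{d}(a)$. So the composite fixes every generator and hence equals the identity. For the right-hand triangle, trace an element $(a,b) \in \mathsf{T}(A) = A \ltimes A$: $\eta_{\mathsf{T}(A)}(a,b) = \left((a,b), \mathsf{d}(a,b)\right) \in \mathsf{T}^\circ(A\ltimes A) \ltimes \mathsf{T}^\circ(A\ltimes A)$, and $\mathsf{T}(\varepsilon_A) = \varepsilon_A \times \varepsilon_A$ sends this to $(\varepsilon_A(a,b), \varepsilon_A(\mathsf{d}(a,b))) = (a,b)$; so this composite is also the identity.

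The main obstacle — really the only subtle point — is bookkeeping: making sure that $\mathsf{T}^\circ$ applied to a $\P$-algebra morphism is correctly understood via its action on the generators $a$ and $\mathsf{d}(a)$, that $\eta$ and $\varepsilon$ are genuinely well-defined $\P$-algebra morphisms (i.e.\ compatible with the defining relations of $\mathsf{T}^\circ$, which follows since $\mathsf{d}$ satisfies the Leibniz rule and $\eta_A$, $\varepsilon_A$ respect it), and that naturality of $\eta$ and $\varepsilon$ holds — all of which are routine given the explicit formulas. As an alternative, cleaner route I could instead invoke the hom-set isomorphism already recorded above, $\mathsf{ALG}_\P(\Free_A(\Omega_A), A') \cong \mathsf{ALG}_\P(A, A' \ltimes A')$, obtained by composing the universal properties of $\Omega_A$ (representing $\Der(A,-)$) and $\Free_A$ (left adjoint to $\mathsf{U}_A$), and check that this natural bijection is exactly the one mediated by $\eta$ and $\varepsilon$ as defined; but the direct triangle-identity check on generators is shorter and more self-contained, so that is the approach I would write up.
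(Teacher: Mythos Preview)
Your proposal is correct and matches the paper's approach exactly: the paper explicitly says ``We leave it as an exercise for the reader to check that the adjunction triangle identities are satisfied,'' and then offers the hom-set bijection $\mathsf{ALG}_\P(\mathsf{T}^\circ(A),A') \cong \mathsf{ALG}_\P(A, A'\ltimes A')$ as an alternative route --- precisely the two options you describe, in the same order of preference. Your generator-by-generator verification of the triangle identities is the exercise the paper omits, carried out correctly.
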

\begin{proof} We leave it as an exercise for the reader to check that the adjunction triangle identities are satisfied. Alternatively, one could check that $\mathsf{ALG}_\P\left( \mathsf{T}^\circ(A) , A^\prime \right) \cong \mathsf{ALG}_\P(A, A^\prime \ltimes A^\prime)$. Explicitly, given a $\P$-algebra morphism $f: \mathsf{T}^\circ(A) \to A^\prime$, define the $\P$-algebra morphism $f^\flat: A \to A^\prime \ltimes A^\prime$ as $f^\flat(a) = (f(a), \mathsf{d}(f(a)))$, and conversely, given a $\P$-algebra morphism $g: A \to A^\prime \ltimes A^\prime$, with $g(a) = (g_1(a), g_2(a))$, define the $\P$-algebra morphism $g^\sharp: \mathsf{T}^\circ(A) \to A^\prime$ on generators as $g^\sharp(a) = g_1(a)$ and $g^\sharp(\mathsf{d}(a))= g_2(a)$. One can check that the mappings $f\mapsto f^\flat$ and $g\mapsto g^\sharp$ define mutually inverse bijections.
\end{proof}

For any operad $\P$, $\mathsf{ALG}_\P$ is cocomplete \cite[Proposition 6.4]{mandell:operadic-algebra-co-limits}, and so, it admits all coproducts and pushouts. Therefore, applying Theorem \ref{theorem:dual-tangent-structure} and Corollary \ref{corollary:dual-tangent-structure}, we obtain:

\begin{corollary} Let $\P$ be an operad and $A$ a $\P$-algebra. The Cartesian Rosick\'y tangent category $(\mathsf{ALG}_\P, \mathbb{T})$ defined in Theorem \ref{theorem:footT-semi-direct-product} has an adjoint tangent structure, and so $(\mathsf{ALG}^{op}_\P, \mathbb{T}^\circ)$ is a Cartesian Rosick\'y tangent category, where $\mathbb{T}^\circ$ is defined as in Theorem \ref{theorem:dual-tangent-structure}. 
\end{corollary}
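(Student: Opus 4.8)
The strategy is to verify the hypotheses of Corollary~\ref{corollary:dual-tangent-structure} for the Cartesian Rosick\'y tangent category $(\mathsf{ALG}_\P, \mathbb{T})$ built in Theorem~\ref{theorem:footT-semi-direct-product}. That corollary requires exactly two things: that the tangent bundle functor $\mathsf{T}$ admit a left adjoint $\mathsf{T}^\circ$, and that for every $n$ the $n$-fold pushout of the adjoint projection $p^\circ_A \colon A \to \mathsf{T}^\circ(A)$ exist in $\mathsf{ALG}_\P$. Once these are in place, Corollary~\ref{corollary:dual-tangent-structure} (which itself invokes Lemma~\ref{lemma:dual-tangent-structure} to produce the left adjoints $\mathsf{T}^\circ_n$ of $\mathsf{T}_n$ and Theorem~\ref{theorem:dual-tangent-structure} to assemble the dual structure) delivers the conclusion that $(\mathsf{ALG}^{op}_\P, \mathbb{T}^\circ)$ is a Cartesian Rosick\'y tangent category, including the negatives and the Cartesian condition since $(\mathsf{ALG}_\P, \mathbb{T})$ is Cartesian Rosick\'y and $\mathsf{ALG}_\P$ has finite coproducts.

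First I would invoke the existence of the left adjoint: this is precisely Lemma~\ref{lem:operad-adj}, which establishes the adjunction $(\eta, \varepsilon)\colon \mathsf{T}^\circ \dashv \mathsf{T}$ with the explicit $\mathsf{T}^\circ(A) = \Free_A(\Omega_A)$ described in Section~\ref{subsec:adjoint-tan-operad}. So the first hypothesis of Corollary~\ref{corollary:dual-tangent-structure} holds outright. Next I would address cocompleteness: by \cite[Proposition 6.4]{mandell:operadic-algebra-co-limits}, $\mathsf{ALG}_\P$ is cocomplete, hence in particular it has all finite coproducts and all pushouts, so the $n$-fold pushout of $p^\circ_A$ exists for every $n$ (and more generally all the colimits needed in Theorem~\ref{theorem:dual-tangent-structure}, such as the $n$-fold pushouts defining $\mathsf{T}^\circ_n$, exist). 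This verifies the second hypothesis. Applying Corollary~\ref{corollary:dual-tangent-structure} then gives that $(\mathsf{ALG}_\P, \mathbb{T})$ has adjoint tangent structure, and therefore $(\mathsf{ALG}^{op}_\P, \mathbb{T}^\circ)$ inherits a Rosick\'y tangent structure as in Theorem~\ref{theorem:dual-tangent-structure}.

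Finally, to upgrade to the \emph{Cartesian} Rosick\'y conclusion, I would appeal to the last sentence of Theorem~\ref{theorem:dual-tangent-structure}: if $(\mathbb{X}, \mathbb{T})$ is a Cartesian Rosick\'y tangent category with adjoint tangent structure and $\mathbb{X}$ has finite coproducts, then $(\mathbb{X}^{op}, \mathbb{T}^\circ)$ is a Cartesian Rosick\'y tangent category. Here $\mathbb{X} = \mathsf{ALG}_\P$ is Cartesian Rosick\'y by Theorem~\ref{theorem:footT-semi-direct-product}, has adjoint tangent structure by the previous paragraph, and has finite coproducts by cocompleteness; so the hypotheses are met and the conclusion follows. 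Since the statement of the corollary is essentially an instantiation of already-established general machinery to the particular case at hand, there is no genuine obstacle here; the only point requiring any care is bookkeeping---making sure that ``the $n$-fold pushout of $p^\circ_A$ exists'' really is subsumed by cocompleteness of $\mathsf{ALG}_\P$ (it is, since pushouts are finite colimits) and that the functor $\mathsf{T}^\circ$ of Lemma~\ref{lem:operad-adj} is literally the $\mathsf{T}^\circ_1$ produced by Lemma~\ref{lemma:dual-tangent-structure}, which holds because left adjoints are unique up to natural isomorphism.
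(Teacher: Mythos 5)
Your proposal is correct and follows essentially the same route as the paper: the left adjoint $\mathsf{T}^\circ \dashv \mathsf{T}$ comes from Lemma \ref{lem:operad-adj}, cocompleteness of $\mathsf{ALG}_\P$ (via Mandell) supplies the required pushouts and finite coproducts, and then Corollary \ref{corollary:dual-tangent-structure} together with Theorem \ref{theorem:dual-tangent-structure} yields the Cartesian Rosick\'y tangent structure on $\mathsf{ALG}^{op}_\P$.
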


We will now give a concrete description of the adjoint tangent structure. We can define all the necessary structure maps on generators. Let us first describe the generators of $\mathsf{T}^\circ_n(A)$ and ${\mathsf{T}^\circ}^2(A)$. On the one hand, $\mathsf{T}^\circ_n(A)$ is a quotient of $\mathsf{S}(\P, \prod\limits^{n+1}_{i=1} A )$ modulo the similar equations as above, and therefore, can be described in terms of generators $a$ and $\mathsf{d}_i(a)$ for all $a \in A$ and $1 \leq i \leq n$. By Lemma \ref{lemma:dual-tangent-structure}, $\mathsf{T}^\circ_n$ is indeed a left adjoint to $\mathsf{T}_n$. On the other hand, ${\mathsf{T}^\circ}^2(A)$ is generated by $a$, $\mathsf{d}(a)$, $\mathsf{d}^\prime(a)$, and $\mathsf{d}^\prime\mathsf{d}(a)$ for all $a \in A$, where $\mathsf{d}$ is for $\Omega_A$ and $\mathsf{d}^\prime$ is for $\Omega_{\mathsf{T}^\circ(A)}$. 

\begin{thm}
\label{theorem:operadic-tangent-categories} Let $\P$ be an operad. Consider: 
\begin{enumerate}[{\em (i)}]
\item The adjoint tangent bundle functor $\mathsf{T}^\circ: \mathsf{ALG}_{\P} \to \mathsf{ALG}_{\P}$ defined on objects by $\mathsf{T}^\circ(A) = \Free_A(\Omega_A)$ and on maps by $\mathsf{T}^\circ(f)$ (as defined above); 
\item The adjoint projection $p^\circ_A: A \to \mathsf{T}^\circ(A)$ defines as:
\[p^\circ_A(a) = a\] 
and where the $n$-fold pushout of $p^\circ_A$ is $\mathsf{T}^\circ_n(A)$, with injections $q^\circ_j: \mathsf{T}^\circ(A) \to \mathsf{T}^\circ_n(A)$ defined on generators as:
\begin{align*}
   q^\circ_j(a) = a &&  q^\circ_j(\mathsf{d}(a))=\mathsf{d}_j(a)
\end{align*}
\item The adjoint sum $s^\circ_A: \mathsf{T}^\circ(A) \to \mathsf{T}^\circ_2(A)$, defined on generators as:
\begin{align*}
    s^\circ_A(a) = a && s^\circ_A(\mathsf{d}(a)) = \mathsf{d}_1(a) + \mathsf{d}_2(a)
\end{align*}
\item The adjoint zero map $z^\circ_A: \mathsf{T}^\circ(A) \to A$, defined on generators as:
\begin{align*}
    z^\circ_A(a) = a && z^\circ_A(\mathsf{d}(a)) = 0
\end{align*}
\item The adjoint vertical lift $\ell^\circ_A: {\mathsf{T}^\circ}^2(A) \to \mathsf{T}^\circ(A)$, defined on generators as: 
\begin{align*}
l^\circ_A(a) = a, && l^\circ_A(\mathsf{d}(a)) =0, && l^\circ_A(\mathsf{d}^\prime(a)) = 0, && l^\circ_A(\mathsf{d}^\prime \mathsf{d}(a)) = \mathsf{d}(a).
\end{align*}
\item The adjoint canonical flip $c^\circ_A: {\mathsf{T}^\circ}^2(A) \to {\mathsf{T}^\circ}^2(A)$, defined on generators as: 
\begin{align*}
c^\circ_A(a) = a, && c^\circ_A(\mathsf{d}(a)) = \mathsf{d}^\prime(a), && c^\circ_A(\mathsf{d}^\prime(a)) = \mathsf{d}(a), && c^\circ_A(\mathsf{d}^\prime \mathsf{d}(a)) = \mathsf{d}^\prime \mathsf{d}(a).
\end{align*}
\item The adjoint negative map $n^\circ_A: \mathsf{T}^\circ(A) \to \mathsf{T}^\circ(A)$, defined on generators as:
\begin{align*}
   n^\circ_A(a) = a &&  n^\circ(\mathsf{d}(a)) = -\mathsf{d}(a)
\end{align*} 
\end{enumerate}
Then, $\mathbb{T}^\circ= (\mathsf{T}^\circ,  p^\circ, s^\circ, z^\circ, l^\circ, c^\circ, n^\circ)$ is a Rosick\'y tangent structure on $\mathsf{ALG}^{op}_\P$, and so, $(\mathsf{ALG}^{op}_\P, \mathbb{T}^\circ)$ is a Cartesian Rosick\'y tangent category. 
\end{thm}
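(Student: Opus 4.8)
The existence of a Cartesian Rosick\'y tangent structure on $\mathsf{ALG}^{op}_\P$ is already settled: since $\mathsf{ALG}_\P$ is cocomplete, the corollary just above --- via Corollary \ref{corollary:dual-tangent-structure} and Theorem \ref{theorem:dual-tangent-structure} --- gives that $(\mathsf{ALG}_\P, \mathbb{T})$ has adjoint tangent structure and hence that $(\mathsf{ALG}^{op}_\P, \mathbb{T}^\circ)$ is a Cartesian Rosick\'y tangent category. What the statement adds is the \emph{concrete} identification of the abstract structure maps of Theorem \ref{theorem:dual-tangent-structure} with the explicit formulas given on generators. The plan is therefore to unwind, one at a time, each defining composite from Theorem \ref{theorem:dual-tangent-structure}, using three facts: (a) as a $\P$-algebra $\mathsf{T}^\circ(A)$ is generated by the elements $a$ and $\mathsf{d}(a)$, so a $\P$-algebra morphism out of $\mathsf{T}^\circ(A)$ is pinned down by its values on these; (b) $\mathsf{T}^\circ$ acts on a morphism $f$ by $\mathsf{T}^\circ(f)(a) = f(a)$ and $\mathsf{T}^\circ(f)(\mathsf{d}(a)) = \mathsf{d}(f(a))$; and (c) the unit and counit of $\mathsf{T}^\circ \dashv \mathsf{T}$ from Lemma \ref{lem:operad-adj}, namely $\eta_A(a) = (a, \mathsf{d}(a))$ and $\varepsilon_A(a,b) = a$, $\varepsilon_A(\mathsf{d}(a,b)) = b$.

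First I would record the concrete presentations of the higher bundles. By Lemma \ref{lemma:dual-tangent-structure}, $\mathsf{T}^\circ_n$ is the $n$-fold pushout of $p^\circ_A$, which is the $\P$-algebra obtained from $\mathsf{S}\left(\P, \prod_{i=1}^{n+1} A\right)$ by the obvious analogues of the defining relations of $\mathsf{T}^\circ(A)$; it is thus generated by $a$ and $\mathsf{d}_1(a), \dots, \mathsf{d}_n(a)$. Similarly ${\mathsf{T}^\circ}^2(A) = \mathsf{T}^\circ(\mathsf{T}^\circ(A))$ is generated by $a$, $\mathsf{d}(a)$, $\mathsf{d}^\prime(a)$ and $\mathsf{d}^\prime \mathsf{d}(a)$, where $\mathsf{d}$ refers to $\Omega_A$ and $\mathsf{d}^\prime$ to $\Omega_{\mathsf{T}^\circ(A)}$. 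With these presentations in hand, the units $\eta(n)_A$, the counits $\varepsilon(n)_A$, and the iterated unit/counit occurring in the $l^\circ$ and $c^\circ$ formulas all acquire one-line descriptions on generators by applying $\mathsf{T}^\circ$ to the explicit $\eta, \varepsilon$ together with the universal properties of the relevant pushouts.

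Next I would carry out the six computations. The maps $p^\circ$, $z^\circ$ and $n^\circ$ are immediate: e.g. $z^\circ_A = \varepsilon_A \circ \mathsf{T}^\circ(z_A)$ sends $a \mapsto \varepsilon_A(z_A(a)) = \varepsilon_A(a,0) = a$ and $\mathsf{d}(a) \mapsto \varepsilon_A(\mathsf{d}(a,0)) = 0$, while $p^\circ_A = p_{\mathsf{T}^\circ(A)} \circ \eta_A$ sends $a \mapsto p_{\mathsf{T}^\circ(A)}(a, \mathsf{d}(a)) = a$; one also checks here that $p^\circ_A$ agrees with the $A$-algebra structure map $u_{\Omega_A}$ of $\Free_A(\Omega_A)$. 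The cases $q^\circ_j$ and $s^\circ$ need one application of $\mathsf{T}^\circ$ to $\eta_A$ resp. to $\eta(2)_A$ but remain short. The real labor is in $l^\circ$ and $c^\circ$, whose composites pass through ${\mathsf{T}^\circ}^2$ and two nested counits: for $l^\circ_A = \varepsilon_{\mathsf{T}^\circ(A)} \circ \mathsf{T}^\circ(\varepsilon_{\mathsf{T}\mathsf{T}^\circ(A)}) \circ {\mathsf{T}^\circ}^2(l_{\mathsf{T}^\circ(A)}) \circ {\mathsf{T}^\circ}^2(\eta_A)$ one chases the generators through the four maps --- ${\mathsf{T}^\circ}^2(\eta_A)$ inserts $\mathsf{d}(a)$, then ${\mathsf{T}^\circ}^2$ of the vertical lift $l$ of $\mathbb{T}$ from Theorem \ref{theorem:footT-semi-direct-product} routes the relevant component into the $\mathsf{d}^\prime \mathsf{d}$-slot, and the two counits collapse the remaining layers --- arriving at $a \mapsto a$, $\mathsf{d}(a), \mathsf{d}^\prime(a) \mapsto 0$, $\mathsf{d}^\prime \mathsf{d}(a) \mapsto \mathsf{d}(a)$; the $c^\circ$ computation is the same chase with $\mathsf{d}$ and $\mathsf{d}^\prime$ transposed. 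At each step one must also confirm that the candidate maps are well-defined, i.e. respect the defining relations of the quotients --- this, together with the bookkeeping for $l^\circ$ and $c^\circ$, is the only real obstacle. Once all six maps are matched, the tangent axioms and the Cartesian Rosick\'y structure are inherited from Theorem \ref{theorem:dual-tangent-structure}, using that $\mathsf{ALG}_\P$ has finite coproducts.
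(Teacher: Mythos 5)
Your proposal is correct and follows essentially the same route as the paper: the paper likewise obtains existence of the adjoint tangent structure from the cocompleteness of $\mathsf{ALG}_\P$ together with Lemma \ref{lem:operad-adj}, Corollary \ref{corollary:dual-tangent-structure} and Theorem \ref{theorem:dual-tangent-structure}, and then reads off the explicit formulas by describing $\mathsf{T}^\circ_n(A)$ and ${\mathsf{T}^\circ}^2(A)$ by generators and relations and evaluating the composites of Theorem \ref{theorem:dual-tangent-structure} on those generators. The generator-chasing computations you outline for $p^\circ$, $q^\circ_j$, $s^\circ$, $z^\circ$, $l^\circ$, $c^\circ$, $n^\circ$ are exactly the (omitted) verifications the paper has in mind.
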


 While the tangent bundle $\mathsf{T}$ is mostly the same for each operad, the adjoint tangent bundle $\mathsf{T}^\circ$ can vary quite drastically from operad to operad. So let us now consider the resulting tangent categories for our main examples of operads. We again stress that, while the first two examples recapture known examples, the last two examples are new examples of tangent categories. While the third example is not too surprising, it does provide a direct link between tangent categories and non-commutative algebraic geometry, which provides a novel application for the theory of tangent categories. On the other hand, the fourth example is a good example that demonstrates how operads provide many new (and surprising) examples of tangent categories that were not previously considered. 

\begin{example} Let $A$ be an $R$-algebra. For the operad $A^\bullet$, Theorem \ref{theorem:operadic-tangent-categories} recaptures precisely the adjoint biproduct tangent structure from Lemma \ref{lemma:biproduct-adjoint}. For an $A$-module $M$, every other $A$-module is an $M$-module in the operadic sense, where the evaluation maps are all zero. Similarly, algebras over $M$ in the operadic sense simply correspond to an $A$-module $N$ equipped with a chosen $A$-linear morphisms $N \to M$. In this case, $\mathsf{Free}_M(N) = M \times N$, which is an algebra over $M$ via the injection map. On the other hand, $\Omega_M = M$, with universal derivation being the identity $1_M: M \to M$. So, we indeed have that $\mathsf{T}^\circ(M) = M \times M$. Thus $(\mathsf{MOD}^{op}_A, \mathbb{B}^\circ)$ is a Cartesian Rosick\'y tangent category. 
\end{example}

\begin{example}
\label{example:otc-Com}
Recall that famously the opposite category of commutative $R$-algebras is isomorphic to the category of affine schemes over $R$. Therefore, the resulting tangent category for operad $\Com$ is equivalent to the tangent category of affine schemes as described in \cite[Section 2.3]{cruttwellLemay:AlgebraicGeometry}, providing a link between tangent categories and algebraic geometry. For a commutative $R$-algebra $A$, a module over $A$ in the operadic sense corresponds precisely to a (left) $A$-module $M$. Free $A$-algebras are constructed by the symmetric $A$-algebra functor:
\[\mathsf{Free}_A(M) = \mathsf{Sym}_A(M) = \bigoplus_{n \in \mathbb{N}} \left(M^{\otimes_A n}\right)_{\Sigma(n)}, \] 
where $\otimes_{A}$ is the tensor product over $A$. On the other hand, $\Omega_A$ is precisely the usual (left) module of K\"ahler differentials over $A$, that is, the free $A$-module over the set $\lbrace \mathsf{d}(a) \vert~ \forall a\in A \rbrace$ modulo the necessary derivation identities. Then $\mathsf{T}^\circ(A)$ is the free symmetric $A$-algebra over $\Omega_A$: 
\[ \mathsf{T}^\circ(A) := \mathsf{Sym}_A \left( \Omega_A \right) = \bigoplus \limits_{n=0}^{\infty} \left(\Omega_A^{\otimes_A n}\right)_{\Sigma(n)} = A \oplus \Omega_A \oplus \left( \Omega_A \otimes_A \Omega_A \right)_{\Sigma(2)} \oplus \hdots \]
In \cite[Definition 16.5.12.I]{grothendieck1966elements}, Grothendieck calls $\mathsf{T}^\circ(A)$ the ``fibré tangent'' (french for tangent bundle) of $A$, while in \cite[Section 2.6]{jubin2014tangent}, Jubin calls $\mathsf{T}^\circ(A)$ the tangent algebra of $A$. An arbitrary element of $\mathsf{T}^\circ(A)$ is a finite sum of monomials of the form $a \mathsf{d}(b_1) \hdots \mathsf{d}(b_n)$, and thus the $R$-algebra structure of $\mathsf{T}^\circ(A)$ is essentially the same as polynomials. In particular, this implies that as an $R$-algebra, $\mathsf{T}^\circ(A)$ is generated by $a$ and $\mathsf{d}(a)$ for all $a \in A$. On the other hand, $\mathsf{T}_n(A)$ can be described in terms of generators $a$ and $\mathsf{d}_i(a)$ for all $a\in A$ and $1 \leq i \leq n$, and ${\mathsf{T}^\circ}^2(A)$ in terms of four generators $a$, $\mathsf{d}(a)$, $\mathsf{d}^\prime(a)$, and $\mathsf{d}^\prime \mathsf{d}(a)$ for all $a \in A$, modulo all the necessary equations. The rest of the adjoint tangent structure is given as follows on generators: 
\begin{align*}
p^\circ_A(a) &= a, &&&&&&\\
q^\circ_j(a) &= a, & q_j^\circ(\mathsf{d}(a)) &= \mathsf{d}_j(a),&&&& \\
s^\circ_A(a) &= a, & s^\circ(\mathsf{d}(a)) &= \mathsf{d}_1(a) + \mathsf{d}_2(a),&&&& \\
z^\circ(a) &= a, & z^\circ(\mathsf{d}(a)) &= 0, &&&&\\
l^\circ_A(a) &= a, & l^\circ_A(\mathsf{d}(a)) &=0, & l^\circ_A(\mathsf{d}^\prime(a)) &= 0, & l^\circ_A(\mathsf{d}^\prime \mathsf{d}(a)) &= \mathsf{d}(a), \\
c^\circ_A(a) &= a, & c^\circ_A(\mathsf{d}(a)) &= \mathsf{d}^\prime(a), & c^\circ_A(\mathsf{d}^\prime(a)) &= \mathsf{d}(a), & c^\circ_A(\mathsf{d}^\prime \mathsf{d}(a)) &= \mathsf{d}^\prime \mathsf{d}(a), \\
n^\circ_A(a) &= a, & n^\circ(\mathsf{d}(a)) &= -\mathsf{d}(a).&&&& 
\end{align*}
So $(\mathsf{CALG}^{op}_R,\mathbb{T}^\circ)$ is a Cartesian Rosick\'y tangent category. 
\end{example}

\begin{example}
\label{example:otc-Ass}
For the operad $\Ass$, this results in a non-commutative version of the previous example. For an $R$-algebra $A$, a module over $A$ in the operadic sense corresponds precisely to an $A$-bimodule $M$. Free $A$-algebras are given by the $A$-tensor algebra:
\[\mathsf{Free}_A(M) = \mathsf{Ten}_A(M) = \bigoplus_{n \in \mathbb{N}} M^{\otimes_A n}.\] 
The $A$-module $\Omega_A$ is the non-commutative version of the module of K\"ahler differentials over $A$ \cite[Section 10]{Ginzburg:notes-on-NCGeometry} (which, it is important to note, is different from the commutative version). Therefore, 
\[\mathsf{T}^\circ(A) = \bigoplus\limits_{n \in \mathbb{N}} \Omega_A^{\otimes_{A} n} = A \oplus \Omega_A \oplus (\Omega_A \otimes_A \Omega_A) \oplus \hdots \]
More concretely, $\mathsf{T}^\circ(A)$ can be described as the free $A$-algebra over the set $\lbrace \mathsf{d}(a) \vert~ \forall a\in A \rbrace$ modulo $\mathsf{d}(ab)=a\mathsf{d}(b)+\mathsf{d}(a)b$ and $\mathsf{d}(ra +sb) = r\mathsf{d}(a) + s\mathsf{d}(b)$. So $(\mathsf{ALG}^{op}_R,\mathbb{T}^\circ)$ is a Cartesian Rosick\'y tangent category. In \cite[Definition 10.2.3]{Ginzburg:notes-on-NCGeometry}, Ginzburg calls $\mathsf{T}^\circ(A)$ the ``space of noncommutative differential forms of $A$''. To the best of our knowledge, this is the first mention of a tangent category that relates directly to non-commutative algebraic geometry. 
\end{example}

\begin{example}
\label{example:otc-Lie}
For the operad $\Lie$, we obtain a new example of a tangent category for Lie algebras. For a Lie algebra $\Lieg$, modules in the operadic sense correspond to representations of $\Lieg$, which we call $\Lieg$-representations and simply denote by their underlying $R$-module $V$. Algebras over $\Lieg$ in the operadic sense correspond to Lie algebras $\Lieg^\prime$ equipped with a Lie algebra morphism $\Lieg \to \Lieg^\prime$. So, $\mathsf{Free}_\Lieg(V)$ is the free Lie algebra over $\Lieg$ of a $\Lieg$-representation $V$. On the other hand, $\Omega_\Lieg$ is the free representation of $\Lieg$ over the set $\mathsf{d}(\Lieg) = \lbrace \mathsf{d}(x) \vert~ \forall x \in \Lieg \rbrace$ modulo the relations $\mathsf{d}(rx+sy)=s\mathsf{d}(x)+r\mathsf{d}(y)$ and $\mathsf{d}\left([x,y] \right)=[\mathsf{d}(x),y]+[x,\mathsf{d}(y)]$ for all $r,s\in R$ and $x,y\in\Lieg$. Hence,  $\mathsf{T}^\circ(\Lieg)$ can be concretely defined as the free Lie algebra over the underlying set of $\Lieg$ and the set $\mathsf{d}(\Lieg)$ modulo the same equalities as for $\Omega_A$, and such that $[x,y] \in \mathsf{T}^\circ(\Lieg)$ is identified to $[x,y] \in \Lieg$, which makes $\mathsf{T}^\circ(\Lieg)$ a Lie algebra over $\Lieg$. Therefore $(\mathsf{LIE}^{op}_R,\mathbb{T}^\circ)$ is a Cartesian Rosick\'y tangent category, which we stress is a new important example of a tangent category.  
\end{example}

We conclude this section by mentioning that the construction of Theorem \ref{theorem:operadic-tangent-categories} is functorial. Indeed, every operad morphism ${f: \P \to \P^\prime}$ induces a functor $\mathsf{ALG}^{op}_f: \mathsf{ALG}^{op}_{\P^\prime} \to \mathsf{ALG}^{op}_{\P}$ which is defined on objects and maps as $\mathsf{ALG}_f$. It is straightforward to check that $\mathsf{ALG}^{op}_f: (\mathsf{ALG}^{op}_{\P^\prime}, \mathbb{T}^\circ) \to (\mathsf{ALG}^{op}_\P, \mathbb{T}^\circ)$ is a Cartesian Rosick\'y tangent morphism. Now let $\mathsf{CRTAN}$ be the category of Cartesian Rosick\'y tangent categories and Cartesian tangent morphisms between them \cite[Section 4.3]{cockett2014differential}. Therefore we obtain a functor $\mathsf{ALG}^{op}: \mathsf{OPERAD}^{op}_R \to \mathsf{CRTAN}$ which sends an operad $\P$ to $(\mathsf{ALG}^{op}_\P, \mathbb{T}^\circ)$ and an operad morphism $f: \P \to \P^\prime$ to $\mathsf{ALG}^{op}_f: (\mathsf{ALG}^{op}_{\P^\prime}, \mathbb{T}^\circ) \to (\mathsf{ALG}^{op}_\P, \mathbb{T}^\circ)$. We note that the Cartesian tangent morphism $\mathsf{ALG}^{op}_f$ is not strong.

\subsection{Vector Fields of Algebras of an Operad}\label{sec:vf-operad}

In this section, we will explain how in the category of algebras of an operad, vector fields correspond precisely to derivations. Luckily, it turns out that it is already known that derivations are closely related to the semi-direct product, i.e., the tangent bundle. Indeed, one could apply \cite[Proposition 12.3.11]{LV} to get the desired result. However, let us give an alternative explanation using the coCartesian differential monad point of view. 

So let $\P$ be an operad and let $A$ be a $\P$-algebra. In Section \ref{sec:SDer} we explained how vector field of $A$ in $(\mathsf{ALG}_\P, \mathbb{T})$ correspond to $\mathsf{S}(\P,-)$-derivations of $A$. It turns out that $\mathsf{S}(\P,-)$-derivations on $A$ correspond precisely to $A$-derivations evaluated in itself. Indeed, $A$ is an $A$-module where the evaluation maps are induced from the $\P$-algebra structure $\theta: \mathsf{S}(\P,A) \to A$. Then we may consider $A$-derivations $D: A\to A$. 

\begin{lemma} For an operad $\P$ and a $\P$-algebra $A$, an $\mathsf{S}(\P,-)$-derivation $D: A \to A$ is precisely the same as a $A$-derivation $D: A \to A$. 
\end{lemma}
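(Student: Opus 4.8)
The plan is to unwind both definitions and verify that each is equivalent to one Leibniz-type identity, so that the two notions coincide on the nose. Concretely, I claim that a map $D : A \to A$ is an $\mathsf{S}(\P,-)$-derivation on $A$ if and only if
\[
D(\mu(a_1,\dots,a_n)) \;=\; \sum_{i=1}^{n} \mu(a_1,\dots,a_{i-1},D(a_i),a_{i+1},\dots,a_n)
\]
for all $n$, all $\mu\in\P(n)$, and all $a_1,\dots,a_n\in A$, and that this is exactly the defining condition of an $A$-derivation $D : A \to A$ once $A$ is regarded as a module over itself.

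First I would unpack Definition \ref{definition:monadic-derivations}. Writing $\theta : \mathsf{S}(\P,A) \to A$ for the $\P$-algebra structure map, the condition $D \circ \theta = \theta \circ \mathsf{S}(\P, \pi_1 + D\circ\pi_2)\circ\partial_A$ need only be checked on a class $(\mu;a_1,\dots,a_n)$, since every element of $\mathsf{S}(\P,A)$ is an $R$-linear combination of such classes and all the maps in sight are $R$-linear. Using the explicit formula for $\partial$ from Theorem \ref{theorem:opdiffmonad}, the identity $\mathsf{S}(\P,f)(\mu;v_1,\dots,v_n)=(\mu;f(v_1),\dots,f(v_n))$, and $D(0)=0$, the map $\pi_1+D\circ\pi_2$ sends $(a_j,0)\mapsto a_j$ and $(0,a_i)\mapsto D(a_i)$, so $\mathsf{S}(\P,\pi_1+D\circ\pi_2)\bigl(\partial_A(\mu;a_1,\dots,a_n)\bigr) = \sum_{i=1}^n (\mu;a_1,\dots,D(a_i),\dots,a_n)$. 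Applying $\theta$ gives $\sum_{i=1}^n \mu(a_1,\dots,D(a_i),\dots,a_n)$, while the left-hand side is $D(\mu(a_1,\dots,a_n))$, which is exactly the reformulation above.

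Next I would invoke the fact, recalled in Section \ref{sec:vf-operad} (and in the discussion before Proposition \ref{prop:free-A-algebra-functor}), that $A$ is a module over itself with evaluation maps induced by $\theta$; under this identification the module evaluation $\mu(a_1,\dots,D(a_i),\dots,a_n)$ is precisely the $\P$-algebra operation appearing above, with the ``insert into the $i$-th slot'' notation for the module structure matching the reindexed $\P$-algebra operation via equivariance of $\theta$. With this dictionary in hand, the definition of an $A$-derivation $D : A \to A$ from Section \ref{subsec:adjoint-tan-operad} is word-for-word the displayed Leibniz identity. Hence the two conditions coincide, and the sets $\mathsf{DER}_{\mathsf{S}(\P,-)}(A)$ and $\Der(A,A)$ are equal (carrying the same $R$-module structure).

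The only delicate point is purely notational bookkeeping: one must match the slot-insertion notation for module evaluations with the argument-permutation notation for $\P$-algebra operations, and confirm that the first-versus-second injection convention in the formula for $\partial$ places $D(a_i)$ in exactly the $i$-th position. I expect this to be the main (though minor) obstacle; once the conventions are aligned, the argument is a one-line computation. Combined with Lemma \ref{lemma:abstract-derivations-vector-fields}, this also yields that vector fields on $A$ in $(\mathsf{ALG}_\P,\mathbb{T})$ correspond to $A$-derivations of $A$ in the operadic sense.
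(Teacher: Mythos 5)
Your proposal is correct and follows essentially the same route as the paper's proof: unwind the $\mathsf{S}(\P,-)$-derivation condition $D\circ\theta = \theta\circ\mathsf{S}(\P,\pi_1+D\circ\pi_2)\circ\partial_A$ on classes $(\mu;a_1,\dots,a_n)$ using the explicit formula for $\partial$, obtaining the operadic Leibniz identity, which is the definition of an $A$-derivation valued in $A$ with its canonical self-module structure. The only cosmetic difference is that the paper isolates the $n=0$ case (where $\partial_A$ vanishes, forcing $D(\mu_A)=0$), which your empty-sum convention handles implicitly.
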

\begin{proof} Recall that an $R$-linear morphism $D: A \to A$ is an $A$-derivation if it satisfies: 
\begin{equation*} D(\mu(a_1,\dots,a_{n}))=\sum_{i=1}^{n}\mu(a_1,\hdots,D(a_i),\hdots,a_{n}).
\end{equation*}
On the other hand, an $R$-linear morphism $D: A\to A$ is an $\mathsf{S}(\P,-)$-derivation if $D \circ \theta = \theta \circ S\left(\P,\pi_1 + D \circ \pi_2 \right) \circ \partial_A$. Let us show that this equality is precisely the same as requiring $D$ be an $A$-derivation. For $\mu_A \in \P(0)$, since $\partial_A(\mu_A) = 0$, we have that $D(\mu_A) = 0$. For the rest, we compute: 
\begin{align*}
D(\mu(a_1,\dots,a_{n})) &= D\left( \theta(\mu; a_1, \hdots, a_n) \right) = \theta \left( S\left(\P,\pi_1 + D \circ \pi_2 \right) \left( \partial_A(\mu; a_1,\dots,a_{n}) \right) \right) \\
&=  \sum_{i=1}^{n}\theta \left( S\left(\P,\pi_1 + D \circ \pi_2 \right) \left(\mu; (a_1, 0),\hdots,(0, a_i),\hdots,(a_{n},0) \right) \right) \\
&= \sum_{i=1}^{n}\theta \left( \mu; a_1,\hdots,D(a_i),\hdots,a_{n} \right) = \sum_{i=1}^{n}\mu(a_1,\hdots,D(a_i),\hdots,a_{n}).
\end{align*}
We conclude that $\mathsf{S}(\P,-)$-derivations on $A$ and $A$-derivations evaluated in $A$ are indeed the same thing. 
\end{proof}

Therefore, by Lemma \ref{lemma:abstract-derivations-vector-fields}, we conclude that vector fields correspond to derivations as desired: 

\begin{proposition} For an operad $\P$ and a $\P$-algebra $A$, there is a bijective correspondence between vector fields of $A$ in $(\mathsf{ALG}_\P, \mathbb{T})$ and $A$-derivations $D: A \to A$. Therefore a vector field $v \in \mathsf{V}_\mathbb{T}(A)$ is precisely a $\P$-algebra morphism $v: A \to A \ltimes A$ such that $v(a) = (a, D_v(a))$ for all $a \in A$, where $D_v: A \to A$ is an $A$-derivation. Furthermore, the induced Lie bracket is given by $[v,w](a) = (a, D_v(D_w(a)) - D_w(D_v(a)))$. 
\end{proposition}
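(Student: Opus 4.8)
The plan is to deduce this proposition by concatenating two results already in hand: Lemma \ref{lemma:abstract-derivations-vector-fields}, which identifies vector fields on an algebra over a coCartesian differential monad with monadic derivations, and the lemma immediately preceding this proposition, which identifies $\mathsf{S}(\P,-)$-derivations on $A$ with $A$-derivations $D\colon A\to A$. Recall from Theorem \ref{theorem:opdiffmonad} that $(\mathsf{S}(\P,-),\gamma,\eta,\partial)$ is a coCartesian differential monad on the additive category $\mathsf{MOD}_R$, and from Theorem \ref{theorem:footT-semi-direct-product} that $(\mathsf{ALG}_\P,\mathbb{T})$ is the associated Cartesian Rosick\'y tangent category, with $\mathsf{T}(A)=A\ltimes A$.

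First I would apply Lemma \ref{lemma:abstract-derivations-vector-fields} to $\mathsf{S}=\mathsf{S}(\P,-)$: this gives a bijection $\mathsf{V}_{\mathbb{T}}(A)\cong\mathsf{DER}_{\mathsf{S}(\P,-)}(A)$, where a vector field $v$ is sent to $D_v=\pi_2\circ v$ and an $\mathsf{S}(\P,-)$-derivation $D$ is sent to $v_D=\langle 1_A,D\rangle$. In particular, every vector field $v$ is of the form $v(a)=(a,D_v(a))$ for a unique $\mathsf{S}(\P,-)$-derivation $D_v$, and conversely every $R$-linear $D$ that is an $\mathsf{S}(\P,-)$-derivation gives a $\P$-algebra morphism $\langle 1_A,D\rangle\colon A\to A\ltimes A$ splitting $p_A$. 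Then I would invoke the preceding lemma to replace $\mathsf{DER}_{\mathsf{S}(\P,-)}(A)$ by $\Der(A,A)$; since both are literally the same subset of $R$-linear endomorphisms of $A$, no further identification is needed, and composing the two bijections yields $\mathsf{V}_{\mathbb{T}}(A)\cong\Der(A,A)$ together with the stated explicit description $v\leftrightarrow D_v=\pi_2\circ v$.

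For the Lie bracket, I would note that $(\mathsf{ALG}_\P,\mathbb{T})$ is a Rosick\'y tangent category, so by Proposition \ref{prop:lie} the set $\mathsf{V}_{\mathbb{T}}(A)$ carries a Lie algebra structure. Since $\mathsf{MOD}_R$ is additive, the last clause of Lemma \ref{lemma:abstract-derivations-vector-fields} computes this bracket as $[v,w]=\langle 1_A,(D_v\circ D_w)-(D_w\circ D_v)\rangle$, which evaluated at $a\in A$ reads $[v,w](a)=(a,D_v(D_w(a))-D_w(D_v(a)))$, as claimed.

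There is essentially no hard step here: the substantive work is in Lemma \ref{lemma:abstract-derivations-vector-fields} and the preceding lemma, and the present proposition is their concatenation together with an unwinding of notation. The only point requiring minor care is bookkeeping — checking that the composite bijection really sends $v$ to the $A$-derivation $\pi_2\circ v$ and back, and that the bracket formula transports correctly — but this is immediate. As an alternative, one could bypass the monadic picture and appeal directly to the representability result \cite[Proposition 12.3.11]{LV} relating $A$-derivations to the semi-direct product; I would mention this for context but prefer the coCartesian differential monad argument for uniformity with the rest of the section.
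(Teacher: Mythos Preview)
Your proposal is correct and mirrors exactly what the paper does: the proposition is stated as an immediate consequence of Lemma \ref{lemma:abstract-derivations-vector-fields} together with the preceding lemma identifying $\mathsf{S}(\P,-)$-derivations with $A$-derivations, and the Lie bracket clause is read off from the additive-category case of Lemma \ref{lemma:abstract-derivations-vector-fields}. Your remark about the alternative via \cite[Proposition 12.3.11]{LV} also matches the paper's own aside.
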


By Lemma \ref{lem:adjoint-vf}, we also have that vector fields in the opposite category of algebras also correspond precisely to derivations:

\begin{corollary} For an operad $\P$ and a $\P$-algebra $A$, there is a bijective correspondence between vector fields of $A$ in $(\mathsf{ALG}^{op}_\P, \mathbb{T}^\circ)$ and $A$-derivations $D: A \to A$. So a vector field $v \in \mathsf{V}_{\mathbb{T}^\circ}(A)$ is precisely a $\P$-algebra morphism $v: \Free_A(\Omega_A)  \to A$ which is defined on generators as $v(a)=a$ and $v(\mathsf{d}(a))=D_v(a)$ for all $a \in A$, where ${D_v: A \to A}$ is an $A$-derivation. Furthermore, the induced Lie bracket is given on generators by $[v,w](a) = a$ and $[v,w](\mathsf{d}(a))= D_v(D_w(a)) - D_w(D_v(a))$. 
\end{corollary}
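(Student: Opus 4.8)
The plan is to deduce the statement directly by combining Lemma~\ref{lem:adjoint-vf} with the preceding proposition, and then to unwind the abstract formula $v \mapsto v^\sharp = \varepsilon_A \circ \mathsf{T}^\circ(v)$ into the concrete generator-level description of $\mathsf{T}^\circ$ supplied by Theorem~\ref{theorem:operadic-tangent-categories}. First I would recall that, as established in Section~\ref{subsec:adjoint-tan-operad}, $(\mathsf{ALG}_\P,\mathbb{T})$ is a Cartesian Rosick\'y tangent category with adjoint tangent structure, so Lemma~\ref{lem:adjoint-vf} applies and yields a bijection $\mathsf{V}_{\mathbb{T}}(A) \cong \mathsf{V}_{\mathbb{T}^\circ}(A)$ which is moreover a Lie algebra isomorphism. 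Composing this with the bijection of the preceding proposition, which identifies $\mathsf{V}_{\mathbb{T}}(A)$ with the set of $A$-derivations $D\colon A\to A$, immediately gives the asserted bijective correspondence; it then only remains to make both the vector field and the Lie bracket explicit.

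For the explicit formula, start from a vector field $v\in\mathsf{V}_{\mathbb{T}}(A)$ with associated $A$-derivation $D_v$, so $v\colon A \to A\ltimes A$ is the $\P$-algebra morphism $v(a) = (a, D_v(a))$. By Lemma~\ref{lem:adjoint-vf}(i) the corresponding vector field in $(\mathsf{ALG}^{op}_\P,\mathbb{T}^\circ)$ is $v^\sharp = \varepsilon_A \circ \mathsf{T}^\circ(v)$, where $\varepsilon$ is the counit of Lemma~\ref{lem:operad-adj}. Using the presentations of Theorem~\ref{theorem:operadic-tangent-categories}, the map $\mathsf{T}^\circ(v)\colon \mathsf{T}^\circ(A) \to \mathsf{T}^\circ(A\ltimes A)$ sends $a \mapsto v(a)$ and $\mathsf{d}(a)\mapsto \mathsf{d}(v(a))$, while $\varepsilon_A\colon \mathsf{T}^\circ(A\ltimes A)\to A$ sends $(a,b)\mapsto a$ and $\mathsf{d}(a,b)\mapsto b$. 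Evaluating on generators gives $v^\sharp(a) = \varepsilon_A(a,D_v(a)) = a$ and $v^\sharp(\mathsf{d}(a)) = \varepsilon_A(\mathsf{d}(a,D_v(a))) = D_v(a)$, which is exactly the claimed description; conversely, one checks that an $R$-linear assignment $\mathsf{d}(a)\mapsto D_v(a)$ extends to a well-defined $\P$-algebra morphism $\mathsf{T}^\circ(A)\to A$ fixing each $a$ precisely when $D_v$ satisfies the Leibniz rule, and such a morphism is automatically a retraction of $p^\circ_A$, i.e. a vector field.

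Finally, for the Lie bracket I would invoke the fact that the isomorphism of Lemma~\ref{lem:adjoint-vf} is a Lie algebra isomorphism together with the formula for the bracket on $\mathsf{V}_{\mathbb{T}}(A)$ from the preceding proposition, namely $[v,w](a) = (a, D_v(D_w(a)) - D_w(D_v(a)))$; transporting this along $v\mapsto v^\sharp$ as above gives $[v,w]$ on generators by $a\mapsto a$ and $\mathsf{d}(a)\mapsto D_v(D_w(a)) - D_w(D_v(a))$. I do not expect a genuine obstacle here: the only delicate point is verifying that the concrete $\varepsilon_A$ and $\mathsf{T}^\circ(v)$ recorded in Theorem~\ref{theorem:operadic-tangent-categories} really do compute the abstract composite $\varepsilon_A\circ\mathsf{T}^\circ(v)$ of Lemma~\ref{lem:adjoint-vf}, but this compatibility of the chosen presentation of $\mathsf{T}^\circ$ with the adjunction data was already arranged in Section~\ref{subsec:adjoint-tan-operad}, so the remaining work is purely bookkeeping on generators.
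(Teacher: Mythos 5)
Your proposal is correct and follows essentially the same route as the paper: the paper obtains this corollary by applying Lemma~\ref{lem:adjoint-vf} (the Lie-algebra isomorphism $\mathsf{V}_{\mathbb{T}}(A)\cong\mathsf{V}_{\mathbb{T}^\circ}(A)$ for adjoint tangent structure) to the preceding proposition identifying $\mathsf{V}_{\mathbb{T}}(A)$ with $A$-derivations, with the generator-level formulas read off from the explicit counit and $\mathsf{T}^\circ$ of Section~\ref{subsec:adjoint-tan-operad}. Your explicit unwinding of $v^\sharp=\varepsilon_A\circ\mathsf{T}^\circ(v)$ on generators is exactly the computation the paper leaves implicit, and it checks out.
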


Let us consider what vector fields are for our main examples of operads. 

\begin{example} For an $R$-algebra $A$ and an $A$-module $M$, an $M$-derivation evaluated in $M$
is just an $A$-linear endomorphism $f: M \to M$. Therefore a vector field of $M$ in $(\mathsf{MOD}_A, \mathbb{B})$ is an $A$-linear map $v: M \to M \times M$ such that $v(m)= (m,f_v(m))$ for some $A$-linear map $f_v: M \to M$. Similarly for vector fields of $M$ in $(\mathsf{MOD}^{op}_A, \mathbb{B}^\circ)$. 
\end{example}

\begin{example} For the operad $\Com$ and a commutative $R$-algebra $A$, an $A$-derivation evaluated in $A$ in the operadic sense is the same thing as a derivation in the classical sense, that is, $R$-linear morphism $D: A \to A$ which satisfies the product rule: $D(ab)= a D(b) + D(a) b$. Then a vector field of $A$ in $(\mathsf{CALG}_R, \mathbb{T})$ is an $R$-algebra morphism $v: A \to A[\epsilon]$ such that $v(a) = a + D_v(a) \epsilon$ for some derivation $D_v: A \to A$. Similarly, a vector field of $A$ in $(\mathsf{CALG}^{op}_R, \mathbb{T}^\circ)$ corresponds to an $R$-algebra morphism $v: \mathsf{Sym}_A \left( \Omega_A \right) \to A$ which is given on generators as $v(a) = a$ and $v(\mathsf{d}(a))=D_v(a)$ for some derivation $D_v: A \to A$. 
\end{example}

\begin{example} For the operad $\Ass$, derivations in the operadic sense again correspond to derivations in the classical sense as in the previous example. So vector fields in $(\mathsf{ALG}_R, \mathbb{T})$ or $(\mathsf{ALG}^{op}_R, \mathbb{T}^\circ)$ are given in essentially the same way as in the commutative case. 
\end{example}

\begin{example}
For the operad $\Lie$ and a Lie algebra $\Lieg$, a $\Lieg$-derivation evaluated in $\Lieg$ corresponds to an $R$-linear mormphsim $D: \Lieg \to \Lieg$ which satisfies $D([x,y])=[x,D(y)]+[D(x),y]$ for all $x,y \in \Lieg$. So a vector field of $\Lieg$ in $(\mathsf{LIE}_R, \mathbb{T})$ is a Lie algebra morphism $v: \Lieg \to \Lieg \ltimes \Lieg$ such that $v(x) = x + D_v(y) \epsilon$ for some $\Lieg$-derivation $D_v: \Lieg \to \Lieg$. Similarly, a vector field of $\Lieg$ in $(\mathsf{Lie}^{op}_R, \mathbb{T}^\circ)$ corresponds to an $R$-algebra morphism $v: \mathsf{T}^\circ(\Lieg) \to A$ which is given on generators as $v(a) = a$ and $v(\mathsf{d}(a))=D_v(a)$ for some $\Lieg$-derivation $D_v: \Lieg \to \Lieg$. 
\end{example}

\subsection{Differential Objects of an Operad}\label{subsec:diffobj-operad}

In this section, we will give precise characterizations of the differential objects in both the category of algebras of an operad and its opposite category. On the one hand, for the category of algebras, we will see that the differential objects are in some sense quite trivial algebras. On the other hand, we will show that the differential objects in the opposite category are quite rich and recapture a certain kind of module in the operadic sense.

Let us begin by taking a look at the differential objects in the category of algebras of an operad. 

\begin{proposition}\label{prop:diffobj-alg} Let $\P$ be an operad. Then a $\P$-algebra $A$ is a differential object in $(\mathsf{ALG}_\P, \mathbb{T})$ (in a necessarily unique way) if and only if $\mu(a_1, \hdots, a_{n}) =0$ for all $n\neq 1$, $\mu \in \P(n)$, and $a_i \in A$.
\end{proposition}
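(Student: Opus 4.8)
The plan is to apply Lemma \ref{lem:diffobj-Salg} to the coCartesian differential monad $(\mathsf{S}(\P,-), \gamma, \eta, \partial)$ associated to $\P$. By that lemma, a $\P$-algebra $A$ with structure map $\theta: \mathsf{S}(\P,A) \to A$ has a (necessarily unique) differential object structure in $(\mathsf{ALG}_\P, \mathbb{T})$ if and only if the three equations of (\ref{diffobjSalgeq}) hold, namely $\theta = \theta \circ \mathsf{S}(\P,\pi_2) \circ \partial_A$, $\theta \circ \mathsf{S}(\P,\pi_1 + \pi_2) = \theta \circ \mathsf{S}(\P,\pi_1) + \theta \circ \mathsf{S}(\P,\pi_2)$, and $\theta \circ \mathsf{S}(\P,0) = 0$. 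The strategy is to evaluate each of these on a general generator $(\mu; a_1, \dots, a_n)$ using the explicit formula for $\partial_V$ from Theorem \ref{theorem:opdiffmonad}, and to show the conjunction of the three is equivalent to the vanishing condition $\mu(a_1,\dots,a_n) = 0$ for all $n \neq 1$.

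First I would handle the key equation, the first one: unwinding $\theta \circ \mathsf{S}(\P,\pi_2) \circ \partial_A$ on $(\mu;a_1,\dots,a_n)$. Since $\partial_A(\mu;a_1,\dots,a_n) = \sum_{i=1}^n (\mu; (a_1,0),\dots,(0,a_i),\dots,(a_n,0))$ and $\pi_2$ kills the first-injection inputs, applying $\mathsf{S}(\P,\pi_2)$ gives $\sum_{i=1}^n (\mu; 0,\dots,a_i,\dots,0)$, where all entries but the $i$-th are zero. For $n \geq 2$ each such term is already zero in $\mathsf{S}(\P,A)$ by multilinearity, so $\mathsf{S}(\P,\pi_2)\circ\partial_A$ vanishes on all generators of arity $\neq 1$, while for $n = 1$ it is the identity on $(\mu; a)$. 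Applying $\theta$, the equation $\theta = \theta \circ \mathsf{S}(\P,\pi_2)\circ\partial_A$ becomes $\mu(a_1,\dots,a_n) = 0$ for all $n \neq 1$ (the $n=1$ case being automatic since $\theta$ is $R$-linear and agrees with itself there), plus $n=0$ which is subsumed. So this single equation already gives the stated characterization. The remaining task is to check that the other two equations of (\ref{diffobjSalgeq}) are then automatically satisfied (so that they impose no extra constraint), or more precisely, that the vanishing condition implies all three: if $\mu(a_1,\dots,a_n)=0$ for $n\neq 1$, then $\theta$ factors through the arity-one part $\P(1)\otimes A$, on which $\mathsf{S}(\P,\pi_1+\pi_2) = \mathsf{S}(\P,\pi_1) + \mathsf{S}(\P,\pi_2)$ holds by $R$-linearity in the single module slot, and $\mathsf{S}(\P,0)$ is zero there for $n\neq 0$ while $\P(0)$-part is killed by the vanishing condition. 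Conversely the first equation alone already forces the vanishing condition. I should also note $1_\P(a) = a$ so arity one is unconstrained, consistent with the statement only excluding $n \neq 1$.

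I do not anticipate a serious obstacle here; the main care point is bookkeeping with the symmetric-group coinvariants and being explicit that $(\mu; a_1,\dots,a_n)$ with a zero entry is genuinely $0$ in the coinvariants (this is just multilinearity of the tensor product, descending to the quotient). A secondary subtlety is making sure the $n=0$ case (elements of $\P(0)$, the "units" of the operad) is correctly folded into "$n \neq 1$": there $\partial_A$ sends $(\mu_A; \,) \mapsto 0$ (empty sum), so the first equation reads $\theta(\mu_A) = 0$, i.e. $\mu_A() = 0$, which is precisely the $n=0$ instance of the stated condition — good. I would present the proof as a short application of Lemma \ref{lem:diffobj-Salg}: compute that the first equation of (\ref{diffobjSalgeq}) is equivalent to the displayed vanishing condition, then observe that this condition also implies the other two equations, concluding the biconditional. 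The uniqueness of the differential object structure is inherited directly from Lemma \ref{lem:diffobj-Salg}.

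\begin{proof}
By Theorem \ref{theorem:opdiffmonad}, $(\mathsf{S}(\P,-), \gamma, \eta, \partial)$ is a coCartesian differential monad on $\mathsf{MOD}_R$, so Lemma \ref{lem:diffobj-Salg} applies: a $\P$-algebra $A$ with structure map $\theta: \mathsf{S}(\P,A) \to A$ has a (necessarily unique) differential object structure in $(\mathsf{ALG}_\P, \mathbb{T})$ if and only if the three equations of (\ref{diffobjSalgeq}) hold. We show these three equations are together equivalent to the condition that $\mu(a_1,\dots,a_n) = 0$ for all $n \neq 1$, $\mu \in \P(n)$, and $a_i \in A$.

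Recall from the proof of Theorem \ref{theorem:opdiffmonad} that for any $R$-linear $f: V \times V \to W$,
\[
\mathsf{S}(\P, f)\left( \partial_V(\mu; v_1, \hdots, v_n) \right) = \sum_{i=1}^{n}\left(\mu; f(v_1, 0),\hdots,f(0, v_i),\hdots,f(v_n,0) \right).
\]
Taking $f = \pi_2$ and using that $(\mu; w_1, \hdots, 0, \hdots, w_n) = 0$ whenever some entry is zero, we get $\mathsf{S}(\P,\pi_2)\left(\partial_A(\mu;a_1,\dots,a_n)\right) = \sum_{i=1}^n (\mu; 0, \hdots, a_i, \hdots, 0)$, which equals $(\mu; a_1)$ if $n = 1$ and $0$ if $n \neq 1$ (including $n = 0$, where the sum is empty). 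Hence the first equation of (\ref{diffobjSalgeq}), namely $\theta = \theta \circ \mathsf{S}(\P,\pi_2) \circ \partial_A$, holds if and only if $\mu(a_1,\dots,a_n) = \theta(\mu;a_1,\dots,a_n) = 0$ for all $n \neq 1$ (the $n = 1$ case being automatic). In particular, if $A$ is a differential object, then this vanishing condition holds.

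Conversely, suppose $\mu(a_1,\dots,a_n) = 0$ for all $n \neq 1$. Then $\theta$ vanishes on $\left( \P(n) \otimes A^{\otimes n} \right)_{\Sigma(n)}$ for every $n \neq 1$, so $\theta$ is determined by its restriction to $\P(1) \otimes A$. On that summand, $\mathsf{S}(\P, \pi_1 + \pi_2)(\mu; a) = (\mu; a + a) = (\mu; a) = \mathsf{S}(\P,\pi_1)(\mu; a) + \mathsf{S}(\P,\pi_2)(\mu; a)$ by $R$-linearity... wait, more carefully: $(\mu; (a+b)) = (\mu; a) + (\mu; b)$, so $\mathsf{S}(\P,\pi_1+\pi_2)(\mu;(a,b)) = (\mu; a+b) = (\mu;a) + (\mu;b) = \mathsf{S}(\P,\pi_1)(\mu;(a,b)) + \mathsf{S}(\P,\pi_2)(\mu;(a,b))$; on summands of arity $n \neq 1$ both sides are sent to $0$ by $\theta$. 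Thus the second equation of (\ref{diffobjSalgeq}) holds. Finally, $\mathsf{S}(\P, 0)(\mu; a_1, \dots, a_n) = (\mu; 0, \dots, 0)$, which is $0$ for $n \geq 1$; for $n = 0$ it is $(\mu_A;\,)$, and $\theta(\mu_A;\,) = \mu_A() = 0$ by the vanishing hypothesis. Hence $\theta \circ \mathsf{S}(\P, 0) = 0$, so the third equation of (\ref{diffobjSalgeq}) holds as well. Therefore $A$ is a differential object, and the structure is unique by Lemma \ref{lem:diffobj-Salg}.
\end{proof}
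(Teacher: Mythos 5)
Your proof is correct and takes essentially the same route as the paper: the paper also reduces the statement to Lemma \ref{lem:diffobj-Salg}, its necessity direction being the same computation packaged via the forced identification $\mathsf{T}(A)=A\ltimes A=A\times A$ (then setting $a_i=0$), and its sufficiency direction being exactly the verification of (\ref{diffobjSalgeq}) that you spell out. One cosmetic point: in the third equation of (\ref{diffobjSalgeq}) the map $0$ is $0\colon \ast\to A$, so $\mathsf{S}(\P,0)$ has domain $\mathsf{S}(\P,\mathsf{0})\cong\P(0)$; this does not affect your argument, since the condition still amounts to $\mu()=0$ for all $\mu\in\P(0)$, which is the $n=0$ case of your vanishing hypothesis.
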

\begin{proof} Recall that the terminal object in $\mathsf{ALG}_\P$ is given by the zero $R$-module $\mathsf{0}$, whose $\P$-algebra structure is just given by $0$. Per the discussions in Section \ref{subsec:diffobj-ccdc}, a $\P$-algebra $A$ is a differential object if and only if $\pi_2: A \ltimes A \to A$, $\pi_1 + \pi_2: A \times A \to A$, and $0: \mathsf{0} \to A$ are all $\P$-algebra morphisms. Suppose that $A$ is a differential object. Then, as mentioned in  Section \ref{subsec:diffobj-ccdc}, we have that $A \times A = A \ltimes A$. This means that: 
\begin{align*}
 ( \mu(a_1, \hdots, a_{n}), \mu(b_1, \hdots, b_{n}) ) &= \mu((a_1, b_1),\hdots,(a_{n}, b_{n})) \\
 &=\left(\mu(a_1,\hdots,a_{n}),\sum_{i=1}^{n}\mu(a_1,\hdots,b_i,\hdots,a_{n})\right),
\end{align*}
which implies:
\begin{align*}
    \mu(b_1, \hdots, b_{n}) = \sum_{i=1}^{n}\mu(a_1,\hdots,b_i,\hdots,a_{n}), && \forall a_i, b_i \in A.
\end{align*}
The left-hand side does not depend on $a_i$, so we get: $\mu(b_1, \hdots, b_{n}) = \sum_{i=1}^{n}\mu(0,\hdots,b_i,\hdots,0)$. But by multilinearity, $\sum_{i=1}^{n} \mu(0,\hdots,b_i,\hdots,0) =0$ for $n \geq 2$. For $n=0$, we also get an empty sum. Therefore, $\mu(b_1, \hdots, b_{n}) = 0$ for all $n\neq 1$. Conversely, if in $A$ $\mu(b_1, \hdots, b_{n}) = 0$ for all $n\neq 1$, it is straightforward to show that the equalities in Lemma \ref{lem:diffobj-Salg} hold, and so, $A$ is a differential object. 
\end{proof}

Here are the differential objects for our main examples of operads: 

\begin{example} For any $R$-algebra $A$, $A^\bullet(1)=A$; every $A$-module $M$ is a differential object in $(\mathsf{MOD}_A, \mathbb{B})$, as per the discussion in Section \ref{subsec:diffobj-ccdc}. 
\end{example}

\begin{example} For the operad $\Com$, a commutative $R$-algebra $A$ is a differential object in $(\mathsf{CALG}_R, \mathbb{T})$ would imply that $A[\epsilon]\cong A \times A$ as $R$-algebras. However, the unit in $A[\epsilon]$ is $1$ while the unit in $A \times A$ is $(1,1)$. But then the isomorphism $A[\epsilon]\cong A \times A$ would imply that $1=0$. This is only the case for the zero $R$-algebra $\mathsf{0}$. So the only differential object in $(\mathsf{CALG}_R, \mathbb{T})$ is $\mathsf{0}$. 
\end{example}

\begin{example} For the operad $\Ass$, by the same argument as in the previous example, we have that the differential object in $(\mathsf{ALG}_R, \mathbb{T})$ is $\mathsf{0}$.
\end{example}

\begin{example} For the operad $\Lie$, it turns out that the differential objects are precisely the $R$-modules. Indeed, every $R$-module $V$ comes equipped with a trivial Lie bracket, $[v,w]=0$, which makes $V$ a Lie algebra and also that $V \ltimes V = V \times V$. Conversely, suppose that $\Lieg$ is a Lie algebra and a differential object, which in particular implies that $\Lieg \ltimes \Lieg = \Lieg \times \Lieg$. However, this implies that $\left( [x_1, x_2], [y_1, y_2] \right) = [(x_1, y_1), (x_2, y_2)] = \left( [x_1, x_2], [x_1, y_2] + [y_1, x_2] \right)$. Setting $x_i =0$, we get that $[y_1, y_2] =0$, which means that the Lie bracket of $\Lieg$ is trivial. So we have that the differential objects in $(\mathsf{LIE}_R, \mathbb{T})$ are precisely the $R$-modules with the trivial Lie bracket. 
\end{example}

Let us now turn our attention to differential objects in the opposite category of algebras of an operad $\P$. Luckily, as mentioned in Section \ref{sec:CDC-diffobj}, differential objects do not transfer through adjoint tangent structure. So even if $(\mathsf{ALG}_\P, \mathbb{T})$ may not have any non-trivial differential objects, we will show that $(\mathsf{ALG}^{op}_\P, \mathbb{T}^\circ)$ actually has many interesting differential objects. Recall that we mentioned that $\mathsf{ALG}_\P$ is cocomplete \cite[Proposition 6.4]{mandell:operadic-algebra-co-limits}, and therefore has coproducts. However, coproducts of $\P$-algebras are not straightforward and easy to work with. Luckily, there is an alternative but equivalent characterization of differential objects in a Cartesian Rosick\'y tangent category which does not involve the product $\times$. As such, this alternative description will allow us to describe differential objects in $(\mathsf{ALG}^{op}_\P, \mathbb{T}^\circ)$ without having to work with the coproduct in $\mathsf{ALG}_\P$. 

Firstly, it turns out that a differential object is in fact a special kind of differential bundle \cite[Definition 2.3]{cockettCruttwellDiffBundles}, which are analogues of smooth vector bundles in a tangent category. While differential bundles are beyond the scope of this paper (we invite interested readers to learn about them in \cite{cockettCruttwellDiffBundles,cruttwellLemay:AlgebraicGeometry,MacAdamVectorBundles}), it is enough to know that a differential object is the same thing as a differential bundle over the terminal object $\ast$ \cite[Proposition 3.4]{cockettCruttwellDiffBundles}. In \cite{MacAdamVectorBundles}, MacAdam provided an alternative description of a differential bundle in a Cartesian Rosick\'y tangent category, which in particular required less data and axioms than the original definition. Briefly, MacAdam showed that, in a Cartesian Rosick\'y tangent category, a differential bundle over an object $X$ can be characterized as an object $A$ with maps $q: A \to X$, called the projection, $\zeta: X \to A$, called the zero map, and $\ell: A \to \mathsf{T}(A)$, called the lift map. Furthermore, these need to satisfy: (1) four equalities, (2) that the pullback of $n$-copies of $q$ exists and is preserved by $\mathsf{T}^n$, and (3) $\ell$ has a universal property being part of a pullback square called the Rosick\'y's universality diagram, see \cite[Corollary 3]{MacAdamVectorBundles} or \cite[Proposition 3.8]{cruttwellLemay:AlgebraicGeometry} for full details. We have mentioned that differential objects are just differential bundles over the terminal object. So, in a Cartesian Rosick\'y tangent category, MacAdam's description is greatly simplified when $X = \ast$. Firstly, there is only one possible candidate for the projection: the unique map $t_A: A \to \ast$, so this map need not be specified. It follows that condition (2) is just saying that the product of $n$ copies of $A$ exists and is preserved by the tangent bundle, which is already true since we are in a Cartesian tangent category. So, condition (2) is automatically verified. So, one only needs the relations on the maps $\zeta: \ast \to A$ and ${\ell: A \to \mathsf{T}(A)}$. Now, one of the equalities in condition (1) is $t_A \circ \zeta = 1_\ast$, which is true by the universal property of the terminal object, so it is always satisfied. Lastly, the pullback square of condition (3) usually has $\mathsf{T}(\ast) \times A$ in the bottom corner. However, since $\mathsf{T}(\ast) \cong \ast$, we may rewrite Rosick\'y's universality diagram with $A$ in the bottom corner. Therefore, MacAdam's description allows us to provide a much simpler, yet equivalent, characterization of differential objects without referring to products. 

The following is just a combination of \cite[Proposition 3.4]{cockettCruttwellDiffBundles} with the rewritten version of \cite[Corollary 3]{MacAdamVectorBundles}, or \cite[Proposition 3.8]{cruttwellLemay:AlgebraicGeometry}, for the specific case of the terminal object. 

\begin{lemma} \label{lem:diffobjBen} Let $(\mathbb{X}, \mathbb{T})$ be a Cartesian Rosick\'y tangent category. Then, there is a bijective correspondence between: 
\begin{enumerate}[{\em (i)}]
\item Differential objects $(A, \hat{p}, \sigma, \zeta)$;
\item Triples $(A, \zeta, \ell)$ consisting of an object $A$, a map $\zeta: \ast \to A$, called the \textbf{zero map}, and a map $\ell: A \to \mathsf{T}(A)$, called the \textbf{differential lift}, such that the following equalities hold: 
\begin{align}\label{pre-diffeq}
    p_A \circ \ell = \zeta \circ t_A, && \ell \circ \zeta = z_A \circ \zeta, && \mathsf{T}(\ell) \circ \ell = l_A \circ \ell , 
\end{align}
and the following commutative diagram, called \textbf{Rosick\'y's universality diagram}, is a pullback square: 
 \begin{equation*}\begin{gathered} 
  \xymatrixcolsep{5pc}\xymatrix{ A \ar[r]^-{\ell}  \ar[d]_-{t_A} & \mathsf{T}(A) \ar[d]^-{p_A} \\
 \ast \ar[r]_-{\zeta}  & A }  \end{gathered}\end{equation*}
and for all $n \in \mathbb{N}$, $\mathsf{T}^n$ preserves this pullback. 
\end{enumerate}
\end{lemma}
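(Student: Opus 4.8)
The plan is to establish the bijection between differential objects $(A,\hat p, \sigma, \zeta)$ and triples $(A,\zeta,\ell)$ by realizing both as special cases of structures already characterized in the literature, and then simplifying the ambient characterization at the terminal object. The starting point is \cite[Proposition 3.4]{cockettCruttwellDiffBundles}, which says that a differential object in a Cartesian Rosick\'y tangent category is precisely a differential bundle over the terminal object $\ast$. So I would first invoke that equivalence to translate item (i) into the language of differential bundles over $\ast$. Then I would invoke MacAdam's reformulation, \cite[Corollary 3]{MacAdamVectorBundles} (equivalently \cite[Proposition 3.8]{cruttwellLemay:AlgebraicGeometry}), which characterizes a differential bundle over a base $X$ as a tuple $(A, q\colon A\to X, \zeta\colon X\to A, \ell\colon A\to\mathsf T(A))$ satisfying four equations, a condition that $n$-fold pullbacks of $q$ exist and are preserved by all $\mathsf T^n$, and the universality of $\ell$ via Rosick\'y's universality diagram.

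The substance of the proof is then showing that, when $X=\ast$, this data and these conditions collapse exactly to item (ii). First I would note that $q$ must be $t_A\colon A\to\ast$ by the universal property of $\ast$, so no projection map needs to be specified, and the equation $q\circ\zeta = 1_X$ becomes $t_A\circ\zeta = 1_\ast$, which holds automatically. Next, the pullback-existence-and-preservation condition: the $n$-fold pullback of $t_A$ over $\ast$ is just the $n$-fold product $A\times\cdots\times A$, which exists because $\mathbb X$ is Cartesian, and is preserved by $\mathsf T^n$ because $(\mathbb X,\mathbb T)$ is a Cartesian tangent category (so $\mathsf T$ commutes with finite products up to the canonical isomorphism). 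Hence that condition is free. The remaining four equations of MacAdam's list, after discarding $t_A\circ\zeta=1_\ast$, reduce to the three equations in (\ref{pre-diffeq}), namely $p_A\circ\ell = \zeta\circ t_A$, $\ell\circ\zeta = z_A\circ\zeta$, and $\mathsf T(\ell)\circ\ell = l_A\circ\ell$ — I would check that the fourth MacAdam equation (the one relating $\ell$ to the sum/the lift on the total space) is, in the terminal-base case, either subsumed or vacuous, citing the precise statement. Finally, Rosick\'y's universality diagram in MacAdam's general form has $\mathsf T(X)\times A$ in the bottom right; since $\mathsf T(\ast)\cong\ast$ and $\ast\times A\cong A$, the diagram rewrites as the pullback square displayed in the statement, with the required preservation by all $\mathsf T^n$ carried over verbatim.

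With these simplifications in hand, the bijection itself is the composite of the two cited equivalences restricted to the base $\ast$: a differential object $(A,\hat p,\sigma,\zeta)$ corresponds to the differential bundle over $\ast$ with the same $\zeta$ and with lift $\ell$ recovered from $\hat p$ (and conversely $\hat p$ and $\sigma$ are recovered from $\ell$ via the pullback and the tangent structure maps, exactly as in \cite{MacAdamVectorBundles}). I would spell out that the zero map $\zeta$ is literally the same piece of data on both sides, so the only real content is matching $\hat p$ (together with $\sigma$) against $\ell$, which is precisely what MacAdam's theorem does.

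The main obstacle I anticipate is bookkeeping rather than conceptual: one must carefully check that each of MacAdam's four defining equations and the universality diagram specialize correctly when $X=\ast$, in particular that nothing is lost when the projection $q$ is forced to be $t_A$ and that the fourth equation (which in the general theory encodes compatibility of $\ell$ with the additive bundle structure) does not impose an extra condition here beyond what (\ref{pre-diffeq}) and the pullback already give. This requires consulting the exact formulations in \cite{cockettCruttwellDiffBundles} and \cite{MacAdamVectorBundles} and tracking how $\mathsf T(\ast)\cong\ast$ propagates through the universality square; once that is done, the statement follows immediately by composing the two known equivalences.
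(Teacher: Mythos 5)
Your proposal is correct and takes essentially the same route as the paper: the paper likewise obtains the lemma by combining \cite[Proposition 3.4]{cockettCruttwellDiffBundles} (differential objects are differential bundles over $\ast$) with MacAdam's characterization \cite[Corollary 3]{MacAdamVectorBundles}, then specializing to the terminal base exactly as you describe ($q$ forced to be $t_A$, the equation $t_A \circ \zeta = 1_\ast$ and the pullback-preservation condition automatic in a Cartesian tangent category, and the universality square rewritten using $\mathsf{T}(\ast) \cong \ast$). The bookkeeping you flag about which of the four equations survive is precisely what the paper records, with the remaining three being (\ref{pre-diffeq}).
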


In order to use Lemma \ref{lem:diffobjBen}, we must mention what the terminal object is in $\mathsf{ALG}^{op}_\P$. Firstly note that $\P(0)$ is a $\P$-algebra where the $\P$-algebra structure is just given by the composition operation of $\P$. Furthermore, $\P(0)$ is the initial object in $\mathsf{ALG}_\P$, that is, for every $\P$-algebra $A$ there is a unique $\P$-algebra morphism $t^\circ_A: \P(0) \to A$, which is defined as $t_A(\mu) = \mu_A$. Thus $\P(0)$ is the terminal object in $\mathsf{ALG}^{op}_\P$. 

So by Lemma \ref{lem:diffobjBen}, a differential object in $(\mathsf{ALG}^{op}_\P, \mathbb{T}^\circ)$ is a $\P$-algebra $A$ equipped with $\P$-algebra morphisms $\zeta^\circ: A \to \P(0)$ and $\ell^\circ: \mathsf{T}^\circ(A) \to A$ such that the dual of equalities in (\ref{pre-diffeq}) hold and the following square is a pushout diagram in $\mathsf{ALG}_\P$:
 \begin{equation*}\begin{gathered} 
  \xymatrixcolsep{5pc}\xymatrix{ A \ar[r]^-{p_A}  \ar[d]_-{\zeta^\circ} & \mathsf{T}^\circ(A) \ar[d]^-{\ell^\circ} \\
 \P(0) \ar[r]_-{t^\circ_A}  & A }  \end{gathered}\end{equation*}
Note that we do not need to mention that ${\mathsf{T}^\circ}^n$ preserves these pushout squares, since $\mathsf{T}^\circ$ is a left adjoint, and left adjoints always preserve colimits. 
Let us unpack this a bit more. For starters for every $a \in A$, $\zeta^\circ(a) \in \P(0)$ so we have $\zeta^\circ(a)_A \in A$. Therefore the dual of equalities in (\ref{pre-diffeq}) amount to the following identities on generators:
\begin{equation}\begin{gathered}\label{diffobg-algopeq}
     \ell^\circ(a) = \zeta^\circ(a)_A, \\
      \zeta^\circ(\zeta^\circ(a)_A)_A = \zeta^\circ(a)_A \qquad \zeta^\circ(\ell^\circ(\mathsf{d}(a)))_A = 0, \\
      \ell^\circ(\zeta^\circ(a)_{A}) = \zeta^\circ(a)_{A} \qquad   \ell^\circ( \mathsf{d}\left(\ell^\circ(\mathsf{d}(a)) \right) ) = \ell^\circ(\mathsf{d}(a)).
\end{gathered}
\end{equation}
Lastly, the pushout property says that for any $\P$-algebra $A^\prime$ and $\P$-algebra morphism $f: \mathsf{T}^\circ(A) \to A^\prime$ such that $f(a) = \zeta^\circ(a)_A$, there exists a unique $\P$-algebra morphism $f^\natural: A \to A^\prime$ such that $f^\natural(\ell(x)) = f(x)$ for all $x \in \mathsf{T}^\circ(A)$, so in particular on generators $f^\natural(\zeta^\circ(a)_A) = \zeta^\circ(a)_{A^\prime}$ and $f^\natural(\ell^\circ(\mathsf{d}(a))) = f(\mathsf{d}(a))$. 

By Proposition \ref{prop:diffobg-stanopp}, we already know that every free $\P$-algebra is a differential object: 

\begin{lemma}\label{lem:freeP-diffobj} Let $\P$ be an operad. Then for any $R$-module $V$, $\mathsf{S}(\P,V)$ is a differential object in $(\mathsf{ALG}^{op}_\P, \mathbb{T}^\circ)$ where in particular the zero $\zeta^\circ: \mathsf{S}(\P,V) \to \P(0)$ is defined as follows: 
\begin{align*}
   \zeta^\circ(\mu;v_1, \hdots, v_{n}) = 0 ,
\end{align*}
and the differential lift $\ell^\circ: \mathsf{T}^\circ(\mathsf{S}(\P,V)) \to \mathsf{S}(\P,V)$ is defined as follows on generators: 
\begin{align*}
   \ell^\circ(\mu; v_1, \hdots, v_{n}) &= 0, &  \ell^\circ\left(\mathsf{d}(\mu; v_1, \hdots, v_{n}) \right) &= (\mu; v_1, \hdots, v_{n}).
\end{align*}
Furthermore, $\mathsf{T}^\circ(\mathsf{S}(\P,V)) \cong \mathsf{S}(\P,V \times V)$ as $\P$-algebras.
\end{lemma}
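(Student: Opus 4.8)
The plan is to verify the characterization of Lemma \ref{lem:diffobjBen} directly, using the explicit presentation of $\mathsf{T}^\circ$ from Theorem \ref{theorem:operadic-tangent-categories} applied to the free $\P$-algebra $\mathsf{S}(\P,V)$. First I would observe that $\zeta^\circ$ and $\ell^\circ$ as defined are genuinely $\P$-algebra morphisms: $\zeta^\circ$ sends every element $(\mu;v_1,\dots,v_n)$ to $0\in\P(0)$ and this is compatible with the $\P$-algebra structure since $\mu(0,\dots,0)=0$ for $n\geq 1$ while in arity $0$ the generators $\mu_{\mathsf{S}(\P,V)}\in\mathsf{S}(\P,V)$ must map to $\mu\in\P(0)$ (this is forced, and is exactly the first equation of \eqref{pre-diffeq} dualized: $\ell^\circ\circ p^\circ = $ the structure map $t^\circ$). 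For $\ell^\circ$, I would check it respects the defining relations of $\mathsf{T}^\circ(\mathsf{S}(\P,V))$ from Theorem \ref{theorem:operadic-tangent-categories}, namely $\mu(a_1,\dots,a_n)\mapsto \mu(a_1,\dots,a_n)$ and $\mathsf{d}(\mu(a_1,\dots,a_n))\mapsto\sum_i\mu(a_1,\dots,\mathsf{d}(a_i),\dots,a_n)$ — sending all the $a$ generators to $0$ and $\mathsf{d}(a)$ to $a$ makes both sides of each relation collapse consistently.

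Next I would verify the three equalities of \eqref{pre-diffeq} (equivalently \eqref{diffobg-algopeq}) on generators. With $\zeta^\circ$ landing in $\P(0)$ via $0$, we have $\zeta^\circ(a)_{\mathsf{S}(\P,V)} = 0$ for every $a=(\mu;v_1,\dots,v_n)$ with $n\geq 1$, and the only subtle point is arity zero, where $\zeta^\circ$ must be the identity on $\P(0)\subseteq\mathsf{S}(\P,V)$; then $\ell^\circ(a)=\zeta^\circ(a)_A$ reads as $\ell^\circ$ killing the $a$-part, which holds. The equations $\zeta^\circ(\zeta^\circ(a)_A)_A=\zeta^\circ(a)_A$, $\zeta^\circ(\ell^\circ(\mathsf{d}(a)))_A=0$, $\ell^\circ(\zeta^\circ(a)_A)=\zeta^\circ(a)_A$, and $\ell^\circ(\mathsf{d}(\ell^\circ(\mathsf{d}(a))))=\ell^\circ(\mathsf{d}(a))$ then all reduce to routine bookkeeping: $\ell^\circ(\mathsf{d}(a))=a$, so $\ell^\circ(\mathsf{d}(\ell^\circ(\mathsf{d}(a))))=\ell^\circ(\mathsf{d}(a))=a$, etc. These are short.

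The main work — and the expected obstacle — is the pushout square: I must show that
\[
\xymatrixcolsep{5pc}\xymatrix{ \mathsf{S}(\P,V) \ar[r]^-{p^\circ}  \ar[d]_-{\zeta^\circ} & \mathsf{T}^\circ(\mathsf{S}(\P,V)) \ar[d]^-{\ell^\circ} \\ \P(0) \ar[r]_-{t^\circ}  & \mathsf{S}(\P,V) }
\]
is a pushout in $\mathsf{ALG}_\P$. My approach is to use the adjunction $\mathsf{T}^\circ\dashv\mathsf{T}$ together with the isomorphism $\mathsf{T}^\circ(\mathsf{S}(\P,V))\cong\mathsf{S}(\P,V\times V)$ (which I would establish first, or in parallel — it follows from $\mathsf{T}^\circ$ being a left adjoint hence preserving the colimit presentation of free algebras, combined with the natural isomorphism $\tau$ of Theorem \ref{theorem:adjoint-tangent-structure-for-cCD-monads}, or directly by checking the presentation of $\mathsf{T}^\circ(\mathsf{S}(\P,V))$ collapses to $\mathsf{S}(\P,V\times V)$ since on a free algebra the relation $\mu(a_1,\dots,a_n)=\mu(a_1,\dots,a_n)$ is vacuous). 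Under this identification $p^\circ$ becomes $\mathsf{S}(\P,\langle 1,0\rangle)$ and $\ell^\circ$ becomes $\mathsf{S}(\P,\pi_2)$, and since $\mathsf{S}(\P,-)$ is a left adjoint (free algebra functor) it preserves pushouts, so it suffices to check that
\[
\xymatrixcolsep{5pc}\xymatrix{ V \ar[r]^-{\langle 1,0\rangle}  \ar[d] & V\times V \ar[d]^-{\pi_2} \\ 0 \ar[r]  & V }
\]
is a pushout in $\mathsf{MOD}_R$ — which it visibly is, since $V\times V$ is the biproduct $V\oplus V$ and this is the standard pushout computing the cokernel of the first inclusion. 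The only genuine care needed is that $\P(0)$ is the free $\P$-algebra on the zero module (true: $\mathsf{S}(\P,0)=\P(0)$) and that the square really is $\mathsf{S}(\P,-)$ applied to the module square, for which I would match up $\zeta^\circ$ with $\mathsf{S}(\P,-)$ of the zero map and $t^\circ$ with the structure map $\gamma$ restricted appropriately; an alternative, perhaps cleaner, route is to cite Proposition \ref{prop:diffobg-stanopp} directly, which already asserts free algebras are differential objects, and then merely identify the resulting $\zeta^\circ,\ell^\circ$ with the stated formulas via $\tau^{-1}$ and $\mathsf{S}(\P,\langle 0,1\rangle)$. I would present both the self-contained module-pushout argument and the one-line appeal to Proposition \ref{prop:diffobg-stanopp}, since the latter makes the final isomorphism statement immediate.
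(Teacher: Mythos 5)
Your fallback route --- the one-line appeal to Proposition \ref{prop:diffobg-stanopp} --- is exactly what the paper does: the lemma is introduced with precisely that citation and no further verification, the displayed formulas being read off from the transported structure via $\tau$. Your self-contained argument (verify the data of Lemma \ref{lem:diffobjBen}, then reduce Rosick\'y's square to the split-mono pushout $V \to V\times V \to V$ in $\mathsf{MOD}_R$ and use that the free algebra functor preserves pushouts, together with $\mathsf{S}(\P,0)=\P(0)$ and the identification of $p^\circ$ with $\mathsf{S}(\P,\langle 1,0\rangle)$ under $\tau$) is a genuinely different and attractive route, and that reduction itself is sound.

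However, there is a genuine flaw in your first step. The assignment ``$a\mapsto 0$, $\mathsf{d}(a)\mapsto a$ for \emph{all} $a\in\mathsf{S}(\P,V)$'' does \emph{not} respect the second defining relation of $\mathsf{T}^\circ$: applying it to $\mathsf{d}(\mu(a_1,\dots,a_n))=\sum_i\mu(a_1,\dots,\mathsf{d}(a_i),\dots,a_n)$ gives $\mu(a_1,\dots,a_n)$ on the left but $\sum_i\mu(0,\dots,a_i,\dots,0)=0$ on the right whenever $n\geq 2$. Concretely, for $\P=\Com$ and $A=R[x,y]$ you would need $\ell^\circ(\mathsf{d}(xy))=xy$ while $\ell^\circ(x\,\mathsf{d}(y)+y\,\mathsf{d}(x))=0$; no algebra morphism does both, so the map whose relations you claim ``collapse consistently'' does not exist, and the identities of (\ref{diffobg-algopeq}) in your second paragraph are being checked for a phantom. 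The correct differential lift is the unique $\P$-algebra morphism determined on the \emph{free} generators $v\in V$ by $v\mapsto 0$ and $\mathsf{d}(v)\mapsto v$ --- equivalently $\mathsf{S}(\P,\pi_2)$ transported along $\tau$ --- which sends $\mathsf{d}(a)$ to the degree-one component of $a$ (and $a$ to its arity-zero component), exactly as in the paper's $\Com$ example where $\ell^\circ$ sets $x:=0$ and $\mathsf{d}x:=x$. Thus the morphism you actually use in your pushout argument is the right one, but it is not the morphism you ``verified'' in your first two paragraphs: the two halves of your proposal describe different maps. The repair is straightforward --- define $\ell^\circ$ as $\mathsf{S}(\P,\pi_2)$ under $\mathsf{T}^\circ(\mathsf{S}(\P,V))\cong\mathsf{S}(\P,V\times V)$, after which your equation checks and your pushout reduction go through verbatim --- but you should then note that the lemma's formula $\ell^\circ(\mathsf{d}(\mu;v_1,\dots,v_n))=(\mu;v_1,\dots,v_n)$ can only be read on the free generators (or slot-wise), not as the universal derivation applied to an arbitrary element; taking it at face value is what led your verification astray.
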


While free $\P$-algebras always give differential objects, it is possible that there are other differential objects. That said, we are still able to precisely characterize the differential objects as $\P(0)$-modules (in the operadic sense). 

\begin{thm}\label{thm:diffobj-operad-op} Let $\P$ be an operad. Then there is a bijective correspondence between differential objects in $(\mathsf{ALG}^{op}_\P, \mathbb{T}^\circ)$ and $\P(0)$-modules. 
\end{thm}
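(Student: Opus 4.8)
The plan is to use the simplified characterization of differential objects from Lemma \ref{lem:diffobjBen}: a differential object in $(\mathsf{ALG}^{op}_\P, \mathbb{T}^\circ)$ is a $\P$-algebra $A$ together with $\P$-algebra morphisms $\zeta^\circ: A \to \P(0)$ and $\ell^\circ: \mathsf{T}^\circ(A) \to A$ satisfying the equations \eqref{diffobg-algopeq} and the pushout property. First I would extract from $\zeta^\circ$ and $\ell^\circ$ the data of a $\P(0)$-module structure on (some quotient or sub-object of) the underlying $R$-module of $A$, and conversely show that any $\P(0)$-module gives rise to such a pair; then I would check the two assignments are mutually inverse. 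Throughout, I would lean on Lemma \ref{lem:freeP-diffobj}, which already tells us that the free $\P$-algebra $\mathsf{S}(\P,V)$ is a differential object, so the correspondence must restrict on free algebras to the evident one (a free $R$-module $V$ thought of as a $\P(0)$-module with all evaluation maps zero, or more generally whatever the free construction produces).

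The key steps, in order, would be: (1) Unpack what the first equation $\ell^\circ(a) = \zeta^\circ(a)_A$ of \eqref{diffobg-algopeq} says -- it forces $\ell^\circ$ to be determined on the ``degree-zero'' generators $a$ by $\zeta^\circ$, so the genuinely new data is the value $\ell^\circ(\mathsf{d}(a))$ for $a \in A$; call this $R$-linear map $\rho: A \to A$, $\rho(a) := \ell^\circ(\mathsf{d}(a))$. The remaining equations of \eqref{diffobg-algopeq} then say $\zeta^\circ(\rho(a))_A = 0$ and $\rho(\zeta^\circ(a)_A) = \zeta^\circ(a)_A$ wait -- more precisely $\ell^\circ(\zeta^\circ(a)_A) = \zeta^\circ(a)_A$, i.e. $\rho$ restricted to the image of $\zeta^\circ(-)_A$ is the identity -- and $\rho(\rho(a)) = \rho(a)$, so $\rho$ is an idempotent $R$-linear endomorphism of $A$ whose image contains $\zeta^\circ(A)_A$ and on which $\zeta^\circ$ vanishes. (2) Let $M := \mathrm{im}(\rho) = \{a \in A \mid \rho(a) = a\}$; the pushout/universal property should force $M$ to carry a $\P(0)$-module structure, with the evaluation maps $\P(0)(n+1) \otimes \P(0)^{\otimes n} \otimes M \to M$ induced by the $\P$-algebra structure of $A$ composed with the projection $\rho$, and $\zeta^\circ$ exhibiting $A$ as (roughly) the semi-direct product $\P(0) \ltimes M$ or the free $A$-algebra picture; I would verify the operadic-module axioms (equivariance, associativity, unit) directly from the $\P$-algebra axioms of $A$ and the defining relations of $\mathsf{T}^\circ$. (3) Conversely, given a $\P(0)$-module $M$, I would build the $\P$-algebra $A$ -- the natural candidate is $\P(0) \ltimes M$ using the semi-direct product of Section \ref{subsection:otc-concrete-description} generalized to a module -- equip it with the obvious $\zeta^\circ$ (projection to $\P(0)$) and the idempotent $\rho$ (projection to $M$), define $\ell^\circ$ on generators of $\mathsf{T}^\circ(A)$ accordingly, and check it is a differential object using Lemma \ref{lem:diffobjBen}, in particular verifying the Rosický pushout square via the universal property of $\mathsf{T}^\circ = \mathsf{Free}_{(-)}(\Omega_{(-)})$ and of the semi-direct product. (4) Finally, check the round trips: starting from $(A, \zeta^\circ, \ell^\circ)$, recover $A \cong \P(0) \ltimes M$ with its data; starting from $M$, recover $M$ as the image of $\rho$.

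The main obstacle I expect is step (2)/(3): pinning down \emph{why} a differential object $A$ must split as $\P(0) \ltimes M$ on the nose (as opposed to merely having $M$ as a retract), and extracting the full operadic $A$-module structure -- including the unit map $\eta_M : M \to \bigoplus_n \P(0)(n+1) \otimes \P(0)^{\otimes n} \otimes M$ -- from the bare data $(\zeta^\circ, \ell^\circ)$. The Rosický universality pushout square is the piece of structure that should do this work, since it says $A$ is freely generated over $\P(0)$ by the ``linear part'' $M$ in a precise sense, but translating the pushout condition into the operadic-module axioms is the delicate calculation. A secondary subtlety is making sure the correspondence is compatible with the free-algebra case from Lemma \ref{lem:freeP-diffobj}, i.e. that when $A = \mathsf{S}(\P,V)$ the associated $\P(0)$-module is the expected one, which serves as a useful consistency check. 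I would also need to confirm that the zero map $\mathsf{S}(0): A \to \P(0)$ implicit in Proposition \ref{prop:diffobg-stanopp} matches $\zeta^\circ$ here, so that the general statement genuinely subsumes the free case rather than contradicting it.
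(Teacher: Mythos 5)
Your forward direction matches the paper's: setting $\rho(a) := \ell^\circ(\mathsf{d}(a))$, the equations of (\ref{diffobg-algopeq}) do make $\rho$ idempotent, and the paper takes exactly $M := \mathrm{im}(\rho)$ (written $\mathsf{D}_{\ell^\circ}(A)$) with the $\P(0)$-action induced from the $\P$-algebra structure of $A$ via the Leibniz rule for $\mathsf{d}$. The genuine gap is in your steps (2)--(4): the differential object attached to a $\P(0)$-module $M$ is \emph{not} the semi-direct product $\P(0)\ltimes M$, and a differential object $A$ does not split as $\P(0)\ltimes M$; the correct object, and the splitting the Rosick\'y pushout actually forces, is the \emph{free} $\P(0)$-algebra $\Free_{\P(0)}(M)$ of Proposition \ref{prop:free-A-algebra-functor}. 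The semi-direct product is a square-zero extension and is the tangent bundle on the covariant side $(\mathsf{ALG}_\P,\mathbb{T})$; it is generally far smaller than $\Free_{\P(0)}(M)$ and generally fails to be a differential object in $(\mathsf{ALG}^{op}_\P,\mathbb{T}^\circ)$. Concretely, for $\P=\Com$ one has $\P(0)=R$, and the theorem's correspondence sends an $R$-module $M$ to $\mathsf{Sym}(M)$; your candidate $R\ltimes M$ is e.g.\ $k[x]/(x^2)$ for $M=k$, and for this algebra $\mathsf{T}^\circ(A)=\mathsf{Sym}_A(\Omega_A)$ is infinite dimensional while the product of $A$ with itself in $\mathsf{CALG}^{op}_R$ is $A\otimes_k A$, of dimension $4$, so $\mathsf{T}^\circ(A)\not\cong A\times A$ and $A$ cannot be a differential object. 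Your own consistency check via Lemma \ref{lem:freeP-diffobj} already signals this: $\mathsf{S}(\P,V)$ is a differential object but is a polynomial-type algebra, not a square-zero extension of $\P(0)$.

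With that correction, the remaining work is exactly what the paper does and what your ``main obstacle'' paragraph gestures at: given $M$, put $A:=\Free_{\P(0)}(M)$, define $\zeta^\circ(x)=0$ and $\ell^\circ(x)=0$, $\ell^\circ(\mathsf{d}(x))=x$ on generators $x\in M$, verify (\ref{diffobg-algopeq}) and the pushout square directly from the generators-and-relations presentation of $\mathsf{T}^\circ(\Free_{\P(0)}(M))$; and for the round trip, use the couniversal property of that pushout to build $\phi: A \to \Free_{\P(0)}(\mathsf{D}_{\ell^\circ}(A))$ determined by $\phi(\ell^\circ(\mathsf{d}(a)))=\ell^\circ(\mathsf{d}(a))$ and check it is an isomorphism of $\P$-algebras preserving $(\zeta^\circ,\ell^\circ)$. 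In other words, the pushout does not exhibit $M$ merely as a retract of $A$, nor $A$ as $\P(0)\ltimes M$: it exhibits $A$ as freely generated over $\P(0)$ by $M$, which is precisely the statement $A\cong\Free_{\P(0)}(M)$.
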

\begin{proof} Let $(A, \zeta^\circ, \ell^\circ)$ be a differential object in $(\mathsf{ALG}^{op}_\P, \mathbb{T}^\circ)$. Let $\mathsf{D}_{\ell^\circ}(a) = \ell^\circ(\mathsf{d}(a))$ and let $\mathsf{D}_{\ell^\circ}(A)$ be the image of $\mathsf{D}_{\ell^\circ}$, so $\mathsf{D}_{\ell^\circ}(A)$ is an $R$-module. We claim that the following equips $\mathsf{D}_{\ell^\circ}(A)$ with a $\P(0)$-module structure with evaluation: 
\begin{align*}
    \psi_{n+1}\left(\mu;\nu_1, \hdots, \nu_n, \ell^\circ(\mathsf{d}(a)))\right) := \ell^\circ\left( \mathsf{d}\left( \mu( {\nu_1}, \hdots, {\nu_n}, a ) \right) \right).
\end{align*}
Indeed, by definition, since $\mathsf{d}$ is a derivation, one has: 
\begin{align*}
     \ell^\circ\left( \mathsf{d}\left( \mu( {\nu_1}, \hdots, {\nu_n}, a ) \right) \right)&= \ell^\circ\left( \sum_{i=1}^n \mu( {\nu_1}, \hdots,\mathsf d\nu_i,\hdots {\nu_n}, a ) \right)+\ell^\circ\left(  \mu( {\nu_1}, \hdots, {\nu_n}, \mathsf da ) \right)\\
         &=\mu\left( {\nu_1}, \hdots, {\nu_n}, \ell^\circ(\mathsf da) \right),
\end{align*}
which gives a $\P(0)$-module structure on $\mathsf{D}_{\ell^\circ}(A)$.

Conversely, let $M$ be a $\P(0)$-module. Consider the free $\P(0)$-algebra $\Free_{\P(0)}(M)$. As shown in Proposition \ref{prop:free-A-algebra-functor}, $\Free_{\P(0)}(M)$ is generated by $\mu \in \P(0)$ and $x \in M$. However, thanks to the relations on $\Free_{\P(0)}(M)$, the generators $\mu\in\P(0)$ correspond in fact to the units coming from the $\P$-algebra structure, so $\Free_{\P(0)}(M)$, as $\P(0)$-algebra is generated by $x\in M$. The $\P$-algebra  $\mathsf{T}^\circ(\Free_{\P(0)}(M))$ has generators  $x$, and $\mathsf{d}(x)$ for all $x \in M$. Now, define the differential lift $\ell^\circ: \mathsf{T}^\circ(\Free_{\P(0)}(M)) \to \Free_{\P(0)}(M)$ as the $\P$-algebra morphism defined as follows on generators: 
\begin{align*}
   \ell^\circ(x) &= 0, & \ell^\circ(\mathsf{d}(x)) &= x,
\end{align*}
which is indeed well-defined, since $\ell^\circ$ can be constructed using the universal properties. Next, we define the zero ${\zeta^\circ: \Free_{\P(0)}(M) \to \P(0)}$ as the $\P$-algebra morphism defined as follows on generators: 
\begin{equation*}
    \zeta^\circ(x) = 0.
\end{equation*}
It is straightforward to check that $\ell^\circ$ and $\zeta^\circ$ satisfy the equalities of (\ref{diffobg-algopeq}). Lastly for the pushout square, suppose that there was a $\P$-algebra morphism $f: \mathsf{T}^\circ(\Free_{\P(0)}(M)) \to A^\prime$ such that  $f(x) = 0$. Then define the $\P$-algebra morphism $f^\natural: \Free_{\P(0)}(M) \to A^\prime$ on generators as $f^\natural(x) = f(\mathsf{d}(x))$. By construction, $f^\natural$ satisfies the necessary identities and is the unique map that does so since it was defined on generators. So we conclude that $(\Free_{\P(0)}(M), \zeta^\circ, \ell^\circ)$ is indeed a differential object. 

We must now explain why these constructions are inverses of each other. So starting from a $\P(0)$-module $M$, by definition of $\ell^\circ$ we already have that $\mathsf{D}_{\ell^\circ}(\Free_{\P(0)}(M))=M$ (and this is an equality of $\P(0)$-modules). In the other direction, start with a differential object $(A, \zeta^\circ, \ell^\circ)$. Define the $\P$-algebra morphism $\phi: A \to \Free_{\P(0)}(\mathsf{D}_{\ell^\circ}(A))$ using the universal property of the pushout as the unique $\P$-algebra morphism such that: 
\begin{equation*}
\phi(\ell^\circ(\mathsf{d}(a))) = \ell^\circ(\mathsf{d}(a))  .
\end{equation*}
Then, $\phi$ is a $\P$-algebra isomorphism with inverse $\phi^{-1}: \Free_{\P(0)}(\mathsf{D}_{\ell^\circ}(A)) \to A$ defined on generators as:
\begin{equation*}
\phi^{-1}(\ell^\circ(\mathsf{d}(a))) = \ell^\circ(\mathsf{d}(a)).
\end{equation*}
We have that $A \cong \Free_{\P(0)}(\mathsf{D}_{\ell^\circ}(A))$ as $\P$-algebras, and it easy to see that $\phi$ preserves the differential object structure $\zeta^\circ$ and $\ell^\circ$. So, we conclude that there is indeed a bijective correspondence between differential objects and $\P(0)$-modules as desired.  
\end{proof}

Before considering the differential objects in our main examples, we point out that $\P(0)$-modules also have an alternative description in terms of modules in the usual sense. 

\begin{lemma}\cite[Lemma 1.4]{moerdijkEnvelopingOperads} For an operad $\P$, $\P(0)$-modules in the operadic sense correspond precisely to $\P(1)$-left modules in the usual sense.
\end{lemma}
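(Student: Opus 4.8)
The plan is to exhibit an equivalence of categories between $\P(0)$-modules in the operadic sense and (left) $\P(1)$-modules in the usual sense, where $\P(1)$ is regarded as an associative unital $R$-algebra via partial composition $\circ_1$ with unit $1_\P$. The key observation is that an operadic $\P(0)$-module $M$ comes with evaluation maps $\psi_{n+1}\colon \P(n+1)\otimes \P(0)^{\otimes n}\otimes M\to M$, but since $\P(0)$ is the fixed $\P$-algebra and its elements are ``constants'', one expects all of the structure to be recoverable from the single map $\psi_2\colon \P(1)\otimes M\to M$ obtained when $n=0$, which has no $\P(0)$-tensor factors at all.

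First I would set up the correspondence in the direction from $\P(0)$-modules to $\P(1)$-modules. Given an operadic $\P(0)$-module $(M,\psi_\bullet)$, define a left $\P(1)$-action on $M$ by restricting to $n=0$: for $\nu\in\P(1)$ and $x\in M$, set $\nu\cdot x := \psi_1(\nu\otimes x)$. The associativity axiom for the evaluation maps, specialized to composing $\nu,\nu'\in\P(1)$, gives $\nu\cdot(\nu'\cdot x)=(\nu\circ_1\nu')\cdot x$, and the unit map axiom forces $1_\P\cdot x = x$. So $M$ becomes a $\P(1)$-module, and one checks that an operadic $\P(0)$-module morphism is in particular $\P(1)$-linear. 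Conversely, given a left $\P(1)$-module $M$, I would define evaluation maps $\psi_{n+1}\colon \P(n+1)\otimes\P(0)^{\otimes n}\otimes M\to M$ by first using the complete composition $\P(n+1)\otimes\P(0)^{\otimes n}\otimes\P(1)\to\P(1)$ (filling the first $n$ slots with elements of $\P(0)$ and the last slot with $1_\P$, landing in $\P(0+\dots+0+1)=\P(1)$) and then applying the $\P(1)$-action. Equivariance and associativity of $\psi_\bullet$ then follow from the corresponding equivariance and associativity axioms of the operad's composition, and the unit condition follows from $1_\P$ being the operadic unit.

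Then I would verify that these two assignments are mutually inverse up to natural isomorphism, giving the claimed bijective correspondence (indeed an equivalence of categories). Going from $M$ (a $\P(1)$-module) to an operadic $\P(0)$-module and back recovers the original action because $\psi_1(\nu\otimes x)$ in the reconstructed structure is exactly $\nu\cdot 1_\P\cdot x=\nu\cdot x$. Going the other way, from an operadic $\P(0)$-module $M$, restricting to $n=0$ and then re-inflating recovers $\psi_{n+1}$ because the associativity axiom of $\psi_\bullet$ lets one rewrite $\psi_{n+1}(\mu\otimes\mu_1\otimes\dots\otimes\mu_n\otimes x)$ as $\psi_1\big(\mu(\mu_1,\dots,\mu_n,1_\P)\otimes x\big)$ — i.e. every evaluation is forced to factor through the arity-one part. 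Finally, I would simply cite \cite[Lemma 1.4]{moerdijkEnvelopingOperads}, since the statement is quoted verbatim from there; the role of this proof sketch in the paper is to record why it holds, not to reprove it in full.

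The main obstacle is bookkeeping: making precise the re-inflation map $\P(n+1)\otimes\P(0)^{\otimes n}\otimes M\to\P(1)\otimes M$ and checking it is well-defined, $\Sigma$-equivariant, and compatible with the unit map $\eta_M$ of the operadic module structure. None of this is deep, but it requires carefully tracking the symmetric group actions (the $\Sigma(n)$-action on $\P(0)^{\otimes n}$ is the permutation action composed with the operad's internal action) and the associativity pentagon for module evaluations; the point to get right is that the $n=0$ component genuinely determines all higher components, which is precisely where the hypothesis that the ``constants'' live in $\P(0)$ (rather than in some larger algebra) is used.
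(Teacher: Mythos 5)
Your proposal is correct, but note that the paper itself offers no proof of this lemma at all: it is quoted verbatim and justified purely by the citation to Moerdijk, so your sketch supplies an argument where the paper supplies none. What you write is in fact the standard argument behind the cited result, and it holds up: the pivotal identity $\psi_{n+1}(\mu\otimes a_1\otimes\dots\otimes a_n\otimes x)=\psi_1\bigl(\mu(a_1,\dots,a_n,1_\P)\otimes x\bigr)$ is a legitimate instance of the module associativity axiom precisely because the $\P$-algebra structure on $\P(0)$ \emph{is} operadic composition, so plugging the arity-zero elements into the first $n$ slots and $1_\P$ into the last slot lands in $\P(1)$ and the evaluation is forced to factor through arity one; conversely the re-inflation you describe inherits equivariance, associativity and unitality from the operad axioms. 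Moerdijk's own route packages the same observation as the computation of the enveloping algebra of the initial $\P$-algebra, $U_\P(\P(0))\cong\P(1)$, and then invokes the general equivalence between operadic $A$-modules and left $U_\P(A)$-modules; your direct verification buys a self-contained, machinery-free proof, while the enveloping-algebra route buys the statement as a special case of a general theorem. The only blemishes are cosmetic: in your first paragraph you call the $n=0$ evaluation map $\psi_2$ but later (correctly, in the paper's indexing $\psi_{n+1}$) call it $\psi_1$, and the ``unit map'' $\eta_M$ in the paper's formulation should be explicitly matched with the condition $1_\P\cdot x=x$, as you indicate but do not spell out; neither affects the correctness of the argument.
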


Let us consider the differential objects in our main examples of operads. In particular, we note that the first example has differential objects which are not simply free $\P$-algebras. On the other hand, for the last three examples, the differential objects turn out to be precisely free $\P$-algebras.

\begin{example} For an $R$-algebra $A$, $A^\bullet(1)=A$; so every $A$-module is a differential object in $(\mathsf{MOD}^{op}_A, \mathbb{B}^\circ)$, as per the discussion in Section \ref{subsec:diffobj-ccdc}. 
\end{example}

\begin{example} For the operad $\Com$, $\Com(0)=R$ and $\Com(1)=R$. We also have that $\Free_R = \mathsf{Sym}$. Therefore for any $R$-module $V$, $\mathsf{Sym}(V)$ is a differential object in $(\mathsf{CALG}^{op}_R, \mathbb{T}^\circ)$. If $V$ is a free $R$-module with basis $X$, then recall that $\mathsf{Sym}(V) \cong R[X]$ and so $\mathsf{T}^\circ(\mathsf{Sym}(V)) \cong R[X, dX]$, as in Example \ref{example:CDCCom}. So in terms of polynomials, the differential object structure is defined as follows 
\begin{align*}
    \hat{p}^\circ( q(x_1, \hdots, x_n) ) &= q(dx_1, \hdots, dx_n), \\  \zeta^\circ(q(x_1, \hdots, x_n)) &= q(0, \hdots, 0), \\
    \sigma^\circ(q(x_1, \hdots, x_n)) &= q(x_1 + dx_1, \hdots, x_n + dx_n), \\
    \ell^\circ( q(x_1, \hdots, x_n, dy_{1}, \hdots, dx_{m}) ) &= q(0, \hdots, 0, dy_{1}, \hdots, dx_{m}).
\end{align*}
This recaptures \cite[Theorem 5.9]{cruttwellLemay:AlgebraicGeometry}.
\end{example}

\begin{example} For the operad $\Ass$, $\Ass(0)=R$ and $\Ass(1)=R$. We also have that $\Free_R = \mathsf{Ten}$. So for any $R$-module $V$, $\mathsf{Ten}(V)$ is a differential object in $(\mathsf{ALG}^{op}_R, \mathbb{T}^\circ)$. For the case of free $R$-modules, we may describe the differential object structure in terms of non-commutative polynomials as in the previous example. 
\end{example}

\begin{example} For the operad $\Lie$, $\Lie(0)=\mathsf 0$ and $\Lie(1)=R$. We also have $\Free_{\mathsf 0} = \mathsf{Lie}$. Thus for any $R$-module $V$, the free Lie algebra $\mathsf{Lie}(V)$ is a differential object in $(\mathsf{Lie}^{op}_R, \mathbb{T}^\circ)$.
\end{example}

\section{Future Work}\label{sec:future}

We conclude this paper by discussing some interesting future research projects that build upon the theory of tangent categories of algebras over an operad. 

\begin{enumerate}[{\em (i)}]
\item In \cite{cruttwellLemay:AlgebraicGeometry}, it was shown that the study of the tangent structure on the opposite category of commutative algebras provides many concepts from the algebraic geometry of affine schemes. Similarly, the tangent structure on the opposite category of associative algebras formalizes many constructions of Ginzburg \cite{Ginzburg:notes-on-NCGeometry} related to non-commutative geometry. It is natural to ask the question: what kind of geometry can be described using the opposite category of Lie algebras? Is this somehow related to the geometry of Lie algebras studied, for example, by Francis and Gaitsgory \cite{francis2012}? What are the geometries obtained using the operads of PreLie algebras or Poisson algebras? Can we describe the non-commutative Poisson geometry studied by Van den Bergh \cite{bergh08} using our techniques? Even more generally, one could use tangent categories to provide a new version of algebraic geometry relative to an operad.

\item Differential bundles in a tangent category \cite{cockettCruttwellDiffBundles} generalize the notion of smooth vector bundles. In \cite{cruttwellLemay:AlgebraicGeometry}, it was shown that differential bundles over a commutative algebra correspond precisely to modules over said commutative algebra. As such, we conjecture the differential bundles over an algebra of an operad will also correspond to modules over the algebra (in the operadic sense). This would be an extension of Theorem \ref{lemma:dual-tangent-structure} since a differential object can equivalently be described as a differential bundle over the terminal object. This will be investigated by the second named author for their PhD thesis. Beyond differential bundles, one should also study other interesting tangent category notions in the (opposite) category of algebras over an operad. For example, what are the tangent category versions of connections \cite{cockettCruttwellConnections}, or de Rham cohomology \cite{CruttwellLucychynCohomology}, or even solving differential equations \cite{cockett2021differential} in these tangent categories? 

\item The story of this paper was to explain how, from operad, one could build tangent categories and Cartesian differential categories. A natural question is if we can go in the other direction. So for what kinds of tangent categories or Cartesian differential categories is possible to construct an operad? Similarly, it would be of interest to precisely characterize which tangent categories are equivalent to the (opposite) category of algebras of an operad. 

\item In \cite{bauerburkeching21}, Bauer, Burke and Ching generalized the notion of tangent categories to the higher categorical setting. This new concept of tangent $\infty$-category allows one to study tangent structures “up to higher coherences’’. There is a well-established notion of $\infty$-operad \cite{cisinskimoerdijk}, and this encodes operations “up to homotopy”. It then seems natural to ask whether our theory can be generalized to produce a tangent $\infty$-structure on the (opposite) category of algebras over an $\infty$-operad. If so, there are plenty of potential applications: replacing the operad $\Com$ by the $\infty$-operad $E_\infty$ of commutative algebras up to infinity could give an insight into the well-defined notion of derived algebraic geometry \cite{toenvezzosi05}. Using a replacement for $\Lie$ could recapture notions from the geometry of Lie algebras of Francis and Gaitsgory, and from works of Harpaz, Nuiten and Prasma \cite{francis2012,harpazetal19}. Replacing $\Ass$ by the $\infty$-operad $A_\infty$ should recapture the theory of $A_\infty$-geometry of Kontsevich and Soibelman  \cite{kontsevich09}.

\item In Sections \ref{sec:operads} we discussed briefly the functoriality of the two constructions. In particular, we mentioned how every operad morphism provides strong/strict tangent morphisms relating to each construction. This fact should play an important role in better understanding the link between operad theory and tangent category theory.
\end{enumerate}

So there are many potential interesting paths to take for future work regarding operads and tangent categories.

\addcontentsline{toc}{section}{References}
\bibliographystyle{plain}
\bibliography{mainbib.bib}

\end{document}